\documentclass{amsart}
\usepackage{amsmath,amssymb,amscd,amsthm}
\usepackage[all]{xy}
\usepackage[colorlinks,linkcolor = blue,citecolor = blue]{hyperref}
\usepackage{color}
\usepackage{array}
\makeatletter
 \def\Spec{\mathop{\operator@font Spec}\nolimits}
 \def\Aut{\mathop{\operator@font Aut}\nolimits}
 \def\Out{\mathop{\operator@font Out}\nolimits}
 \def\Image{\mathop{\operator@font Im}\nolimits}
 \def\Gal{\mathop{\operator@font Gal}\nolimits}
 \def\Isom{\mathop{\operator@font Isom}\nolimits}
 \def\OutIsom{\mathop{\operator@font OutIsom}\nolimits}
 \def\ker{\mathop{\operator@font ker}\nolimits}
 \def\cdim{\mathop{\operator@font cd}\nolimits}
 \def\id{\mathop{\operator@font id}\nolimits}
\makeatother

\theoremstyle{plain}
\newtheorem{thm}{Theorem}[section]
\newtheorem{prop}[thm]{Proposition}
\newtheorem{cor}[thm]{Corollary}
\newtheorem{lem}[thm]{Lemma}
\newtheorem{introthm}{Theorem}

\newtheorem{introcor}[introthm]{Corollary}

\theoremstyle{definition}
\newtheorem{defi}[thm]{Definition}

\theoremstyle{remark}
\newtheorem{rem}{Remark}[thm]
\newtheorem{e.g.}[thm]{Example}

\allowdisplaybreaks[1]

\begin{document}

\title[Families preserving isomorphisms in anabelian geometry]{Families preserving isomorphisms via techniques in anabelian geometry: with an application to a generalized Neukirch-Uchida theorem}

\author{Arata Minamide}
\address[Arata Minamide]{ZEN Mathematics Center, ZEN University, Zushi 249-0007, Japan; Research Institute for Mathematical Sciences, Kyoto University, Kyoto 606-8502, Japan}
\email{arata\_minamide@zen.ac.jp; minamide@kurims.kyoto-u.ac.jp}

\author{Koichiro Sawada}
\address[Koichiro Sawada]{Research Institute for Mathematical Sciences, Kyoto University, Kyoto 606-8502, Japan}
\email{sawada@kurims.kyoto-u.ac.jp}

\author{Shota Tsujimura}
\address[Shota Tsujimura]{Research Institute for Mathematical Sciences, Kyoto University, Kyoto 606-8502, Japan}
\email{stsuji@kurims.kyoto-u.ac.jp}

\subjclass[2020]{Primary 12E30; Secondary 14H30, 20E36}
\keywords{profinite group; absolute Galois group; anabelian geometry; families preserving isomorphism; generalized Neukirch-Uchida theorem}

\maketitle

\begin{abstract}
Families preserving automorphisms of profinite groups are automorphisms that preserve each of the conjugacy classes of pro-cyclic subgroups. This notion appears in the context of verification of the property that every normal automorphism [i.e., an automorphism that preserves each of the normal closed subgroups] of a certain profinite group such as a nonabelian free profinite group or the absolute Galois group of a $p$-adic local field is an inner automorphism, which was proved by Jarden/Jarden-Ritter.
In the present paper, we revisit the notion of a families preserving automorphism from the viewpoint of anabelian geometry. We introduce a natural generalized version of this notion for isomorphisms between \textit{general} closed subgroups of a profinite group, which we shall refer to as {\it families preserving isomorphisms} in the profinite group. With regard to this generalized version, we prove that a large class of profinite groups satisfies a property that families preserving isomorphisms of certain closed subgroups of the profinite groups arise from inner automorphisms of them. For instance, the class includes the absolute Galois groups of Henselian discrete valuation fields of positive residue characteristic or Hilbertian fields. Moreover, as an application of this property, together with delicate considerations on pro-cyclic subgroups of the absolute Galois groups of infinite algebraic extension fields of the field of rational numbers $\mathbb{Q}$, we obtain a generalized version of the Neukirch-Uchida theorem for {\it $l$-quasi-number fields}, where an $l$-quasi-number field is defined to be an algebraic extension field $K$ of $\mathbb{Q}$ whose Galois closure $L$ over $\mathbb{Q}$ satisfies the property that the extension degree $[L : \mathbb{Q}]$ is divisible by a prime number $l$ only finitely many times. Surprisingly, this result includes the Neukirch-Uchida-type result for the class of subfields of the maximal pro-prime-to-$l$ extension fields of arbitrary number fields.
\end{abstract}

\renewcommand{\labelenumi}{(\roman{enumi})}
\renewcommand{\labelenumii}{(\arabic{enumii})}

\tableofcontents

\setcounter{tocdepth}{0}
\section*{Introduction}

Throughout the present paper, isomorphisms of topological groups are always assumed to be continuous. For each profinite group $G$ and each automorphism $\sigma$ of $G$, we shall say that
\begin{itemize}
\item
$\sigma$ is {\it families preserving} if the image of any pro-cyclic subgroup $I$ via $\sigma$ is conjugate to $I$ in $G$;
\item
$\sigma$ is {\it normal} if $\sigma$ preserves each of the normal closed subgroups of $G$.
\end{itemize}
In particular, it is immediate that families preserving automorphisms of profinite groups are normal. Recall that these notions appear in the context of the Neukirch-Uchida theorem that bridge the works of Neukirch [cf.\ \cite{N1}, \cite{N2}] and the work of Uchida [cf.\ \cite{U1}].  Here, we note that there are contributions of Iwasawa and Ikeda in this context. For instance, Ikeda also showed that every automorphism of the absolute Galois group of the field of rational numbers $\mathbb{Q}$ is an inner automorphism at the almost same time [cf.\ \cite{I}]. After that, apart from the arithmetic setting, Jarden proved that this property also holds for nonabelian free profinite groups [cf.\ \cite{J}]. Later, Jarden's result was generalized substantially by Jarden-Ritter in the following way. Let $p$ be a prime number. Then we shall say that a profinite group $G$ is {\it pseudo-$p$-free} if, for each normal open subgroup $N \subseteq G$, there exists a closed subgroup $M \subseteq N$ that is normal in $G$ such that $N/M$ is abelian, and $N/M$ admits a $G/N$-invariant pro-$p$ closed subgroup isomorphic to the group ring $\mathbb{Z}_p[G/N]$. For this technically defined class of profinite groups, they proved that [cf.\ \cite{JR}, Theorem 1]:
\begin{thm}[Jarden-Ritter]\label{JR-1}
Every normal automorphism of a pseudo-$p$-free profinite group is an inner automorphism.
\end{thm}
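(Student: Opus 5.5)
The plan is to show that $\sigma$ induces an inner automorphism on every finite quotient $G/N$, and then to pass to the limit by compactness. Let $\sigma$ be a normal automorphism of the pseudo-$p$-free profinite group $G$. For each normal open subgroup $N\subseteq G$, let
\[
S_N:=\{\bar h\in G/N \mid \sigma \bmod N = \mathrm{inn}(\bar h)\}.
\]
Note that $\sigma(N)=N$, since $N$ is normal, so $\sigma$ does induce an automorphism of $G/N$. The sets $S_N$ are finite and form an inverse system under the projections $G/N'\to G/N$ for $N'\subseteq N$. If each $S_N$ is nonempty, then $\varprojlim S_N\neq\emptyset$, and any element $h$ of this limit satisfies $\sigma=\mathrm{inn}(h)$. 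So everything reduces to showing $S_N\neq\emptyset$ for a fixed $N$.

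Fix $N$, and take $M\subseteq N$ as in the definition of pseudo-$p$-freeness. Put $A:=N/M$, and let $P\subseteq A$ be the $G/N$-invariant pro-$p$ subgroup with $P\cong\mathbb{Z}_p[G/N]$; choose a generator $x$ of $P$. Since $M$ and $N$ are normal, $\sigma$ preserves them. Hence $\sigma$ induces automorphisms $\bar\sigma$ of $G/N$ and $\tilde\sigma$ of $G/M$, with $\tilde\sigma(A)=A$.

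Because $A$ is abelian, conjugation by $G$ on $A$ factors through $G/N$. Therefore, for $y\in A$, the closed normal subgroup of $G/M$ generated by $y$ is just the $\widehat{\mathbb{Z}}[G/N]$-submodule generated by $y$. The only input from normality of $\sigma$ is thus: for every $y\in A$,
\[
\tilde\sigma(y)\in \widehat{\mathbb{Z}}[G/N]\cdot y.
\]
Applying this inside the pro-$p$ part, I obtain a map $\varphi:\mathbb{Z}_p[G/N]\to\mathbb{Z}_p[G/N]$ determined by $\tilde\sigma(\alpha x)=\varphi(\alpha)x$. This map has three properties:
\begin{itemize}
\item $\varphi$ is additive;
\item it is semilinear, namely $\varphi(\bar g\alpha)=\bar\sigma(\bar g)\varphi(\alpha)$, which follows from $\tilde\sigma(gyg^{-1})=\tilde\sigma(g)\tilde\sigma(y)\tilde\sigma(g)^{-1}$;
\item it is "ideal preserving", namely $\varphi(\alpha)\in \mathbb{Z}_p[G/N]\,\alpha$ for every $\alpha$.
\end{itemize}
Applying the same reasoning to $\tilde\sigma^{-1}$ shows that $\lambda:=\varphi(1)$ generates the same left ideal as $1$, so $\lambda$ is a unit.

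The main step, and the one I expect to be the real obstacle, is the purely algebraic conclusion that such a $\varphi$ forces $\bar\sigma$ to be inner. Semilinearity gives $\varphi(\bar g)=\bar\sigma(\bar g)\lambda$, and the ideal condition gives $\bar\sigma(\bar g)\lambda\in\mathbb{Z}_p[G/N]\,\bar g$. This is automatic, since $\bar g$ is a unit, so single group elements carry no information. The information must come from sums. For $\alpha=\sum_{\bar g\in T}\bar g$ with $T\subseteq G/N$, additivity gives
\[
\varphi(\alpha)=\Bigl(\sum_{\bar g\in T}\bar\sigma(\bar g)\Bigr)\lambda \in \mathbb{Z}_p[G/N]\,\alpha .
\]
The natural choice is $T$ a subgroup $H\le G/N$. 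Then the left ideal generated by $\sum_H h$ consists of the elements invariant under right multiplication by $H$, which turns the condition into
\[
\Bigl(\sum_{h\in H}\bar\sigma(h)\Bigr)\lambda\, h'=\Bigl(\sum_{h\in H}\bar\sigma(h)\Bigr)\lambda \qquad\text{for all } h'\in H .
\]
This says that $\lambda$ conjugates the $H$-invariants into the $\bar\sigma(H)$-invariants. Taking $H$ cyclic shows that $\bar\sigma(H)$ is conjugate to $H$ in a suitable sense, i.e.\ the induced automorphism is "families preserving" on the finite group.

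To extract an honest element $\bar h\in G/N$, I would reduce modulo $p$ and compare the two sides coefficient-wise, writing $\lambda=\sum_{\bar k}c_{\bar k}\bar k$. The goal is to show that some $\bar h$ with unit coefficient $c_{\bar h}$ satisfies $\bar\sigma(\bar g)=\bar h\bar g\bar h^{-1}$ for all $\bar g$. My first attempt would be to apply the ideal condition to the elements $1+\bar g$ and $1+\bar g+\bar g'$. This should force $\lambda\bar g\lambda^{-1}$ and $\bar\sigma(\bar g)$ to agree on supports, and a counting argument in $\mathbb{F}_p[G/N]$ would then isolate a single $\bar h$.

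If this direct approach stalls, the fallback is to apply the construction to a smaller normal open $N'\subseteq N$ and project down. In $\mathbb{Z}_p[G/N']$ the extra room should let one choose generic sums whose ideal condition is rigid enough to pin down $\bar h$ modulo $N$. The nonemptiness of $S_N$, together with the compactness argument above, then completes the proof.
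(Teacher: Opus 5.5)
The paper gives no proof of this statement (it is quoted from Jarden--Ritter). It only records that their argument runs normal $\Rightarrow$ families preserving $\Rightarrow$ inner. Your compactness reduction is fine, and so is your construction of $\varphi$ with $\varphi(\alpha)=\bar\sigma(\alpha)\lambda$ for a unit $\lambda$ (continuity gives $\mathbb{Z}_p$-linearity).

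\textbf{The gap.} Your ``main step'' is false: the three properties of $\varphi$ at a fixed level $N$ do \emph{not} force $\bar\sigma$ to be inner. So no coefficient comparison in $\mathbb{F}_p[G/N]$ can work. Take $G/N\cong C_3=\langle c\rangle$, $p=2$, $\bar\sigma(c)=c^{-1}$, $\lambda=1$. Since $3\in\mathbb{Z}_2^{\times}$, we have $\mathbb{Z}_2[C_3]\cong\mathbb{Z}_2\times\mathbb{Z}_2[\zeta_3]$ via $c\mapsto(1,\zeta_3)$. Under this isomorphism $\bar\sigma$ is the identity on the first factor and the Frobenius of the unramified ring $\mathbb{Z}_2[\zeta_3]$ on the second. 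It preserves valuations in each factor, hence every ideal, so $\bar\sigma(\alpha)\in\mathbb{Z}_2[C_3]\,\alpha$ for all $\alpha$. Yet $\bar\sigma$ is not inner, and your support claim also fails here, since $\lambda c\lambda^{-1}=c\neq\bar\sigma(c)$.

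This is not an artifact of discarding information. On $E=\mathbb{Z}_2[C_3]\rtimes C_3$, the map $(a,g)\mapsto(\bar\sigma(a),g^{-1})$ preserves every closed normal subgroup but is not inner modulo $\mathbb{Z}_2[C_3]$. Your fallback does not repair this. Projecting the ideal condition from $\mathbb{Z}_p[G/N']$ to $\mathbb{Z}_p[G/N]$ just reproduces a condition of the same insufficient type. Generic sums in $\mathbb{Z}_p[G/N']$ never touch the level-$N$ module $P$.

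\textbf{The missing idea.} Prove families preservation first, using all levels, and only then return to level $N$.

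Your own subgroup computation does the first part. Work at any level $N'$, with $\Gamma'=G/N'$, $H\le\Gamma'$ cyclic, $\Sigma_H=\sum_{h\in H}h$, and $\bar\sigma$ now denoting the induced automorphism of $\Gamma'$. Right multiplication by the unit $\lambda'$ embeds $\mathbb{Z}_p[\Gamma']\Sigma_{\bar\sigma(H)}\cong\mathbb{Z}_p[\Gamma'/\bar\sigma(H)]$ into $\mathbb{Z}_p[\Gamma']\Sigma_H\cong\mathbb{Z}_p[\Gamma'/H]$. Both have rank $[\Gamma':H]$, so the permutation characters agree. Evaluating at a generator of $H$ then shows that $\bar\sigma(H)$ is conjugate to $H$. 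Compactness over all $N'$ gives $\sigma(I)=gIg^{-1}$ for every procyclic $I\subseteq G$.

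Now return to level $N$ and apply this to the procyclic subgroups of $G/M$ generated by the elements $\alpha x$. This yields $\varphi(\alpha)=u\,\bar w\,\alpha$ with $u\in\mathbb{Z}_p^{\times}$ and $\bar w\in G/N$, which is much stronger than $\varphi(\alpha)\in\mathbb{Z}_p[G/N]\,\alpha$.
\begin{itemize}
\item With $\alpha=1$ you get $\lambda=u_0\bar w_0$.
\item With $\alpha=\sum_{g}p^{i(g)}g$ for a bijection $i:G/N\to\{0,\ldots,|G/N|-1\}$, compare $p$-adic valuations of coefficients in $\bar\sigma(\alpha)u_0\bar w_0=u\bar w\alpha$. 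This gives $\bar\sigma(g)\bar w_0=\bar w g$ for all $g$.
\item Taking $g=1$ gives $\bar w=\bar w_0$, hence $\bar\sigma=\mathrm{inn}(\bar w_0)$.
\end{itemize}
This is the same distinct-valuation device as $z_N=\sum_i p^{i-1}e_{i,N}$ in the proof of Theorem~\ref{autfree}. Your compactness argument then finishes the proof.
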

As concrete nontrivial consequences, they proved that [cf.\ \cite{JR}, Theorems A, B]:
\begin{thm}[Jarden-Ritter]\label{JR-2}
Every normal automorphism of the absolute Galois group of a $p$-adic local field is an inner automorphism.
\end{thm}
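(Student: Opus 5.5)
The plan is to reduce Theorem~\ref{JR-2} to Theorem~\ref{JR-1}. Let $K$ be a $p$-adic local field, $\overline{K}$ an algebraic closure, and $G_K = \Gal(\overline{K}/K)$. It suffices to show that $G_K$ is pseudo-$p$-free. Then any normal automorphism of $G_K$ is inner by Theorem~\ref{JR-1}.

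So fix a normal open subgroup $N \subseteq G_K$, corresponding to a finite Galois extension $L/K$ with group $G_K/N = \Gal(L/K)$. We need a closed subgroup $M \subseteq N$, normal in $G_K$, such that $N/M$ is abelian and contains a $G_K/N$-stable pro-$p$ closed subgroup isomorphic to $\mathbb{Z}_p[G_K/N]$. The natural choice is
\[
M := \ker\bigl(N \twoheadrightarrow N^{\mathrm{ab}} \twoheadrightarrow N^{\mathrm{ab}}(p)\bigr),
\]
so that $N/M$ is the maximal abelian pro-$p$ quotient of $G_L$. This $M$ is characteristic in $N$, hence normal in $G_K$. By local class field theory, $N/M$ is $\Gal(L)$-equivariantly the pro-$p$ completion of $L^\times$. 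That completion is $\widehat{\langle\pi\rangle}_p \times U_L^{(1)} \times \mu(L)_{p}$, where $U_L^{(1)}$ denotes the principal units and the residue-field roots of unity die since they have prime-to-$p$ order.

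The key input is the classical structure theorem for principal units, due to Iwasawa. For $n$ sufficiently large, the logarithm gives a $\Gal(L/K)$-equivariant isomorphism from $U_L^{(n)}$ onto an open $\mathcal{O}_L$-submodule of $\mathcal{O}_L$, namely $\mathfrak{m}_L^n$. By the normal basis theorem, $L \cong K[\Gal(L/K)]$ as Galois modules. Hence there exists $\alpha \in \mathcal{O}_L$ such that the $\mathcal{O}_K$-span of $\{\sigma\alpha\}$ is a free $\mathcal{O}_K[\Gal(L/K)]$-submodule of finite index in $\mathcal{O}_L$. Scaling $\alpha$ by a power of $p$ puts this submodule inside $\mathfrak{m}_L^n$. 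Since $\mathcal{O}_K \cong \mathbb{Z}_p^{[K:\mathbb{Q}_p]}$ and $[K:\mathbb{Q}_p]\ge1$, this free module contains $\mathbb{Z}_p[\Gal(L/K)]\cdot\alpha$, which is free of rank one. Pulling back via $\exp$ gives a $G_K/N$-invariant closed pro-$p$ subgroup of $U_L^{(n)} \subseteq N/M$ isomorphic to $\mathbb{Z}_p[G_K/N]$. This is exactly what pseudo-$p$-freeness requires.

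The main obstacle is the equivariance bookkeeping. First, one must check that the reciprocity isomorphism $L^\times{}^{\wedge} \cong G_L^{\mathrm{ab}}$ intertwines the Galois action on $L^\times$ with the conjugation action of $G_K/N$ on $N^{\mathrm{ab}}$. This is functoriality of the Artin map under conjugation, and it holds. Second, one must verify that $\exp$ and $\log$ converge and are mutually inverse on $\mathfrak{m}_L^n$ once $n > e_L/(p-1)$. That step is standard. I would handle it first, and then assemble the argument above.
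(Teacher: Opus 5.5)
Your proposal is correct. It takes essentially the route the paper indicates: the paper gives no independent proof of Theorem~\ref{JR-2}, and instead cites it as a consequence of Theorem~\ref{JR-1} (following Jarden--Ritter), while your verification of pseudo-$p$-freeness via equivariant local class field theory, the normal basis theorem and the $p$-adic logarithm on $U_L^{(n)}$ supplies that deduction. One harmless slip: $\mu(L)_p$ already lies in $U_L^{(1)}$, so your displayed decomposition double-counts it and is also not Galois-equivariant; since you only use the Galois-stable closed subgroup $U_L^{(n)}\subseteq U_L^{(1)}$, nothing breaks.
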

\begin{thm}[Jarden-Ritter]\label{JR-3}
Let $G$ be a profinite group; $d$, $e$ nonnegative integers such that $e \geq d+2$; $\mathcal{C}$ a full-formation. Suppose that $G$ is a pro-$\mathcal{C}$ group presented by $e$ generators and $d$ word-relations. Then every normal automorphism of $G$ is an inner automorphism.
\end{thm}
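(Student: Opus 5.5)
The plan is to deduce the statement from Theorem~\ref{JR-1}: it suffices to show that every such $G$ is pseudo-$p$-free for a suitable prime $p$. Concretely, $G$ is a pro-$\mathcal{C}$ group presented as $G = F/R$, where $F$ is the free pro-$\mathcal{C}$ group on $e$ generators and $R$ is the closed normal subgroup generated by $d$ relators $r_1,\dots,r_d$. If $\mathcal{C}$ contains only the trivial group, there is nothing to prove. Otherwise we fix a prime $p$ with $\mathbb{Z}/p\mathbb{Z}\in\mathcal{C}$; since $\mathcal{C}$ is a full formation, it is closed under extensions, so $p$-groups and extensions by them remain in $\mathcal{C}$.

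Given a normal open subgroup $N\subseteq G$, the candidate for $M$ is the kernel of $N \to N^{\mathrm{ab}}\otimes\mathbb{Z}_p = N^{\mathrm{ab}}(p)$, the maximal abelian pro-$p$ quotient; this kernel is characteristic in $N$, hence normal in $G$. Let $\tilde N\subseteq F$ be the preimage of $N$, set $n=[G:N]$, and let $H=G/N=F/\tilde N$. The main computation is a Gaschütz/Lyndon-type relation module sequence for the open subgroup $\tilde N$ of the free pro-$\mathcal{C}$ group $F$:
\[
0\to \tilde N^{\mathrm{ab}}(p)\to \mathbb{Z}_p[H]^{e}\to I_{\mathbb{Z}_p}[H]\to 0,
\]
where $I$ denotes the augmentation ideal. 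This is the pro-$p$ completed Fox-derivative sequence; here $\tilde N^{\mathrm{ab}}(p)$ is free $\mathbb{Z}_p$ of rank $1+n(e-1)$ by Nielsen--Schreier for open subgroups of free pro-$\mathcal{C}$ groups. Next, $N^{\mathrm{ab}}(p)$ is the quotient of $\tilde N^{\mathrm{ab}}(p)$ by the image of $R$, which as a $\mathbb{Z}_p[H]$-module is generated by the images of $r_1,\dots,r_d$; the point is that $R\subseteq\tilde N$ is normally generated in $F$ by the $r_i$, and $F$-conjugation acts on $\tilde N^{\mathrm{ab}}$ through $H$. So we get an $H$-equivariant surjection $\mathbb{Z}_p[H]^{d}\to (\text{image of }R)$, and an exact sequence of $\mathbb{Z}_p[H]$-modules
\[
\mathbb{Z}_p[H]^d \to \tilde N^{\mathrm{ab}}(p) \to N^{\mathrm{ab}}(p)\to 0.
\]

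The hard step is to extract from this a $G/N$-invariant closed subgroup of $N/M=N^{\mathrm{ab}}(p)$ isomorphic to $\mathbb{Z}_p[H]$, using $e\ge d+2$. The counting is encouraging: $\tilde N^{\mathrm{ab}}(p)\oplus\mathbb{Z}_p \cong \mathbb{Z}_p[H]^{e}\oplus(\text{stuff})$ up to the augmentation sequence, so after killing $d$ cyclic submodules there should remain roughly $e-d-1\ge1$ free summands. However, the lattice $\tilde N^{\mathrm{ab}}(p)$ need not be $\mathbb{Z}_p[H]$-free, because of Schanuel-type twisting by $I[H]$, and the image of $\mathbb{Z}_p[H]^d$ might not be a direct summand. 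I would work rationally first: $\tilde N^{\mathrm{ab}}(p)\otimes\mathbb{Q}_p\cong\mathbb{Q}_p[H]^{e-1}\oplus\mathbb{Q}_p$, so $N^{\mathrm{ab}}(p)\otimes\mathbb{Q}_p$ contains $\mathbb{Q}_p[H]^{e-1-d}$, which is nonzero. I would then take a $\mathbb{Q}_p[H]$-free summand $V\cong\mathbb{Q}_p[H]$ of the torsion-free quotient and choose a generator lying in the image lattice, which should be possible after scaling by a power of $p$. Its $\mathbb{Z}_p[H]$-span is isomorphic to $\mathbb{Z}_p[H]$, since a module that becomes free of rank one after tensoring with $\mathbb{Q}_p$ has no torsion annihilator. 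Finally I would lift this span back into $N^{\mathrm{ab}}(p)$ by taking the submodule generated by a lift of the generator. This lift is still isomorphic to $\mathbb{Z}_p[H]$, because the map to the torsion-free quotient is injective on it. The subgroup obtained is $G/N$-invariant and isomorphic to $\mathbb{Z}_p[H]$, so $G$ is pseudo-$p$-free and Theorem~\ref{JR-1} applies.

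I expect the main obstacle to be this last step, where the slack $e-d\ge2$ is spent. One unit is lost to the augmentation term, and at least one is needed to keep a free summand after the $d$ relations. I also expect care to be needed in making the Fox-derivative sequence rigorous in the pro-$\mathcal{C}$ setting.
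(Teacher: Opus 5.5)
Your proof is correct and follows the route the paper indicates. The paper does not reprove this statement; it cites \cite{JR} and presents it as a consequence of Theorem~\ref{JR-1}. Your argument is exactly such a deduction: you take $M=\ker\bigl(N\to N^{\mathrm{ab}}(p)\bigr)$, use the relation-module sequence, and count multiplicities in the semisimple algebra $\mathbb{Q}_p[G/N]$, where $e-1-d\ge 1$ is precisely what is needed. One wording point in the lifting step: $\mathbb{Z}_p[H]\tilde v\cong\mathbb{Z}_p[H]$ holds simply because $\mathrm{Ann}(\tilde v)\subseteq\mathrm{Ann}(v)=0$, not because of a separately justified injectivity into the torsion-free quotient.
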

Their proof of Theorem \ref{JR-1} was divided into two steps: first showing that every normal automorphism is families preserving, and then showing that every families preserving automorphism is an inner automorphism.
In the present paper, we focus on a stronger version of the latter property. By applying group-theoretic techniques in anabelian geometry, we prove that many highly nonabelian profinite groups satisfy the stronger property. For our purpose, we generalize the notion of a families preserving automorphism as follows: 
\begin{quotation}
\noindent
For an isomorphism between closed subgroups of a profinite group $G$, if the
image of any pro-cyclic subgroup $I$ via this isomorphism is conjugate to $I$ in $G$, then we
shall say that this isomorphism is {\it families preserving} in $G$ [cf.\ Definition \ref{snfp}].
\end{quotation}
Then, as a preliminary technical result that corresponds to Theorem \ref{JR-1} in our context, we prove the following result, which may be regarded as our first main result [cf.\ a special case of Theorem \ref{autfree}; Remark \ref{snfprem}, (ii)]:
\begin{introthm}\label{introthmA}
Let $G$ be a profinite group; $F\subseteq G$ a nonabelian free pro-$p$ normal closed subgroup; $\alpha\in\Aut(F)$ an automorphism of $F$ that is families preserving in $G$. Then $\alpha$ is induced by an inner automorphism of $G$.
\end{introthm}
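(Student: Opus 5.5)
Here is the plan I would follow, in the style of the anabelian reconstruction of decomposition groups. Fix $\alpha$ as in Theorem \ref{introthmA}. The aim is to produce $g\in G$ with $\alpha(x)=gxg^{-1}$ for all $x\in F$.

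\textbf{Step 1: $\alpha$ fixes every normal-in-$G$ open subgroup of $F$.} Let $N\subseteq F$ be open and normal in $G$. Each $x\in N$ generates a pro-cyclic subgroup $\overline{\langle x\rangle}$. By hypothesis, $\alpha(\overline{\langle x\rangle})$ is $G$-conjugate to $\overline{\langle x\rangle}$, hence lies in $N$. So $\alpha(N)\subseteq N$, and applying the same argument to $\alpha^{-1}$ (also families preserving) gives equality. Consequently $\alpha$ induces automorphisms $\alpha_N$ of the abelianizations $N^{\mathrm{ab}}\otimes\mathbb{Z}_p$. These are free $\mathbb{Z}_p$-modules carrying an action of $G/N$ (through conjugation), and also of $G/N'$ for suitable $N'$.

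\textbf{Step 2: each $\alpha_N$ is multiplication by an element of a group ring.} Take $x\in N$ with $\alpha(x)=g_x x^{\lambda_x} g_x^{-1}$, where $\lambda_x\in\mathbb{Z}_p^\times$ and $g_x\in G$. Passing to $N^{\mathrm{ab}}$, the map $\alpha_N$ sends the class $[x]$ to $\lambda_x\cdot \bar g_x[x]$, with $\bar g_x\in G/N$. Now choose $x$ to be part of a free basis of $F$, or lifts of generators for a free $\mathbb{Z}_p[F/N]$-submodule of $N^{\mathrm{ab}}$. (By Gaschütz/Lyndon, $N^{\mathrm{ab}}\cong \mathbb{Z}_p[F/N]^{\,r}\oplus\mathbb{Z}_p$ with $r=(d-1)[F:N]$.) The requirement that the relation $[x]\mapsto \lambda_x\bar g_x[x]$ is compatible for all $x$, including products $xy$, should force $\alpha_N$ to act on a free summand as multiplication by a single unit $\lambda_N\bar g_N$ with $\bar g_N\in G/N$. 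This is the Jarden--Ritter mechanism: a module endomorphism sending every "cyclic element" into the orbit of its own line is a monomial scalar. Making it rigorous is where freeness and nonabelianness of $F$ are used, since $r$ is large.

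\textbf{Step 3: compatibility and the inner automorphism.} The elements $\bar g_N$ form a nonempty compact set for each $N$, and the sets are compatible under shrinking $N$. Taking a projective limit over a cofinal system of $N$ (all open in $F$ and normal in $G$) yields $g\in G$ such that $\alpha\circ \mathrm{inn}(g)^{-1}$ acts on every $N^{\mathrm{ab}}\otimes\mathbb{Z}_p$ through a scalar in $\mathbb{Z}_p^\times$. This scalar should then equal $1$, because of compatibility with transfer $F^{\mathrm{ab}}\to N^{\mathrm{ab}}$ or with the $\mathbb{Z}_p$ trivial summand. Finally, an automorphism of a free pro-$p$ group acting trivially on all $N^{\mathrm{ab}}$ for a cofinal system of characteristic-like $N$ is trivial: pass to the pro-$p$ completion of successive Frattini quotients and use that $\bigcap_N [N,N]N^p=1$.

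\textbf{Main obstacle.} I expect Step 2 to be the hardest. Pinning down that the $G/N$-component $\bar g_x$ is independent of $x$ requires a careful choice of test elements $x,y,xy$ in distinct free summands. It also requires controlling the ambiguity coming from the centralizer of $[x]$ in $G/N$. There is an additional complication: elements of $G$ outside $F$ may act on $N^{\mathrm{ab}}$ non-monomially, so I would first reduce to the image of $G$ in $\Aut(N^{\mathrm{ab}})$.
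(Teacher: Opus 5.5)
\textbf{There is a genuine gap: your Step~2 is the whole content of the theorem, and you only assert it.} Step~1 is fine. In Step~2, however, you write that compatibility ``should force'' $\alpha_N$ to be monomial, and as set up it does not go through.

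\begin{itemize}
\item Families preservation gives, for each $x$, a conjugator $g_x\in G$ that depends on $x$. Here $G$ is an arbitrary container: $G/N$ may be infinite and act on $N^{\mathrm{ab}}$ in an uncontrolled way. Also, $N^{\mathrm{ab}}$ is not free over any group ring attached to $G/N$; the Jarden--Ritter mechanism lives in the pseudo-$p$-free setting, where the container is the group itself.
\item Testing $x$, $y$, $xy$ yields a single relation $\lambda_{xy}\bar g_{xy}([x]+[y])=\lambda_x\bar g_x[x]+\lambda_y\bar g_y[y]$. You can extract $\bar g_x=\bar g_y$ from this only if you know the orbits involved consist of linearly independent vectors, and you have no such control.
\item Minor: by Schreier, $N^{\mathrm{ab}}$ has $\mathbb{Z}_p$-rank $1+(d-1)[F:N]$, so the multiplicity of $\mathbb{Z}_p[F/N]$ in Gasch\"utz's decomposition is $d-1$, not $(d-1)[F:N]$.
\end{itemize}

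Step~3 inherits further holes. ``A free summand'' is not canonical, so the sets of admissible $(\lambda_N,\bar g_N)$ need not be compatible along $N'^{\mathrm{ab}}\to N^{\mathrm{ab}}$, and compactness gives nothing. A scalar on a summand is not a scalar on all of $N^{\mathrm{ab}}$. The transfer remark does not show the scalar is $1$.

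Your endgame is salvageable, but not via $\bigcap_N[N,N]N^p=1$, since a given $h\in F$ need not lie in $N$. Instead: if $\beta$ acts on every $N^{\mathrm{ab}}$ by a scalar, then $h^{-1}\beta(h)$ acts trivially on $N^{\mathrm{ab}}$. Since $\mathrm{rk}\,F\ge 2$, $F/N$ acts faithfully on $N^{\mathrm{ab}}$, so $h^{-1}\beta(h)\in\bigcap_N N=\{1\}$.

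\textbf{The missing idea} is a canonical, rigid finite configuration to replace the group ring, plus a device that produces one conjugator for many elements at once. The paper (Theorem \ref{autfree}) fixes a single $x\notin\ker(F\twoheadrightarrow F^{\mathrm{ab}}/pF^{\mathrm{ab}})$. It realizes $F$ as a genus-$0$ pro-$p$ surface group in which $x$ generates a cuspidal inertia group, and uses three facts:
\begin{itemize}
\item The $F$-conjugates of $x^{p^{k_N}}$ have finitely many images $e_{1,N},\dots,e_{m,N}$ in $N^{\mathrm{ab}}$. These are linearly independent and span a canonical submodule $M_N$ (Lemma \ref{freestr}, (i)).
\item Such an inertia group is determined by the images of its intersections with all $N$ in $N^{\mathrm{ab}}/pN^{\mathrm{ab}}$ (Lemma \ref{inertia}).
\item $N_F(I)=I$ (Lemma \ref{freestr}, (iii)).
\end{itemize}

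The argument then runs as follows.
\begin{enumerate}
\item Apply families preservation to a lift of the weighted element $z_N=\sum_i p^{i-1}e_{i,N}$. This gives $(b,g)$ with $\alpha^{\mathrm{ab}}_{g,N}(z_N)=bz_N$, where $\alpha_g=g^{-1}\alpha(\cdot)g$.
\item Compactness over $N$ makes $(b,g)$ independent of $N$.
\item Since $z_N\equiv e_{1,N}$ mod $p$, Lemma \ref{inertia} gives $\alpha_g(I\cap N_0)=I\cap N_0$. Hence $\alpha_g$ permutes the $e_i$ up to a single scalar $c$. Comparing coefficients of the powers $p^{i-1}$ forces $c=b$ and the trivial permutation.
\item A second compactness argument yields one $g$ with $\alpha_g=b$ on every $M_N$. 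Compatibility of these conditions comes from torsion-freeness of $N^{\mathrm{ab}}$.
\item Lemma \ref{inertia} then shows that $\alpha_g$ stabilizes every $F$-conjugate $J$ of $I$. Hence $\alpha_g(h)h^{-1}\in\bigcap_J N_F(J)=\bigcap_J J=\{1\}$.
\end{enumerate}
The scalar $b$ never has to be shown equal to $1$.
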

It is an interesting point of Theorem \ref{introthmA} that the property that a families preserving automorphism of a nonabelian free pro-$p$ group in a container arises from an inner automorphism of the container is independent of the choice of a container.
With regard to the proof of Theorem \ref{introthmA}, it would be important to note that the discussion is based on group-theoretic techniques in anabelian geometry. On the other hand, the proof of Theorem \ref{JR-1} is based on the discussions in abelianized setting. At the time of writing of the present paper, the precise relation between the notion of a pseudo-$p$-free group that appears in Theorem \ref{JR-1} and the property that $F$ is a nonabelian free pro-$p$ normal closed subgroup that appears in Theorem A is not clear to the authors. 

Next, in order to discuss applications of [a strengthened version of] Theorem \ref{introthmA}, we introduce some notations and definitions. For each field $F$, write $G_F$ for the absolute Galois group of $F$ with respect to a fixed separable closure of $F$. 
For each full-formation $\mathcal{C}$ and each quotient $Q$ of a profinite group $G$ [in the category of profinite groups]:
\begin{enumerate}
\item We shall write $G^{\mathcal{C}}$ for the maximal pro-$\mathcal{C}$ quotient of $G$. If $\mathcal{C}$ is the family of all $p$-groups, then we shall also write $G^p\overset{\mathrm{def}}{=}G^{\mathcal{C}}$ [cf.\ Definition \ref{almostquot}, (i)].
\item If there exists a normal open subgroup $N\subseteq G$ of $G$ such that the kernel of the surjection $G\twoheadrightarrow Q$ coincides with the kernel of the natural surjection $N\twoheadrightarrow N^{\mathcal{C}}$ (respectively, $N\twoheadrightarrow N^p$),
then we shall say that $Q$ is the \textit{almost pro-$\mathcal{C}$-maximal quotient} (respectively, \textit{almost pro-$p$-maximal quotient}) of $G$ [cf.\ Definition \ref{almostquot}, (iii)].
\end{enumerate}
Then, as applications of Theorem \ref{introthmA} [more precisely, Theorem \ref{innisom}, which may be regarded as a strengthened version of Theorem \ref{introthmA} under a certain assumption on $G$], together with various works on anabelian group-theoretic properties due to the authors of the present paper; highly nontrivial results in field arithmetic, we obtain the following concrete results concerning families preserving isomorphisms [cf.\ special cases of Theorems \ref{inngal}, \ref{innanab1}]:
\begin{introthm}\label{introthmB}
Let $K$ be a Hilbertian field or a Henselian discrete valuation field of residue characteristic $p$; $H,H'\subseteq G_K$ closed subgroups. Suppose that $H,H'$ admit nontrivial closed subgroups that are normal in $G_K$. Then every families preserving isomorphism $H\overset{\sim}{\to}H'$ in $G_K$ is induced by an inner automorphism of $G_K$. Moreover, in the case where $K$ is a number field or a Henselian discrete valuation field of residue characteristic $p$, for any full-formation $\mathcal{C}$ such that $\mathbb{Z}/p\mathbb{Z}$ is a $\mathcal{C}$-group, the same property also holds for the almost pro-$\mathcal{C}$-maximal quotients of $G_K$.
\end{introthm}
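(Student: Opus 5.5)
The plan is to reduce Theorem \ref{introthmB} to Theorem \ref{introthmA}, or rather to its strengthened form Theorem \ref{innisom}, by locating a suitable nonabelian free pro-$p$ normal closed subgroup. Fix a families preserving isomorphism $\phi\colon H\overset{\sim}{\to}H'$ in $G \overset{\mathrm{def}}{=} G_K$.

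The first step is a group-theoretic reduction. Let $N\subseteq H$ be a nontrivial closed subgroup normal in $G$. Since $\phi$ sends each pro-cyclic subgroup to a $G$-conjugate of itself, and $N$ is normal, $\phi(N)$ contains every pro-cyclic subgroup of $N$. Applying the same reasoning to $\phi^{-1}$ gives $\phi(N)=N$. So $\phi|_N$ is an automorphism of $N$ which is families preserving in $G$.

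The second step, which is the arithmetic input, is to find a nonabelian free pro-$p$ closed subgroup $F\subseteq N$ that is normal in a suitable open subgroup, in such a way that conjugacy is controlled. For $K$ Hilbertian, I would use the known results that a nontrivial normal closed subgroup of $G_K$ is large, for example Haran's diamond theorem or the result that normal subgroups of $G_K$ for Hilbertian $K$ have free pro-$p$ maximal quotients of infinite rank. From these, I would extract an open $U\subseteq G$ together with a normal subgroup $N\cap U$ whose maximal pro-$p$ quotient, or a free pro-$p$ subquotient, is nonabelian free.

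For $K$ Henselian discrete valuation with residue characteristic $p$, I would use the wild inertia and the structure of $G_K$: a nontrivial normal closed subgroup must meet the wild inertia group $P$. This gives a nonabelian free pro-$p$ subgroup, either via Koch's theorem that $P$ is free pro-$p$ of infinite rank after a suitable restriction, or via the local analysis in earlier results on anabelian properties of such groups.

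Then I would apply Theorem \ref{innisom} in the form it takes after passing to the quotient by the kernel of $N\to N^p$. The Hom-type result yields $g\in G$ with $\phi|_N=\mathrm{inn}(g)|_N$. Replace $\phi$ by $\mathrm{inn}(g)^{-1}\circ\phi$, which is still families preserving and is the identity on $N$. For $h\in H$ and $n\in N$ we have $hnh^{-1}\in N$, so
\[
\phi(h)\,n\,\phi(h)^{-1}=\phi(hnh^{-1})=hnh^{-1}.
\]
Hence $h^{-1}\phi(h)$ centralizes $N$. Nontrivial normal subgroups of these $G_K$ have trivial centralizer: this follows from the freeness or slimness results, since a centralizing element would commute with a nonabelian free pro-$p$ group that is normal in an open subgroup. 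Therefore $\phi=\mathrm{id}$.

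For the almost pro-$\mathcal{C}$-maximal quotients, I would run the same argument. The needed inputs are that the relevant kernel quotients still contain nonabelian free pro-$p$ normal subgroups, for example $G_{K}^{p}$-type quotients of open subgroups, using $\mathbb{Z}/p\mathbb{Z}\in\mathcal{C}$. For number fields this comes from the maximal pro-$p$ quotients of the groups of subfields, and in the local case from Demushkin-type/free structure of the pro-$p$ quotient of the wild part. The same inputs are needed for slimness.

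\textbf{Main obstacle.} I expect the second step to be the hardest: producing, inside an arbitrary nontrivial normal subgroup, a nonabelian free pro-$p$ subgroup that is normal in something large enough for Theorem \ref{innisom} to apply, and especially controlling this in the quotient setting.
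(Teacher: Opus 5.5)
\textbf{There is a genuine gap in your second step, and your proposed inputs do not fill it.} That step is the entire arithmetic content of the statement.

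For Hilbertian $K$, it is false that normal closed subgroups of $G_K$ have free maximal pro-$p$ quotients. Take $K=\mathbb{Q}$, $N=G_{\mathbb{Q}}$, $p=2$: complex conjugation maps to an element of order $2$ in $G_{\mathbb{Q}}^2$, and free pro-$p$ groups are torsion-free. Haran's diamond theorem only yields Hilbertianity of certain intermediate fields, not $\cdim_p\le 1$. In fact the paper records (Remark \ref{freesubrem}) that the authors do not know whether any almost pro-$p$-maximal quotient $G_K/\ker(N\twoheadrightarrow N^p)$ contains a nontrivial free pro-$p$ normal closed subgroup. So the quotient you propose to work in is not known to be usable.

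The missing idea is to cut down by $(G_K)_{\mathrm{sym}}$ (Definition \ref{symquot}). Its fixed field is Hilbertian and PAC, so for open normal $N\subseteq G_K$ one has $\cdim_p(N\cap(G_K)_{\mathrm{sym}})\le 1$. Hence $(N\cap(G_K)_{\mathrm{sym}})^p$ is free pro-$p$, and it is nontrivial because the corresponding finite extension of that fixed field is again Hilbertian. The resulting almost symmetric pro-$p$-maximal quotients are strongly sn-internally indecomposable (Proposition \ref{hilbsym}), and $G_K$ is their inverse limit.

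The other cases have the same problem:
\begin{itemize}
\item \emph{Number fields, quotient part.} The groups $G_{K'}^p$ are not free in general. The paper composes the fixed field of $N$ with the cyclotomic $\mathbb{Z}_p$-extension to force $\cdim_p\le 1$ (Proposition \ref{nffree}).
\item \emph{Henselian $K$.} Freeness of the wild inertia group $P$ is available when the residue field is perfect or $\mathrm{char}\,K=p$. In that case your shortcut does work directly in $G_K$: intersect $P$ with a nontrivial normal subgroup contained in $H\cap H'$ and apply Theorem \ref{innisom}. But the statement allows imperfect residue fields in mixed characteristic, where this input is unavailable. That is why the paper builds a weakly unramified pro-$p$ extension with perfect residue field and then invokes the theory of fields of norms (Proposition \ref{henselfree}).
\end{itemize}

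\textbf{There is also a structural gap.} Applying Theorem \ref{innisom} in a single quotient $Q=G/\ker(N\twoheadrightarrow N^p)$ yields only an element of $Q$. That shows $\phi|_N$ agrees with an inner automorphism modulo the very large subgroup $\ker(N\twoheadrightarrow N^p)$; it does not produce $g\in G$ with $\phi|_N=\mathrm{inn}(g)|_N$. You need a family of such quotients whose kernels intersect trivially, and in each of them:
\begin{itemize}
\item the quotient is internally indecomposable, so the free subgroup has rank $\ge 2$ and Proposition \ref{snisom} extends innerness from it to the image of $H$;
\item the free subgroup lies in the image of $H\cap H'$. The paper arranges this by intersecting $F_N$ with the image of $N_1\cap N_1'$, keeping only those $N$ with $N_1\cap N_1'\not\subseteq N$.
\end{itemize}
Families preservation passes to these quotients by Remark \ref{snfprem}, (iii), and a compactness argument (Lemma \ref{innlim}, packaged as Theorem \ref{innisom2}) glues the resulting elements.

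\textbf{Your first step also runs the wrong way.} Families preservation gives only $\phi(N)\subseteq N$, and the $\phi^{-1}$ argument additionally needs $N\subseteq H'$. Take instead $N=N_1\cap N_1'$ for nontrivial normal $N_1\subseteq H$ and $N_1'\subseteq H'$. This is nontrivial by internal indecomposability, since otherwise $N_1'\subseteq Z_G(N_1)=\{1\}$.
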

\begin{introthm}\label{introthmC}
Let $G$ be a profinite group; $\mathcal{C}$ a nontrivial full-formation. Suppose that $G$ is isomorphic to one of the following:
\renewcommand{\labelenumi}{(\alph{enumi})}
\begin{enumerate}
\item an almost pro-$\mathcal{C}$-maximal quotient of a free profinite group of [possibly infinite] rank $\ge 2$;
\item an almost pro-$\mathcal{C}$ surface group;
\item a pro-$p$ Demu\v{s}kin group of rank $\ge 3$.
\end{enumerate}
Let $H,H'\subseteq G$ be closed subgroups. Suppose that $H,H'$ admit nontrivial closed subgroups that are normal in $G$. Then every families preserving isomorphism $H\overset{\sim}{\to}H'$ in $G$ is induced by an inner automorphism of $G$.
\end{introthm}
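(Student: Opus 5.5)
The plan is to reduce Theorem \ref{introthmC} to the strengthened form of Theorem \ref{introthmA}, i.e.\ Theorem \ref{innisom}, so I need two inputs for each of (a), (b), (c). First, $G$ must contain a nonabelian free pro-$p$ normal closed subgroup $F$, for a suitable prime $p$; in case (c) this is $p$ itself. Second, $G$ must satisfy the anabelian hypotheses required in Theorem \ref{innisom}. These are presumably slimness, that every nontrivial normal closed subgroup of an open subgroup has trivial centralizer, and some form of ``commensurator = normalizer'' for the relevant subgroups.

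For the first input, I fix a prime $p$ with $\mathbb{Z}/p\mathbb{Z}$ a $\mathcal{C}$-group. Such a $p$ exists because $\mathcal{C}$ is nontrivial.
\begin{itemize}
\item In case (a), write $G=\Phi/\ker(N\twoheadrightarrow N^{\mathcal{C}})$ with $N\subseteq\Phi$ normal open. Then $N^{\mathcal{C}}$ is an open normal subgroup of $G$. It is a free pro-$\mathcal{C}$ group of rank $\ge 2$, by the Nielsen--Schreier formula for free profinite groups. Its kernel onto $(N^{\mathcal{C}})^p$ is characteristic in $N^{\mathcal{C}}$, hence normal in $G$. That kernel is not quite the subgroup I want, so instead I take $F$ to be a free pro-$p$ subgroup realized differently.
\item The cleaner route is to verify the hypotheses of Theorem \ref{innisom} directly. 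I expect its hypotheses to be stated in terms of the pro-$p$ quotients of open subgroups, since Theorem \ref{introthmB} treats almost pro-$\mathcal{C}$-maximal quotients in exactly this way. For an open subgroup $U$ of $N^{\mathcal{C}}$, the quotient $U^p$ is free pro-$p$ of rank $\ge 2$.
\item In case (b), the analogous statement is that $U^p$ is a pro-$p$ surface group of genus $\ge 2$, or free if the surface is open.
\item In case (c), open subgroups of a Demu\v{s}kin group of rank $\ge 3$ are again Demu\v{s}kin. The needed nonabelian free pro-$p$ normal subgroups appear as kernels of surjections onto $\mathbb{Z}_p$, which are free pro-$p$ of infinite rank.
\end{itemize}

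For the second input, the required properties are slimness, triviality of centralizers of nontrivial normal closed subgroups, and elasticity-type properties. I will invoke the earlier results of the authors on these anabelian group-theoretic properties for free pro-$\mathcal{C}$ groups, surface groups and Demu\v{s}kin groups. Transferring them to almost pro-$\mathcal{C}$-maximal quotients uses the open normal subgroup $N^{\mathcal{C}}$ together with the fact that these properties pass to finite extensions once the centralizer of the open subgroup is trivial.

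The final step uses the hypothesis that $H$ and $H'$ contain nontrivial closed subgroups normal in $G$. Given a families preserving isomorphism $\phi\colon H\overset{\sim}{\to}H'$, I will intersect such a normal subgroup with $F$, or with its preimage. This gives a nontrivial normal subgroup $M\subseteq H\cap F$, which is itself free pro-$p$ and nonabelian. I then apply Theorem \ref{innisom} to get $g\in G$ with $\phi|_M=\mathrm{inn}(g)|_M$.

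Next I extend from $M$ to $H$. For $h\in H$ and $m\in M$,
\[
\phi(h)\,\phi(m)\,\phi(h)^{-1}=\phi(hmh^{-1})=g\,hmh^{-1}\,g^{-1},
\]
so $(g h^{-1} g^{-1})\,\phi(h)$ centralizes $gMg^{-1}$. That subgroup is nontrivial and normal in $G$, and its centralizer is trivial, so $\phi(h)=ghg^{-1}$.

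The main obstacle is the second input: checking that each of (a), (b), (c) really satisfies the precise hypotheses of Theorem \ref{innisom}. This is hardest for case (a) with infinite rank and for general nontrivial $\mathcal{C}$, where normal subgroups of $G$ need not lie inside a pro-$p$ part. There I would first try to show that any nontrivial normal closed subgroup of $G$ has nontrivial image in some $U^p$, using the fact that free pro-$\mathcal{C}$ groups are residually $p$ for every $p$ in $\mathcal{C}$.
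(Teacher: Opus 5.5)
Your reduction works for case (c): there $G$ is pro-$p$, and for instance the kernel of a surjection $G\twoheadrightarrow\mathbb{Z}_p$ is a free pro-$p$ normal subgroup by Proposition \ref{Demushkin1}. It also works for (a) and (b) when $\mathcal{C}$ is the class of $p$-groups, and your last step, extending from $M$ to $H$ via $Z_G(M)=\{1\}$, is fine (it is Proposition \ref{snisom}). For general $\mathcal{C}$, however, there is a genuine gap. Your plan needs a nontrivial free pro-$p$ normal closed subgroup $F$ of $G$ itself, and none exists once $\mathcal{C}$ contains $\mathbb{Z}/p\mathbb{Z}$ and $\mathbb{Z}/q\mathbb{Z}$ for distinct primes $p,q$ (e.g.\ $G$ a free profinite group). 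To see this, take a normal open $U\subseteq G$. The quotient $G/\ker(U\twoheadrightarrow U^q)$ is an almost pro-$q$-maximal quotient of the underlying free or surface group, hence sn-internally indecomposable by Theorem \ref{sneanab}. The image there of a pro-$p$ subnormal subgroup of $G$ meets the nontrivial pro-$q$ normal subgroup $U^q$ trivially, so it is trivial by Proposition \ref{snintersect}, (ii). Since $\bigcap_U\ker(U\twoheadrightarrow U^q)\subseteq\bigcap_U U=\{1\}$, the subgroup itself is trivial. So there is no $M\subseteq H\cap F$, and Theorem \ref{innisom} cannot be applied inside $G$, not even in its subnormal form. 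Relatedly, your claim that free pro-$\mathcal{C}$ groups are residually $p$ is false when $\mathcal{C}$ involves a second prime.

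The missing idea is to apply Theorem \ref{innisom} in quotients and then glue the results.
\begin{enumerate}
\item Write $G\cong\varprojlim_N Q_N$ with $Q_N\overset{\mathrm{def}}{=}G/\ker(N\twoheadrightarrow N^p)$, where $N$ runs over normal open subgroups of $G$.
\item Each $Q_N$ is strongly sn-internally indecomposable. It contains a free pro-$p$ normal subgroup, namely the closure of the commutator subgroup of $N^p$ (Theorem \ref{innanab1}, (i)).
\item The isomorphism $\alpha$ induces a families preserving isomorphism between the images of $H,H'$ in $Q_N$ (Remark \ref{snfprem}, (iii)).
\item Fix a nontrivial normal subgroup $N_1\cap N_1'\subseteq H\cap H'$ of $G$, and keep only the cofinal family of $N$ not containing it. Then Proposition \ref{snintersect} gives a nontrivial free pro-$p$ normal subgroup of $Q_N$ inside both images. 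So Theorem \ref{innisom} makes the induced isomorphism inner in $Q_N$.
\item The nonempty closed sets of conjugating elements at the various levels form a decreasing system. Compactness of $G$ then yields a single $g\in G$ that works at every level (Lemma \ref{innlim}; the combination is Theorem \ref{innisom2}).
\end{enumerate}
Your closing remark about images in $U^p$ points in this direction. But your argument contains neither the passage to the quotients $Q_N$ nor the compactness gluing, and that is where the content of the theorem lies for general $\mathcal{C}$.
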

In fact, in order to apply Theorem \ref{introthmA} to obtain the above results, we need to consider certain ``almost pro-$p$ quotients" to construct nontrivial free pro-$p$ normal closed subgroups in them. However, this is by no means trivial in the case of Theorem \ref{introthmB}. In the case where $K$ is a Henselian discrete valuation field of residue characteristic $p$ in Theorem \ref{introthmB}, one may use the almost pro-$p$-maximal quotients. Even in this case, we resort to the first and third authors' previous work based on Artin-Schreier theory in characteristic $0$ due to MacKenzie-Whaples, together with the highly nontrivial theory of fields of norms due to Fontaine-Wintenberger [cf.\ \cite{W}; \cite{FV}, Chapter III, \S 5]. On the other hand, in the case where $K$ is a Hilbertian field, at the time of writing of the present paper, it is not clear to the authors that $G_{K}^p$ admits a nontrivial free pro-$p$ normal closed subgroup. To deal with this situation, we introduce the notion of an {\it almost symmetric pro-$p$-maximal quotient} related to finite quotients isomorphic to symmetric groups [cf.\ Definition \ref{symquot}]. In these quotients, in light of highly nontrivial properties of Hilbertian fields, one may prove the existence of desired nontrivial free pro-$p$ normal closed subgroups. Here, we note that it may be difficult to obtain results for large classes of closed subgroups as in Theorems \ref{introthmB}, \ref{introthmC}, via the discussions in abelianized settings as in \cite{JR}. This point may be regarded as one of the main differences between \cite{JR} and our work. Note also that the class of profinite groups that appears in Theorem \ref{introthmC} is smaller than that of Theorem \ref{JR-3}. If there exists a way to construct a nontrivial free pro-$p$ normal closed subgroup in suitable quotients of profinite groups in Theorem \ref{JR-3}, then one may cover the larger class of profinite groups in Theorem \ref{introthmC}. However, at the time of writing of the present paper, the authors have no clue to obtain such a generalization. For another note, in the main text, we also discuss families preserving isomorphisms for group extensions of profinite groups that appear in Theorems \ref{introthmB}, \ref{introthmC}.

In any event, with regard to families preserving isomorphism, one may cover many profinite groups that appear in anabelian geometry. However, at the time of writing of the present paper, the authors do not know whether or not every normal automorphism is an inner automorphism in the situation of Theorem \ref{introthmB}. While pursuing the potential generalization of the above results to the situation of normal automorphisms as in the works of Jarden/Jarden-Ritter is undoubtedly interesting, it is our observation that the notion of a families preserving automorphism/isomorphism is more important than the notion of a normal automorphism in terms of the point that the former may be regarded as a reminiscence of the notion of a point-theoretic automorphism/isomorphism, which appears frequently in anabelian geometry. This point will be discussed in the final portion of the present Introduction.

Next, we discuss an application to a generalized Neukirch-Uchida theorem. For each pair of fields $F_1$, $F_2$, write 
\begin{equation*}
\Isom(F_1, F_2)
\end{equation*}
for the set of isomorphisms $F_1\overset{\sim}{\to}F_2$ of fields. For each pair of profinite groups $G_1$, $G_2$, write 
\begin{equation*}
\OutIsom(G_1, G_2)
\end{equation*}
for the set of outer isomorphisms $G_1\overset{\sim}{\to}G_2$ of profinite groups. Then the classical Neukirch-Uchida theorem is as follows [cf.\ \cite{U1}, Theorem; \cite{U2}, Theorem]:
\begin{thm}[Neukirch-Uchida]\label{NU}
Let $K_1$, $K_2$ be global fields. Then the natural map
\begin{equation*}
\Isom(K_2,K_1)\to\OutIsom(G_{K_1},G_{K_2})
\end{equation*}
is bijective. In particular, $K_1$ and $K_2$ are isomorphic if and only if $G_{K_1}$ and $G_{K_2}$ are isomorphic [as profinite groups].
\end{thm}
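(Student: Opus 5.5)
The plan follows the classical two-stage strategy: first a \emph{local correspondence}, then a \emph{reconstruction of the field} from group-theoretic data. Injectivity of $\Isom(K_2,K_1)\to\OutIsom(G_{K_1},G_{K_2})$ I would handle separately and first. If a field isomorphism $\phi$ induces an inner isomorphism, then after composing with a conjugation its extension to separable closures commutes with every element of $G_{K_1}$. That extension therefore fixes every finite separable extension setwise, and hence acts on $K_1$ trivially. Here one uses that $G_{K}$ of a global field is center-free, and more generally has trivial centralizers of open subgroups. Concretely, an automorphism of $\overline{K}$ centralizing an open subgroup $G_L$ must be trivial on $L$, and $L$ is arbitrary.

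For surjectivity, fix an isomorphism $\sigma\colon G_{K_1}\overset{\sim}{\to}G_{K_2}$. The first key step is to show that $\sigma$ maps decomposition groups $D_{\tilde v}$ of primes of $\overline{K_1}$ bijectively onto decomposition groups of primes of $\overline{K_2}$, compatibly with passage to open subgroups.
\begin{itemize}
\item I would follow Neukirch and characterize decomposition groups purely group-theoretically. For a prime number $p$, the $p$-Sylow subgroups of decomposition groups at nonarchimedean primes (for $p$ different from the characteristic in the function field case) are exactly the maximal closed subgroups $Z\subseteq G_K$ that are pro-$p$ of the type of a local Galois group. Equivalently, one can detect them by cohomological conditions such as $H^2(Z,\mathbb{Z}/p)\neq 0$ with the local duality shape, which uses the local–global principle for $H^2$ and Chebotarev density.
\item The full decomposition group is then recovered as the normalizer of such a Sylow.
\end{itemize}
Once this is done, $\sigma$ induces, for every open subgroup, a bijection of primes preserving residue characteristic, the residue field size (read off from the local group via its abelianization and inertia), and the local Frobenius elements. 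By local class field theory, $\sigma$ also induces isomorphisms $\widehat{K_{1,v}^\times}\cong\widehat{K_{2,\sigma(v)}^\times}$ preserving valuations and units.

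The second key step, following Uchida, is to reconstruct the multiplicative group and then the addition.
\begin{itemize}
\item Global class field theory identifies $K^\times$ inside the idele class data as the kernel of the reciprocity map on the ideles modulo the connected component. Using the local isomorphisms above, $\sigma$ therefore induces a compatible isomorphism $K_1^\times\cong K_2^\times$, and likewise for every finite extension, compatible with the Galois actions.
\item It remains to show that this map, extended by $0\mapsto 0$, is additive.
\end{itemize}
I expect this to be the main obstacle. My first attempt would use Uchida's trick of recovering the "$+1$" map from local data. An element $x$ and $x+1$ are linked by congruence conditions at all primes: $x\equiv -1 \bmod v$ if and only if $v(x+1)>0$. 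These conditions are group-theoretically visible through the images of $x$ in the reconstructed residue fields $k_v^\times\cup\{0\}$, whose additive structure is known because $k_v$ is a finite field of known order; the multiplicative group together with the Frobenius determines it up to Galois twist. A density argument over infinitely many primes then pins down $x+1$ uniquely, which shows that $\sigma$ respects addition. Finally I would check that the resulting field isomorphism induces $\sigma$ up to inner automorphism, using the compatibility with all open subgroups and the injectivity already proved.
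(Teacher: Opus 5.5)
\textbf{Additivity.} Your proposal has a genuine gap at the step you yourself flag as the main obstacle. The claim that the additive structure of $k_v$ is determined by its multiplicative group together with the Frobenius, ``up to Galois twist'', is false. The automorphisms of the cyclic group $k_v^\times\cong\mathbb{Z}/(q-1)$ that commute with $x\mapsto x^p$ are all the maps $x\mapsto x^m$ with $m\in(\mathbb{Z}/(q-1))^\times$. Only those with $m$ congruent to a power of $p$ modulo $q-1$ are additive. For example, on $\mathbb{F}_7$ the Frobenius is trivial and $x\mapsto x^{-1}$ commutes with it, yet $(1+1)^{-1}=4\neq 2=1^{-1}+1^{-1}$.

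So the residue maps $k_{1,v}\to k_{2,\sigma(v)}$ that $\sigma$ produces (via tame inertia or local class field theory) are a priori only of the form $x\mapsto x^{m_v}$. Proving that they are additive is exactly the nontrivial content of Uchida's argument, and your proposal gives no mechanism for it. Your density argument only reduces global additivity to this unproved local additivity; knowing which primes divide $x+1$ does not determine $x+1$.

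There is also a smaller inaccuracy in the same stage. The kernel of the reciprocity map on the ideles is the \emph{closure} of $K^\times$ times the connected component. As soon as the unit group is infinite, this closure is strictly larger than $K^\times$ times the connected component, so $K^\times$ is not simply read off as that kernel.

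\textbf{Decomposition groups.} A full decomposition group $D_v$ is not the normalizer of a Sylow subgroup $S\subseteq D_v$. Take $p$ different from the residue characteristic $\ell$ of $v$, and let $P_v\subseteq D_v$ be the wild inertia subgroup, a nontrivial pro-$\ell$ normal closed subgroup. If $D_v$ normalized $S$, then $[S,P_v]\subseteq S\cap P_v=\{1\}$, so $S\subseteq Z_{D_v}(P_v)=\{1\}$ (as in the proof of Proposition \ref{plocprime}, via Theorem \ref{sneanab}). But $S$ surjects onto the $\mathbb{Z}_p$-part of the unramified quotient $\widehat{\mathbb{Z}}$, so $S\neq\{1\}$. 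Hence $D_v\not\subseteq N_{G_K}(S)$. The correct route is to characterize the full decomposition groups directly, as the maximal closed subgroups satisfying a local-type $H^2$ condition on all sufficiently small open subgroups; this is Neukirch's theorem (cf.\ condition $(\dagger)$ and Proposition \ref{decomp}, (iii)).

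\textbf{Comparison with the paper.} The paper does not prove this statement itself; it is quoted from Uchida. For number fields only, it is recovered by Theorem \ref{lqnfinn} and Corollary \ref{lqnfbij}, along a route that never reconstructs $K^\times$ or addition. After Proposition \ref{decomp} and a Chebotarev argument, one shows that the isomorphism sends each pro-$l$ pro-cyclic subgroup to a $G_{\mathbb{Q}}$-conjugate (Lemma \ref{l-fp}). One then applies the free pro-$l$ result (Theorem \ref{innisom}, via Proposition \ref{nffree}) in quotients $G_{\mathbb{Q}}/\ker(N\twoheadrightarrow N^l)$ and passes to the limit (Lemma \ref{innlim}). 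Injectivity there comes from $Z_{G_{\mathbb{Q}}}(G_{K_2})=\{1\}$, which is the slimness input your injectivity paragraph is implicitly relying on.
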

Note that various generalizations of Theorem \ref{NU} with respect to replacing the absolute Galois groups by smaller quotients have been investigated intensively [cf.\ e.g., \cite{KS}, \cite{O2}, \cite{ST}, \cite{Sh1}, \cite{Sh2}, \cite{Sh3}, \cite{U3}]. On the other hand, the direction of extending the class of fields has not been sufficiently developed. It appears to the authors that this direction is also important from the both viewpoints
\begin{itemize}
\item
exploration of the limit of anabelian phenomena;
\item
existence of generalized class field theory for certain infinite algebraic extension fields of global fields [cf.\ e.g., \cite{Ka}, \cite{RW}]. 
\end{itemize}
In light of historical developments of class field theory and anabelian geometry, together with the close relationship between them, it is natural to ask whether or not there exist anabelian phenomena for fields with sufficiently well-behaved class field theory. With regard to the positive characteristic situation, global class field theory for the function fields of curves over quasi-finite fields that are algebraic over the prime fields was established by Rim-Whaples in \cite{RW}. In this case, recently, the third author proved the corresponding Neukirch-Uchida-type result [cf.\ \cite{T}, Corollary F, (ii)]. On the other hand, in the case of characteristic $0$, after the works of Herbrand, Krull, and Moriya, from the viewpoint of id\`ele theory, Kawada established class field theory for a large class of algebraic extension fields of $\mathbb{Q}$, which is defined by a certain type of finiteness of extension degrees over $\mathbb{Q}$ in the sense of supernatural numbers [cf.\ \cite{Ka}]. In light of this development, in the present paper, as an application of results concerning families preserving isomorphisms, together with delicate considerations on pro-cyclic subgroups of the absolute Galois groups of infinite algebraic extension fields of $\mathbb{Q}$, we prove a Neukirch-Uchida-type result for a certain large class of algebraic extension fields of $\mathbb{Q}$. Here, we note that our work is not the first attempt to prove a Neukirch-Uchida-type result for infinite extension fields of $\mathbb{Q}$ [see the Ozaki's work \cite{O1} --- cf.\ Remark \ref{lqnfinnrem}]. 

Let $K$ be an algebraic extension field of $\mathbb{Q}$. Then, for each prime number $l$, we shall say that $K$ is an \textit{$l$-quasi-number field} if the Galois closure $L$ of $K$ over $\mathbb{Q}$ satisfies the property that the extension degree $[L : \mathbb{Q}]$ [in the sense of super natural numbers] is divisible by $l$ only finitely many times [cf.\ Definition \ref{qnf}]. Note that many infinite algebraic extension fields of $\mathbb{Q}$ are $l$-quasi-number fields for some prime number $l$. For instance, any subfield of the maximal pro-prime-to-$l$ extension field of a number field is an $l$-quasi-number field [cf.\ Proposition \ref{qnfext}, (i), (iii)]. Then our main result concerning a generalization of the Neukirch-Uchida theorem is as follows [cf.\ Corollary \ref{lqnfbij}]:
\begin{introthm}\label{introthmD}
Let $l$ be a prime number; $K_1,K_2$ $l$-quasi-number fields. Then the natural map
\begin{equation*}
\Isom(K_2,K_1)\to\OutIsom(G_{K_1},G_{K_2})
\end{equation*}
is bijective. In particular, $K_1$ and $K_2$ are isomorphic if and only if $G_{K_1}$ and $G_{K_2}$ are isomorphic [as profinite groups].
\end{introthm}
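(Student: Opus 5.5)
We will prove Theorem \ref{introthmD} in the standard Neukirch--Uchida pattern: first show that an isomorphism $\alpha\colon G_{K_1}\overset{\sim}{\to}G_{K_2}$ arises from an isomorphism of absolute Galois groups of $\mathbb{Q}$ that is compatible with the embeddings, and then read off the field isomorphism from the Galois correspondence. Fix an algebraic closure $\overline{\mathbb{Q}}$ and regard $G_{K_i}\subseteq G_{\mathbb{Q}}$. Injectivity is the easier half. An element of $\Isom(K_2,K_1)$ extends to $\overline{\mathbb{Q}}$, so it is given by some $\sigma\in G_{\mathbb{Q}}$ with $\sigma G_{K_1}\sigma^{-1}=G_{K_2}$. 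If two such elements induce the same outer isomorphism, then after composing we obtain $\tau\in G_{\mathbb{Q}}$ normalizing $G_{K_1}$ whose conjugation action is inner on $G_{K_1}$. It then suffices to show that the centralizer of $G_{K_1}$ in $G_{\mathbb{Q}}$ is trivial. This follows because $G_{K_1}$ contains open subgroups of decomposition groups $G_{\mathbb{Q}_p}$ for infinitely many $p$, for instance for $p\neq l$ outside a suitable set, and the usual argument (centralizers of such local Galois groups in $G_{\mathbb{Q}}$ are trivial) applies.

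For surjectivity, the core step is to show that $\alpha$ preserves decomposition groups: for every prime $\mathfrak{p}$ of $\overline{\mathbb{Q}}$, $\alpha$ carries $D_{\mathfrak{p}}\cap G_{K_1}$ onto $D_{\mathfrak{p}'}\cap G_{K_2}$ for some prime $\mathfrak{p}'$. We also need $\alpha$ to preserve the residue characteristic and to induce a bijection between sets of primes. The $l$-quasi-number hypothesis is used here. Since $l^\infty\nmid[L:\mathbb{Q}]$, the decomposition groups at primes above $l$ (and at primes $p$ of bounded ramification) contain open subgroups of $G_{\mathbb{Q}_p}$ of finite index. Their pro-$l$ parts therefore still carry local class field theory, and in particular Demu\v{s}kin pro-$l$ Sylow subgroups. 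Uchida's characterization of decomposition groups is local--global, via Brauer groups and cohomological dimension of $l$-Sylow subgroups. We will adapt it by working with $l$-Sylow subgroups of $G_{K_i}$: a closed subgroup is a decomposition group exactly when it is maximal among subgroups satisfying a group-theoretic local condition. This condition is checked on the pro-$l$ part, where Kawada's class field theory for such fields provides the local--global principle. This is the step I expect to be the main obstacle. The "delicate considerations on pro-cyclic subgroups" mentioned in the introduction suggest that distinguishing decomposition groups, and identifying inertia and Frobenius-type pro-cyclic subgroups, requires controlling how pro-cyclic subgroups meet $l$-Sylow subgroups in infinite extensions.

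Given that $\alpha$ preserves decomposition and inertia groups, we pass to open normal subgroups. For a finite Galois subextension $M/K_1$, Chebotarev-type density arguments on the sets of primes with given splitting show that $\alpha$ preserves "local" information: $\alpha$ maps the conjugacy class of each Frobenius or inertia pro-cyclic subgroup of $G_{K_1}$ to the corresponding class in $G_{\mathbb{Q}}$. The aim is to construct from $\alpha$ an automorphism $\widetilde{\alpha}$ of a suitable $G_{K}$, with $K$ a number field contained in $K_1\cap K_2$ after conjugation, or else to compare $\alpha$ directly with the inclusion into $G_{\mathbb{Q}}$. The argument shows that $\alpha$, viewed as an isomorphism $G_{K_1}\overset{\sim}{\to}G_{K_2}$ between closed subgroups of $G_{\mathbb{Q}}$, is families preserving in $G_{\mathbb{Q}}$: every pro-cyclic $I\subseteq G_{K_1}$ has $\alpha(I)$ conjugate to $I$. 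We first verify this for pro-cyclic subgroups generated by Frobenius-type elements via the decomposition-group correspondence, and then extend it to all pro-cyclic subgroups by a density and compactness argument in $G_{\mathbb{Q}}$. Theorem \ref{introthmB} applies, with $K=\mathbb{Q}$ Hilbertian, once we know that $G_{K_i}$ contains a nontrivial closed subgroup normal in $G_{\mathbb{Q}}$, namely $G_{L_i}$ for the Galois closure $L_i$. It follows that $\alpha$ is induced by conjugation by some $\sigma\in G_{\mathbb{Q}}$ with $\sigma G_{K_1}\sigma^{-1}=G_{K_2}$. Then $\sigma^{-1}$ restricts to an element of $\Isom(K_2,K_1)$ mapping to the class of $\alpha$, which completes the proof.
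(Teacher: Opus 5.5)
There is a genuine gap in your surjectivity argument. It sits at the step where you pass from ``Frobenius-type'' pro-cyclic subgroups to \emph{all} pro-cyclic subgroups of $G_{K_1}$ by density and compactness. For an infinite $l$-quasi-number field this density fails. Take $K_1$ to be the maximal pro-prime-to-$l$ Galois extension of $\mathbb{Q}$. Every residue field of $K_1$ then has absolute Galois group $\mathbb{Z}_l$, so every Frobenius element in a finite Galois extension $L/K_1$ has $l$-power order. Yet $\Gal(L/K_1)$ can contain elements of order prime to $l$, e.g.\ $\Gal(EK_1/K_1)\supseteq A_n$ for an $S_n$-extension $E/\mathbb{Q}$ with $n\geq\max(5,l)$.

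What the decomposition-group and Chebotarev machinery actually gives is weaker. It shows that $\varphi(I)$ is $G_{\mathbb{Q}}$-conjugate to $I$ only for \emph{pro-$l$} pro-cyclic $I\subseteq G_{K_1}$, and this is all the paper proves (Lemma \ref{l-fp}). That is not enough to invoke Theorem \ref{introthmB} for $G_{\mathbb{Q}}$. Its proof passes to quotients by normal subgroups $N\subseteq G_{\mathbb{Q}}$ and needs $\varphi(G_{K_1}\cap N)=G_{K_2}\cap N$. This follows from full families preservation (Remark \ref{snfprem}), but not from the pro-$l$ version, which only controls the subgroup generated by the pro-$l$ elements of $G_{K_1}\cap N$.

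The missing idea is how to get by with pro-$l$ information alone. The paper proceeds as follows:
\begin{enumerate}
\item It reduces to $K_1$ Galois over $\mathbb{Q}$ (Corollary \ref{qnfextcor}, Proposition \ref{snisom}).
\item For each normal open $V\subseteq G_{\mathbb{Q}}$, Lemma \ref{normalpreserve} produces a subgroup $N_V\subseteq G_{K_1}\cap G_{K_2}$ that is normal in $G_{\mathbb{Q}}$, cuts out an $l$-quasi-number field, and satisfies $\varphi(N_V)=N_V$. It is built from subgroups generated by Sylow $l$-subgroups, which pro-$l$ conjugacy does control.
\item In $G_{\mathbb{Q}}/\ker(N_V\twoheadrightarrow N_V^l)$ a Sylow $l$-subgroup of $N_V$ surjects onto $N_V^l$, so pro-$l$ families preservation suffices there.
\item Proposition \ref{nffree}, (ii) and Lemma \ref{innlim} finish the argument.
\end{enumerate}

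Your Frobenius step is also only asserted. An abstract identification $\varphi(D_{K_1,\overline{u}})=D_{K_2,\overline{w}}$ does not by itself put corresponding Frobenius subgroups in one $G_{\mathbb{Q}}$-conjugacy class. The paper needs several ingredients here: the reduction to fields with no nontrivial prime-to-$l$ Galois extensions, Lemma \ref{l-inertia} to match the maximal unramified subextensions, the refined Chebotarev statement Proposition \ref{qnftchebotarev}, and a careful choice of $J$ with $[J_1:J]$ prime to $l$ (Proposition \ref{cyclquot-l-qnf}).

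Finally, decomposition groups of $G_{K_i}$ need not contain open subgroups of $G_{\mathbb{Q}_p}$. Their index in $G_{\mathbb{Q}_p}$ is merely not divisible by $l^\infty$ and may be infinite, which is why the criterion $(\dagger)$ is phrased via $H^2(U,\mathbb{F}_l)$. So your injectivity justification is wrong as stated. Use instead that $G_L$, for $L$ the Galois closure, is a nontrivial subgroup of $G_{K_i}$ normal in $G_{\mathbb{Q}}$, and that $G_{\mathbb{Q}}$ is internally indecomposable.
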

As a consequence of Theorem \ref{introthmD}, together with the well-known computation of cohomological dimensions, one may also obtain the following result [cf.\ Corollary \ref{cyclsubext}]:
\begin{introcor}\label{introcorE}
Write $\mathbb{Q}^{\mathrm{cyc}}$ for the cyclotomic $\widehat{\mathbb{Z}}$-extension of $\mathbb{Q}$; $\mathcal{F}$ for the set of finite extension fields of proper subfields of $\mathbb{Q}^{\mathrm{cyc}}$. Let $K_1,K_2\in\mathcal{F}$. Then the natural map
\begin{equation*}
\Isom(K_2,K_1)\to\OutIsom(G_{K_1},G_{K_2})
\end{equation*}
is bijective. In particular, $K_1$ and $K_2$ are isomorphic if and only if $G_{K_1}$ and $G_{K_2}$ are isomorphic [as profinite groups].
\end{introcor}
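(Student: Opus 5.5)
We deduce Corollary \ref{introcorE} from Theorem \ref{introthmD}. The strategy is to show that every $K\in\mathcal{F}$ is an $l$-quasi-number field for some prime $l$, and that this prime can be chosen uniformly for $K_1,K_2$ once $G_{K_1}\cong G_{K_2}$.

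First we record the structure of $K\in\mathcal{F}$. Let $K\in\mathcal{F}$, so $K$ is a finite extension of a proper subfield $M\subsetneq\mathbb{Q}^{\mathrm{cyc}}$. Since $\Gal(\mathbb{Q}^{\mathrm{cyc}}/\mathbb{Q})\cong\widehat{\mathbb{Z}}=\prod_q\mathbb{Z}_q$, we have $M=(\mathbb{Q}^{\mathrm{cyc}})^{U}$ for a nontrivial closed subgroup $U=\prod_q q^{a_q}\mathbb{Z}_q$ with $a_q\in\{0,1,\dots,\infty\}$. Properness of $M$ means $U\neq 0$, so some $a_l<\infty$. Equivalently, the supernatural number $[M:\mathbb{Q}]=\prod_q q^{a_q}$ is divisible by such an $l$ only finitely many times.

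The Galois closure $L$ of $K$ is contained in the compositum of $M$ with the Galois closure of a finite extension $K_0/\mathbb{Q}$ such that $K=K_0M$. Hence $[L:\mathbb{Q}]$ divides $[M:\mathbb{Q}]\cdot n$ for some finite $n$. So $K$ is an $l$-quasi-number field for every $l$ with $a_l<\infty$. This is exactly the kind of statement packaged in Proposition \ref{qnfext}, which we would invoke instead of repeating the argument.

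Next, the primes $l$ that work should be recovered group-theoretically from $G_K$. The natural invariant is the $l$-cohomological dimension. If $a_l<\infty$, then $K$ contains no $\mathbb{Z}_l$-extension of a number field inside the cyclotomic tower, and $\mathrm{cd}_l(G_K)=2$. This holds for totally imaginary $K$; in general we would pass to $K(\sqrt{-1})$, an open subgroup, and use virtual $\mathrm{cd}_l$. If instead $a_l=\infty$, then $K$ contains the cyclotomic $\mathbb{Z}_l$-extension of a number field, and by the well-known computation (vanishing of the $l$-part of the Brauer group, i.e.\ $l^\infty\mid[K_v:\mathbb{Q}_p]$ at every place) one gets $\mathrm{vcd}_l(G_K)\le1$. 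Thus the set $S(K)=\{l : \mathrm{vcd}_l(G_K)=2\}$ equals $\{l: a_l<\infty\}$, which is nonempty, and it is determined by $G_K$ up to isomorphism.

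Finally, given $K_1,K_2\in\mathcal{F}$, there are two cases. If $G_{K_1}\not\cong G_{K_2}$, then both sides of the map are empty, since an isomorphism $K_2\overset{\sim}{\to}K_1$ would induce an isomorphism of absolute Galois groups; the map is then trivially bijective. If $G_{K_1}\cong G_{K_2}$, then $S(K_1)=S(K_2)\neq\emptyset$, and we choose $l$ in this common set. Both $K_1,K_2$ are then $l$-quasi-number fields, and Theorem \ref{introthmD} gives bijectivity.

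The main obstacle is the cohomological dimension computation. Showing $\mathrm{cd}_l=2$ when $a_l<\infty$ requires nonvanishing of the $l$-part of $\mathrm{Br}$ for such infinite extensions. The plan is to use a place $v$ where the local degree $[K_v:\mathbb{Q}_p]$ has finite $l$-part, and to show that the local $H^2$ injects into or survives in global $H^2$, via the local–global Brauer sequence taken as a direct limit over finite subextensions.
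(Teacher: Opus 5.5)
Your proposal is correct and follows essentially the paper's proof: show that every $K\in\mathcal{F}$ is an $l$-quasi-number field for some $l$, and read off the set of such $l$ from the $l$-cohomological dimension of $G_K$ (the paper cites \cite{NSW}, Theorem 7.1.8, (i), and Corollary 8.1.18). An isomorphism $G_{K_1}\cong G_{K_2}$ then gives a common $l$, and Theorem \ref{introthmD} applies; the Brauer-group direct-limit argument you flag as the main obstacle is unnecessary, since $\cdim_l G_K\ge\cdim_l D_{K,\overline{v}}=2$ follows at once from the local computation (cf.\ Proposition \ref{decomp}, (ii)), while your use of the virtual $l$-cohomological dimension at $l=2$, where real places force $\cdim_2 G_K=\infty$, is a sensible precaution that the paper's one-line criterion glosses over.
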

It is an important point of Corollary \ref{introcorE} that the $l$-dependence assumption on $K_1$, $K_2$ in Theorem \ref{introthmD} is eliminated in this special situation. At the time of writing of the present paper, the authors do not know whether or not the $l$-dependence assumption on $K_1$, $K_2$ in Theorem \ref{introthmD} can be eliminated in general. 

Finally, as mentioned briefly above, we explain a conceptual importance of the notion of a families preserving automorphism/isomorphism. For this, we make an observation concerning an analogy between our purely group-theoretic context and the usual situation in anabelian geometry whose ultimate goal is the reconstruction of geometric structures. Let $G$ be a profinite group; $\sigma$ an automorphism of $G$. Then it appears to the authors that the reconstruction of conjugacy classes of pro-cyclic subgroups of $G$ [i.e., the verification that $\sigma$ is families preserving] may be viewed as the group-theoretic counterpart of the reconstruction of underlying set of points of the geometric object. The further assertion that $\sigma$ is ultimately inner should then correspond to the reconstruction of ``rational" functions on the set of those points, which may be regarded as a group-theoretic version of the principle that ``point-theoretic implies geometric" that appears frequently in anabelian geometry. This analogy is particularly transparent in the setting of the Neukirch-Uchida theorem for number fields. In this setting, the first step is to recover the decomposition subgroups of finite primes, which are precisely the points of the spectrum of the ring of integers. On the other hand, we note that almost all decomposition subgroups are cyclic in finite quotients. Thus, one may observe the close relationship at the level of points. The subsequent passage from this point-theoretic information to the conclusion that a given isomorphism $\tau$ of the absolute Galois groups of number fields is induced by a field isomorphism precisely corresponds to the verification that $\tau$ arises from an inner automorphism of $G_{\mathbb{Q}}$. In a more abstract situation, one may consider, for each open subgroup $H \subseteq G$, the abstract space formed by the conjugacy classes of its distinguished pro-cyclic subgroups of $H$. In particular, one may observe that if $\sigma$ is families preserving, then $\sigma$ induces an automorphism of the resulting family of spaces equipped with the natural $G$-action, compatible with passage between open subgroups and inducing the identity at the bottom level. In this formulation, the reconstruction of the abstract point
spaces corresponds to the point-theoretic part, whereas the innerness conclusion plays the role
of reconstructing the rational functions on points, and hence the full geometric structure. The authors hope that this observation prompts further investigations of [highly nonabelian] profinite groups.\\

The present paper is organized as follows. In \S 1, we recall some definitions and results of the authors' previous work \cite{MST} that are applied in the present paper. In \S 2, we prove Theorem \ref{introthmA} and its strengthened version. In \S 3, as applications of the strengthened version of Theorem \ref{introthmA}, together with highly nontrivial group-theoretic/arithmetic results, we prove Theorems \ref{introthmB}, \ref{introthmC}. In this section, we also discuss group extensions of profinite groups that appear in Theorems \ref{introthmB}, \ref{introthmC}. In \S 4, we prove Theorem \ref{introthmD} and Corollary \ref{introcorE}.

\vskip.5\baselineskip
\section*{Notations and conventions}
\textbf{Numbers:} The notation $\mathbb{Z}$ will be used to denote the ring of integers. The notation $\mathbb{Q}$ will be used to denote the field of fractions of $\mathbb{Z}$. The notation $\widehat{\mathbb{Z}}$ will be used to denote the profinite completion of the underlying additive group of $\mathbb{Z}$. The notation $\mathbb{C}$ will be used to denote the field of complex numbers.

If $p$ is a prime number, then the notation $\mathbb{Z}_p$ will be used to denote the ring of $p$-adic integers; the notation $\mathbb{Q}_p$ will be used to denote the field of fractions of $\mathbb{Z}_p$; the notation $\mathbb{C}_p$ will be used to denote the $p$-adic completion of an algebraic closure of $\mathbb{Q}_p$; the notation $\mathbb{F}_p$ will be used to denote the finite field of cardinality $p$.

We shall refer to a finite extension field of $\mathbb{Q}$ as a \textit{number field}. We shall refer to a finite extension field of $\mathbb{Q}_p$ as a \textit{$p$-adic local field}.

\textbf{Fields:} Let $F$ be a field; $L\supseteq F$ an algebraic extension of $F$. Then we shall write $\overline{F}$ (respectively, $F^{\mathrm{sep}}$) for the algebraic closure (respectively, separable closure) of $F$ [determined up to isomorphisms]; $G_F\overset{\mathrm{def}}{=}\Gal(F^{\mathrm{sep}}/F)$; $[L:F]$ for the degree of $L$ over $F$ [here we regard this as a supernatural number].

Let $\Sigma$ be a set of prime numbers. Then we shall say that an algebraic extension $L/F$ is \textit{pro-prime-to-$\Sigma$} if $[M:F]$ is not divided by $l$ for each $l\in\Sigma$, where $M$ is the Galois closure of $F$.

Let $p$ be a prime number; $n$ a positive integer. Then we shall write $\mu_n$ for the set of $n$-th roots of unity in $\overline{\mathbb{Q}}$; $\mu_{p^\infty}\overset{\mathrm{def}}{=}\bigcup_m\mu_{p^m}$. We shall fix a primitive $n$-th root of unity $\zeta_n$.

If $F$ is an algebraic extension of $\mathbb{Q}$, then we shall write $\mathbb{V}(F)^{\mathrm{non}}$ for the set of nonarchimedean places of $F$. For $v\in\mathbb{V}(F)^{\mathrm{non}}$ and a subfield $K\subseteq F$ of $F$, we shall write $D_{F/K,v}$ for the decomposition subgroup of $\Gal(F/K)$ associated to $v$; $F_v$ for the completion of $F$ at $v$; $p_v$ for the residue characteristic of $F_v$. For $\overline{v}\in\mathbb{V}(\overline{\mathbb{Q}})^{\mathrm{non}}$, we shall write $D_{F,\overline{v}}\overset{\mathrm{def}}{=}D_{\overline{\mathbb{Q}}/F,\overline{v}}$.

Let $F_1, F_2$ be fields. Then we shall write $\Isom(F_1,F_2)$ for the set of all isomorphisms of fields from $F_1$ to $F_2$.

\textbf{Groups:} Let $G$ be a group; $H\subseteq G$ a subgroup; $n$ a nonnegative integer. Then we shall write $Z_G(H)$ for the \textit{centralizer} of $H$ in $G$, i.e., the subgroup $\{g\in G\,\vert\, ghg^{-1}=h\text{ for any }h\in H\}$; $Z(G)\overset{\mathrm{def}}{=}Z_G(G)$; $N_G(H)$ for the \textit{normalizer} of $H$ in $G$, i.e., the subgroup $\{g\in G\,\vert\, gHg^{-1}=H\}$; $G^{\wedge}$ for the profinite completion of $G$; $[G:H]$ for the index of $H$ in $G$. If $G$ is a profinite group and $H\subseteq G$ is a closed subgroup of $G$, then we regard the index $[G:H]$ as a supernatural number. We shall say that $G$ is \textit{center-free} if $Z(G)=\{1\}$. We shall say that $H$ is \textit{$n$-subnormal} in $G$ if there exist [not necessarily distinct] subgroups $H_0=G,H_1,\ldots,H_{n-1},H_n=H$ of $G$ such that $H_i$ is normal in $H_{i-1}$ for each $i\in\{1,\ldots,n\}$. [If $G$ is topological group and $H$ is an $n$-subnormal closed subgroup of $G$, then we may choose $H_0,H_1,\ldots,H_n$ to be closed [cf.\ \cite{MST}, Remark 1.1.1].] We shall say that $H$ is \textit{subnormal} in $G$ if $G$ is $m$-subnormal for some nonnegative integer $m$.

Suppose that $G$ is a profinite group. Then we shall say that $G$ is \textit{slim} if $Z_G(U)=\{1\}$ for every open subgroup $U$ of $G$, or, equivalently, every open subgroup of $G$ is center-free [cf.\ \cite{AbsTopI}, Notations and Conventions]. We shall refer to the minimum cardinality [possibly infinite] of topological generators of $G$ as \textit{rank} of $G$. We shall write $G^{\mathrm{ab}}$ for the quotient of $G$ by the closure of the commutator subgroup of $G$. If $p$ is a prime number, then we shall write $\cdim G$ for the cohomological dimension of $G$; $\cdim_p G$ for the $p$-cohomological dimension of $G$ [cf.\ \cite{NSW}, Definition 3.3.1].

Let $G_1,G_2$ be profinite groups. Then we shall write $\Isom(G_1,G_2)$ for the set of continuous isomorphisms of profinite groups from $G_1$ to $G_2$; $\OutIsom(G_1,G_2)$ for the set of continuous outer isomorphisms of profinite groups from $G_1$ to $G_2$, i.e., continuous isomorphisms considered up to composition with an inner automorphism arising from an element of $G_2$; $\Aut(G_1)\overset{\mathrm{def}}{=}\Isom(G_1,G_1)$; $\Out(G_1)\overset{\mathrm{def}}{=}\OutIsom(G_1,G_1)$.

Let $\mathcal{C}$ be a family of finite groups containing the trivial group. Then we shall refer to $\mathcal{C}$ as a \textit{full-formation} if $\mathcal{C}$ is closed under taking quotients, subgroups, and extensions.

\textbf{Fundamental groups:} Let $S$ be a connected locally Noetherian scheme. Then we shall write $\pi_1(S)$ for the \'etale fundamental group of $S$, relative to a suitable choice of basepoint. [Note that, for any field $F$, $\pi_1(\Spec F)\cong G_F$.] If $X$ is an algebraic variety [i.e., a separated, of finite type, and geometrically connected scheme] over $\mathbb{C}$, then we shall write $\pi_1^{\mathrm{top}}(X)$ for the topological fundamental group of the complex analytic space associated to $X$, relative to a suitable choice of [$\mathbb{C}$-rational] basepoint. If $K$ is a complete subfield of $\mathbb{C}_p$ and $X$ is a smooth variety over $K$, then we shall write $\pi_1^{\mathrm{temp}}(X)$ for the tempered fundamental group of $X$, relative to a suitable choice of basepoint [cf.\ \cite{An}].

\vskip.5\baselineskip
\section{Anabelian group-theoretic properties}
In the present section, we introduce terms and results from the authors' previous paper \cite{MST} that are also used in the present paper. While \cite{MST} sometimes deals with cases that are not necessarily profinite, here we refer only to profinite cases.

In the present section, let $p$ be a prime number.

\begin{defi}[cf.\ \cite{MST}, Definitions 1.5, 1.8; \cite{MST}, Proposition 1.7]\label{intindecomp}
Let $n$ be a positive integer; $G$ a profinite group.
\begin{enumerate}
\item Let $H\subseteq G$ be a closed subgroup. We shall say that $H$ is \textit{normally decomposable} in $G$ if there exist nontrivial normal closed subgroups $H_1,H_2\subseteq G$ of $G$ such that $H=H_1\times H_2$. We shall say that $H$ is \textit{normally indecomposable} in $G$ if $H$ is not normally decomposable in $G$. We shall say that $G$ is \textit{decomposable} (respectively, \textit{indecomposable}) if $G$ is normally decomposable (respectively, normally indecomposable) in $G$.
\item We shall say that $G$ is \textit{internally indecomposable} if every normal closed subgroup of $G$ is center-free and normally indecomposable in $G$, or, equivalently, $Z_G(H)=\{1\}$ for every nontrivial normal closed subgroup $H\subseteq G$. We shall say that $G$ is \textit{strongly internally indecomposable} if every open subgroup of $G$ is internally indecomposable.
\item We shall say that $G$ is \textit{$n$-sn-internally indecomposable} if every $(n-1)$-subnormal closed subgroup of $G$ is internally indecomposable, or, equivalently, $Z_G(H)=\{1\}$ for every nontrivial $n$-subnormal closed subgroup $H\subseteq G$. We shall say that $G$ is \textit{strongly $n$-sn-internally indecomposable} if every open subgroup of $G$ is $n$-sn-internally indecomposable.
\item We shall say that $G$ is \textit{sn-internally indecomposable} (respectively, \textit{strongly sn-internally indecomposable}) if $G$ is $m$-sn-internally indecomposable (respectively, strongly $m$-sn-internally indecomposable) for any positive integer $m$.
\end{enumerate}
\end{defi}

\begin{defi}[cf.\ \cite{MST}, Definition 2.1]\label{sne1}
Let $n$ be a positive integer; $G$ a profinite group.
\begin{enumerate}
\item We shall say that $G$ is \textit{$n$-sn-quasielastic} (respectively, \textit{$n$-sn-elastic}) if every topologically finitely generated $n$-subnormal closed subgroup of $G$ (respectively, of an open subgroup of $G$) is trivial or open in $G$.
\item We shall say that $G$ is \textit{sn-quasielastic} (respectively, \textit{sn-elastic}) if $G$ is $m$-sn-quasielastic (respectively, $m$-sn-elastic) for any positive integer $m$.
\item We shall say that $G$ is \textit{very $n$-sn-quasielastic} (respectively, \textit{very $n$-sn-elastic}; \textit{very sn-quasielastic}; \textit{very sn-elastic}) if $G$ is $n$-sn-quasielastic (respectively, $n$-sn-elastic; sn-quasielastic; sn-elastic), but not topologically finitely generated.
\end{enumerate}
\end{defi}

\begin{defi}[cf.\ \cite{FJ}, Definition 17.3.2; \cite{MoTa}, Definition 1.1, (iii)]\label{almostquot}
Let $\mathcal{C}$ be a full-formation; $G$ a profinite group; $Q$ a quotient of $G$ [in the category of profinite groups].
\begin{enumerate}
\item We shall write $G^{\mathcal{C}}$ for the maximal pro-$\mathcal{C}$ quotient of $G$. If $\mathcal{C}$ is the family of all $p$-groups, then we shall also write $G^p\overset{\mathrm{def}}{=}G^{\mathcal{C}}$.
\item Let $N\subseteq G$ be a normal open subgroup. If the kernel of the surjection $G\twoheadrightarrow Q$ coincides with the kernel of the natural surjection $N\twoheadrightarrow N^{\mathcal{C}}$ (respectively, $N\twoheadrightarrow N^p$), then we shall say that $Q$ is the \textit{almost pro-$\mathcal{C}$-maximal quotient} (respectively, \textit{almost pro-$p$-maximal quotient}) of $G$ associated to $N$.
\item We shall say that $Q$ is an \textit{almost pro-$\mathcal{C}$-maximal quotient} (respectively, \textit{almost pro-$p$-maximal quotient}) of $G$ if it is the almost pro-$\mathcal{C}$-maximal quotient (respectively, almost pro-$p$-maximal quotient) of $G$ associated to $N$ for some normal open subgroup $N\subseteq G$.
\end{enumerate}
\end{defi}

\begin{defi}[cf.\ \cite{H1}, Definition 2.1, (i)]\label{hypcurvedef}
Let $S$ be a scheme; $X$ a scheme over $S$. Then we shall say that $X$ is a \textit{hyperbolic curve} over $S$ if there exist nonnegative integers $g,r$, a scheme $X^{\mathrm{cpt}}$ over $S$, and a [possibly empty] closed subscheme $D\subseteq X^{\mathrm{cpt}}$ such that the following holds:
\begin{itemize}
\item $2g-2+r>0$;
\item $X^{\mathrm{cpt}}\to S$ is smooth, proper, geometrically connected, of relative dimension $1$, and any geometric fiber is [a necessarily smooth proper curve] of genus $g$;
\item the composite $D\hookrightarrow X^{\mathrm{cpt}}\to S$ is a finite \'etale morphism of degree $r$;
\item $X$ is isomorphic to $X^{\mathrm{cpt}}\setminus D$ over $S$.
\end{itemize}
We shall refer to $(g,r)$ as the \textit{type} of $X$. If $S$ is the spectrum of an algebraically closed field, then $D$ consists of $r$ points. In this case, we shall refer to each point of $D$ as a \textit{cusp}.
\end{defi}

\begin{defi}[cf.\ \cite{MoTa}, Definition 1.2]\label{surfacedef}
Let $\mathcal{C}$ be a full-formation; $\Pi$ a profinite group. Then we shall say that $\Pi$ is a \textit{pro-$\mathcal{C}$ surface group} (respectively, an \textit{almost pro-$\mathcal{C}$ surface group}) if $\Pi$ is isomorphic to the maximal pro-$\mathcal{C}$ quotient (respectively, an almost pro-$\mathcal{C}$-maximal quotient) of the \'etale fundamental group of a hyperbolic curve over an algebraically closed field of characteristic $0$. If $\mathcal{C}$ is the family of all $p$-groups, then we shall also refer to a pro-$\mathcal{C}$ surface group as a \textit{pro-$p$ surface group}.
\end{defi}

\begin{prop}[cf.\ \cite{MST}, Lemma 1.15]\label{snisom}
Let $n$ be a positive integer; $G$ an $n$-sn-internally indecomposable profinite group; $S\subseteq G$ a nontrivial $n$-subnormal closed subgroup; $H\subseteq G$ a closed subgroup containing $S$; $\alpha:H\to G$ a  continuous homomorphism. Suppose that for any $h\in S$, it holds that $\alpha(h)=h$. Then for any $g\in H$, it holds that $\alpha(g)=g$.
\end{prop}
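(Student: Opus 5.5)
The argument is a standard centralizer computation. Fix $g\in H$ and aim to show that $g^{-1}\alpha(g)$ centralizes a nontrivial $n$-subnormal closed subgroup of $G$. The $n$-sn-internal indecomposability of $G$ then forces $g^{-1}\alpha(g)=1$.

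The first step is to find a suitable subnormal subgroup contained in $S$ on which both $g$-conjugation and $\alpha$ can be controlled. The natural candidate is $S\cap gSg^{-1}$, but it need not be nontrivial. Instead I would work directly with $gSg^{-1}$, which lies in $H$ because $g\in H$ and $S\subseteq H$. For $h\in S$ we have $ghg^{-1}\in H$, and since $\alpha$ is a homomorphism,
\[
\alpha(ghg^{-1})=\alpha(g)\,\alpha(h)\,\alpha(g)^{-1}=\alpha(g)\,h\,\alpha(g)^{-1}.
\]
This alone says nothing about $\alpha(ghg^{-1})$ in terms of $ghg^{-1}$, so the argument has to stay inside $S$.

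The key step is to use an element that lies in $S$ after conjugation. The clean version takes $h\in S$ with $ghg^{-1}\in S$. Then $\alpha(ghg^{-1})=ghg^{-1}$ by hypothesis, and comparing with the display gives $g^{-1}\alpha(g)\,h\,\alpha(g)^{-1}g=h$. Thus $x:=g^{-1}\alpha(g)$ centralizes $S\cap g^{-1}Sg$. The remaining task is to get a nontrivial $n$-subnormal subgroup here, and I expect this to be the main obstacle. Intersections of conjugates of a subnormal subgroup are again subnormal, via the chains $H_i\cap g^{-1}H_ig$, so $S\cap g^{-1}Sg$ is $n$-subnormal. Nontriviality is the delicate point. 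I would handle it by induction along the subnormal chain $G=H_0\supseteq H_1\supseteq\cdots\supseteq H_n=S$, using internal indecomposability at each level: two nontrivial subnormal subgroups with trivial intersection commute elementwise (if they are normal in a common subgroup, their commutator lies in the intersection), which contradicts the triviality of centralizers of nontrivial $n$-subnormal subgroups.

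As a fallback, I would avoid intersections entirely. Let $T$ be the closed normal closure of $S$ in $H_{n-1}$, assuming $g$ normalizes the relevant chain, and reduce to showing that $x$ centralizes $S$ itself. For $s\in S$, the element $xsx^{-1}s^{-1}$ can be compared via commutators $[s,t]$ with $t\in S$, exploiting that $\alpha$ fixes $S$ pointwise. Once $x$ centralizes a nontrivial $n$-subnormal subgroup, $Z_G(\cdot)=\{1\}$ gives $x=1$, so $\alpha(g)=g$.
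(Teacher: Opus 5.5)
Your main line is correct, and it is the natural proof of this statement. The paper itself gives no proof here: it imports the result from \cite{MST}, Lemma 1.15, together with the intersection property it records as Proposition \ref{snintersect}. For $h\in S\cap g^{-1}Sg$ you correctly obtain $x=g^{-1}\alpha(g)\in Z_G(S\cap g^{-1}Sg)$. This intersection is a closed $n$-subnormal subgroup via the chains $H_i\cap g^{-1}H_ig$. Once it is known to be nontrivial, $n$-sn-internal indecomposability forces $x=1$. That nontriviality is exactly Proposition \ref{snintersect}, (ii), applied to $S$ and $g^{-1}Sg$, so you may simply invoke it. This also shows your opening remark that $S\cap gSg^{-1}$ ``need not be nontrivial'' is wrong under these hypotheses: it is always nontrivial, and that is the whole content of the proof.

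If you want to prove the nontriviality yourself, your sketch does not yet do it. A one-index induction along $H_0\supseteq\cdots\supseteq H_n$ runs into one of two problems:
\begin{enumerate}
\item it compares groups that are not both normal in a common subgroup (e.g.\ $S\cap g^{-1}H_kg$ is only subnormal in $g^{-1}H_kg$), so ``their commutator lies in the intersection'' is unavailable;
\item or, if you induct on the diagonal $H_k\cap g^{-1}H_kg$, the step needs the off-diagonal groups to be nontrivial, and the inductive hypothesis does not supply that.
\end{enumerate}
Instead, induct on $i+j$ to show $H_i\cap g^{-1}H_jg\neq\{1\}$ for all $0\le i,j\le n$. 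The cases $i=0$ or $j=0$ are clear, since these groups contain $g^{-1}Sg$ or $S$.

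For the step to $(i+1,j+1)$, set $A=H_{i+1}\cap g^{-1}H_jg$ and $B=H_i\cap g^{-1}H_{j+1}g$. Both are nontrivial by induction, both are normal in $H_i\cap g^{-1}H_jg$, and $A\cap B=H_{i+1}\cap g^{-1}H_{j+1}g$. If $A\cap B$ were trivial, all commutators of $A$ with $B$ would vanish, so $A\subseteq Z_G(B)$. But $B$ is $\max(i,j+1)$-subnormal (use a staircase chain through the grid), hence $n$-subnormal and nontrivial. So $Z_G(B)=\{1\}$, contradicting $A\neq\{1\}$.

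Finally, discard your fallback paragraph. Nothing lets you assume that $g$ normalizes the chain. Likewise, nothing in the hypotheses lets you show directly that $x$ centralizes all of $S$; that only follows afterwards, from $x=1$.
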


\begin{prop}[cf.\ \cite{MST}, Proposition 1.11]\label{snintersect}
Let $n$ be a positive integer; $G$ a profinite group; $\{H_\lambda\}_{\lambda\in\Lambda}$ a set of $n$-subnormal closed subgroups of $G$. Then the following hold:
\begin{enumerate}
\item $\bigcap_{\lambda\in\Lambda}H_{\lambda}$ is $n$-subnormal in $G$.
\item Suppose that $G$ is $n$-sn-internally indecomposable, that $\Lambda$ is finite, and that for each $\lambda\in\Lambda$, $H_{\lambda}$ is nontrivial. Then $\bigcap_{\lambda\in\Lambda}H_{\lambda}$ is nontrivial.
\end{enumerate}
\end{prop}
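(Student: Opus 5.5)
We prove (i) and (ii) separately; (i) is formal and (ii) is an induction on $\#\Lambda$ resting on the equivalent form of Definition \ref{intindecomp}, (iii): $Z_G(K)=\{1\}$ for every nontrivial $n$-subnormal closed subgroup $K\subseteq G$.

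\emph{Proof of (i).} For each $\lambda$, fix a chain of closed subgroups
\[
G=H_{\lambda,0}\supseteq H_{\lambda,1}\supseteq\cdots\supseteq H_{\lambda,n}=H_\lambda ,
\]
with each term normal in the previous one. Closed chains exist by the convention recalled in the Notations. Set $K_i=\bigcap_{\lambda}H_{\lambda,i}$. Then $K_0=G$ and $K_n=\bigcap_\lambda H_\lambda$, and each $K_i$ is closed.

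We check that $K_i$ is normal in $K_{i-1}$. Take $g\in K_{i-1}$ and $x\in K_i$. For every $\lambda$ we have $g\in H_{\lambda,i-1}$ and $x\in H_{\lambda,i}$. Since $H_{\lambda,i}\trianglelefteq H_{\lambda,i-1}$, this gives $gxg^{-1}\in H_{\lambda,i}$ for all $\lambda$, hence $gxg^{-1}\in K_i$. So $K_0\supseteq\cdots\supseteq K_n$ exhibits $\bigcap_\lambda H_\lambda$ as $n$-subnormal. This argument works for arbitrary $\Lambda$.

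\emph{Proof of (ii).} By induction on $\#\Lambda$ it suffices to treat two subgroups. Here we use (i) to know that partial intersections of $n$-subnormal subgroups are again $n$-subnormal. So let $A,B\subseteq G$ be nontrivial $n$-subnormal closed subgroups, and suppose $A\cap B=\{1\}$; we derive a contradiction.

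\emph{Key step.} With $A\cap B=\{1\}$, we show that a nontrivial $n$-subnormal subgroup commutes with $A$. Take the chains $G=A_0\supseteq\cdots\supseteq A_n=A$ and $G=B_0\supseteq\cdots\supseteq B_n=B$. We argue by a descending induction on commutators. Choose $j$ minimal such that $A\cap B_j=\{1\}$; note $j\le n$ and $j\ge 1$, since $A\cap B_0=A\ne\{1\}$. Then $C\overset{\mathrm{def}}{=}A\cap B_{j-1}\neq\{1\}$.

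For $c\in C$ and $b\in B_j$, the commutator $[c,b]=cbc^{-1}b^{-1}$ satisfies two memberships:
\begin{itemize}
\item $[c,b]\in B_j$, because $c\in B_{j-1}$ normalizes $B_j$;
\item $[c,b]\in A$ would follow if $b$ normalized $A$, which need not hold.
\end{itemize}
So we refine the argument using symmetric minimality. Let $i,j$ be minimal such that $A_i\cap B_j=\{1\}$ fails to propagate. More precisely, among pairs $(i,j)$ with $A_i\cap B_j=\{1\}$, choose one with $i+j$ minimal. Then $A_{i-1}\cap B_j$ and $A_i\cap B_{j-1}$ are nontrivial.

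For $x\in A_i\cap B_{j-1}$ and $y\in A_{i-1}\cap B_j$:
\begin{itemize}
\item $[x,y]\in A_i$, since $x\in A_i\trianglelefteq A_{i-1}\ni y$;
\item $[x,y]\in B_j$, since $y\in B_j\trianglelefteq B_{j-1}\ni x$.
\end{itemize}
Hence $[x,y]\in A_i\cap B_j=\{1\}$. So $A_i\cap B_{j-1}$ centralizes $A_{i-1}\cap B_j$.

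Now $A_{i-1}\cap B_j$ is a nontrivial closed subgroup, and by the argument of (i) it is $n$-subnormal: intersect the chains, padding the shorter chain with its last term. Therefore its centralizer in $G$ is trivial. This forces $A_i\cap B_{j-1}=\{1\}$, contradicting the choice of $(i,j)$. The boundary cases $i=0$ or $j=0$ cannot occur, because $A_0\cap B_j=B_j\ne\{1\}$ and $A_i\cap B_0=A_i\neq\{1\}$.

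The main obstacle is exactly this step: choosing the minimal pair $(i,j)$ so that both neighbouring intersections are nontrivial, so that the commutator trick applies on both sides at once.
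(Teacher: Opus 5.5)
Your proof is correct. The paper itself gives no argument for this statement; it only quotes \cite{MST}, Proposition 1.11. Your write-up therefore works as a self-contained proof rather than as a variant of one in the text.

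Part (i) is the standard term-by-term intersection of closed subnormal chains. In part (ii) you choose a pair $(i,j)$ with $A_i\cap B_j=\{1\}$ and $i+j$ minimal. Together with the inclusion $[A_i\cap B_{j-1},\,A_{i-1}\cap B_j]\subseteq A_i\cap B_j$, this is exactly what is needed. You then apply the centralizer form of $n$-sn-internal indecomposability from Definition \ref{intindecomp}, (iii), to the nontrivial $n$-subnormal subgroup $A_{i-1}\cap B_j$. Equivalently, you are running an induction on $i+j$ showing that every $A_i\cap B_j$ is nontrivial.

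The hypothesis genuinely has to be used at every step of this descent. Trivially intersecting subnormal subgroups need not commute in general: the subgroups generated by two non-commuting reflections in the dihedral group of order $8$ are $2$-subnormal, meet trivially, and do not commute.

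For the final write-up:
\begin{itemize}
\item Delete the abandoned first attempt (the choice of $j$ minimal with $A\cap B_j=\{1\}$).
\item Restate the ``key step'' as what you actually prove, namely that $A_i\cap B_{j-1}$ centralizes $A_{i-1}\cap B_j$.
\item Explicitly choose the chains in (ii) closed, as you did in (i). Then $A_{i-1}\cap B_j$ is a closed subgroup, which the centralizer criterion requires.
\end{itemize}
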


\begin{prop}[cf.\ \cite{MST}, Proposition 1.14; \cite{MST}, Lemma 2.4]\label{projlim}
Let $n$ be a positive integer; $G$ a profinite group; $\{G_i\}_{i\in I}$ a directed subset of the set of normal closed subgroups of $G$ [where $j\ge i\Leftrightarrow G_j\subseteq G_i$] such that the natural homomorphism $G\to\varprojlim_{i\in I}G/G_i$ is an isomorphism. If for each $i\in I$, $G/G_i$ is $n$-sn-internally indecomposable (respectively, strongly $n$-sn-internally indecomposable; very $n$-sn-quasielastic; very $n$-sn-elastic), then so is $G$.
\end{prop}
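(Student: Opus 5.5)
The plan is to push every relevant subgroup of $G$ down to the quotients $G/G_i$, apply the hypothesis there, and pull the conclusion back using $G\overset{\sim}{\to}\varprojlim_{i}G/G_i$. Two elementary facts will be used throughout. First, if $H\subseteq G$ is $n$-subnormal, witnessed by $G=H_0\supseteq H_1\supseteq\cdots\supseteq H_n=H$, then the images $H_kG_i/G_i$ form a chain of the same kind. Hence the image $H_{(i)}\overset{\mathrm{def}}{=}HG_i/G_i$ is $n$-subnormal in $G/G_i$. Second, since $\bigcap_iG_i=\{1\}$ and the family is directed, any nontrivial closed $H$ satisfies $H\not\subseteq G_i$ for all sufficiently large $i$. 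Likewise, an element $g$ whose image is trivial for all large $i$ must be $1$.

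\emph{$n$-sn-internal indecomposability.} Let $H\subseteq G$ be a nontrivial $n$-subnormal closed subgroup and $g\in Z_G(H)$. For $i$ large, $H_{(i)}$ is a nontrivial $n$-subnormal subgroup of $G/G_i$. The image of $g$ centralizes $H_{(i)}$, so it is trivial by hypothesis. Hence $g\in G_i$ for all large $i$, and therefore $g=1$.

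\emph{Strong version.} For an open $U\subseteq G$, put $U_i\overset{\mathrm{def}}{=}U\cap G_i$. These are normal closed subgroups of $U$ forming a directed family with trivial intersection, so $U\overset{\sim}{\to}\varprojlim_iU/U_i$ by compactness. Moreover $U/U_i\cong UG_i/G_i$, which is an open subgroup of $G/G_i$ and hence $n$-sn-internally indecomposable. The previous case, applied to $U$, then finishes the argument.

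\emph{Very $n$-sn-quasielastic and very $n$-sn-elastic.} The key observation is that in the ``very'' setting the dichotomy ``trivial or open'' collapses to ``trivial''. Indeed, if a topologically finitely generated subgroup is open in a profinite group $P$, then $P$ is topologically finitely generated, since it is generated by the subgroup together with finitely many coset representatives. So each $G/G_i$ has no nontrivial topologically finitely generated $n$-subnormal closed subgroups; in the elastic case this also holds for subgroups of its open subgroups. Since a quotient of a topologically finitely generated group is topologically finitely generated, $G$ is not topologically finitely generated. Now let $H$ be a topologically finitely generated $n$-subnormal closed subgroup of $G$, respectively of an open $U\subseteq G$. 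Its image in $G/G_i$ is topologically finitely generated and $n$-subnormal in $G/G_i$, respectively in the open subgroup $UG_i/G_i$. By the observation it is trivial for every $i$, so $H\subseteq\bigcap_iG_i=\{1\}$. Thus $H$ is trivial, and in particular trivial or open.

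The one place I expected trouble was a naive argument for (quasi)elasticity, which would have to bound the indices $[G:HG_i]$ uniformly in $i$. The ``not finitely generated'' reduction above avoids this entirely, so I expect no step to be a genuine obstacle.
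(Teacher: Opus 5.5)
Your proposal is correct and follows essentially the same route as the results the paper cites in place of a proof (\cite{MST}, Proposition 1.14 and Lemma 2.4): you push a nontrivial $n$-subnormal subgroup (respectively, a topologically finitely generated $n$-subnormal subgroup) down to the quotients $G/G_i$, apply the hypothesis there, and pull back using $\bigcap_{i\in I}G_i=\{1\}$, treating open subgroups via the family $\{U\cap G_i\}_{i\in I}$. Your observation about the ``very'' setting is what makes the inverse-limit argument work with no uniform control of indices: an open topologically finitely generated subgroup would force $G/G_i$ to be topologically finitely generated, so all the images are trivial.
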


\begin{prop}[cf.\ \cite{MST}, Proposition 1.12; \cite{MST}, Lemma 2.3]\label{snopen}
Let $n$ be a positive integer; $G$ a profinite group; $H\subseteq G$ an open subgroup. Then the following hold:
\begin{enumerate}
\item Suppose that any open subgroup of $G$ has no nontrivial finite normal subgroup [e.g.\ the case where $G$ is slim [cf.\ \cite{MiTs1}, Lemma 1.3]]. Then if $H$ is strongly $n$-sn-internally indecomposable, then so is $G$.
\item Suppose that $G$ has no nontrivial finite $n$-subnormal subgroup. Then if $H$ is $n$-sn-quasielastic (respectively, very $n$-sn-quasielastic), then so is $G$.
\end{enumerate}
\end{prop}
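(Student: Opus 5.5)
We prove the two assertions separately, in each case passing from $G$ to the open subgroup $H$ by intersecting. The common difficulty is that an intersection with an open subgroup can become trivial. This happens exactly when the subgroup in question is finite, and that case is excluded by the respective hypothesis on finite subgroups.

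\emph{Assertion (ii).} This is the easier one and we do it first. Let $T\subseteq G$ be a topologically finitely generated $n$-subnormal closed subgroup, with chain $G=T_0\supseteq T_1\supseteq\cdots\supseteq T_n=T$. Intersecting the chain with $H$ shows that $T\cap H$ is $n$-subnormal in $H$. Since $T\cap H$ is open in $T$, it is topologically finitely generated, because open subgroups of topologically finitely generated profinite groups are again topologically finitely generated. By the $n$-sn-quasielasticity of $H$, the group $T\cap H$ is trivial or open in $H$.
\begin{itemize}
\item If $T\cap H$ is open in $H$, then it is open in $G$, hence $T$ is open in $G$.
\item If $T\cap H$ is trivial, then $T$ injects into the finite set $G/H$. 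So $T$ is a finite $n$-subnormal subgroup of $G$, hence trivial by hypothesis.
\end{itemize}
For the ``very'' version it remains to see that $G$ is not topologically finitely generated. This holds because otherwise its open subgroup $H$ would be topologically finitely generated.

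\emph{Assertion (i).} Let $U\subseteq G$ be open and $S\subseteq U$ a nontrivial $n$-subnormal closed subgroup. We must show $Z_U(S)=\{1\}$.
\begin{enumerate}
\item Choose an open subgroup $W\subseteq U\cap H$ that is normal in $U$, for instance the normal core of $U\cap H$ in $U$. Since $W$ is open in $H$, it is $n$-sn-internally indecomposable.
\item Intersecting the chain for $S$ with $W$ shows that $S\cap W$ is $n$-subnormal in $W$.
\item Suppose first that $S\cap W\neq\{1\}$. Then $Z_W(S\cap W)=\{1\}$, so $Z_U(S)\cap W\subseteq Z_W(S\cap W)=\{1\}$. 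Hence $C:=Z_U(S)$ is finite.
\item $C$ is normalized by $N_U(S)$, which contains the penultimate term $S_{n-1}$ of the chain. Hence $C\cap S_{n-1}$ is a finite normal subgroup of $S_{n-1}$, which is itself subnormal in $U$.
\item If instead $S\cap W=\{1\}$, then $S$ itself is a finite nontrivial subnormal subgroup of $U$.
\end{enumerate}
Both cases therefore reduce to the following auxiliary lemma, together with the remaining step of controlling $C$ outside $S_{n-1}$.

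\emph{Auxiliary lemma (main obstacle).} If every open subgroup of $G$ has no nontrivial finite normal subgroup, then every finite subnormal subgroup of every open subgroup $U$ is trivial. We would prove this by induction on the subnormal length, as follows.
\begin{enumerate}
\item Let $F$ be finite and normal in $S_{n-1}$.
\item Choose an open normal subgroup $V$ of $U$ with $V\cap F=\{1\}$.
\item Since $F$ is finite, $Z_V(F)$ has finite index in $N_V(F)$. The plan is to realize $F$ as a normal subgroup of the subgroup $F\cdot Z_{V}(F)$, or of a suitable open subgroup of $U$ built from $F$ and a large open subgroup commuting with it.
\item Iteratively push $F$ up the chain $S_{n-1}\subseteq S_{n-2}\subseteq\cdots$, using that its $S_{i}$-conjugates are finite in number and generate a finite normal subgroup.
\end{enumerate}
The real difficulty is producing an \emph{open} subgroup in which $F$, or its finite normal closure, is normal. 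If a direct argument fails, we would instead invoke the slimness-type argument of \cite{MiTs1}, Lemma 1.3, adapted to subnormal subgroups. Once the lemma is established, both cases above yield $Z_U(S)=\{1\}$, so every open subgroup of $G$ is $n$-sn-internally indecomposable, i.e., $G$ is strongly $n$-sn-internally indecomposable.
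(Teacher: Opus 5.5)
\textbf{Assertion (ii)} is handled correctly, including the point that $T$ injects into $G/H$ when $T\cap H=\{1\}$, and the ``very'' clause.

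\textbf{Assertion (i)} has a genuine gap. Everything is funneled into your auxiliary lemma, which you do not prove, and which is false as stated. Let $A=\mathbb{F}_2[[\mathbb{Z}_2]]\cong\mathbb{F}_2[[T]]$ and $G=A\rtimes\mathbb{Z}_2$, with a generator acting by multiplication by $1+T$.
\begin{itemize}
\item Let $E$ be a finite normal subgroup of an open subgroup $W'\subseteq G$. Since $\mathbb{Z}_2$ is torsion-free, the image of $E$ in $\mathbb{Z}_2$ is trivial, so $E\subseteq A$.
\item $W'$ contains an element acting on the abelian group $A$ by multiplication by $1+T^{2^m}$. So each $x\in E$ satisfies $((1+T^{2^m})^r-1)x=0$ for some $r\ge 1$. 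Since $\mathbb{F}_2[[T]]$ is a domain, $x=0$.
\item Hence no open subgroup of $G$ has a nontrivial finite normal subgroup.
\item Yet $\{0,1\}\subseteq A$ is a nontrivial finite $2$-subnormal subgroup of $G$. Its normalizer is not open, which is exactly why the ``open subgroup in which $F$ is normal'' you are looking for need not exist.
\end{itemize}
So the finite-subgroup step cannot come from the hypothesis on finite normal subgroups alone; it must use the indecomposability of $H$. Separately, even granting the lemma, you leave open how to control $C=Z_U(S)$ outside $S_{n-1}$.

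\textbf{The missing idea} is to let your well-chosen $W$ (open in $H$, normal in $U$) do the work.

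\emph{Case $S\cap W\neq\{1\}$.} Take $c\in Z_U(S)$. Conjugation by $c$ maps $W$ to itself and fixes the nontrivial $n$-subnormal subgroup $S\cap W$ of $W$ pointwise. By Proposition \ref{snisom}, applied to the $n$-sn-internally indecomposable group $W$, it is the identity on $W$. Hence $Z_U(S)\subseteq Z_U(W)$. Now $Z_U(W)$ is normal in $U$ and meets $W$ in $Z(W)=\{1\}$ (as $W\neq\{1\}$). So it is a finite normal subgroup of the open subgroup $U$, hence trivial by hypothesis. This is the only place that hypothesis is needed.

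\emph{Case $S\cap W=\{1\}$.} We may assume $G$ is infinite, since a finite $G$ is itself a finite normal subgroup and hence trivial. Choose $j<n$ with $S_j\cap W\neq\{1\}$ and $S_{j+1}\cap W=\{1\}$.
\begin{itemize}
\item $S_{j+1}\supseteq S$ is nontrivial.
\item $S_{j+1}$ and $S_j\cap W$ are normal subgroups of $S_j$ with trivial intersection, so they commute.
\item Hence $\{1\}\neq S_{j+1}\subseteq Z_U(S_j\cap W)$.
\end{itemize}
This contradicts the first case applied to the $(j+1)$-subnormal, hence $n$-subnormal, subgroup $S_j\cap W$ of $U$.

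In particular, nontrivial finite $n$-subnormal subgroups do not occur under the hypotheses of (i), but only because of the indecomposability of $W$.
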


\begin{prop}[cf.\ \cite{MST}, Corollary 3.4]\label{pfreesne}
Every free pro-$p$ group [of arbitrary rank] is sn-elastic.
\end{prop}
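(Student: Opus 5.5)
The statement asks that, for every free pro-$p$ group $F$ and every $n\ge 1$, each topologically finitely generated $n$-subnormal closed subgroup of an open subgroup of $F$ be trivial or open in $F$. I would prove this by induction on $n$, reducing everything to the single statement of the case $n=1$:
\begin{quotation}
\noindent
$(\ast)$ In a free pro-$p$ group $\Phi$ of arbitrary rank, every nontrivial topologically finitely generated normal closed subgroup $N$ is open. In particular $\Phi$ is then topologically finitely generated.
\end{quotation}
Two standard facts are used throughout. First, closed subgroups of free pro-$p$ groups are free pro-$p$. Second, an open subgroup $U$ of a free pro-$p$ group of finite rank $d$ has rank $1+[F:U](d-1)$, and an open subgroup of a free pro-$p$ group of infinite rank has infinite rank.

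\emph{Reduction to $(\ast)$.} Let $U\subseteq F$ be open and let $H$ be a nontrivial, topologically finitely generated, $n$-subnormal closed subgroup of $U$. Choose a closed chain $U=H_0\supseteq H_1\supseteq\cdots\supseteq H_n=H$ with each $H_i$ normal in $H_{i-1}$. Every $H_{n-1}$ is free pro-$p$, being a closed subgroup of $F$. Since $H$ is nontrivial, finitely generated and normal in $H_{n-1}$, applying $(\ast)$ with $\Phi=H_{n-1}$ shows that $H$ is open in $H_{n-1}$. Then $H_{n-1}$ is itself finitely generated, because a pro-$p$ group containing a finitely generated open subgroup is finitely generated, and it is nontrivial. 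It is also $(n-1)$-subnormal in $U$. By induction on $n$, $H_{n-1}$ is open in $U$, hence in $F$, and therefore $H$ is open in $F$. The trivial case gives the alternative ``trivial''. So the whole statement reduces to $(\ast)$, which is Lubotzky's theorem for free pro-$p$ groups. I would reprove it as follows.

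\emph{Proof of $(\ast)$, the main obstacle.} Suppose $N\ne 1$ is finitely generated and normal in $\Phi$, and write $d(-)$ for rank. The group $\Phi$ acts by conjugation on the finite $\mathbb{F}_p$-vector space $H^1(N,\mathbb{F}_p)$. Let $U_0\subseteq\Phi$ be the open kernel of this action, and consider any open $U$ with $N\subseteq U\subseteq U_0$. Since $\mathrm{cd}_p U\le 1$, the five-term exact sequence for $1\to N\to U\to U/N\to 1$ reads
\begin{equation*}
0\to H^1(U/N)\to H^1(U)\to H^1(N)\to H^2(U/N)\to 0,
\end{equation*}
where we use $H^1(N)^{U/N}=H^1(N)$. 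This gives the two bounds
\begin{equation*}
d(U)\le d(U/N)+d(N),\qquad r(U/N)\le d(N),
\end{equation*}
where $r(-)$ denotes the relation rank $\dim H^2(-,\mathbb{F}_p)$.

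If $\Phi$ has infinite rank, then $d(U)=\infty$ forces $d(U/N)=\infty$. I would then pass to a free-factor retraction $\Phi\twoheadrightarrow\Phi_1$ of finite rank $\ge 2$ under which the image of $N$ is nontrivial. That image is normal and finitely generated, so applying the finite-rank case to $\Phi_1$ shows it is open. I would then derive a contradiction by enlarging $\Phi_1$, since the indices obtained this way would be unbounded.

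In the finite-rank case with $d=d(\Phi)\ge 2$, assume for contradiction that $G=\Phi/N$ is infinite. Then $G$ is a pro-$p$ group all of whose open subgroups $V=U/N\subseteq U_0/N$ satisfy
\begin{equation*}
d(V)\ge 1+[G:V](d-1)-d(N)\quad\text{and}\quad r(V)\le d(N).
\end{equation*}
So the deficiency $d(V)-r(V)$ grows linearly in $[G:V]$ while $r(V)$ stays bounded. The step I expect to be delicate is turning this into a contradiction. My first attempt would compare it with the Euler-characteristic inequality for the finitely presented pro-$p$ group $G$, namely that $r(V)-d(V)+1\le [G:V]\,(r(G)-d(G)+1)$ for open $V$. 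Writing $r(V)=d(V)-1+[G:V](d(G)-r(G)-1)+\delta_V$ with $\delta_V\ge 0$, this forces $d(V)\to\infty$ with $r(V)$ bounded, which contradicts the bound $r(V)\ge d(V)^2/4$ for large finite quotients via Golod--Shafarevich.

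The cases $d\le 1$ are trivial: $\Phi$ is then $1$ or $\mathbb{Z}_p$, and there every nontrivial closed subgroup is open.
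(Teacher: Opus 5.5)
Your reduction of the $n$-subnormal statement to $(\ast)$, by induction along a closed subnormal chain, is correct. The reduction from infinite to finite rank via free-factor retractions is also correct, but the contradiction there comes from ranks, not indices: the image of $N$ in $\Phi_1$ has rank at most $d(N)$, while every open subgroup of $\Phi_1$ has rank at least $d(\Phi_1)$, so it suffices to take $d(\Phi_1)>d(N)$.

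The genuine gap is the finite-rank case of $(\ast)$, and everything else depends on it. Your five-term sequence gives that $d(V)$ grows linearly in $[G:V]$ while $r(V)\le d(N)$. This does not contradict Golod--Shafarevich. The inequality $r>d^2/4$ holds for \emph{finite} $p$-groups, but each $V$ is open in the supposedly infinite group $G$, hence infinite. Moreover, ``large $d$, bounded $r$'' is precisely the Golod--Shafarevich criterion for a pro-$p$ group to be \emph{infinite*}; free pro-$p$ groups have $r=0$ and arbitrary $d$. Passing to finite quotients of $V$ does not help, since their relation ranks are not bounded by $r(V)$.

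The Euler-characteristic step does not help either. The inequality $r(V)-d(V)+1\le[G:V](r(G)-d(G)+1)$ is an \emph{upper} bound on $r(V)$, and you already have the upper bound $r(V)\le d(N)$. Your rewriting with $+\delta_V$ and the factor $d(G)-r(G)-1$ also reverses the inequality.

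The missing ingredient is continuity of cohomology. Since $H^1(N,\mathbb{F}_p)$ is finite and equals $\varinjlim H^1(U,\mathbb{F}_p)$ over the open $U\supseteq N$, there is an open $U_1$ with $N\subseteq U_1\subseteq U_0$ such that restriction $H^1(U,\mathbb{F}_p)\to H^1(N,\mathbb{F}_p)$ is surjective for every open $U$ with $N\subseteq U\subseteq U_1$. Your exact sequence then gives $H^2(U/N,\mathbb{F}_p)=0$. So $V=U/N$ is free pro-$p$ with $d(V)=d(U)-d(N)$.

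Now suppose $G$ were infinite, and take $V\subseteq V_1\overset{\mathrm{def}}{=}U_1/N$ of index $p$. The Schreier formula in the free group $V_1$ gives $d(V)=1+p(d(V_1)-1)$. On the other hand, $d(V)=d(U)-d(N)=1+p[\Phi:U_1](d-1)-d(N)$, and likewise $d(V_1)=1+[\Phi:U_1](d-1)-d(N)$. Comparing these yields $(p-1)d(N)=0$, which contradicts $N\neq 1$. With this in place of your last step, the argument is complete.

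The paper itself gives no proof of this statement; it simply imports it from \cite{MST}.
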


\begin{prop}[cf.\ \cite{MST}, Proposition 3.6, (ii)]\label{Demushkin1}
Let $G$ be an infinite pro-$p$ Demu\v{s}kin group of rank $\ge 2$. Then every nontrivial closed subgroup of infinite index of $G$ is a free pro-$p$ group.
\end{prop}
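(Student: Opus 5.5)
The plan is to deduce freeness from the cohomological characterization of free pro-$p$ groups: a pro-$p$ group $H$ is free if and only if $\cdim_p H\le 1$, equivalently $H^2(H,\mathbb{F}_p)=0$. So, for a nontrivial closed subgroup $H\subseteq G$ of infinite index, I will aim to show $H^2(H,\mathbb{F}_p)=0$. The input about $G$ is the standard structure theory of Demu\v{s}kin groups:
\begin{itemize}
\item $G$ is a pro-$p$ Poincar\'e duality group of dimension $2$; in particular $\cdim_p G=2$ and $H^2(G,\mathbb{F}_p)\cong\mathbb{F}_p$.
\item Every open subgroup $U\subseteq G$ is again a Demu\v{s}kin group, so $H^2(U,\mathbb{F}_p)\cong\mathbb{F}_p$.
\end{itemize}
These hold because $G$ is infinite; the finite Demu\v{s}kin group $\mathbb{Z}/2\mathbb{Z}$ is excluded by the hypothesis.

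First, write $H=\bigcap U$, where $U$ ranges over the open subgroups of $G$ containing $H$. Continuity of cohomology gives
\begin{equation*}
H^2(H,\mathbb{F}_p)=\varinjlim_{U\supseteq H}H^2(U,\mathbb{F}_p),
\end{equation*}
with transition maps given by restriction. Second, I will show that for open $V\subseteq U$ with $[U:V]$ divisible by $p$, the restriction map $\mathrm{res}:H^2(U,\mathbb{F}_p)\to H^2(V,\mathbb{F}_p)$ is zero. Since $[G:H]$ is infinite and $G$ is pro-$p$, every $U\supseteq H$ admits some $V$ with $H\subseteq V\subsetneq U$, hence with $p\mid[U:V]$. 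Therefore every class in the direct limit dies, and $H^2(H,\mathbb{F}_p)=0$.

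The key step, and the main obstacle, is the vanishing of restriction on top-degree cohomology. I would argue via corestriction. For a Poincar\'e duality group of dimension $2$, $\mathrm{cor}:H^2(V,\mathbb{F}_p)\to H^2(U,\mathbb{F}_p)$ is dual to the inclusion of invariants of the dualizing module (with $\mathbb{F}_p$ coefficients, this is the identity on $H^0$ of a rank-one module). Hence $\mathrm{cor}$ is surjective, and so an isomorphism, since both groups are $\mathbb{F}_p$. Alternatively, one checks it directly from the nondegenerate cup product pairing of the Demu\v{s}kin groups $U$ and $V$. Then
\begin{equation*}
\mathrm{cor}\circ\mathrm{res}=[U:V]\equiv 0\pmod p,
\end{equation*}
and injectivity of $\mathrm{cor}$ forces $\mathrm{res}=0$.

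Finally, $H^2(H,\mathbb{F}_p)=0$ gives $\cdim_p H\le 1$, so $H$ is a free pro-$p$ group. The nontriviality assumption on $H$ only rules out the degenerate trivial case; the rank of $H$ may be infinite. This is Serre's argument that closed subgroups of infinite index in a pro-$p$ duality group of dimension $n$ have cohomological dimension $<n$.
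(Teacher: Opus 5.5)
Your argument is correct. The paper gives no proof of its own here and only cites \cite{MST}, Proposition 3.6, (ii); what you wrote is the standard Serre argument for closed subgroups of infinite index in pro-$p$ Poincar\'e duality groups of dimension $2$. In top degree the corestriction $H^2(V,\mathbb{F}_p)\to H^2(U,\mathbb{F}_p)$ is an isomorphism, so restriction kills $H^2$ whenever $p\mid[U:V]$, and hence $H^2(H,\mathbb{F}_p)=\varinjlim_{U\supseteq H}H^2(U,\mathbb{F}_p)=0$. (You can also get surjectivity of corestriction in top degree directly from $\cdim_p U=2$, via Shapiro's lemma and right exactness of $H^2(U,-)$, without the dualizing module.)
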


\begin{prop}[cf.\ \cite{MST}, Lemma 3.9]\label{hilb1}
Let $K$ be a Hilbertian field [cf.\ \cite{FJ}, \S 12.1]; $H\subseteq G_K$ a nontrivial subnormal closed subgroup; $U\subseteq H$ a proper open subgroup of $H$. Then the separable extension of $K$ associated to $U\subseteq G_K$ is Hilbertian.
\end{prop}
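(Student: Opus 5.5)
The plan is to prove the statement by induction on the subnormal length $n$, where $H=H_n\subseteq H_{n-1}\subseteq\cdots\subseteq H_0=G_K$ is a chain of closed subgroups, each normal in the previous one. The two standard inputs are that a finite separable extension of a Hilbertian field is Hilbertian, and Weissauer's theorem [cf.\ \cite{FJ}, Theorem 13.9.1]: if $M$ is a Galois extension of a Hilbertian field $L$, then every finite proper separable extension of $M$ is Hilbertian. Throughout, write $N$ for the separable extension of $K$ associated to $U$.

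\emph{Base case $n=0$.} Here $H=G_K$ and $U$ is a proper open subgroup, so $N/K$ is finite and $N$ is Hilbertian. One could start at $n=1$ instead and apply Weissauer directly with $L=K$, but the inductive step below already covers that case.

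\emph{Inductive step, construction of $V$.} Assume the claim for all nontrivial $(n-1)$-subnormal closed subgroups, and let $H$ be $n$-subnormal with $H\trianglelefteq H_{n-1}$.
\begin{enumerate}
\item Since $U$ is open in $H$, we may choose a normal open subgroup $W\subseteq H_{n-1}$ with $H\cap W\subseteq U$.
\item We want, in addition, $UW\neq H_{n-1}$. If $UW=H_{n-1}$ held for every normal open $W$, then $U$ would be dense in $H_{n-1}$. Being closed, $U$ would equal $H_{n-1}\supseteq H$, contradicting that $U$ is proper in $H$. So shrinking $W$ achieves $UW\neq H_{n-1}$.
\item We also want $H\cap W\subsetneq U$. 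This is possible by shrinking $W$ further, provided $H$ is infinite. Infiniteness of $H$ should follow because $G_K$ has no nontrivial finite subnormal subgroups. By Artin--Schreier, nontrivial finite subgroups of $G_K$ have order $2$ and are generated by involutions attached to real closures. Such a subgroup cannot be subnormal in $G_K$ for Hilbertian $K$, since its subnormal closure would force $K$ to be real closed-like; alternatively, one can invoke the slimness-type facts used in \cite{MST}. This is the step I expect to need the most care.
\end{enumerate}

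\emph{Inductive step, conclusion.} Set $V\overset{\mathrm{def}}{=}UW$. This is a proper open subgroup of the $(n-1)$-subnormal subgroup $H_{n-1}$. By the induction hypothesis applied to $(H_{n-1},V)$, the fixed field $L$ of $V$ is Hilbertian; if $n-1=0$, this is just the base case. Because $H$ and $W$ are both normal in $H_{n-1}$, the subgroup $H\cap W$ is normal in $V$. Hence its fixed field $M$ is Galois over $L$. Moreover, $H\cap W\subsetneq U$ is open of finite index in $U$, so $N\subseteq M$ is a finite proper separable extension of $M$. Weissauer's theorem then shows that $N$ is Hilbertian, completing the induction.
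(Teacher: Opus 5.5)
There is a genuine gap in your final step, and it comes from the choice $V=UW$. Since $H\cap W\subsetneq U$, the Galois correspondence gives $N\subsetneq M$, where $M$ is the fixed field of $H\cap W$. So it is $M$ that is a finite proper separable extension of $N$, not the reverse. Weissauer's theorem, applied to the Galois extension $M/L$, only says that finite proper separable extensions of $M$ are Hilbertian. It gives no information about the intermediate field $N$ of $M/L$.

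This cannot be repaired while keeping the same $V$. Because $U\subseteq H$ and $H\cap W\subseteq U$, the modular law gives $H\cap V=U(H\cap W)=U$. Hence $U$ is normal in $V$, and $N$ is just the compositum of $L$ with the fixed field of $H$. That is, $N$ is a Galois extension of $L$, in general of infinite degree (as $[V:U]=[W:W\cap H]$). Being Galois over a Hilbertian field does not imply Hilbertian (e.g.\ $K^{\mathrm{sep}}$), so nothing can be concluded.

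The remedy is to take the intermediate open subgroup to contain $H$ rather than $U$; this is exactly the pattern you already describe for $n=1$. If $H=H_{n-1}$, shorten the chain and apply the induction hypothesis to $(H,U)$ directly. Otherwise $H$ is a proper closed subgroup of $H_{n-1}$, so some normal open $W\subseteq H_{n-1}$ satisfies $V\overset{\mathrm{def}}{=}HW\neq H_{n-1}$. The argument then runs as follows.
\begin{enumerate}
\item By induction applied to $(H_{n-1},V)$, the fixed field $L$ of $V$ is Hilbertian.
\item Since $H\trianglelefteq H_{n-1}$ and $H\subseteq V$, $H$ is normal in $V$, so the fixed field $M$ of $H$ is Galois over $L$.
\item Since $U$ is a proper open subgroup of $H$, $N$ is a finite proper separable extension of $M$, and Weissauer's theorem applies.
\end{enumerate}

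With this choice your step (3) becomes unnecessary. Its assertion is true: a nontrivial finite subnormal subgroup of $G_K$ would be $G_R$ for a real closure $R$ of $K$. By uniqueness of real closures, $G_R$ is self-normalizing in $G_K$, which forces $G_K=G_R$ and $K$ real closed. But it plays no role in the corrected argument.
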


\begin{thm}[cf.\ \cite{MST}, Theorems A, B]\label{sneanab}
Let $\mathcal{C}$ be a full-formation such that $\mathbb{Z}/p\mathbb{Z}$ is a $\mathcal{C}$-group; $G$ a profinite group. Suppose that $G$ is isomorphic to one of the following:
\begin{itemize}
\item an almost pro-$\mathcal{C}$-maximal quotient of a free profinite group of [possibly infinite] rank $\ge 2$;
\item an almost pro-$\mathcal{C}$ surface group;
\item a pro-$p$ Demu\v{s}kin group [cf.\ \cite{NSW}, Definition 3.9.9] of rank $\ge 3$;
\item an almost pro-$\mathcal{C}$-maximal quotient of $G_K$, where $K$ is a Henselian discrete valuation field [cf.\ \cite{FJ}, \S 11.5] of residue characteristic $p$ or a Hilbertian field.
\end{itemize} 
Then $G$ is strongly sn-internally indecomposable and sn-elastic.
\end{thm}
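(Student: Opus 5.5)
The plan is to reduce every case of the statement to a common core: a group that contains a nonabelian free pro-$p$ group (or a pro-$p$ Demu\v{s}kin group) as an open subgroup or as a suitable quotient. Two properties then need to be checked. \emph{Sn-internal indecomposability} asks that $Z_G(H)=\{1\}$ for every nontrivial subnormal closed subgroup $H$. \emph{Sn-elasticity} asks that every topologically finitely generated subnormal closed subgroup be trivial or open. Both properties pass to open overgroups by Proposition \ref{snopen}, once we know there are no nontrivial finite subnormal subgroups (which follows from slimness), and they pass to inverse limits by Proposition \ref{projlim}. Accordingly, for an almost pro-$\mathcal{C}$-maximal quotient $Q$ of $G$ associated to $N$, I would first replace $Q$ by its open subgroup $N^{\mathcal{C}}$. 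I would then replace $N^{\mathcal{C}}$ by the inverse system of its quotients by characteristic open subgroups, to reduce to the maximal pro-$\mathcal{C}$ quotients of open subgroups.

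\textbf{Pro-$p$ core.} Proposition \ref{pfreesne} already gives sn-elasticity for free pro-$p$ groups. For the centralizer statement, I would argue as follows. Let $H$ be a nontrivial $n$-subnormal closed subgroup of a free pro-$p$ group $F$ of rank $\ge 2$. By Proposition \ref{pfreesne}, $H$ contains a nonabelian free subgroup. If $z\in Z_F(H)$ is nontrivial, then $\overline{\langle z\rangle}\cong\mathbb{Z}_p$ commutes with $H$. Since centralizers of nontrivial elements in free pro-$p$ groups are pro-cyclic, $H$ would then be abelian, a contradiction.

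For Demu\v{s}kin groups of rank $\ge 3$ I would use Proposition \ref{Demushkin1}. A nontrivial subnormal subgroup of infinite index is free pro-$p$. If it were topologically finitely generated, an Euler-characteristic argument (comparing $\chi$ of open subgroups containing it) would force it to be trivial or open. The centralizer argument is the same as in the free case, using that abelian subgroups of a Demu\v{s}kin group of rank $\ge 3$ are pro-cyclic.

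\textbf{General $\mathcal{C}$ and surface groups.} When $\mathcal{C}$ contains $\mathbb{Z}/p\mathbb{Z}$, for any nontrivial subnormal $H$ I would pass to an open subgroup $U$ with $H\cap U\ne 1$ mapping nontrivially to $U^p$. This $U^p$ is free pro-$p$ (respectively, a pro-$p$ surface group, hence free or Demu\v{s}kin). The image of $H\cap U$ in $U^p$ is nontrivial and subnormal, so the pro-$p$ core case applies there. I would then lift back using Proposition \ref{snintersect}, (ii), and Proposition \ref{snisom}.

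\textbf{Field cases, the main obstacle.} For Hilbertian $K$, I would use Proposition \ref{hilb1}: the fixed fields of proper open subgroups of subnormal subgroups are again Hilbertian. This lets one solve embedding problems with free pro-$p$ kernels inside $H$, which shows that $H$ is not finitely generated and has trivial centralizer. For Henselian discrete valuation fields of residue characteristic $p$, the wild inertia and the Artin--Schreier / fields-of-norms theory should give, after passing to an open subgroup, a quotient whose kernel part is a free pro-$p$ group of infinite rank. I expect producing these free pro-$p$ pieces compatibly with subnormal subgroups in the field cases to be the main difficulty. There I would follow the fields-of-norms reduction to characteristic $p$, where $G_K^p$ is free pro-$p$.
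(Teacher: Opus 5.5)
The paper does not prove this statement; it imports it from \cite{MST}, Theorems A, B. Your sketch therefore has to stand on its own, and it has genuine gaps.

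Your pro-$p$ core is essentially sound, modulo making the Euler-characteristic step precise. For free pro-$p$ groups you use Proposition \ref{pfreesne} plus pro-cyclicity of centralizers. For Demu\v{s}kin groups you use Proposition \ref{Demushkin1} plus the absence of finitely generated normal subgroups of infinite index. The passage to pro-$\mathcal{C}$ and almost pro-$\mathcal{C}$ groups, however, fails at four points.

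\emph{The lift back is circular.} Propositions \ref{snisom} and \ref{snintersect}, (ii), both assume that the ambient group is already $n$-sn-internally indecomposable, which is what you are proving.

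\emph{The choice of $U$ is unjustified.} You assert, without argument, that there is an open $U$ with $H\cap U$ mapping nontrivially to $U^p$. This fails for non-subnormal $H$ (e.g.\ $H\cong\mathbb{Z}_\ell$ with $\ell\neq p$), so it is exactly where subnormality must be used.

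\emph{The conclusion is too weak.} Even granting such a $U$, you only obtain $Z_G(H)\cap U\subseteq\ker(U\to U^p)$. The workable route is Proposition \ref{projlim} over the almost pro-$p$-maximal quotients $Q_U=G/\ker(U\to U^p)$, followed by Proposition \ref{snopen} applied to $U^p\subseteq Q_U$. Your quotients of $N^{\mathcal{C}}$ by characteristic open subgroups are finite and cannot be strongly sn-internally indecomposable, so they are useless in Proposition \ref{projlim}. On that route, the real content is the hypothesis of Proposition \ref{snopen}. You must show, e.g., that no nontrivial element of $Q_U$ centralizes an open subgroup of $U^p$; in the free case this needs something like a Gasch\"utz-type description of $V^{\mathrm{ab}}/pV^{\mathrm{ab}}$ as a module. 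Saying this ``follows from slimness'' presupposes it.

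\emph{Sn-elasticity is never treated outside the pro-$p$ core.} Openness of the image of $H$ in every $Q_U$ gives openness of $H$ only with a uniform bound on $[G:H\ker(U\to U^p)]$, which needs, e.g., a Schreier-rank count. Moreover, Proposition \ref{projlim} transfers only the ``very'' variant. That variant is unavailable when $G$ is finitely generated, as for free profinite groups of finite rank and surface groups.

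The field cases are not proved, and your unifying strategy does not reach them. For a number field $L$, the group $G_L^p$ playing the role of $U^p$ is neither free pro-$p$ nor Demu\v{s}kin. For a general Hilbertian $K$, the paper itself notes (Remark \ref{freesubrem}) that it is not known whether almost pro-$p$-maximal quotients of $G_K$ contain any nontrivial free pro-$p$ normal subgroup. So ``solve embedding problems with free pro-$p$ kernels inside $H$'' names no mechanism, and trivial centralizers and non-finite generation are simply asserted. Also, Proposition \ref{hilb1} gives information about subgroups of $G_K$, whereas you need it for their images in the almost pro-$\mathcal{C}$-maximal quotient.

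In the Henselian case, fields of norms produce only a free pro-$p$ normal subgroup of infinite index. Even that requires first passing to a perfect residue field, as in the proof of Proposition \ref{henselfree}, which you omit. Using such a subgroup for an arbitrary subnormal $H$ requires that $H$ meet it nontrivially, i.e.\ Proposition \ref{snintersect}, (ii), for the very group in question. That is precisely where the paper's proof of Proposition \ref{henselfree} invokes Theorem \ref{sneanab}, so this route is circular as a proof of the theorem.

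What is missing is arithmetic input that proves the properties directly for the almost pro-$p$-maximal quotients of $G_K$. For this, \cite{MST} uses, among other things, Proposition \ref{hilb1} (which is \cite{MST}, Lemma 3.9).
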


\vskip.5\baselineskip
\section{Certain isomorphisms between closed subgroups of profinite groups}
In the present section, in certain situations, we show that a certain isomorphism between closed subgroups of profinite groups is induced by an inner automorphism. Especially, families preserving isomorphisms [cf.\ Definition \ref{snfp}] play a crucial role [cf.\ Corollary \ref{innaut}].

In the present section, let $p$ be a prime number.

\begin{lem}\label{freestr}
Let $F$ be a free pro-$p$ group. Then the following hold:
\begin{enumerate}
\item Let $N\subseteq F$ be a normal open subgroup; $x\in F\setminus\ker(F\twoheadrightarrow F^{\mathrm{ab}}/pF^{\mathrm{ab}})$. Write $k$ for the minimum nonnegative integer such that $x^{p^k}\in N$; $a\overset{\mathrm{def}}{=}x^{p^k}$. Then there exist a $p$-power $m$ and $F$-conjugates $a_1=a,\ldots,a_m\in N$ of $a$ such that, if we write $e_i$ for the image of $a_i$ in $N^{\mathrm{ab}}$, then
\begin{itemize}
\item $e_1,\ldots,e_m\in N^{\mathrm{ab}}$ are linearly independent over $\mathbb{Z}_p$;
\item the image of any $F$-conjugate of $a$ in $N^{\mathrm{ab}}$ coincides with some $e_i$.
\end{itemize}
\item Let $x\in F\setminus\ker(F\twoheadrightarrow F^{\mathrm{ab}}/pF^{\mathrm{ab}})$. Write $I\subseteq F$ for the [pro-cyclic] subgroup of $F$ topologically generated by $x$. Let $y\in F$ be an element; $\{N_\lambda\}_{\lambda\in\Lambda}$ a set of normal open subgroups of $F$ such that $\bigcap_{\lambda\in\Lambda}N_{\lambda}=\{1\}$. Suppose that for any $\lambda\in\Lambda$, it holds that $\Image(I\cap N_{\lambda}\to N_{\lambda}^{\mathrm{ab}}/pN_{\lambda}^{\mathrm{ab}})=\Image(yIy^{-1}\cap N_{\lambda}\to N_{\lambda}^{\mathrm{ab}}/pN_{\lambda}^{\mathrm{ab}})$. Then it holds that $I=yIy^{-1}$.
\item Let $I\subseteq F$ be as in (ii). Then it holds that $N_F(I)=I$.
\end{enumerate}
\end{lem}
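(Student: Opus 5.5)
For (i), I would treat $N$ as a free pro-$p$ group and $N^{\mathrm{ab}}$ as a free $\mathbb{Z}_p$-module. The key tool is the Schreier/Gaschütz description of $N^{\mathrm{ab}}$ as a $\mathbb{Z}_p[F/N]$-module. The cleanest way to get it is through the Lyndon (Crowell) exact sequence of the free pro-$p$ group $F$ with its normal subgroup $N$:
\begin{equation*}
0\to N^{\mathrm{ab}}\to \mathbb{Z}_p[[F/N]]\widehat{\otimes}_{\mathbb{Z}_p}\mathbb{Z}_p^{\,\mathrm{rk} F}\to \mathbb{Z}_p[F/N]\to\mathbb{Z}_p\to 0 .
\end{equation*}
In the finitely generated case this is the usual one; in general one uses completed versions.

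Let $C=\langle xN\rangle\subseteq F/N$, which is cyclic of order $p^k$. Since $x$ is nontrivial mod $\Phi(F)$, it can be chosen as part of a basis of $F$. Then the subgroup $\langle x\rangle\cong\mathbb{Z}_p$ is a free factor, and the element $a=x^{p^k}$ maps in $N^{\mathrm{ab}}$ to $\sum_{j<p^k}\bar{x}^j\cdot e_x$ under the Fox-derivative embedding. Here $e_x$ is the basis vector attached to $x$; this is the norm element of $C$ times $e_x$. Conjugation by $g\in F$ acts on $N^{\mathrm{ab}}$ through $gN$. So the images of the $F$-conjugates of $a$ are the elements $g\cdot N_C e_x$, indexed by cosets $(F/N)/C$, and $m=[F/N:C]$ is a $p$-power. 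These elements lie in distinct free summands $g\,\mathbb{Z}_p[C]e_x$ of the free module $\mathbb{Z}_p[F/N]e_x$, and they are nonzero norm elements there. Hence they are $\mathbb{Z}_p$-linearly independent. Both bullets follow, and by the same computation their images in $N^{\mathrm{ab}}/p$ are also linearly independent (and nonzero).

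For (ii), I would argue by compactness. Write $I_\lambda = I\cap N_\lambda$. The hypothesis says that the image of $yI_\lambda y^{-1}$ in $N_\lambda^{\mathrm{ab}}/p$ equals that of $I_\lambda$. Now $I\cap N_\lambda$ is generated by $a_\lambda=x^{p^{k_\lambda}}$. By (i) applied mod $p$, the image of $ya_\lambda y^{-1}$ is $\bar{y}\cdot e_1$, and it must lie in $\mathbb{F}_p^\times e_1$ (the images of conjugates are distinct basis vectors mod $p$). So $\bar{y}\,e_1=c\,e_1$ with $c\neq 0$, and independence of the $e_i$ forces $\bar{y}\in C_\lambda$. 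That is, $y\in IN_\lambda$ for every $\lambda$. Since $\bigcap_\lambda N_\lambda=\{1\}$ and $I$ is closed, I would get $y\in\bigcap_\lambda IN_\lambda=I$. Strictly this needs directedness; I would pass to finite intersections $N_{\lambda_1}\cap\cdots\cap N_{\lambda_r}$, or note that $\bigcap IN_\lambda=I$ holds for any family with trivial intersection by compactness. It then follows that $yIy^{-1}=I$.

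For (iii), let $y\in N_F(I)$. Then the hypothesis of (ii) holds trivially for the family of all normal open subgroups, and the proof of (ii) actually gives $y\in I$. So I would state the conclusion of (ii) in its stronger form $y\in I$ and deduce (iii) immediately.

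The main obstacle is the module computation in (i): identifying the image of $x^{p^k}$ in $N^{\mathrm{ab}}$ as a norm element in a free $\mathbb{Z}_p[F/N]$-summand, and handling infinite rank. I would try to reduce to the case $F=\langle x\rangle * F'$, where $N^{\mathrm{ab}}$ decomposes via the pro-$p$ Kurosh subgroup theorem. In that decomposition the conjugates of $\langle x\rangle\cap N$ appear as distinct free factors indexed by double cosets $N\backslash F/\langle x\rangle$, which gives linear independence directly.
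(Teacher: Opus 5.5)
Your route differs from the paper's, and it is sound for (i) and (iii). The paper reduces to finite rank $r\ge 2$ and identifies $F$ with the pro-$p$ fundamental group of a curve of type $(0,r+1)$, with $x$ generating a cuspidal inertia subgroup. It then quotes (i) and (ii) as standard facts about the cusps of the covers $Y_\lambda$. For (iii) it either cites the combinatorial anabelian literature or uses sn-elasticity: $N_F(I)$ is free pro-$p$ with pro-cyclic normal subgroup $I$, hence pro-cyclic, hence equal to $I$.

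Your Crowell/Fox computation is the algebraic counterpart of that picture. The elements $\bar g N_C e_x$ are the classes of the cusps of the cover lying over the chosen cusp, and disjoint supports correspond to distinct cusps. Your version is self-contained, which is its advantage.

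One point needs to be made explicit, because (ii) depends on it. $\mathbb{Z}_p$-linear independence of the $e_i$ does not by itself give independence of their images in $N^{\mathrm{ab}}/pN^{\mathrm{ab}}$. It does hold here: the cokernel of $N^{\mathrm{ab}}\to\mathbb{Z}_p[F/N]^{X}$ is the augmentation ideal, which is $\mathbb{Z}_p$-free. So $N^{\mathrm{ab}}$ is a direct summand and the embedding stays injective modulo $p$. Alternatively, in your Kurosh variant the $e_i$ are part of a basis of $N^{\mathrm{ab}}$.

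The real gap is the last step of (ii). You correctly obtain $y\in IN_\lambda$ for every $\lambda$, but the family $\{N_\lambda\}$ is not assumed directed, and neither of your patches works.
\begin{itemize}
\item Passing to finite intersections is not available. The hypothesis is only given for each $N_\lambda$ separately, and $y\in IN_1\cap IN_2$ does not imply $y\in I(N_1\cap N_2)$.
\item The claim that $\bigcap_\lambda IN_\lambda=I$ holds for any family with trivial intersection ``by compactness'' is false. Take $\mathbb{Z}_p\times\mathbb{Z}_p$ with $I$ the diagonal and the family $\{p^n\mathbb{Z}_p\times\mathbb{Z}_p\}_{n}\cup\{\mathbb{Z}_p\times p^n\mathbb{Z}_p\}_{n}$. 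The intersection is trivial, but every $I+N_\lambda$ is the whole group. Compactness only handles directed families.
\end{itemize}

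The missing input is group-theoretic. $y\in IN_\lambda$ means the image of $y$ in $F/N_\lambda$ is a power of the image of $x$. So $[y,x]\in N_\lambda$ for every $\lambda$, whence $[y,x]=1$ and $y$ lies in the centralizer $Z_F(x)$. This centralizer is a free pro-$p$ group with nontrivial center, hence pro-cyclic, say topologically generated by $z$. Write $x=z^c$ with $c\in\mathbb{Z}_p$. Since $x\notin\Phi(F)=\ker(F\twoheadrightarrow F^{\mathrm{ab}}/pF^{\mathrm{ab}})$, we must have $c\in\mathbb{Z}_p^{\times}$, so $Z_F(x)=I$ and $y\in I$. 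This proves (ii) for arbitrary families and gives (iii) at once.

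Where the lemma is actually used — in the proof of Theorem~\ref{autfree}, both directly and via Lemma~\ref{inertia} — the families are directed. So your compactness argument would suffice there, but not for the lemma as stated.
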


\begin{proof}
Since any free pro-$p$ group is a direct limit of free pro-$p$ groups of finite rank, we may assume that $F$ is of finite rank, which we denote by $r$. The case where $r\le 1$ is clear. Thus, we may further assume that $r\ge 2$. Let us fix a hyperbolic curve $X$ of type $(0,r+1)$ over $\mathbb{C}$ and an isomorphism $F\overset{\sim}{\to}\pi_1(X)^p$ such that $x\in F$ maps to a topological generator of a cuspidal inertia subgroup of the pro-$p$ surface group $\pi_1(X)^p$ [i.e., a closed subgroup corresponding to a cusp of $X$]. Then $N$ in assertion (i) (respectively, for each $\lambda\in\Lambda$, $N_\lambda$ in assertion (ii)) corresponds to a covering $Y$ of $X$ (respectively, $Y_\lambda$). Then, by writing $a_1=a,\ldots,a_m$ for the [finite] orbit of $a$ under the conjugation action of $F$, assertions (i), (ii) are well-known as properties of cuspidal inertia subgroups of pro-$p$ surface groups. Assertion (iii) is also well-known [cf.\ \cite{CmbGC}, Proposition 1.2, (ii)], or alternatively, since $N_F(I)\subseteq F$ is a free pro-$p$ group with pro-cyclic normal closed subgroup $I\subseteq N_F(I)$, it follows from Proposition \ref{pfreesne} that $N_F(I)$ is pro-cyclic, hence $N_F(I)=I$ by our choice of $I$. This completes the proof of Lemma \ref{freestr}.
\end{proof}

\begin{lem}\label{inertia}
Let $F$ be a free pro-$p$ group; $x,y\in F\setminus\ker(F\twoheadrightarrow F^{\mathrm{ab}}/pF^{\mathrm{ab}})$; $\{N_\lambda\}_{\lambda\in\Lambda}$ a set of normal open subgroups of $F$ such that $\bigcap_{\lambda\in\Lambda}N_{\lambda}=\{1\}$. Write $I_x,I_y$ for the pro-cyclic subgroups topologically generated by $x,y$, respectively. Suppose that for each $\lambda\in\Lambda$, the respective images of $I_x\cap N_{\lambda}$ and $I_y\cap N_{\lambda}$ via the natural surjection $N_{\lambda}\twoheadrightarrow N_{\lambda}^{\mathrm{ab}}/pN_{\lambda}^{\mathrm{ab}}$ coincide. Then it holds that $I_x=I_y$.
\end{lem}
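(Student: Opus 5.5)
The natural strategy is to first show that $I_x$ and $I_y$ are $F$-conjugate, and then invoke Lemma \ref{freestr}, (ii) to conclude that they are in fact equal. As a preliminary reduction, we may shrink $\Lambda$ and assume $\{N_\lambda\}$ is directed: replacing each $N_\lambda$ by finite intersections would change the hypothesis, so instead we note that any open $N$ contains some finite intersection $N_{\lambda_1}\cap\cdots\cap N_{\lambda_s}$. For the final argument, however, it is more convenient to work with the given family directly and to use compactness.

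\textbf{Step 1 (images in a single quotient force conjugacy there).} Fix $\lambda$ and put $N=N_\lambda$. Let $k$ and $k'$ be minimal with $a=x^{p^k}\in N$ and $b=y^{p^{k'}}\in N$. Since $x$ is not in $\ker(F\to F^{\mathrm{ab}}/p)$, the powers $x,\dots,x^{p^{k-1}}$ are not in $N$. Hence $I_x\cap N=\overline{\langle a\rangle}$, and the image of $I_x\cap N$ in $N^{\mathrm{ab}}/pN^{\mathrm{ab}}$ is the line spanned by $\bar e_1$, the image of $a$. Likewise, the image of $I_y\cap N$ is the line spanned by the image of $b$.

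Now use the geometric picture from the proof of Lemma \ref{freestr}. Choose an isomorphism $F\cong\pi_1(X)^p$ with $X$ of type $(0,r+1)$, sending $x$ to a cuspidal generator. In the cover $Y$ corresponding to $N$, the element $a$ generates the inertia subgroup at a cusp of $Y$. By Lemma \ref{freestr}, (i), its image $e_1$ in $N^{\mathrm{ab}}$ is part of the $\mathbb{Z}_p$-linearly independent system of cusp classes $e_1,\dots,e_m$ over $x$.

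The problem is that $y$ need not be cuspidal for this particular presentation. I would circumvent this by a compactness argument instead. For each $\lambda$, consider the closed set
\[
C_\lambda=\{g\in F : \Image(gI_xg^{-1}\cap N_\lambda)=\Image(I_y\cap N_\lambda)\ \text{in}\ N_\lambda^{\mathrm{ab}}/pN_\lambda^{\mathrm{ab}}\}.
\]
By hypothesis $1\in C_\lambda$, so each $C_\lambda$ is nonempty.

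\textbf{Step 2 (reduce to Lemma \ref{freestr}, (ii)).} This step is actually simpler than it first appears. Lemma \ref{freestr}, (ii) concerns $I=I_x$ and $yIy^{-1}$, whereas we have a second element $y$ whose subgroup $I_y$ is unrelated a priori. The key move is to apply the same cusp argument with the roles of the generators exchanged: the condition that $y$ is not in $\ker(F\to F^{\mathrm{ab}}/p)$ means $y$ can also be taken as part of a basis.

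The cleanest route is therefore to choose a basis of $F$ containing $x$, together with an isomorphism to $\pi_1(X)^p$ that makes $x$ cuspidal, and then to show that $y$ also lies in a cuspidal inertia subgroup. Concretely, the plan is to prove that if the mod-$p$ abelianized images of $I_y\cap N$ coincide with a cusp class $e_1$ for all $N$, then $y$ generates an inertia subgroup. This follows from the standard characterization of cuspidal inertia subgroups of pro-$p$ surface groups of genus $0$: a pro-cyclic subgroup is contained in a cuspidal inertia subgroup exactly when, in every open normal subgroup, its intersection maps into the span of cusp classes. Once $I_y$ is known to be cuspidal, it is conjugate to $I_x$, since the two determine the same cusp because their images agree on all levels $N_\lambda$. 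Lemma \ref{freestr}, (ii) then upgrades conjugacy to the equality $I_x=I_y$.

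\textbf{Main obstacle.} The hardest step is showing that $I_y$ is cuspidal, i.e.\ that it is conjugate to $I_x$. The intended route is the cuspidal-characterization argument above, run via the covers $Y_\lambda$ and the linear independence of the $e_i$ from Lemma \ref{freestr}, (i).
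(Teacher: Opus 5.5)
\textbf{There is a genuine gap at exactly the step you flag as the main obstacle: the conjugacy of $I_x$ and $I_y$ is never established.}

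The ``standard characterization of cuspidal inertia subgroups'' you invoke is asserted, not proved, and it does not match what the hypothesis gives you. It is phrased as a condition at every open normal subgroup $N$. The hypothesis only controls the given $N_\lambda$, and, as you note yourself, the condition does not obviously pass to finite intersections such as $N_{\lambda_1}\cap N_{\lambda_2}$. It is also phrased in terms of the span of cusp classes, whereas you only know the mod-$p$ line of one cusp class. Even granting that $I_y$ is cuspidal, deducing from mod-$p$ lines that it lies over the same cusp as $I_x$ needs a further argument you do not give. The closed sets $C_\lambda$ in Step 1 are introduced but never used. So nothing in the proposal actually produces a $g$ with $I_y=gI_xg^{-1}$.

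\textbf{The missing idea} is in your remark that Lemma \ref{freestr}, (ii) concerns $I_x$ and a conjugate, while $I_y$ is ``unrelated a priori''. In fact you may take the conjugating element in Lemma \ref{freestr}, (ii) to be $y$ itself. Since $y$ commutes with every element of $I_y$, conjugation by $y$ acts trivially on the image of $I_y\cap N_\lambda$ in $N_\lambda^{\mathrm{ab}}/pN_\lambda^{\mathrm{ab}}$. By hypothesis this image equals that of $I_x\cap N_\lambda$. Since $N_\lambda$ is normal, $yI_xy^{-1}\cap N_\lambda=y(I_x\cap N_\lambda)y^{-1}$. Hence the images of $yI_xy^{-1}\cap N_\lambda$ and $I_x\cap N_\lambda$ coincide for every $\lambda$. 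Lemma \ref{freestr}, (ii) then gives $yI_xy^{-1}=I_x$. By Lemma \ref{freestr}, (iii), $y\in N_F(I_x)=I_x$, so $I_y\subseteq I_x$, and by symmetry $I_x=I_y$. This is the paper's argument; it needs no cuspidality of $y$ and no separate conjugacy step.
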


\begin{proof}
For each $\lambda\in\Lambda$, since $yI_xy^{-1}\cap N_{\lambda}=y(I_x\cap N_{\lambda})y^{-1}$ and $y(I_y\cap N_{\lambda})y^{-1}=I_y\cap N_{\lambda}$, it follows from our assumption that the respective images of $y^{-1}I_xy\cap N_{\lambda}$ and $I_x\cap N_{\lambda}$ via the natural surjection $N_{\lambda}\twoheadrightarrow N_{\lambda}^{\mathrm{ab}}/pN_{\lambda}^{\mathrm{ab}}$ coincide. Thus, it follows from Lemma \ref{freestr}, (ii), that $yI_xy^{-1}=I_x$. Then it holds that $y\in N_F(I_x)=I_x$ [cf.\ Lemma \ref{freestr}, (iii)], which implies that $I_y\subseteq I_x$. Similarly, it holds that $I_x\subseteq I_y$. This completes the proof of Lemma \ref{inertia}.
\end{proof}

\begin{thm}\label{autfree}
Let $G$ be a profinite group; $F\subseteq G$ a normal closed subgroup of $G$; $\alpha\in\Aut(F)$ a continuous automorphism. Suppose that
\begin{itemize}
\item $F$ is a free pro-$p$ group of [possibly infinite] rank $\ge 2$;
\item $\alpha(N)=N$ for every closed subgroup $N\subseteq F$ of $F$ that is normal in $G$;
\item for every open subgroup $N\subseteq F$ that is normal in $G$, if $I$ is a pro-cyclic subgroup of $N$ such that $I\not\subseteq\ker(N\twoheadrightarrow N^{\mathrm{ab}}/pN^{\mathrm{ab}})$, then there exists $g\in G$ such that $\alpha(I)=gIg^{-1}$.
\end{itemize}
Then $\alpha$ is induced by an inner automorphism of $G$.
\end{thm}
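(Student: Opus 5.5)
Our plan is to build the conjugating element as a limit over the finite levels of $F$, and then to upgrade ``agrees with conjugation on a family of inertia-like generators'' to ``equals conjugation on all of $F$''.

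\textbf{Reduction to a compact limit.} Let $\mathcal{N}$ be the set of open subgroups $N\subseteq F$ that are normal in $G$; note $\bigcap_{N\in\mathcal{N}}N=\{1\}$. Fix $N\in\mathcal{N}$. By the second hypothesis, $\alpha$ preserves $N$ and every $G$-normal closed subgroup of $N$. In particular it preserves the characteristic subgroups $[N,N]N^p$, so it induces an automorphism $\bar\alpha_N$ of the finite $\mathbb{F}_p$-vector space $V_N\overset{\mathrm{def}}{=}N^{\mathrm{ab}}/pN^{\mathrm{ab}}$, and likewise of $F/N$. Define
\[
C_N\overset{\mathrm{def}}{=}\{g\in G \mid \alpha \text{ and } \mathrm{inn}(g) \text{ induce the same maps on } F/N \text{ and on } V_N\}.
\]
These sets are closed and decrease as $N$ shrinks. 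Suppose each $C_N$ is nonempty. Then compactness gives $g\in\bigcap_N C_N$, and $\alpha\circ\mathrm{inn}(g)^{-1}$ acts trivially on every $F/N$, hence is the identity. So the whole task is to prove $C_N\neq\emptyset$ for each $N$.

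\textbf{Producing an element of $C_N$.} Pick $x\in F\setminus\ker(F\twoheadrightarrow F^{\mathrm{ab}}/pF^{\mathrm{ab}})$ whose image in a suitable covering plays the role of a cusp, as in the proof of Lemma~\ref{freestr}. Put $a=x^{p^k}\in N$ as in Lemma~\ref{freestr}(i), and let $I$ be the subgroup generated by $a$. Its image in $V_N$ is a line spanned by $e_1$, and $e_1\neq0$ since the $e_i$ are $\mathbb{Z}_p$-independent.

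The third hypothesis gives $\alpha(I)=gIg^{-1}$ for some $g\in G$. Apply it also to the intersections $I\cap N'$ for smaller $N'\in\mathcal{N}$, which again have nonzero image by Lemma~\ref{freestr}(i). Combining this with Lemma~\ref{inertia} should force $\alpha(x)$ to be conjugate to a generator of $gI_xg^{-1}$. One then uses the freedom to vary the cusp-like generator.

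To make this concrete, realize $F$ (assume finite rank first, then pass to the limit) as $\pi_1(X)^p$ with $X$ of type $(0,r+1)$ and free generators $x_1,\dots,x_r$ that are cuspidal. Since $N^{\mathrm{ab}}$ is generated by images of conjugates of the cuspidal elements $x_i^{p^{k_i}}$ together with genus classes, the hypothesis shows that $\bar\alpha_N$ permutes the lines spanned by the $G$-orbits of the $e$'s in the manner of some element of $G$.

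\textbf{Main obstacle.} The hard step is to choose a single $g$ that works simultaneously for all cuspidal generators and for the genus part. Here the second hypothesis is essential. Compare $\alpha$ and $\mathrm{inn}(g)$ for the $g$ attached to one cusp. Their ``difference'' $\beta=\mathrm{inn}(g)^{-1}\circ\alpha$ fixes the conjugacy class of $I$ and preserves every $G$-normal subgroup. Using $N_F(I)=I$ (Lemma~\ref{freestr}(iii)), one can adjust by an element of $F$ so that $\beta$ fixes $I$.

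We then intend to show that $\beta$ acts on $V_N$ as the action of an element normalizing $I$ modulo $N$. The point is that a linear map of $V_N$ that preserves every $G/N$-submodule and sends each distinguished vector to a scalar multiple of a $G$-translate must be the action of a group element times a scalar. The scalar is then absorbed by an element of $I$, using the $\mathbb{Z}_p[F/N]$-structure from Lemma~\ref{freestr}(i). We expect this linear-algebra step over $\mathbb{F}_p[G/N]$ to be the delicate part, and we would first try it for $G=F$ before handling a general container.
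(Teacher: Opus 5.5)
Your plan has a genuine gap: the step you flag as the delicate part is the entire content of the theorem, and the mechanism you sketch for it would not work.

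\textbf{Where the proposal breaks.} Your compactness reduction is correct, but it only relocates the problem. Showing $C_N\neq\emptyset$ means showing that $\alpha$ agrees with some $\mathrm{inn}(g)$ on the finite group $F/N$ and on $V_N$, and nothing in the proposal proves this. Concretely:
\begin{enumerate}
\item In ``Producing an element of $C_N$'', applying the third hypothesis separately to each $I\cap N'$ gives elements $g_{N'}$ that depend on $N'$. Lemma~\ref{inertia} needs one $g$ that works at all levels at once, so it cannot be invoked. For $I$ alone no lemma is needed, since taking $N=F$ in the hypothesis already gives $\alpha(I)=gIg^{-1}$. The real difficulty is making $g$ uniform over all $F$-conjugates of $I$.
\item The ``linear-algebra principle'' over $\mathbb{F}_p[G/N]$ is unproved. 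By working only in $V_N=N^{\mathrm{ab}}/pN^{\mathrm{ab}}$ you discard the $\mathbb{Z}_p$-information that makes the argument work.
\item The scalar cannot be ``absorbed by an element of $I$''. $F$ acts on $V_N$ through the finite $p$-group $F/N$, hence unipotently, so no element of $F$ acts as a scalar $\lambda\in\mathbb{F}_p^\times\setminus\{1\}$.
\item Even agreement on $V_N$ would not give agreement on $F/N$, which your $C_N$ also requires.
\end{enumerate}

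\textbf{The missing idea.} The paper obtains a single $g$ without any finite-level innerness statement.
\begin{itemize}
\item Fix one cuspidal $x$. For $N\subseteq N_0$, encode all $F$-conjugates $e_{1,N},\dots,e_{m,N}$ of $x^{p^{k_N}}$ in one vector $z_N=\sum_i p^{i-1}e_{i,N}\in N^{\mathrm{ab}}$.
\item The third hypothesis, applied to a lift of $z_N$, gives $(b,g)\in\mathbb{Z}_p^\times\times G$ with $\alpha^{\mathrm{ab}}_{g,N}(z_N)=bz_N$ in the $\mathbb{Z}_p$-module $N^{\mathrm{ab}}$, not merely mod $p$. Compactness makes $(b,g)$ independent of $N$.
\item Since $z_N\equiv e_{1,N}$ mod $p$, Lemma~\ref{inertia} gives $\alpha_g(I_1)=I_1$. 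Hence $\alpha_g$ sends each $e_i$ to some $ce_{j_i}$, and the distinct $p$-adic weights force $j_i=i$ and $b=c$.
\item A second compactness argument gives one $(b,g)$ acting as the scalar $b$ on every $M_N$. Torsion-freeness of $N^{\mathrm{ab}}$ is what makes the relevant sets decrease.
\item The scalar is harmless, because Lemma~\ref{freestr}(ii) only compares lines mod $p$. Hence $\alpha_g(J)=J$ for every $F$-conjugate $J$ of $I$.
\item Therefore $\alpha_g(h)h^{-1}\in\bigcap_J N_F(J)=\bigcap_J J=\{1\}$, so $\alpha_g$ is the identity.
\end{itemize}
Only one conjugacy class of cuspidal subgroups is used. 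No genus part, no other cusps, no absorption of scalars, and no identification of $\alpha$ on $F/N$ is needed.
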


\begin{proof}
Let us fix an element $x\in F\setminus\ker(F\twoheadrightarrow F^{\mathrm{ab}}/pF^{\mathrm{ab}})$. Write $I\subseteq F$ for the [pro-cyclic] subgroup topologically generated by $x$. For each $g\in G$ and each open subgroup $N\subseteq F$ of $F$ that is normal in $G$, we shall write $\alpha_g\in\Aut(F)$ for the automorphism of $F$ determined by $\alpha_g(h)=g^{-1}\alpha(h)g$; $\alpha_{g,N}^{\mathrm{ab}}$ for the automorphism of $N^{\mathrm{ab}}$ determined by $\alpha_g$; $M_N\subseteq N^{\mathrm{ab}}$ for the subgroup of $N^{\mathrm{ab}}$ generated by the image of all $F$-conjugates of $I\cap N$. First, we claim the following:
\begin{quotation}\hypertarget{autfreeclaim}{}
Claim \ref*{autfree}.A: Let $N_0\subseteq F$ be an open subgroup of $F$ that is normal in $G$. Then there exist $g\in G$ and $b\in\mathbb{Z}_p^{\times}$ such that for any $a\in M_{N_0}$, it holds that $\alpha_{g,N_0}^{\mathrm{ab}}(a)=ba$.
\end{quotation}

Indeed, we shall write $k$ for the unique nonnegative integer such that $I\cap N_0$ is topologically generated by $x^{p^k}$. Then it follows from Lemma \ref{freestr}, (i), that there exist $F$-conjugates $x_1=x^{p^k},x_2,\ldots,x_m$ of $x^{p^k}$ such that, if we write $e_i$ for the image of $x_i$ in $N_0^{\mathrm{ab}}$, then $\{e_1,\ldots,e_m\}$ is a linearly independent generator of $M_{N_0}$ over $\mathbb{Z}_p$, and, moreover, it holds that the image of any $F$-conjugate of $x^{p^k}$ in $N_0^{\mathrm{ab}}$ is in $\{e_1,\ldots,e_m\}\subseteq M_{N_0}$. Write $I_i\subseteq N_0$ for the [pro-cyclic] subgroup topologically generated by $x_i$.

Now for each open subgroup $N\subseteq N_0$ of $N_0$ that is normal in $G$, we shall write $k_N$ for the unique nonnegative integer such that $I\cap N$ is topologically generated by $x^{p^{k_N}}$. Then $I_i\cap N$ is topologically generated by $x_i^{p^{k_N-k}}$. For each $i\in\{1,\ldots,m\}$, we shall write $e_{i,N}\in N^{\mathrm{ab}}$ for the image of $x_i^{p^{k_N-k}}$ in $N^{\mathrm{ab}}$. Moreover, we shall write
\begin{equation*}
z_N\overset{\mathrm{def}}{=}\sum_{i=1}^m p^{i-1}e_{i,N}\in N^{\mathrm{ab}};
\end{equation*}
\begin{equation*}
B_N\overset{\mathrm{def}}{=}\{(b,g)\in\mathbb{Z}_p^{\times}\times G\,\vert\,\alpha_{g,N}^{\mathrm{ab}}(z_N)=bz_N\}\subseteq\mathbb{Z}_p^{\times}\times G.
\end{equation*}
Then it follows from our assumption that $B_N$ is a nonempty [closed] subset of $\mathbb{Z}_p^{\times}\times G$. Moreover, if $N'\subseteq N\subseteq N_0$ are open subgroups of $N_0$ that is normal in $G$, then, since the image of $z_{N'}$ via the natural homomorphism $N'^{\mathrm{ab}}\to N^{\mathrm{ab}}$ is $p^{k_{N'}-k_N}z_N$, it holds that $B_{N'}\subseteq B_N$. Thus, since $\mathbb{Z}_p^{\times}\times G$ is compact, it holds that $\bigcap_N B_N\neq\emptyset$.

Now let $(b,g)\in\bigcap_N B_N$. Note that the image of $z_N$ via the natural surjection $N^{\mathrm{ab}}\twoheadrightarrow N^{\mathrm{ab}}/pN^{\mathrm{ab}}$ coincides with the image of $x^{p^{k_N}}$ via the natural surjection $N\twoheadrightarrow N^{\mathrm{ab}}/pN^{\mathrm{ab}}$. Thus, it follows from Lemma \ref{inertia} that $\alpha_g(I_1)=I_1$. In particular, there exists $c\in\mathbb{Z}_p^{\times}$ such that $\alpha_g(e_1)=ce_1$, and, moreover, for each $i\in\{1,\ldots,m\}$, there exists $j_i\in\{1,\ldots,m\}$ such that $\alpha_{g,N_0}(e_i)=ce_{j_i}$. Then it holds that
\begin{equation*}
\sum_{i=1}^m bp^{i-1}e_i=bz_{N_0}=\alpha_{g,N_0}^{\mathrm{ab}}(z_{N_0})=\sum_{i=1}^m cp^{i-1}e_{j_i},
\end{equation*}
which implies that $b=c$ and $j_i=i$. Since $M_{N_0}$ is generated by $e_1,\ldots,e_m$, we conclude that $\alpha_{g,N_0}^{\mathrm{ab}}(a)=ba$ for any $a\in M_{N_0}$. This completes the proof of Claim \hyperlink{autfreeclaim}{\ref*{autfree}.A}.

Now for each open subgroup $N\subseteq F$ of $F$ that is normal in $G$, we shall write
\begin{equation*}
C_N\overset{\mathrm{def}}{=}\{(b,g)\in\mathbb{Z}_p^{\times}\times G\,\vert\,\alpha_{g,N}^{\mathrm{ab}}(a)=ba\text{ for all }a\in M_N\}\subseteq\mathbb{Z}_p^{\times}\times G.
\end{equation*}
Then it follows from Claim \hyperlink{autfreeclaim}{\ref*{autfree}.A} that $C_N$ is a nonempty [closed] subset of $\mathbb{Z}_p^{\times}\times G$. Moreover, if $N'\subseteq N\subseteq F$ are open subgroups of $F$ that are normal in $G$, then it holds that $C_{N'}\subseteq C_N$. Indeed, let $(b,g)\in C_{N'}$. For any $a\in M_N$, there exists $r\in\mathbb{Z}_p\setminus\{0\}$ such that $ra$ is in the image of $M_{N'}\subseteq N'^{\mathrm{ab}}$ via the natural homomorphism $N'^{\mathrm{ab}}\to N^{\mathrm{ab}}$. Then, since it holds that $\alpha_{g,N}^{\mathrm{ab}}(ra)=bra$, it follows from the torsion-freeness of $\mathbb{Z}_p$-module $N^{\mathrm{ab}}$ that $\alpha_{g,N}^{\mathrm{ab}}(a)=ba$, i.e., $(b,g)\in C_N$.

Since $\mathbb{Z}_p^{\times}\times G$ is compact, it holds that $\bigcap_N C_N\neq\emptyset$. Let $(b,g)\in\bigcap_N C_N$. Write $A$ for the set of $F$-conjugates of $I$. Then it follows from Lemma \ref{freestr}, (ii), that $\alpha_g(J)=J$ for any $J\in A$.

Now for any $h\in F$ and $J\in A$, since $h^{-1}Jh\in A$, it holds that
\begin{equation*}
\alpha_g(h)h^{-1}J(\alpha_g(h)h^{-1})^{-1}=\alpha_g(h)\alpha_g(h^{-1}Jh)\alpha_g(h)^{-1}=\alpha_g(J)=J.
\end{equation*}
Thus, it holds that $\alpha_g(h)h^{-1}\in\bigcap_{J\in A}N_F(J)=\bigcap_{J\in A}J=\{1\}$ [cf.\ Lemma \ref{freestr}, (iii)], which implies that $\alpha_g=\id_F$. This completes the proof of Theorem \ref{autfree}.
\end{proof}

\begin{thm}\label{innisom}
Let $n$ be a positive integer, $G$ an $n$-sn-internally indecomposable profinite group; $H,H',F,\widetilde{F}\subseteq G$ closed subgroups of $G$ such that $F\subseteq H\cap H'\cap\widetilde{F}$; $\alpha:H\overset{\sim}{\to}H'$ a continuous isomorphism. Suppose that
\begin{itemize}
\item $\widetilde{F}$ is $(n-1)$-subnormal in $G$;
\item $F$ is a nontrivial free pro-$p$ normal subgroup of $\widetilde{F}$;
\item $\alpha(N)=N$ for every closed subgroup $N\subseteq F$ of $F$ that is normal in $\widetilde{F}$;
\item for every open subgroup $N\subseteq F$ that is normal in $\widetilde{F}$, if $I$ is a pro-cyclic subgroup of $N$ such that $I\not\subseteq\ker(N\twoheadrightarrow N^{\mathrm{ab}}/pN^{\mathrm{ab}})$, then there exists $g\in\widetilde{F}$ such that $\alpha(I)=gIg^{-1}$.
\end{itemize}
Then $\alpha$ is induced by an inner automorphism of $\widetilde{F}$.
\end{thm}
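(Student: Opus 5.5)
The plan is to reduce to Theorem \ref{autfree} applied to the container $\widetilde{F}$ and the normal subgroup $F\subseteq\widetilde{F}$, and then to propagate the resulting identity from $F$ to all of $H$ by means of Proposition \ref{snisom}.

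\textbf{Step 1: $\alpha$ restricts to an automorphism of $F$.} Since $F$ is itself a closed subgroup of $F$ that is normal in $\widetilde{F}$, the third hypothesis gives $\alpha(F)=F$. Hence $\alpha|_F\in\Aut(F)$.

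\textbf{Step 2: $F$ has rank $\ge 2$.} This is needed to invoke Theorem \ref{autfree}. The subgroup $F$ is normal in the $(n-1)$-subnormal subgroup $\widetilde{F}$, so $F$ is a nontrivial $n$-subnormal closed subgroup of $G$. By $n$-sn-internal indecomposability, $Z_G(F)=\{1\}$. If $F$ had rank $1$, then $F\cong\mathbb{Z}_p$ would be abelian and nontrivial, so $F\subseteq Z_G(F)$, a contradiction.

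\textbf{Step 3: apply Theorem \ref{autfree}.} With $\widetilde{F}$ as the ambient group, $F$ as the normal free pro-$p$ subgroup and $\alpha|_F$ as the automorphism, the remaining two hypotheses of Theorem \ref{autfree} are exactly the last two hypotheses of the present statement. So there exists $g\in\widetilde{F}$ with $\alpha(h)=ghg^{-1}$ for all $h\in F$.

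\textbf{Step 4: extend from $F$ to $H$.} Consider the continuous homomorphism
\begin{equation*}
\beta:H\to G,\qquad \beta(h)\overset{\mathrm{def}}{=}g^{-1}\alpha(h)g.
\end{equation*}
Then $\beta$ is the identity on $F$. Moreover $F$ is a nontrivial $n$-subnormal closed subgroup of $G$ contained in $H$. Proposition \ref{snisom} therefore gives $\beta=\id$ on $H$, that is, $\alpha(h)=ghg^{-1}$ for all $h\in H$. Consequently $H'=\alpha(H)=gHg^{-1}$, and $\alpha$ is induced by the inner automorphism of $\widetilde{F}$ defined by $g$.

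\textbf{Main obstacle.} The only genuinely nontrivial step is Step 2. It is where the internal indecomposability of $G$ is used to exclude $F\cong\mathbb{Z}_p$, in which case Theorem \ref{autfree} does not apply. One should also check that the subnormal chain behaves correctly: appending $F\trianglelefteq\widetilde{F}$ to a chain of length $n-1$ for $\widetilde{F}$ gives a chain of length $n$ for $F$, which is the length both Step 2 and Step 4 require. Everything else is a direct transcription of the earlier results.
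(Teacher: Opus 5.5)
Your proof is correct and follows the paper's argument. Like the paper, you use Proposition \ref{snisom} to pass from $F$ to $H$, get rank $\ge 2$ from center-freeness of $F$, and apply Theorem \ref{autfree} to $\alpha|_F$ inside the container $\widetilde{F}$. The only differences are cosmetic: the paper phrases the first step as a reduction to $(n,G,H,H')=(1,\widetilde{F},F,F)$, it gets center-freeness from internal indecomposability of the $(n-1)$-subnormal subgroup $\widetilde{F}$ (equivalent to your $Z_G(F)=\{1\}$), and you make explicit the fact $\alpha(F)=F$ that the paper uses implicitly.
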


\begin{proof}
If there exists $g\in\widetilde{F}$ such that $g^{-1}\alpha(h)g=h$ for any $h\in F$, then it follows from Proposition \ref{snisom} that $g^{-1}\alpha(h)g=h$ for any $h\in H$. Thus, by replacing $(n,G,H,H',\alpha)$ by $(1,\widetilde{F},F,F,\alpha\vert_F:F\overset{\sim}{\to}F)$, we may assume that $n=1$, $G=\widetilde{F}$, $H=H'=F$. Since $G$ is internally indecomposable, the [nontrivial] free pro-$p$ normal closed subgroup $F$ is center-free, hence of rank $\ge 2$. Thus, Theorem \ref{innisom} follows from Theorem \ref{autfree}.
\end{proof}

\begin{rem}\label{innisomrem}
\mbox{}
\begin{enumerate}
\item If $G=F$ (hence $G$ itself is a free pro-$p$ group of rank $\ge 2$), then the two theorems above say that any normal automorphism $\alpha$ of $G$ satisfying the third assumption of Theorem \ref{autfree} is an inner automorphism. On the other hand, by a similar argument to the argument of \cite{NodNon}, Lemma 1.6, we can show that any normal automorphism $\alpha$ of $G$ satisfies the third assumption of Theorem \ref{autfree}. This gives an alternative proof of \cite{JR}, Corollary C in the case where ``$\mathcal{C}$'' in \cite{JR}, Corollary C is the class of all $p$-groups.

Notice that the proof in \cite{JR} is based on representation theoretic considerations, while the proof of Theorem \ref{autfree} is based on anabelian geometric considerations.
\item Theorem \ref{innisom} is a result that provides evidence for the importance of considering [sn-]internal indecomposability.
\item It is natural to pose the following question:
\begin{quotation}
\noindent Question: Can the third assumption of Theorem \ref{autfree} be dropped?
\end{quotation}
However, at the time of writing of the present paper, the authors do not know whether this question is affirmative or not.
\end{enumerate}
\end{rem}

\begin{lem}\label{innlim}
Let $G$ be a profinite group; $H,H'\subseteq G$ closed subgroups; $\alpha:H\to H'$ a continuous homomorphism; $\{G_i\}_{i\in I}$ a directed subset of the set of normal closed subgroups of $G$ [where $j\ge i\Leftrightarrow G_j\subseteq G_i$] such that the natural homomorphism $G\to\varprojlim_{i\in I}G/G_i$ is an isomorphism. For each $i\in I$, write $\phi_i:G\twoheadrightarrow G/G_i$ for the natural surjection. Suppose that $\alpha(H\cap G_i)\subseteq G_i$ for each $i\in I$, and write $\alpha_i:\phi_i(H)\to\phi_i(H')$ for the homomorphism induced by $\alpha$. Then the following conditions are equivalent:
\renewcommand{\labelenumi}{(\arabic{enumi})}
\begin{enumerate}
\item $\alpha$ is induced by an inner automorphism of $G$.
\item For any $i\in I$, $\alpha_i$ is induced by an inner automorphism of $G/G_i$.
\end{enumerate}
\renewcommand{\labelenumi}{(\roman{enumi})}
\end{lem}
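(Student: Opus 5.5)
The implication (1)$\Rightarrow$(2) is formal. Suppose $\alpha(h)=ghg^{-1}$ for all $h\in H$, with $g\in G$. Then for each $i\in I$ and each $h\in H$ we have $\alpha_i(\phi_i(h))=\phi_i(\alpha(h))=\phi_i(g)\phi_i(h)\phi_i(g)^{-1}$. So $\alpha_i$ is induced by the inner automorphism of $G/G_i$ given by $\phi_i(g)$. The hypothesis $\alpha(H\cap G_i)\subseteq G_i$ is used only to know that $\alpha_i$ is well defined.

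For (2)$\Rightarrow$(1) I would use a compactness argument of the same kind as in the proof of Claim \ref{autfree}.A. For each $i\in I$, set
\begin{equation*}
B_i\overset{\mathrm{def}}{=}\{g\in G\,\vert\,\phi_i(\alpha(h))=\phi_i(ghg^{-1})\text{ for all }h\in H\}=\phi_i^{-1}(\{\bar g\in G/G_i\,\vert\,\alpha_i(\bar h)=\bar g\bar h\bar g^{-1}\text{ for all }\bar h\in\phi_i(H)\}).
\end{equation*}
The set $B_i$ has the following properties.
\begin{itemize}
\item It is nonempty, by (2), since any lift of an element of $G/G_i$ inducing $\alpha_i$ lies in $B_i$.
\item It is closed. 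It is the intersection over $h\in H$ of the sets $\{g\,\vert\,\phi_i(g h g^{-1}\alpha(h)^{-1})=1\}$, and each of these is the preimage of the point $1$ under a continuous map $G\to G/G_i$. Here $G/G_i$ is Hausdorff because $G_i$ is closed.
\item The family is decreasing: if $j\ge i$, i.e.\ $G_j\subseteq G_i$, then $\phi_i$ factors through $\phi_j$, so $B_j\subseteq B_i$.
\end{itemize}
Since $I$ is directed, any finite subfamily of $\{B_i\}$ contains some $B_j$, so the family has the finite intersection property. Compactness of $G$ then gives $\bigcap_{i}B_i\neq\emptyset$.

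Now take $g\in\bigcap_i B_i$. For every $h\in H$ the element $ghg^{-1}\alpha(h)^{-1}$ lies in $\bigcap_i G_i$. This intersection is the kernel of $G\to\varprojlim_i G/G_i$, which is trivial because that map is an isomorphism. Hence $\alpha(h)=ghg^{-1}$ for all $h\in H$, which is (1).

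I do not expect a serious obstacle. The only points needing care are the closedness of $B_i$, which relies on the continuity of $\alpha$ and of conjugation, and the use of directedness to pass from the pairwise inclusions $B_j\subseteq B_i$ to the finite intersection property.
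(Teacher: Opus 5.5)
Your proposal is correct and matches the paper's proof: the paper uses the same sets (called $D_i$ there), with $D_j\subseteq D_i$ for $j\ge i$ and compactness of $G$, and then concludes $\alpha(h)^{-1}ghg^{-1}\in\bigcap_{i\in I}G_i=\{1\}$. One small correction: closedness of $B_i$ does not actually use continuity of $\alpha$, since for each fixed $h$ the element $\alpha(h)$ is just a constant.
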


\begin{proof}
The implication $(1)\Rightarrow(2)$ is immediate. We verify the implication $(2)\Rightarrow(1)$. Suppose that condition $(2)$ is satisfied. For each $i\in I$, we shall write
\begin{equation*}
D_i\overset{\mathrm{def}}{=}\{g\in G\,\vert\, \alpha_i(h)=\phi_i(g)h\phi_i(g)^{-1}\text{ for all }h\in\phi_i(H)\}\subseteq G.
\end{equation*}
Then $D_i$ is a nonempty [closed] subset of $G$. Moreover, if $j\ge i$, then it holds that $D_j\subseteq D_i$. Thus, since $G$ is compact, it holds that $\bigcap_{i\in I}D_i\neq\emptyset$.

Let $g\in\bigcap_{i\in I}D_i$. Then for any $h\in H$ and $i\in I$, it holds that $\phi_i(\alpha(h))=\alpha_i(\phi_i(h))=\phi_i(g)\phi_i(h)\phi_i(g)^{-1}=\phi_i(ghg^{-1})$, which implies that $\alpha(h)^{-1}ghg^{-1}\in\bigcap_{i\in I} G_i=\{1\}$. This completes the proof of Lemma \ref{innlim}.
\end{proof}

\begin{thm}\label{innisom2}
Let $G,H,H',\alpha,\{G_i\}_{i\in I},\phi_i,\alpha_i$ be as in Lemma \ref{innlim}; $n$ a positive integer. Suppose that $\alpha$ is an isomorphism. Moreover, for each $i\in I$, suppose that
\begin{itemize}
\item $\alpha(H\cap G_i)=H'\cap G_i$ [hence $\alpha_i$ is an isomorphism];
\item $G/G_i$ is $n$-sn-internally indecomposable;
\item $\phi_i(H)\cap \phi_i(H')$ contains a nontrivial free pro-$p_i$ $n$-subnormal subgroup $F_i$ of $G/G_i$ for some prime number $p_i$;
\item $\alpha_i(S)=S$ for every closed subgroup $S\subseteq F_i$ of $F_i$ that is $n$-subnormal in $G/G_i$;
\item for every open subgroup $U\subseteq F_i$ that is $n$-subnormal in $G/G_i$, if $I$ is a pro-cyclic subgroup of $U$ such that $I\not\subseteq\ker(U\twoheadrightarrow U^{\mathrm{ab}}/pU^{\mathrm{ab}})$, then there exists $g\in G/G_i$ such that $\alpha_i(I)=gIg^{-1}$.
\end{itemize}
Then $\alpha$ is induced by an inner automorphism of $G$.
\end{thm}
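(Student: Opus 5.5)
The plan is to reduce to Lemma \ref{innlim}: it suffices to show that for each $i\in I$, $\alpha_i:\phi_i(H)\overset{\sim}{\to}\phi_i(H')$ is induced by an inner automorphism of $G/G_i$. The hypothesis $\alpha(H\cap G_i)=H'\cap G_i$ guarantees that $\alpha_i$ is a well-defined isomorphism, so Lemma \ref{innlim} applies. Thus we fix $i$ and work in $G/G_i$, which is $n$-sn-internally indecomposable.

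In $G/G_i$ we apply Theorem \ref{innisom} with $n$ replaced by $n+1$, and data
\[
(G,H,H',F,\widetilde{F},\alpha,p) \mapsto (G/G_i,\phi_i(H),\phi_i(H'),F_i,F_i,\alpha_i,p_i).
\]
We need that $G/G_i$ is $(n+1)$-sn-internally indecomposable, or else adjust. If not, we instead use Theorem \ref{innisom} with the same $n$ and $\widetilde{F}=F_i$. This requires $\widetilde{F}$ to be $(n-1)$-subnormal, whereas $F_i$ is only $n$-subnormal, so we must bridge this gap.

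The cleanest route is to redo the reduction in the proof of Theorem \ref{innisom} directly.
\begin{enumerate}
\item $F_i$ is a nontrivial normal subgroup of itself. The subgroups of $F_i$ that are normal in $F_i$ are not necessarily $n$-subnormal in $G/G_i$; however, a closed normal subgroup of an $n$-subnormal subgroup is $(n+1)$-subnormal. So the hypotheses of Theorem \ref{autfree} (with $G=F=F_i$) would need $\alpha_i$ to preserve subgroups normal in $F_i$. The stated hypothesis only gives preservation of $n$-subnormal ones.
\item To fix this, we choose a chain $G/G_i=S_0\supseteq S_1\supseteq\cdots\supseteq S_n=F_i$ of closed subgroups. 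We take $\widetilde{F}:=S_{n-1}$, which is $(n-1)$-subnormal and contains $F_i$ as a normal subgroup. Subgroups of $F_i$ normal in $S_{n-1}$ are $n$-subnormal in $G/G_i$, so they are preserved by $\alpha_i$. Open subgroups of $F_i$ normal in $S_{n-1}$ are likewise $n$-subnormal, so the pro-cyclic conjugacy hypothesis applies, but with a conjugating element $g\in G/G_i$ rather than $g\in S_{n-1}$.
\end{enumerate}

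The main obstacle is this mismatch in the conjugator. Theorem \ref{autfree}, as proved, takes $g$ in the ambient group and obtains $(b,g)$ with $g$ in that ambient group. So I would rerun the proof of Theorem \ref{autfree} verbatim, with ambient group $G/G_i$ for the conjugators but normality only relative to $S_{n-1}$. The compactness arguments (Claim \ref{autfree}.A and the sets $C_N$) go through with $g$ ranging over the compact group $G/G_i$, with $N$ ranging over open subgroups of $F_i$ normal in $S_{n-1}$. Since $F_i$ is normal in $S_{n-1}$, these $N$ are normal in $F_i$, and they still have trivial intersection because $S_{n-1}$ normalizes $F_i$ and open normal subgroups of $S_{n-1}$ meet $F_i$ cofinally. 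The argument yields $g\in G/G_i$ with $g^{-1}\alpha_i(h)g=h$ for all $h\in F_i$.

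Then Proposition \ref{snisom} is applied to the homomorphism $h\mapsto g^{-1}\alpha_i(h)g$ on $\phi_i(H)\supseteq F_i$, with $S=F_i$ nontrivial $n$-subnormal in the $n$-sn-internally indecomposable group $G/G_i$. This gives that $\alpha_i$ is conjugation by $g$ on all of $\phi_i(H)$. Finally, Lemma \ref{innlim} assembles the $\alpha_i$ into an inner automorphism of $G$ inducing $\alpha$.
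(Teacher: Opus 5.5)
Your outer structure matches the paper's. Its proof is the one-line ``Theorem \ref{innisom} plus Lemma \ref{innlim}'', and Theorem \ref{innisom} is itself Theorem \ref{autfree} on $F_i$ followed by Proposition \ref{snisom}. That citation applies verbatim when $F_i$ is normal in $G/G_i$: take $\widetilde{F}=G/G_i$, and the conjugators given by the hypothesis already lie in $\widetilde{F}$. This is the situation in the paper's applications, e.g.\ Theorem \ref{inngal}, where $F_N$ is normal in $Q_N$. You are right that for $n\ge 2$ the choice $\widetilde F=S_{n-1}$ creates a mismatch: Theorem \ref{innisom} needs $g\in S_{n-1}$, but the hypothesis only gives $g\in G/G_i$. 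The paper's one-line proof does not address this either.

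\textbf{The gap.} Your repair does not close this. Rerunning the proof of Theorem \ref{autfree} with $g$ ranging over $G/G_i$ breaks down, because that proof uses throughout that the conjugator normalizes $F$ and each $N$.
\begin{itemize}
\item If $g\notin N_{G/G_i}(N)$, then $\alpha_g=g^{-1}\alpha_i(\,\cdot\,)g$ maps $N$ onto $g^{-1}Ng$, which need not be $N$. So $\alpha^{\mathrm{ab}}_{g,N}$ is not an endomorphism of $N^{\mathrm{ab}}$, and the sets $B_N$, $C_N$ from Claim A are not defined.
\item For a lift $\tilde z_N\in N$ of $z_N$, all the hypothesis gives is $\alpha_i(\tilde z_N)=g\tilde z_N^{\,b}g^{-1}$. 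This is an element of $N$ whose class in $N^{\mathrm{ab}}$ has no evident relation to $bz_N$.
\item Lemma \ref{inertia} can only compare $\alpha_g(I_1)$ with $I_1$ if $\alpha_g(I_1)\subseteq F_i$.
\item The final step, $\alpha_g(h)h^{-1}\in\bigcap_{J\in A}N_{F_i}(J)=\{1\}$, needs $\alpha_g(h)\in F_i$, i.e.\ $g\in N_{G/G_i}(F_i)$. For general $g$ you only land in $\bigcap_J N_{G/G_i}(J)$, about which Lemma \ref{freestr}, (iii), says nothing.
\end{itemize}

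Nothing in the hypotheses lets you move the conjugators into $N_{G/G_i}(F_i)$. If it did, Theorem \ref{autfree} with ambient group $N_{G/G_i}(F_i)\supseteq S_{n-1}$ would apply directly. Indeed, closed subgroups of $F_i$ normal in $N_{G/G_i}(F_i)$ are normal in $S_{n-1}$, hence $n$-subnormal.

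So your argument is complete only when $F_i$ is normal in $G/G_i$, or when the conjugators can be chosen in $S_{n-1}$ or in $N_{G/G_i}(F_i)$. The general $n\ge 2$ case needs an idea you have not supplied.

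A minor omission: Theorem \ref{autfree} also needs $F_i$ to have rank $\ge 2$. This follows from $Z_{G/G_i}(F_i)=\{1\}$, by $n$-sn-internal indecomposability.
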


\begin{proof}
This follows from Theorem \ref{innisom}; Lemma \ref{innlim}.
\end{proof}

\begin{defi}\label{snfp}
Let $n$ be a positive integer; $G$ a profinite group; $H,H'\subseteq G$ closed subgroups; $\alpha:H\overset{\sim}{\to}H'$ a continuous isomorphism.
\begin{enumerate}
\item We shall say that $\alpha$ is \textit{$n$-subnormal} [in $G$] if for every closed subgroup $S\subseteq H\cap H'$ that is $n$-subnormal in $G$, it holds that $\alpha(S)=S$. We shall say that $\alpha$ is \textit{normal} [in $G$] if $\alpha$ is $1$-subnormal [in $G$].
\item (cf.\ \cite{JR}, Introduction) We shall say that $\alpha$ is \textit{families preserving} [in $G$] if for every pro-cyclic subgroup $I\subseteq H$, there exists $g\in G$ such that $\alpha(I)=gIg^{-1}$.
\end{enumerate}
\end{defi}

\begin{rem}\label{snfprem}
\mbox{}
\begin{enumerate}
\item If $H$ and $H'$ are open in $G$, then $\alpha$ is $n$-subnormal in $G$ if and only if $\alpha(S)=S$ for every open subgroup $S\subseteq H\cap H'$ that is $n$-subnormal in $G$.
\item Every families preserving isomorphism in $G$ is normal in $G$.
\item Suppose that $\alpha$ is a families preserving (respectively, an $n$-subnormal) isomorphism in $G$. Then so is $\alpha^{-1}$. Moreover, for any normal closed subgroup $N\subseteq G$ (respectively, normal closed subgroup $N\subseteq G$ contained in $H\cap H'$), the isomorphism $H/(H\cap N)\overset{\sim}{\to}H'/(H'\cap N)$ determined by $\alpha$ [cf.\ (ii)] is families preserving (respectively, $n$-subnormal) in $G/N$.
\end{enumerate}
\end{rem}

\begin{cor}\label{innaut}
Let $n$ be a positive integer; $G$ a profinite group; $H,H'\subseteq G$ closed subgroups; $\alpha:H\overset{\sim}{\to}H'$ a continuous isomorphism; $\{G_i\}_{i\in I}$ a directed subset of the set of closed subgroups of $H\cap H'$ that is normal in $G$ [where $j\ge i\Leftrightarrow G_j\subseteq G_i$] such that the natural homomorphism $G\to\varprojlim_{i\in I}G/G_i$ is an isomorphism. Suppose that for each $i\in I$,
\begin{itemize}
\item $\alpha$ is families preserving and $n$-subnormal in $G$.
\item $G/G_i$ is $n$-sn-internally indecomposable;
\item $(H\cap H')/G_i$ has a nontrivial free pro-$p_i$ closed subgroup that is $n$-subnormal in $G/G_i$ for some prime number $p_i$;
\end{itemize}
Then $\alpha$ is induced by an inner automorphism of $G$.
\end{cor}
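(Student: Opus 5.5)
The plan is to deduce Corollary \ref{innaut} from Theorem \ref{innisom2}, applied to the given directed system $\{G_i\}_{i\in I}$. The work consists in checking the hypotheses of Theorem \ref{innisom2} for each $i\in I$, using the projection $\phi_i:G\twoheadrightarrow G/G_i$ and the induced map $\alpha_i:\phi_i(H)=H/G_i\overset{\sim}{\to}H'/G_i=\phi_i(H')$. Recall that each $G_i\subseteq H\cap H'$ is normal in $G$.

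\textbf{Step 1: $\alpha_i$ is well defined and an isomorphism.} Since $\alpha$ is families preserving, it is normal in $G$ by Remark \ref{snfprem}, (ii). As $G_i\subseteq H\cap H'$ is normal in $G$, we get $\alpha(G_i)=G_i$. Hence $\alpha(H\cap G_i)=\alpha(G_i)=G_i=H'\cap G_i$, which is the first hypothesis of Theorem \ref{innisom2}, and $\alpha_i$ is an isomorphism. The second hypothesis is assumed outright. For the third, let $F_i\subseteq(H\cap H')/G_i$ be the given nontrivial free pro-$p_i$ closed subgroup, $n$-subnormal in $G/G_i$. Then $F_i\subseteq\phi_i(H)\cap\phi_i(H')$.

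\textbf{Step 2: transfer subnormality.} Let $S\subseteq F_i$ be closed and $n$-subnormal in $G/G_i$. Its preimage $\widetilde{S}=\phi_i^{-1}(S)$ is then a closed subgroup of $H\cap H'$ which is $n$-subnormal in $G$: pull back a subnormal chain, which can be taken closed. Since $\alpha$ is $n$-subnormal in $G$, we have $\alpha(\widetilde{S})=\widetilde{S}$, and therefore $\alpha_i(S)=S$. This is the fourth hypothesis.

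\textbf{Step 3: families preservation descends.} By Remark \ref{snfprem}, (iii), $\alpha_i$ is families preserving in $G/G_i$. Concretely, a pro-cyclic $I\subseteq U\subseteq F_i$ lifts to a pro-cyclic $\widetilde I\subseteq H$; a pro-cyclic closed subgroup of a quotient lifts by choosing a preimage of a generator. Then $\alpha(\widetilde I)=g\widetilde Ig^{-1}$ for some $g\in G$, and projecting gives $\alpha_i(I)=\phi_i(g)I\phi_i(g)^{-1}$. This is the fifth hypothesis, and it holds for all pro-cyclic $I$, not just those not contained in the mod-$p$ Frattini kernel.

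With all five hypotheses verified, Theorem \ref{innisom2} yields that $\alpha$ is induced by an inner automorphism of $G$.

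The only points needing care are that preimages of $n$-subnormal subgroups are $n$-subnormal and that pro-cyclic subgroups lift. Both are routine, so I expect no genuine obstacle: the content lies in Theorems \ref{innisom} and \ref{innisom2}.
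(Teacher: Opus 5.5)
Your proposal is correct and is essentially the paper's argument: the paper deduces Corollary \ref{innaut} directly from Theorem \ref{innisom2} together with Remark \ref{snfprem}, (iii). Your Steps 1--3 simply spell out the hypothesis checks that the paper leaves to that remark, namely $\alpha(G_i)=G_i$ via normality, and the descent of $n$-subnormality and of families preservation to $G/G_i$ by pulling back subnormal chains and lifting pro-cyclic subgroups.
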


\begin{proof}
This follows from Theorem \ref{innisom2}; Remark \ref{snfprem}, (iii).
\end{proof}

\vskip.5\baselineskip
\section{Families preserving isomorphisms for groups in anabelian geometry}\label{anabfp}
When applying Theorem \ref{innisom2} or Corollary \ref{innaut}, for a given profinite group, it is important to consider its quotients that are [sn-]internally indecomposable and have free pro-$p$ [sub]normal closed subgroups. In the present section, we discuss this problem for various groups appearing in anabelian geometry, and show that in many cases every families preserving isomorphism is induced by an inner automorphism.

In the present section, let $p$ be a prime number.

First, we deal with the absolute Galois groups and their quotients of various fields. We also provide an application to a family of fields that a weak version of Neukirch-Uchida type result [i.e., the isomorphism class of the absolute Galois group determines the field] were obtained for [cf.\ Remark \ref{uchida}].

\begin{lem}\label{normalize}
Let $G$ be a profinite group; $H\subseteq G$ a closed (respectively, a normal closed) subgroup; $N\subseteq H$ a normal open subgroup of $H$. Then there exist an open (respectively, a normal open) subgroup $U\subseteq G$ of $G$ containing $H$ and a normal open subgroup $V\subseteq U$ of $U$ such that $N=H\cap V$ and that $H/N\overset{\sim}{\to}U/V$.
\end{lem}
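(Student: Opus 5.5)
The plan is to exploit the fact that $N$ is open in the closed subgroup $H$, so it is cut out by an open subgroup of $G$. Concretely, since $N$ is open in $H$ and $H$ carries the subspace topology, there is an open normal subgroup $W\subseteq G$ with $H\cap W\subseteq N$. Indeed, $N$ is a neighborhood of $1$ in $H$, and the open normal subgroups of $G$ form a neighborhood basis of $1$.

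I will then set $U\overset{\mathrm{def}}{=}HW$. This is an open subgroup of $G$ containing $H$, because $W$ is normal and open. In the respectively case, where $H$ is normal in $G$, the product $HW$ is a product of two normal subgroups and so is normal open.

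Next I set $V\overset{\mathrm{def}}{=}NW$. This is a subgroup because $W$ is normal in $G$, and it is open since it contains $W$. To see that $V$ is normal in $U=HW$, take $h\in H$: then $hNWh^{-1}=NW$, since $N$ is normal in $H$ and $W$ is normal in $G$. Conjugation by elements of $W$ also preserves $NW$, since for $w\in W$ we have $wnw'w^{-1}=n(n^{-1}wn)w'w^{-1}\in NW$.

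For $N=H\cap V$, the inclusion $N\subseteq H\cap NW$ is clear. Conversely, if $h=nw\in H$ with $n\in N$ and $w\in W$, then $w=n^{-1}h\in H\cap W\subseteq N$, so $h\in N$.

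Finally, the map $H/N\to U/V$ induced by inclusion is injective precisely because $H\cap V=N$. It is surjective because $U=HW=HV$. Being a continuous bijection of profinite (compact Hausdorff) groups, it is an isomorphism.

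The only mildly delicate point is the choice of $W$ normal in $G$, rather than merely open in $G$. This choice is what makes $NW$ a subgroup normal in $U$, and what makes $U$ normal in $G$ in the respectively case. I do not expect any step to be a genuine obstacle.
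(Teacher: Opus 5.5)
Your proof is correct, and it differs from the paper's in how you produce the $H$-stable subgroup $V$. The paper starts from an open $W\subseteq G$ with $H\cap W=N$ exactly (via Fried--Jarden). It shrinks $W$ to the $H$-stable open subgroup $V=\bigcap_{h\in H}hWh^{-1}$, and takes $U=HV$, namely the preimage of the image of $H/N$ in $N_G(V)/V$. You instead take $W$ open and normal in $G$ with only $H\cap W\subseteq N$, and enlarge it to $V=NW$ with $U=HW=HV$. This needs nothing beyond the fact that open normal subgroups form a neighbourhood basis of $1$, and each verification is a single line.

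Your choice also pays off in the respectively case. There $U=HW$ is visibly normal in $G$. The paper's argument only shows that $U$ is normal in $N_G(V)$, which is not enough in general. For example, with $G=S_3$, $H=N=\{1\}$ and $W$ of order $2$, the paper's recipe returns $U=W$, which is not normal in $G$. So, as you anticipated in your last paragraph, choosing $W$ normal in $G$ is exactly what makes the respectively case work. The paper's construction is fine as it stands in the non-respectively case.
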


\begin{proof}
It follows from \cite{FJ}, Lemma 1.2.5, (b), that there exists an open subgroup $W\subseteq G$ of $G$ such that $N=H\cap W$. Write $V=\bigcap_{h\in H}hWh^{-1}$. Since the number of $G$-conjugates of $W$, hence also the number of $H$-conjugates of $W$, is finite, $V$ is open in $G$. Moreover, for $h\in H$, since $N=hNh^{-1}\subseteq hWh^{-1}$, it holds that $N\subseteq V$. In particular, it holds that $N=H\cap V$.

Now it follows from our choice of $V$ that $H\subseteq N_G(V)$. Thus, we obtain a natural injective homomorphism $H/N\hookrightarrow N_G(V)/V$. Moreover, if $H$ is normal in $G$, then, since $H$ is normal in $N_G(V)$, the image of the natural injective homomorphism $H/N\hookrightarrow N_G(V)/V$ is normal in $N_G(V)/V$. Thus, if we write $U\subseteq N_G(V)$ for the inverse image of $\Image(H/N\hookrightarrow N_G(V)/V)$ via the natural surjection $N_G(V)\twoheadrightarrow N_G(V)/V$, then the pair $(U,V)$ satisfies the desired conditions.
\end{proof}

\begin{prop}\label{nffree}
Let $K$ be a number field; $N\subseteq G_K$ a normal closed subgroup of $G_K$ such that $[G_K:N]$ is not divided by $p^\infty$. Write $Q\overset{\mathrm{def}}{=}G_K/\ker(N\twoheadrightarrow N^p)$. Then the following holds:
\begin{enumerate}
\item $Q$ is strongly sn-internally indecomposable, very sn-elastic, and $Q$ has a nontrivial free pro-$p$ normal closed subgroup. In particular, any almost pro-$p$-maximal quotient of $G_K$ has a nontrivial free pro-$p$ normal closed subgroup.
\item Let $H,H'\subseteq Q$ be closed subgroups. Suppose that $H,H'$ have nontrivial closed subgroups that are normal in $G$. Then every families preserving isomorphism $H\overset{\sim}{\to}H'$ in $G$ is induced by an inner automorphism of $G$.
\end{enumerate}
\end{prop}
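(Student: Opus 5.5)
Write $\Delta \overset{\mathrm{def}}{=}\ker(N\twoheadrightarrow N^p)$, so that $Q=G_K/\Delta$, and write $L\subseteq\overline{\mathbb{Q}}$ for the subfield fixed by $N$.

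\textbf{Step 1: $Q$ as an inverse limit.} The plan for (i) is to realize $Q$ as an inverse limit of almost pro-$p$-maximal quotients of $G_K$. The subgroups $\ker(U\twoheadrightarrow U^p)$, for $U\subseteq G_K$ open normal with $U\supseteq N$, form a directed family of normal closed subgroups of $G_K$. Their intersection is $\Delta$, because $N^p=\varprojlim_U U^p$ when $N=\bigcap U$. Hence
\begin{equation*}
Q\overset{\sim}{\to}\varprojlim_U G_K/\ker(U\twoheadrightarrow U^p).
\end{equation*}
Each term is an almost pro-$\mathcal{C}$-maximal quotient of $G_K$, with $\mathcal{C}$ the $p$-groups, and a number field is Hilbertian. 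So Theorem \ref{sneanab} shows each term is strongly sn-internally indecomposable and sn-elastic.

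\textbf{Step 2: very sn-elasticity.} To pass "very" properties to the limit via Proposition \ref{projlim}, I also need each term not topologically finitely generated. Its pro-$p$ open normal subgroup $U^p=G_{U}^p$ is infinitely generated, since for a number field $H^1(G_{K_U},\mathbb{Z}/p)$ is infinite by Kummer theory and class field theory. Thus Proposition \ref{projlim} gives that $Q$ is strongly sn-internally indecomposable and very sn-elastic.

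\textbf{Step 3: the free pro-$p$ normal subgroup.} The subgroup $N^p=\Gal(L(p)/L)\subseteq Q$ is normal. The hypothesis that $p^\infty\nmid[G_K:N]$ means $\mu_{p^\infty}\not\subseteq L$. I expect $\cdim_p G_L\le 1$ to fail in general, so I would instead pass to a suitable open normal subgroup. Choose $U\supseteq N$ open normal in $G_K$, with associated finite extension $K_U$, such that $p\nmid[G_K:U]\cdot$(the relevant part), i.e.\ such that $[L:K_U]$ is prime to $p$ and $K_U\supseteq\mu_p$ when possible. Then I would show $\Gal(L(p)/L)$ is free pro-$p$ by using the following facts:
\begin{itemize}
\item $L$ is a prime-to-$p$ (in degree) extension of a number field $K'$;
\item $\Gal(K'(p)/K')$ may fail to be free because of the local $\mu_p$ obstructions, but over $L$ one still has infinitely many primes;
\item a cleaner route is to take a nontrivial free subgroup, e.g.\ the $p$-Sylow of an inertia group, which is pro-cyclic, not normal.
\end{itemize}
The more robust plan is to apply the strategy of Proposition \ref{Demushkin1}. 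Locally at a prime $\mathfrak{p}\mid p$, the decomposition group in $N^p$ is $G_{L_{\mathfrak{p}}}^p$, a pro-$p$ Demu\v{s}kin-type or free group of infinite index. Globally, I would instead show $\cdim_p N^p\le 1$ by checking that $H^2(N^p,\mathbb{Z}/p)=0$ in the direct limit over finite layers: classes in $H^2$ of pro-$p$ Galois groups of number fields die in $p^\infty$-extensions that grow the local degrees at every prime by $p^\infty$, and $L(p)$ does this. A pro-$p$ group of $\cdim\le1$ is free, and $N^p$ is nontrivial. For the "in particular", take $N$ open.

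\textbf{Step 4: assertion (ii).} Let $M\subseteq H$ be a nontrivial normal closed subgroup and $F$ the free normal subgroup from (i). By sn-internal indecomposability, $M\cap F\cap M'\neq1$ for the analogous $M'\subseteq H'$ (Proposition \ref{snintersect}); this intersection is free pro-$p$ and normal. Then apply Corollary \ref{innaut} with $G_i=\{1\}$ and $n=1$. The isomorphism is normal by Remark \ref{snfprem}, (ii), and families preserving by hypothesis. This yields innerness.

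\textbf{Main obstacle.} The main obstacle is the freeness of $N^p$ in Step 3, i.e.\ the vanishing of $H^2$ in the limit. I would try first to reduce this to the local statement that $p$-part degrees of the completions are infinite at every prime, using the Hasse principle for $\mathrm{Br}$ / $H^2$ with $\mathbb{Z}/p$-coefficients over $L(\mu_p)$ (a prime-to-$p$ extension).
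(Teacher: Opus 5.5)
Your Steps 1, 2 and 4 are sound and match the paper: $Q$ is an inverse limit of almost pro-$p$-maximal quotients, so Theorem \ref{sneanab} and Proposition \ref{projlim} apply; (ii) then follows from Proposition \ref{snintersect} plus Theorem \ref{innisom}, and your use of Corollary \ref{innaut} with $G_i=\{1\}$ amounts to the same thing.

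The genuine gap is Step 3. The subgroup you try to prove free, $N^p=\Gal(L(p)/L)$, is in general \emph{not} free. Take $N=G_K$ (index $1$), so $N^p=G_K^p$. There are infinitely many finite primes $v$ with $\mu_p\subseteq K_v$. At each of them the decomposition group of $K(p)/K$ is all of $G_{K_v}^p$, by Grunwald--Wang for cyclic extensions of degree $p$. This group is a Demu\v{s}kin group (e.g.\ $\mathbb{Z}_p(1)\rtimes\mathbb{Z}_p$ for $v\nmid p$) and has cohomological dimension $2$, whereas closed subgroups of free pro-$p$ groups are free. Your own remark that the decomposition groups of $N^p$ above $p$ are of Demu\v{s}kin type already contradicts the freeness you want.

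The $H^2$ argument is also misdirected. $H^2(N^p,\mathbb{Z}/p\mathbb{Z})$ is the limit of $H^2(\Gal(L'/L),\mathbb{Z}/p\mathbb{Z})$ over finite subextensions $L'\subseteq L(p)$. It is not the limit of $H^2(G_{L'},\mathbb{Z}/p\mathbb{Z})$; that limit is $H^2(G_{L(p)},\mathbb{Z}/p\mathbb{Z})$ and says nothing about $N^p$. Moreover, the criterion ``$p^\infty$ divides every local degree'' bounds $\cdim_p$ of the absolute Galois group of the field having that property. $L$ does not have it, precisely because $p^\infty\nmid[G_K:N]$.

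The missing idea is to pass to a smaller normal subgroup, of infinite index in $N^p$.
\begin{itemize}
\item First reduce to $K$ totally imaginary when $p=2$. This is harmless: a free normal subgroup of an open normal subgroup of $Q$ yields one of $Q$ by intersecting its conjugates, via Proposition \ref{snintersect} and Theorem \ref{sneanab}.
\item Let $M$ be the compositum of $L$ with the cyclotomic $\mathbb{Z}_p$-extension of $K$. Then $M/K$ is Galois and $N/G_M$ is pro-$p$. Hence the image of $G_M$ in $Q$ is $G_M^p$, a normal closed subgroup of $Q$ contained in $N^p$.
\item Every finite prime has local degree divisible by $p^\infty$ in $M$. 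So $\cdim_p G_M\le 1$ (\cite{NSW}, Corollary 8.1.18), hence $\cdim G_M^p\le 1$ (\cite{Se}, Chapter II, \S 2, Proposition 2), i.e.\ $G_M^p$ is free pro-$p$.
\item Finally, $G_M^p\neq\{1\}$. Since $p^\infty\nmid[G_K:N]$, $M$ is a prime-to-$p$ extension of the cyclotomic $\mathbb{Z}_p$-extension of a number field, and the latter has nontrivial $p$-extensions.
\end{itemize}
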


\begin{proof}
First, we verify assertion (i). It follows from Lemma \ref{normalize} that $\ker(N\twoheadrightarrow N^p)=\bigcap_U\ker(U\twoheadrightarrow U^p)$, where $U$ runs over all normal open subgroups of $G_K$ containing $N$. Thus, since $U^p$, hence also $G_K/\ker(U\twoheadrightarrow U^p)$, is not topologically finitely generated, it follows from Proposition \ref{projlim}; Theorem \ref{sneanab}, that $Q$ is strongly sn-internally indecomposable and very sn-elastic. Thus, it suffices to show that $Q$ has a nontrivial free pro-$p$ normal closed subgroup.

Let $U\subseteq Q$ be a normal open subgroup of $Q$. If $U$ has a nontrivial free pro-$p$ normal closed subgroup $F\subseteq U$, then it follows from Proposition \ref{snintersect}; Theorem \ref{sneanab}, that $\bigcap_{g\in Q}gFg^{-1}\subseteq Q$ is a nontrivial [free pro-$p$] normal closed subgroup of $Q$. Thus, by replacing $K$ by the finite [Galois] extension of $K$ corresponding to a suitable normal open subgroup of $Q$, to verify Proposition \ref{nffree}, we may assume that $K$ is totally imaginary if $p=2$.

Write $L$ for the cyclotomic $\mathbb{Z}_p$-extension of $K$; $M$ for the [Galois] extension of $K$ obtained by compositing $L$ and the [Galois] extension of $K$ corresponding to $N$. Then it follows from \cite{NSW}, Corollary 8.1.18, that $\cdim_p G_M\le 1$. Thus, it follows from \cite{Se}, Chapter II, \S 2, Proposition 2, that $\cdim_p G_M^p\le 1$. On the other hand, since $[G_K:N]$ is not divided by $p^\infty$, it is clear that $G_M$ has a normal open subgroup of index a power of $p$, which implies that $\cdim_p G_M^p\ge 1$. Thus, we conclude that $G_M^p$ is a nontrivial free pro-$p$ group. Now since $[N:G_M]$ is not divided by all prime numbers except $p$, the image of $G_M$ via $N\twoheadrightarrow N^p$, hence also via $G_K\twoheadrightarrow Q$, is identified with $G_M^p$. Thus, $Q$ has a nontrivial free pro-$p$ normal closed subgroup. This completes the proof of assertion (i).

Next, we verify assertion (ii). It follows from assertion (i), together with Proposition \ref{snintersect}, that there exists a nontrivial free pro-$p$ normal closed subgroup $F\subseteq Q$ of $Q$ contained in $H\cap H'$. Thus, it follows from Theorem \ref{innisom}; Remark \ref{snfprem}, (ii), that every families preserving isomorphism $H\overset{\sim}{\to}H'$ in $Q$ is induced by an inner automorphism of $Q$. This completes the proof of assertion (ii), hence also of Proposition \ref{nffree}.
\end{proof}

\begin{prop}\label{henselfree}
Let $K$ be a Henselian discrete valuation field of residue characteristic $p$; $N\subseteq G_K$ a normal open subgroup of $G_K$. Then the almost pro-$p$-maximal quotient of $G_K$ associated to $N$ has a nontrivial free pro-$p$ normal closed subgroup.
\end{prop}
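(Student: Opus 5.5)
The plan is to follow the pattern of Proposition \ref{nffree}. The almost pro-$p$-maximal quotient $Q=G_K/\ker(N\twoheadrightarrow N^p)$ contains $N^p$ as a normal open subgroup. So it suffices to find a normal closed subgroup $M\subseteq G_K$ with $M\subseteq N$ such that the image of $M$ in $N^p$ is a nontrivial free pro-$p$ group. Concretely, I would produce a Galois extension $M'/K$ containing the field $K_N$ corresponding to $N$ with these properties:
\begin{itemize}
\item[(a)] $[G_K:G_{M'}]$ is prime to $p$ "up to finite index", i.e.\ $[N:G_{M'}]$ is not divisible by $p$;
\item[(b)] $\cdim_p G_{M'}\le 1$ while $G_{M'}$ still has a nontrivial $p$-power quotient.
\end{itemize}
Given (a), the image of $G_{M'}$ in $N^p$ is $G_{M'}^p$. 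By (b) and \cite{Se}, Chapter II, \S 2, Proposition 2, $G_{M'}^p$ is free pro-$p$ with $\cdim_p=1$, hence nontrivial. As in the proof of Proposition \ref{nffree}, Proposition \ref{snintersect} together with Theorem \ref{sneanab} allows us to first replace $K$ by finite Galois extensions inside $K_N$; for instance, we may assume $\mu_p\subseteq K$ if convenient.

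The natural candidate for $M'$ is $K_N\cdot K^{\mathrm{ur},(p')}$, the compositum with the maximal unramified prime-to-$p$ extension (i.e.\ adjoining roots of unity of order prime to $p$). Degree condition (a) then holds automatically. Here the argument splits by characteristic.
\begin{itemize}
\item If $\mathrm{char}\,K=p$, then $\cdim_p G_F\le 1$ for every field $F$ of characteristic $p$. In addition, Artin--Schreier theory shows that $H^1(G_{M'},\mathbb{Z}/p)=M'/\wp(M')$ is nonzero (indeed infinite), since $M'$ is a Henselian valued field whose value group is not $p$-divisible. So $G_{M'}^p$ is nontrivial free pro-$p$.
\item If $\mathrm{char}\,K=0$ (mixed characteristic $(0,p)$), $\cdim_p$ is in general $2$, and a different choice is needed.
\end{itemize}

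In mixed characteristic, I would instead take $M'$ to be $K_N$ composed with a suitable arithmetically profinite deeply ramified $\mathbb{Z}_p$-like extension, e.g.\ $K_N(\pi^{1/p^\infty})$ for a uniformizer $\pi$, or the cyclotomic $\mathbb{Z}_p$-extension. By the theory of fields of norms of Fontaine--Wintenberger \cite{W}, \cite{FV}, $G_{M'}$ is isomorphic to the absolute Galois group of a complete discrete valuation field of characteristic $p$. Hence $\cdim_p G_{M'}\le 1$ and $G_{M'}^p$ is a nontrivial (in fact infinite rank) free pro-$p$ group, by the characteristic $p$ case above.

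The obstacle is that such an $M'$ has $[N:G_{M'}]$ divisible by $p^\infty$, so (a) fails. The image of $G_{M'}$ in $N^p$ is then not simply $G_{M'}^p$, and it could be trivial or at least not obviously free. To handle this, I would argue as follows.
\begin{itemize}
\item Let $L\subseteq N^p$ be the image of $G_{M'}$, and consider the closed normal subgroup of $Q$ it generates. This is a closed subgroup of the pro-$p$ group $N^p$.
\item Show that $L$ is nontrivial. Following the first and third authors' earlier MacKenzie--Whaples-type computation, one shows that the maximal pro-$p$ extension of $K_N$ is not contained in $M'$ composed with its finite layers, by exhibiting infinitely many independent Artin--Schreier-type $\mathbb{Z}/p$-extensions of $K_N$ linearly disjoint from $M'$.
\item Show that $L$ has infinite index in $N^p$. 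Then, since $N^p$ is the maximal pro-$p$ quotient of a $p$-adic field's Galois group, it is Demu\v{s}kin (or free), and Proposition \ref{Demushkin1} yields that $L$ is free pro-$p$.
\end{itemize}
I expect the nontriviality step to be the main difficulty. I would first try to construct the required extensions explicitly via $\mathbb{Z}_p$-extensions of $K_N$ other than the one used to build $M'$.
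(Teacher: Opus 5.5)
There is a genuine gap. It has two parts: an inverted index condition, which leads you to misdiagnose the difficulty, and the residue-field issue, which is the real missing idea.

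\textbf{The inverted index condition.} Suppose $G_{M'}\subseteq N$. The image of $G_{M'}$ in $N^p$ equals $G_{M'}^p$ when $M'/K_N$ is pro-$p$, i.e.\ when $M'$ lies in the maximal pro-$p$ extension $K_N(p)$ of $K_N$. In that case the image is $\Gal(K_N(p)/M')$, and $K_N(p)$ is also the maximal pro-$p$ extension of $M'$. This is the situation in Proposition~\ref{nffree}, where $[N:G_M]$ is a power of $p$. If instead $[N:G_{M'}]$ is prime to $p$, the image is all of $N^p$. That is generally not $G_{M'}^p$: already a prime-to-$p$ unramified extension of a $p$-adic field changes the rank of the maximal pro-$p$ quotient. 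So your characteristic-$p$ argument is mis-justified. The conclusion still holds, because $N^p$ itself is free of infinite rank, which is all the paper uses there. In mixed characteristic your ``obstacle'' disappears. Take $M'=K_NK'$ with $K'$ the cyclotomic $\mathbb{Z}_p$-extension of $K$. Then $M'$ is Galois over $K$ and pro-$p$ over $K_N$, so its image in $Q$ is a normal closed subgroup isomorphic to $G_{M'}^p$. One only needs $G_{M'}^p$ to be free and nontrivial. Your planned workaround would not work in this generality anyway. $N^p$ is Demu\v{s}kin or free for $p$-adic local fields, but not for a general Henselian discrete valuation field, so Proposition~\ref{Demushkin1} is unavailable. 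Also, $K_N(\pi^{1/p^\infty})$ is not Galois over $K$, so normality fails for that choice.

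\textbf{The residue field.} The fields-of-norms theory you invoke (\cite{W}; \cite{FV}, Chapter III, \S 5) is for discrete valuation fields with \emph{perfect} residue field. The statement allows any residue field of characteristic $p$. For imperfect residue fields, your claim that $G_{M'}$ is the absolute Galois group of a field of characteristic $p$ is false. For example, for $K=\mathbb{Q}_p\{\{t\}\}$ (residue field $\mathbb{F}_p((t))$) one has $\cdim_p G_K=3$, hence $\cdim_p G_{K'}\ge 2$.

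The paper handles this as follows. It first reduces to $G_K^p$ (replacing $K$ by $K_N$) and to absolute ramification index $\ge p+1$. It then proves Claim~\ref*{henselfree}.A. Fix a nontrivial $g\in G_K^p$ and a $p$-basis $\{t_i\}$ of the residue field. Using \cite{MiTs2}, Lemma 2.4, build towers of weakly unramified Galois extensions of degree $p$ that adjoin $p$-power roots of the $t_i$. Each step is chosen so that $g$ stays in every $G_K^p$-conjugate of the corresponding subgroup. The union $L$ is a weakly unramified pro-$p$ extension with perfect residue field, and $G_L^p$ contains a nontrivial subgroup normal in $G_K^p$.

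Since $L/K$ is infinite and need not be Galois, one cannot simply replace $K$ by $L$. Instead, fields of norms show that $G_{K'L}^p$ is free. Moreover $G_{K'L}^p=G_{K'}^p\cap G_L^p$ contains the intersection of two nontrivial normal closed subgroups of $G_K^p$. This intersection is nontrivial by Proposition~\ref{snintersect}, (ii), and Theorem~\ref{sneanab}. When the residue field is already perfect, your argument with the index condition corrected is exactly the case $L=K$ of this.
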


\begin{proof}
If $K$ is of characteristic $p$, then $N^p$ is a free pro-$p$ group of infinite rank. Thus, we may assume that $K$ is of characteristic $0$. Write $Q$ for the almost pro-$p$-maximal quotient of $G_K$ associated to $N$. If $N^p$ has a nontrivial free pro-$p$ normal closed subgroup $F$, then, since $N^p$ is a normal open subgroup of $Q$, it follows from Proposition \ref{snintersect}; Theorem \ref{sneanab}, that $\bigcap_{g\in Q}gFg^{-1}$ is a nontrivial free pro-$p$ normal closed subgroup of $Q$. Thus, by replacing $K$ by the finite Galois extension of $K$ corresponding to $N$, it suffices to show that $G_K^p$ has a nontrivial free pro-$p$ normal closed subgroup.

Now there exists a finite [Galois] subextension $L$ of the cyclotomic $\mathbb{Z}_p$-extension of $K$ such that the absolute ramification index of $L$ is $\ge p+1$. If $G_L^p$ has a nontrivial free pro-$p$ normal closed subgroup $F$, then it follows from Proposition \ref{snintersect}; Theorem \ref{sneanab}, that $\bigcap_{g\in G_K^p}gFg^{-1}$ is a nontrivial free pro-$p$ normal closed subgroup of $G_K^p$. Thus, by replacing $K$ by $L$, we may assume that the absolute ramification index of $K$ is $\ge p+1$.

Now we claim the following:

\begin{quotation}\hypertarget{henselfreeclaim}{}
Claim \ref*{henselfree}.A: There exists a weakly unramified pro-$p$ extension $L$ of $K$ such that the residue field of $L$ is perfect, and that $G_L^p$ has a nontrivial closed subgroup that is normal in $G_K^p$.
\end{quotation}

Indeed, let us fix a nontrivial element $g\in G_K^p$. Write $k$ for the residue field of $K$. Let $\{t_i\in k\,\vert\, i\in I\}$ be a $p$-basis of $k$. We may assume that $I$ is nonempty. For each $(i,j)\in I\times\mathbb{Z}_{\ge 0}$, write $k_{i,j}$ for the field extension of $k$ generated by $p^j$-th roots of $t_i$ over $k$ [hence $k_{i,0}=k$].

Let $i\in I$. We prove that there exists a sequence of finite weakly unramified $p$-extensions $K_{i,0}\subseteq K_{i,1}\subseteq K_{i,2}\subseteq\cdots$ of $K$ satisfying the following for each $j\ge 0$:
\begin{itemize}
\item the residue field of $K_{i,j}$ is $k_{i,j}$;
\item $g\in\bigcap_{\sigma\in G_K^p}\sigma G_{K_{i,j}}^p\sigma^{-1}$.
\end{itemize}
Write $K_{i,0}\overset{\mathrm{def}}{=}K$.  For each $j\ge 1$, suppose that we have already obtained $K_{i,j-1}$ satisfying the desired conditions. Let $\{\sigma_1,\ldots,\sigma_m\}$ be a left transversal of $G_{K_{i,j-1}}^p$ in $G_K^p$. Write $F\subseteq G_K^p$ for the closed subgroup of $G_K^p$ topologically generated by $\sigma_1^{-1}g\sigma_1,\ldots,\sigma_m^{-1}g\sigma_m$. Then, since we assume that $g\in\bigcap_{\sigma\in G_K^p}\sigma G_{K_{i,j-1}}^p\sigma^{-1}$, it holds that $F\subseteq G_{K_{i,j-1}}^p$. Thus, in light of our assumption on the absolute ramification index of $K$, by applying \cite{MiTs2}, Lemma 2.4, where we take ``$K$'' to be $K_{i,j-1}$, we obtain a weakly unramified Galois extension $K_{i,j}$ of $K_{i,j-1}$ of degree $p$ such that the residue field of $K_{i,j}$ is $k_{i,j}$, and that $F\subseteq G_{K_{i,j}}^p$. Then it holds that $g\in\bigcap_{\sigma\in G_K^p}\sigma G_{K_{i,j}}^p\sigma^{-1}$. Therefore, we obtain a desired sequence of extensions of $K$ by induction on $j$.

Now write $L\overset{\mathrm{def}}{=}\bigcup_{i,j}K_{i,j}$. Then $L$ is a weakly unramified pro-$p$ extension of $K$ such that the residue field of $L$ is perfect. Moreover, the closed subgroup
\begin{equation*}
\bigcap_{\sigma\in G_K^p}\sigma G_L^p\sigma^{-1}=\bigcap_{\sigma\in G_K^p}\bigcap_{i,j}\sigma G_{K_{i,j}}^p\sigma^{-1}
\end{equation*}
of $G_L^p$ contains $g$, hence nontrivial. This completes the proof of Claim \hyperlink{henselfreeclaim}{\ref*{henselfree}.A}.

Now let us write $K'$ for the cyclotomic $\mathbb{Z}_p$-extension of $K$. Since the ramification index of $K'L/L$ is infinite, the inertia subgroup $I\subseteq\Gal(K'L/L)\cong\mathbb{Z}_p$ is infinite, hence open in $\Gal(K'L/L)$. Thus, if we write $M$ for the finite Galois extension of $L$ corresponding to $I\subseteq\Gal(K'L/L)$, then $K'L/M$ is a totally ramified $\mathbb{Z}_p$-extension. Thus, it follows from the theory of fields of norms [cf.\ \cite{FV}, Chapter III, \S 5, Exercises, 1; \cite{FV}, Chapter III, (5.5), Theorem; \cite{FV}, Chapter III, (5.7), Theorem] that $G_{K'L}$ is isomorphic to $G_{k_M((t))}$, where $k_M$ is the residue field of $M$. Thus, $G_{K'L}^p$ is a free pro-$p$ group. Since $G_{K'}^p$ is a normal closed subgroup of $G_K^p$ and $G_L^p$ has a nontrivial closed subgroup that is normal in $G_K^p$, it follows from Proposition \ref{snintersect}; Theorem \ref{sneanab}, that $G_{K'L}^p=G_{K'}^p\cap G_L^p$ has a nontrivial normal closed subgroup of $G_K^p$. This completes the proof of Proposition \ref{henselfree}.
\end{proof}

\begin{defi}\label{symquot}
Let $G$ be a profinite group; $Q$ a quotient of $G$ [in the category of profinite groups]; $l$ a prime number.
\begin{enumerate}
\item Write $G_{\mathrm{sym}}$ for the intersection of all normal open subgroups $U\subseteq G$ such that $G/U$ is isomorphic to a symmetric group.
\item Let $U,N\subseteq G$ normal open subgroups of $G$ such that $N$ is contained in $U$. If the kernel of the surjection $G\twoheadrightarrow Q$ coincides with the kernel of the natural surjection $N\cap U_{\mathrm{sym}}\twoheadrightarrow(N\cap U_{\mathrm{sym}})^l$, then we shall say that $Q$ is an \textit{almost symmetric pro-$l$-maximal quotient} of $G$ associated to $(U,N)$.
\item We shall say that $Q$ is an \textit{almost symmetric pro-$l$-maximal quotient} of $G$ if it is the almost symmetric pro-$l$-maximal quotient of $G$ associated to $(U,N)$ for some $U,N\subseteq G$ as in (ii).
\end{enumerate}
\end{defi}

\begin{rem}\label{symquotrem}
It is clear that $G_{\mathrm{sym}}$ is characteristic in $G$. In particular, for $U,N\subseteq G$ as in Definition \ref{symquot}, (ii), the closed subgroup $U_{\mathrm{sym}}$, hence also $N\cap U_{\mathrm{sym}}$, is normal in $G$. Moreover, the kernel of the natural surjection $N\cap U_{\mathrm{sym}}\twoheadrightarrow(N\cap U_{\mathrm{sym}})^l$ is normal in $G$, since it is characteristic in $N\cap U_{\mathrm{sym}}$.
\end{rem}

\begin{prop}\label{hilbsym}
Let $K$ be a Hilbertian field; $Q$ an almost symmetric pro-$p$-maximal quotient of $G_K$. Then $Q$ is strongly sn-internally indecomposable, very sn-elastic, and $Q$ has a nontrivial free pro-$p$ normal closed subgroup.
\end{prop}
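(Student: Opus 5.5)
We follow the pattern of the proof of Proposition \ref{nffree}. Write $Q$ for the almost symmetric pro-$p$-maximal quotient of $G_K$ associated to $(U,N)$, so $Q=G_K/\ker(M\twoheadrightarrow M^p)$ with $M\overset{\mathrm{def}}{=}N\cap U_{\mathrm{sym}}$, a normal closed subgroup of $G_K$ (Remark \ref{symquotrem}). The plan has two parts: first the anabelian properties of $Q$, then the free pro-$p$ normal subgroup.

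\emph{Anabelian properties.} Write $U_{\mathrm{sym}}$ as the intersection of the normal open subgroups $V\subseteq U$ with $U/V$ symmetric. The open subgroups $N\cap V_1\cap\cdots\cap V_r$ (finite intersections) form a directed system of normal open subgroups of $G_K$ with intersection $M$. By Lemma \ref{normalize} and the characteristic nature of the pro-$p$ kernels, $\ker(M\twoheadrightarrow M^p)$ is the intersection of the kernels $\ker(W\twoheadrightarrow W^p)$ over these open $W$. Hence $Q=\varprojlim G_K/\ker(W\twoheadrightarrow W^p)$, where each term is an almost pro-$p$-maximal quotient of $G_K$. By Theorem \ref{sneanab}, each such quotient is strongly sn-internally indecomposable and sn-elastic. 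Each is also not topologically finitely generated, since $W^p$ is free of infinite rank for a Hilbertian field: every finite embedding problem with $p$-group kernel is solvable, and $W$ corresponds to a Hilbertian field. Proposition \ref{projlim} then gives the first two properties for $Q$.

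\emph{Free pro-$p$ normal subgroup.} By Proposition \ref{snintersect} and Theorem \ref{sneanab}, applied as in the proof of Proposition \ref{nffree}, it suffices to find such a subgroup in some open normal subgroup of $Q$. So we look at the image of $M$, namely $M^p$, and aim to show that $M^p$ itself is a free pro-$p$ group. Let $L$ be the field fixed by $M$, and let $K_U$ and $K_N$ be the fields fixed by $U$ and $N$. Then $L$ is the compositum of $K_N$ with all Galois extensions of $K_U$ having symmetric Galois group. By Tsen-type freeness criteria, it suffices to show that every finite embedding problem for $G_L^p$ with $p$-group kernel is solvable, i.e.\ that $\cdim_p G_L\le1$, or at least that $H^2(G_L^p,\mathbb{F}_p)=0$.

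\emph{Main obstacle: the vanishing of $H^2$.} Take a class in $H^2(G_L,\mathbb{F}_p)$ (after adjoining $\mu_p$, this is the $p$-torsion of the Brauer group, or more generally an embedding problem). It is defined over some finite subextension $L_0$. We want to kill it in $L$ by passing to a symmetric extension. The tool is Hilbertianity: $K_U$ is Hilbertian, and so is every finite extension of it. Hence for a large $n$ there is a Galois extension of $K_U$ with group $S_n$, linearly disjoint from $L_0$ and specialized from a generic polynomial. We want to choose it so that its compositum with $L_0$ splits the given embedding problem, for instance by realizing an $S_n$-extension whose $A_n$-part contains a solution, using that $S_n$ contains all $p$-groups of bounded order as subgroups.

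This step is the hard one, and my first attempt would be to use the fact that a finite embedding problem over a Hilbertian field becomes solvable after a suitable symmetric base change. Concretely: solve the embedding problem geometrically over $K_U(t)$ via a regular $S_n$-cover containing the kernel, and then specialize by Hilbert irreducibility. Nontriviality of $M^p$ follows since $G_L$ still admits $\mathbb{Z}/p$ extensions; $L$ is Hilbertian by Proposition \ref{hilb1}, being the fixed field of a proper open subgroup of a subnormal subgroup, if necessary after passing to an open subgroup.
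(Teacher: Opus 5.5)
Your treatment of the anabelian properties matches the paper: write $Q$ as an inverse limit of almost pro-$p$-maximal quotients and apply Proposition \ref{projlim} and Theorem \ref{sneanab}. The core of the proposition, however, is left unproved: that $(N\cap U_{\mathrm{sym}})^p$ is a nontrivial free pro-$p$ group. You correctly reduce this to $\cdim_p(N\cap U_{\mathrm{sym}})\le 1$, and you label that step ``the hard one''. But you then offer only a heuristic, and as stated it does not work. Specializing a regular $S_n$-cover of $K_U(t)$ by Hilbert irreducibility produces some $S_n$-extension of $K_U$, but nothing connects it to the given class in $H^2(G_{L_0},\mathbb{F}_p)$ or to the given embedding problem. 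Nor is the choice of a linearly disjoint $S_n$-extension ``whose $A_n$-part contains a solution'' justified.

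The missing idea is the theorem the paper cites as \cite{FJ}, Theorem 18.10.4: for a Hilbertian field $F$, the compositum $F_{\mathrm{symm}}$ of all Galois extensions of $F$ with symmetric Galois group is a Hilbertian PAC field. Apply it to $F=K_U$, which is Hilbertian as a finite extension of $K$, and whose $F_{\mathrm{symm}}$ is the fixed field of $U_{\mathrm{sym}}$. Combine this with Ax's theorem that PAC fields have projective absolute Galois groups (\cite{FJ}, Corollary 11.6.8). Then $\cdim_p(N\cap U_{\mathrm{sym}})\le\cdim_p U_{\mathrm{sym}}\le 1$ follows at once, and your reduction to $\cdim_p\le 1$ of the maximal pro-$p$ quotient finishes the argument. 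Your intuition of killing classes by a symmetric base change is in effect what this theorem delivers. It is a deep result, though, and your sketch does not substitute for it.

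Three further points need repair.

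\begin{itemize}
\item Your justification that $W^p$ is free of infinite rank, because Hilbertian fields solve all embedding problems with $p$-group kernel, is false. $\mathbb{Q}$ is Hilbertian, yet $\mathbb{Q}(\sqrt{-1})$ does not embed in a cyclic quartic extension of $\mathbb{Q}$. Also, complex conjugation gives an element of order $2$ in $G_{\mathbb{Q}}^2$. What you actually need, that $W^p$ is not topologically finitely generated, follows instead from the existence of infinitely many linearly disjoint $\mathbb{Z}/p$-extensions of the Hilbertian fixed field of $W$.
\item The $V_i$ are only normal in $U$. Intersect over their finitely many $G_K$-conjugates to obtain $W$ normal in $G_K$.
\item Your nontriviality argument is incomplete. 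Proposition \ref{hilb1} applies only when $M=N\cap U_{\mathrm{sym}}$ is a proper open subgroup of $U_{\mathrm{sym}}$. When $N\supseteq U_{\mathrm{sym}}$, one has $M=U_{\mathrm{sym}}$, and you cannot pass to an open subgroup of $M$, because $Q$ and hence $M^p$ are fixed. A proper open subgroup of $M$ having a $\mathbb{Z}/p$-quotient does not give one for $M$. The paper instead uses that the fixed field of $M$ is a finite extension of the Hilbertian field $(K_U)_{\mathrm{symm}}$. Hence it is Hilbertian (\cite{FJ}, Corollary 12.2.3), hence it has a $\mathbb{Z}/p$-extension. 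This again relies on the Hilbertian half of the same theorem.
\end{itemize}
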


\begin{proof}
Every open subgroup $W\subseteq Q$ of $Q$ is isomorphic to the inverse limit of some family of almost pro-$p$-maximal quotients of the inverse image of $W$ in $G_K$. Thus, it follows from Proposition \ref{projlim}; Theorem \ref{sneanab}; \cite{FJ}, Corollaries 12.2.3, 16.3.6, that $Q$ is strongly sn-internally indecomposable and very sn-elastic. Moreover, there exist open subgroups $N,U\subseteq G_K$ of $G_K$ such that $N\subseteq U$ and that $Q$ contains a normal closed subgroup isomorphic to $(N\cap U_{\mathrm{sym}})^p$. Thus, it suffices to show that $(N\cap U_{\mathrm{sym}})^p$ is a nontrivial free pro-$p$ group, i.e., $\cdim_p(N\cap U_{\mathrm{sym}})^p=1$.

It follows from \cite{FJ}, Corollary 12.2.3; \cite{FJ}, Theorem 18.10.4, that the Galois extension of $K$ associated to $U_{\mathrm{sym}}$ is a Hilbertian pseudo algebraically closed field. In particular, it follows from \cite{FJ}, Corollary 11.6.8, that $\cdim_p(N\cap U_{\mathrm{sym}})\le\cdim_p U_{\mathrm{sym}}\le 1$. Thus, it follows from \cite{Se}, Chapter II, \S 2, Proposition 2, that $\cdim_p(N\cap U_{\mathrm{sym}})^p\le 1$. On the other hand, since the Galois extension of $K$ associated to $N\cap U_{\mathrm{sym}}$ is Hilbertian [cf.\ \cite{FJ}, Corollary 12.2.3], $N\cap U_{\mathrm{sym}}$ has a normal open subgroup of index $p$, which implies that $\cdim_p(N\cap U_{\mathrm{sym}})^p\ge 1$. This completes the proof of Proposition \ref{hilbsym}.
\end{proof}

\begin{rem}\label{freesubrem}
At the time of writing of the present paper, the authors do not know whether any almost pro-$p$-maximal quotient of $G_K$ has a nontrivial free pro-$p$ normal closed subgroup or not, where $K$ is a Hilbertian field.
\end{rem}

\begin{thm}\label{inngal}
Let $G$ be a profinite group; $\mathcal{C}$ a full-formation such that $\mathbb{Z}/p\mathbb{Z}$ is a $\mathcal{C}$-group. Suppose that $G$ is isomorphic to one of the following:
\renewcommand{\labelenumi}{(\alph{enumi})}
\begin{enumerate}
\item an almost pro-$\mathcal{C}$-maximal quotient of $G_K$, or an almost symmetric pro-$p$-maximal quotient of $G_K$, where $K$ is a number field or a Henselian discrete valuation field of residue characteristic $p$;
\item $G_K$ or an almost symmetric pro-$p$-maximal quotient of $G_K$, where $K$ is a Hilbertian field.
\end{enumerate}
Then the following hold:
\renewcommand{\labelenumi}{(\roman{enumi})}
\begin{enumerate}
\item $G$ is strongly sn-internally indecomposable.
\item Let $H,H'\subseteq G$ be closed subgroups. Suppose that $H,H'$ have nontrivial closed subgroups that are normal in $G$. Then every families preserving isomorphism $H\overset{\sim}{\to}H'$ in $G$ is induced by an inner automorphism of $G$.
\end{enumerate}
\end{thm}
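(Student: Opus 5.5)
For assertion (i), I will reduce every case to Theorem \ref{sneanab}, combined with Propositions \ref{projlim} and \ref{hilbsym}. In case (a) with an almost pro-$\mathcal{C}$-maximal quotient, and in case (b) with $G=G_K$ (the almost pro-$\mathcal{C}$-maximal quotient for $\mathcal{C}$ the class of all finite groups and $N=G_K$), strong sn-internal indecomposability is exactly Theorem \ref{sneanab}. For almost symmetric pro-$p$-maximal quotients of Hilbertian fields, it is Proposition \ref{hilbsym}. For almost symmetric pro-$p$-maximal quotients in the number field or Henselian case, I will argue as in the proof of Proposition \ref{hilbsym}: such a $Q$ is an inverse limit of almost pro-$p$-maximal quotients, and Theorem \ref{sneanab} applies to each of them (note that $\mathbb{Z}/p\mathbb{Z}$ lies in the class of $p$-groups). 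Proposition \ref{projlim} then gives the conclusion.

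For assertion (ii), the strategy is to apply Corollary \ref{innaut} with $n=1$ to a suitable cofinal system $\{G_i\}$ of closed subgroups of $H\cap H'$ that are normal in $G$. By Proposition \ref{snintersect}, (ii), and (i), $H\cap H'$ contains a nontrivial normal closed subgroup $S$ of $G$. By Remark \ref{snfprem}, (ii), a families preserving $\alpha$ is normal in $G$. It therefore suffices to exhibit normal closed subgroups $G_i\subseteq S$ with $\bigcap G_i=\{1\}$ and $G\overset{\sim}{\to}\varprojlim G/G_i$, such that each $G/G_i$ is internally indecomposable and $S/G_i$ contains a nontrivial free pro-$p_i$ subgroup that is normal in $G/G_i$.

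The natural choice is $G_i=\ker(N_i\cap S\twoheadrightarrow(N_i\cap S)^{p})$ or its symmetric variant $\ker(N_i\cap U_{i,\mathrm{sym}}\cap S\twoheadrightarrow(\cdots)^p)$, where $N_i$ runs over the normal open subgroups of $G$ (intersected with $S$). Each $G/G_i$ is then an extension of a finite group by an almost pro-$p$ quotient of $S$, and it has to contain a nontrivial free pro-$p$ normal subgroup, namely the image of $(N_i\cap S)^p$. Freeness would come from Proposition \ref{nffree}, (i) for number fields, from Proposition \ref{henselfree} in the Henselian case, and from Proposition \ref{hilbsym} in the Hilbertian case. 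Hilbertianity of the fixed field of $N_i\cap S$ itself follows from Proposition \ref{hilb1}. Internal indecomposability of $G/G_i$ would follow from the same sources, using Proposition \ref{projlim} together with Theorem \ref{sneanab} applied to the relevant field.

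Since $G_i\subseteq S\subseteq H\cap H'$, Corollary \ref{innaut} applies once $\bigcap_i G_i=\{1\}$. The main obstacle is exactly this separation property: it requires that the pro-$p$ (or symmetric pro-$p$) quotients of the open subgroups $N_i\cap S$ jointly separate the points of $G$. I would first establish it by taking $N_i$ finer and finer, so that any nontrivial $g\in S$ eventually survives in some $(N_i\cap S)^p$. This holds for the Hilbertian case and for $G_K$, by the abundance of $\mathbb{Z}/p$-extensions of Hilbertian fields (Proposition \ref{hilb1}) and the freeness results above. For almost pro-$\mathcal{C}$ quotients, I would instead use the inverse-system formulation via Lemma \ref{innlim}, treating each pro-$p$ level separately. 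If this separation fails for some $\mathcal{C}$, I would fall back on quotients indexed by the primes $\ell$ for which $\mathbb{Z}/\ell\mathbb{Z}\in\mathcal{C}$.
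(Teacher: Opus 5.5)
Your argument for (i) is fine and is essentially the paper's: Theorem~\ref{sneanab}, Proposition~\ref{hilbsym}, and an inverse-limit argument via Proposition~\ref{projlim}.

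The gap is in (ii), at the step where you say that $S/G_i$ contains a nontrivial free pro-$p$ subgroup normal in $G/G_i$, namely the image of $(N_i\cap S)^p$.

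\begin{itemize}
\item Propositions~\ref{nffree}, \ref{henselfree}, \ref{hilbsym} do not say this. They only produce \emph{some} nontrivial free pro-$p$ normal closed subgroup, and only in quotients attached to normal \emph{open} subgroups of $G_K$ (or, in Proposition~\ref{nffree}, to normal closed subgroups of index not divisible by $p^\infty$). Your $N_i\cap S$ is in general neither, because $S$ is an arbitrary nontrivial normal closed subgroup. For the same reason, $G/G_i$ is not an extension of a finite group by an almost pro-$p$ quotient of $S$ unless $S$ is open.
\item The claim is also false. Take $K$ a $p$-adic local field, $G=G_K$ (with $\mathcal{C}$ the class of all finite groups) and $H=H'=S=G$. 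Then $(N_i\cap S)^p=G_{K_i}^p$, which is a Demu\v{s}kin group, hence not free, as soon as $\mu_p\subseteq K_i$.
\item For a general Henselian discrete valuation field, even finding one free normal subgroup needs the field-of-norms argument of Proposition~\ref{henselfree}, and that argument is not available for your non-open kernels.
\item In the Hilbertian case with the non-symmetric kernels, you are already (for open $S$) in the situation left open in Remark~\ref{freesubrem}. Your symmetric variant can be justified when $G=G_K$ with $K$ Hilbertian: closed subgroups of $U_{\mathrm{sym}}$ have $\cdim_p\le 1$, and Proposition~\ref{hilb1} gives nontriviality for small $N_i$. This does not reach case (a).
\item The separation property you single out as the main obstacle is automatic: $G_i\subseteq N_i$ and $\bigcap_i N_i=\{1\}$.
\end{itemize}

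The missing idea is not to force the kernels into $H\cap H'$.

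\begin{itemize}
\item The paper uses quotients $Q_N$ of $G$ attached to normal open subgroups $N$ of $G_K$. For these, Propositions~\ref{nffree}, \ref{henselfree}, \ref{hilbsym} supply a nontrivial free pro-$p$ normal closed subgroup $F_N\subseteq Q_N$, not necessarily inside the image of $H\cap H'$.
\item It fixes nontrivial normal closed subgroups $N_1\subseteq H$, $N_1'\subseteq H'$. It keeps only the cofinal family of $N$ with $N_1\cap N_1'\not\subseteq N$, so that the image of $N_1\cap N_1'$ in $Q_N$ is nontrivial.
\item Internal indecomposability of $Q_N$ and Proposition~\ref{snintersect}, (ii), then make $F_N\cap(\text{image of }N_1\cap N_1')$ a nontrivial normal closed subgroup. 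It is free pro-$p$ as a closed subgroup of $F_N$, and it lies in the images of both $H$ and $H'$.
\item Since the kernels of $G\twoheadrightarrow Q_N$ are not contained in $H\cap H'$, one applies Theorem~\ref{innisom2} instead of Corollary~\ref{innaut}. The condition $\alpha(H\cap G_i)=H'\cap G_i$ and the remaining hypotheses on the induced maps follow from families-preservation via Remark~\ref{snfprem}, (iii).
\end{itemize}
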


\begin{proof}
In the case of (a) (respectively, (b)), for a normal open subgroup $N\subseteq G_K$ of $G_K$, we shall write $Q_N$ for the almost pro-$p$-maximal quotient (respectively, almost symmetric pro-$p$-maximal quotient) of $G_K$ associated to $N$ (respectively, associated to $(G_K,N)$).

First, we verify assertion (i). There exists a normal open subgroup $N_0\subseteq G_K$ of $G_K$ such that the natural surjection $G_K\twoheadrightarrow Q_{N_0}$ factors through $G_K\twoheadrightarrow G$. Write $A$ for the set of normal open subgroups of $G_K$ contained in $N_0$. Then, since $G\cong\varprojlim_{N\in A}Q_N$, it follows from Propositions \ref{projlim}, \ref{hilbsym}; Theorem \ref{sneanab}, that $G$ is strongly sn-internally indecomposable. This completes the proof of assertion (i).

Finally, we verify assertion (ii). Let us fix nontrivial normal closed subgroups $N_1,N_1'$ of $G$ contained in $H,H'$, respectively. Then it follows from assertion (i); Proposition \ref{snintersect}, (ii), that $N_1\cap N_1'$ is nontrivial. Write $B$ for the subset of $A$ consisting of $N\in A$ such that $N_1\cap N_1'\not\subseteq N$. Then it holds that $G\cong\varprojlim_{N\in B}Q_N$. Moreover, for $N\in B$, it follows from Propositions \ref{nffree}, \ref{henselfree}, \ref{hilbsym}, that $Q_N$ has a nontrivial free pro-$p$ normal closed subgroup $F_N$. Now it follows from assertion (i); Proposition \ref{snintersect}, together with the definition of $B$, that the intersection of $F_N$ and the image of $N_1\cap N_1'$ in $Q_N$ is a nontrivial free pro-$p$ normal closed subgroup of $Q_N$. Thus, it follows from assertion (i); Theorem \ref{innisom2}; Remark \ref{snfprem}, (iii), that every families preserving isomorphism $H\overset{\sim}{\to}H'$ in $G$ is induced by an inner automorphism of $G$. This completes the proof of assertion (ii), hence also of Theorem \ref{inngal}.
\end{proof}

\begin{rem}\label{inngalrem}
\mbox{}
\begin{enumerate}
\item If $H$ is an open subgroup of an infinite profinite group $G$, then, since $\bigcap_{g\in G}gHg^{-1}$ is a normal open subgroup of $G$, $H$ have a nontrivial closed subgroup that is normal in $G$.
\item Let us consider the case where the supposition of the existence of nontrivial normal closed subgroups in assertion (ii) is replaced by the existence of $n$-subnormal closed subgroups. If we further suppose that, for each $N\in B$, the isomorphism between the images of $H$ and $H'$ in $Q_N$ determined by $\alpha$ is $n$-subnormal in $Q_N$ [where $Q_N$ is the same as in the proof above, and $B$ is determined similarly for a fixed nontrivial $n$-subnormal closed subgroups], then the same proof works. In the case where $G$ itself contains a nontrivial free pro-$p$ normal closed subgroup, it is sufficient to suppose only $\alpha$ is $n$-subnormal in $G$.
\end{enumerate}
\end{rem}

\begin{thm}\label{innsnhilb}
Let $\mathcal{C}$ a nontrivial full-formation; $K$ a Hilbertian field; $S\subseteq G_K$ a nontrivial subnormal closed subgroup of $G_K$; $U,N\subsetneq S$ proper normal open subgroups of $S$ such that $N$ is contained in $U$. Write $Q$ for the almost symmetric pro-$p$-maximal quotient of $S$ associated to $(U,N)$. Then the following hold:
\begin{enumerate}
\item Let $T$ be a profinite group that is isomorphic to an almost pro-$\mathcal{C}$-maximal quotient of $S$ or an almost symmetric pro-$p$-maximal quotient of $S$. If $T$ is nontrivial, then $T$ is strongly sn-internally indecomposable and very sn-elastic.
\item $Q$ has a nontrivial free pro-$p$ normal closed subgroup.
\item Let us denote $G$ by $S$ or $Q$. Let $H,H'\subseteq G$ be closed subgroups. Suppose that $H,H'$ have nontrivial closed subgroups that are normal in $G$. Then every families preserving isomorphism $H\overset{\sim}{\to}H'$ in $G$ is induced by an inner automorphism of $G$.
\end{enumerate}
\end{thm}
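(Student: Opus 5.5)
The plan is to reduce everything to the Hilbertian statements already proved for $G_K$ itself, using Proposition \ref{hilb1}. The key input is that every proper open subgroup of $S$ corresponds to a Hilbertian field.

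\textbf{Assertion (i).} Take $T$ to be an almost pro-$\mathcal{C}$-maximal quotient of $S$ associated to a normal open subgroup $N'\subseteq S$, or an almost symmetric pro-$p$-maximal quotient associated to a pair $(U',N')$.

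First suppose $N'\subsetneq S$. By Proposition \ref{hilb1}, the field $L$ corresponding to $N'$ is Hilbertian. Then $T$ is an almost pro-$\mathcal{C}$-maximal quotient (respectively, an inverse limit of almost pro-$p$-maximal quotients) of the profinite group $S$. This group contains $N'=G_L$ as an open normal subgroup.

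To use Theorem \ref{sneanab} directly, I would replace $S$ by a proper open subgroup $S_1\subsetneq S$ containing $N'$, which is again Hilbertian by Proposition \ref{hilb1}. Then $T$ contains an open subgroup isomorphic to an almost pro-$\mathcal{C}$-maximal quotient of $G_{K_1}$, where $K_1$ is Hilbertian. Theorem \ref{sneanab} (respectively, the argument of Proposition \ref{hilbsym}) gives that this open subgroup is strongly sn-internally indecomposable and sn-elastic. It is also not topologically finitely generated, since $N'^{\mathcal{C}}$ is free of infinite rank for a Hilbertian field; here I would invoke \cite{FJ}, Corollary 16.3.6 together with nontriviality of $\mathcal{C}$.

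Next I transfer these properties to $T$ with Proposition \ref{snopen}. For this I need that $T$ has no nontrivial finite $n$-subnormal subgroups, and that its open subgroups have no nontrivial finite normal subgroups. I expect both to follow from the sn-internal indecomposability and the torsion behaviour of the open subgroup. A finite subnormal subgroup $F$ meets the open subgroup $W$ trivially, so $F$ centralizes a suitable open normal subnormal subgroup of $W$, which forces $F=1$.

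The degenerate case $N'=S$ (that is, $T=S^{\mathcal{C}}$) is handled by passing to a proper open subgroup and applying the same argument. The symmetric case is treated identically, via Definition \ref{symquot} and the proof of Proposition \ref{hilbsym}.

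\textbf{Assertion (ii).} Since $N\subsetneq S$, the field corresponding to $N$ is Hilbertian by Proposition \ref{hilb1}. I would then repeat the proof of Proposition \ref{hilbsym} verbatim:
\begin{itemize}
\item the field fixed by $U_{\mathrm{sym}}$ is Hilbertian and PAC, so $\cdim_p(N\cap U_{\mathrm{sym}})\le 1$;
\item Hilbertianity supplies a $\mathbb{Z}/p$ quotient, so $\cdim_p(N\cap U_{\mathrm{sym}})^p=1$.
\end{itemize}
Hence $(N\cap U_{\mathrm{sym}})^p$ is a nontrivial free pro-$p$ normal closed subgroup of $Q$.

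\textbf{Assertion (iii).} I would follow the proof of Theorem \ref{inngal}(ii) word for word. Write $G$ as the inverse limit of quotients $Q_{N'}$ of the above type over proper normal open subgroups $N'\subsetneq S$ with $N'\subseteq N_0$; this is possible because $S$ is infinite, hence such $N'$ are cofinal.

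By (i), each $Q_{N'}$ is strongly sn-internally indecomposable. By (ii), each $Q_{N'}$ contains a nontrivial free pro-$p$ normal subgroup. Proposition \ref{snintersect}, applied to the normal subgroups contained in $H\cap H'$, then produces a free pro-$p$ normal subgroup inside the image of $H\cap H'$, after restricting to the cofinal set $B$. Theorem \ref{innisom2} together with Remark \ref{snfprem}(iii) then concludes.

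\textbf{Main obstacle.} I expect the hardest step to be (i): ensuring the passage between $S$ and its proper open subgroups correctly transfers both the "very" (non-finitely-generated) condition and strong internal indecomposability, including in the degenerate case.
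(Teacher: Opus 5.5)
Your architecture is the paper's: Proposition~\ref{hilb1} and Theorem~\ref{sneanab} on open subgroups, transfer to $T$ via Proposition~\ref{snopen}, and reruns of the proofs of Proposition~\ref{hilbsym} and Theorem~\ref{inngal},~(ii) for (ii) and (iii). The gap is the step you left as an expectation. You need that $T$ has no nontrivial finite subnormal subgroup and that no open subgroup of $T$ has a nontrivial finite normal subgroup; these are the hypotheses of Proposition~\ref{snopen}. You try to extract this from a single fixed open subgroup $W$ (the image of $S_1$), and no such argument can work. Take $W$ a free pro-$p$ group of infinite rank: it is strongly sn-internally indecomposable, very sn-elastic and torsion-free. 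Yet $W\times\mathbb{Z}/2\mathbb{Z}$ contains $W$ as an open normal subgroup and has a nontrivial finite central subgroup. Concretely, your mechanism fails at ``which forces $F=1$''. Internal indecomposability of $W$ only gives $Z_W(\,\cdot\,)=\{1\}$, while $F$ meets $W$ trivially, so knowing that $F$ centralizes a subnormal subgroup of $W$ says nothing about $F$. The preliminary claim $F\cap W=\{1\}$ also needs the slimness argument below.

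The missing idea is to apply Proposition~\ref{hilb1} to \emph{every} proper open subgroup $V\subsetneq T$, not just one. Each such $V$ has Hilbertian fixed field, so by Theorem~\ref{sneanab} it is strongly sn-internally indecomposable. Hence every subnormal closed subgroup of $V$ is slim. Applying \cite{MiTs1}, Lemma~1.3 to the last step of a subnormal chain then shows that $V$ has no nontrivial finite subnormal subgroup. Now let $F\subseteq T$ be finite subnormal and $1\neq f\in F$. Since $T$ is infinite, $f$ lies in a proper open subgroup $V$ (some $\langle f\rangle W'$ with $W'$ open normal). Then $F\cap V$ is a nontrivial finite subnormal subgroup of $V$, a contradiction. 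The same argument disposes of finite normal subgroups of open subgroups of $T$. This is how the paper concludes before invoking Proposition~\ref{snopen}, and with this repair your (ii) and (iii) go through as described.

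A minor correction: $N'^{\mathcal{C}}$ is not free in general for Hilbertian fields; for example, $G_{\mathbb{Q}}^2$ contains the image of complex conjugation. You only need that it is not topologically finitely generated, which holds because Hilbertian fields admit $(\mathbb{Z}/p\mathbb{Z})^n$-extensions for all $n$.
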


\begin{proof}
First, we verify assertion (i). Let $V\subsetneq T$ be a proper open subgroup of $T$. Then it follows from Proposition \ref{hilb1} that the separable extension of $K$ associated to [the inverse image of] $V$ [in $S\subseteq G_K$] is Hilbertian, which implies that $V$ is strongly sn-internally indecomposable and very sn-elastic [cf.\ Theorem \ref{sneanab}]. In particular, every subnormal closed subgroup of $V$ is slim. In light of \cite{MiTs1}, Lemma 1.3, every proper open subgroup of $T$, hence also $T$, does not have nontrivial finite subnormal subgroup. Thus, assertion (i) follows from Proposition \ref{snopen}.

Assertion (ii) follows from an argument similar to the argument in the proof of Proposition \ref{hilbsym}, together with assertion (i); Proposition \ref{hilb1}. Moreover, since $S\cong\varprojlim_{M}Q_M$, where $M$ runs over all normal open subgroups of $S$ contained in $U$, and $Q_M$ is the almost symmetric pro-$p$-maximal quotient of $S$ associated to $(U,M)$, assertion (iii) follows from an argument similar to the argument in the proof of Theorem \ref{inngal}, (ii), together with assertions (i), (ii). This completes the proof of Theorem \ref{innsnhilb}.
\end{proof}

\begin{defi}\label{ow}
Let $G$ be a profinite group; $H,H'\subseteq G$ closed subgroups; $\alpha:H\overset{\sim}{\to}H'$ a continuous isomorphism. Then we shall say that $\alpha$ is \textit{open-wise inner} [in $G$] if for every open subgroup $U\subseteq H$ of $H$, there exists $g\in G$ such that $\alpha(U)=gUg^{-1}$.
\end{defi}

\begin{rem}\label{owrem}
If $\alpha$ is an open-wise inner automorphism in $G$, then so is $\alpha^{-1}$.
\end{rem}

\begin{prop}\label{subinn}
Let $G$ be a profinite group; $H,H'\subseteq G$ closed subgroups; $\alpha:H\overset{\sim}{\to}H'$ a continuous isomorphism; $\{G_i\}_{i\in I}$ a directed subset of the set of closed subgroups of $H\cap H'$ that is normal in $G$ [where $j\ge i\Leftrightarrow G_j\subseteq G_i$] such that the natural homomorphism $G\to\varprojlim_{i\in I}G/G_i$ is an isomorphism. Then the following conditions are equivalent:
\renewcommand{\labelenumi}{(\arabic{enumi})}
\begin{enumerate}
\item $\alpha$ is open-wise inner in $G$.
\item For each $i\in I$, it holds that $\alpha(G_i)=G_i$, and, moreover, the isomorphism $H/G_i\overset{\sim}{\to}H'/G_i$ determined by $\alpha$ is open-wise inner in $G/G_i$.
\item For every closed subgroup $A\subseteq H$, there exists $g\in G$ such that $\alpha(A)=gAg^{-1}$.
\end{enumerate}
In particular, an open-wise inner isomorphism in $G$ is families preserving in $G$.
\renewcommand{\labelenumi}{(\roman{enumi})}
\end{prop}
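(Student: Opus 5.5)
The plan is to prove $(1)\Leftrightarrow(3)$ and $(1)\Leftrightarrow(2)$; the final assertion then follows from $(1)\Rightarrow(3)$. The implication $(3)\Rightarrow(1)$ is trivial, since open subgroups of $H$ are closed. The final assertion is $(3)$ applied to pro-cyclic closed subgroups $A=I\subseteq H$.

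For $(1)\Rightarrow(2)$, fix $i\in I$. Since $G_i$ is closed in $H$, it is the intersection of the open subgroups $U\subseteq H$ containing $G_i$. For each such $U$, pick $g\in G$ with $\alpha(U)=gUg^{-1}$. As $G_i$ is normal in $G$, we get $\alpha(U)\supseteq gG_ig^{-1}=G_i$. Since $\alpha$ is bijective, $\alpha(G_i)=\bigcap_U\alpha(U)\supseteq G_i$. By Remark \ref{owrem}, $\alpha^{-1}$ is also open-wise inner, so the same argument gives $\alpha^{-1}(G_i)\supseteq G_i$, and hence $\alpha(G_i)=G_i$. Open subgroups of $H/G_i$ are exactly the $U/G_i$ with $U\subseteq H$ open and $U\supseteq G_i$. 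The relation $\alpha(U)=gUg^{-1}$ descends to $G/G_i$, which gives open-wise innerness of the induced isomorphism $H/G_i\overset{\sim}{\to}H'/G_i$.

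For $(2)\Rightarrow(1)$, let $U\subseteq H$ be open. The $G_i$ are compact, form a directed family, and satisfy $\bigcap_iG_i=\{1\}\subseteq U$. So by compactness $G_i\subseteq U$ for some $i$. Applying $(2)$ to $U/G_i$ gives $\bar g\in G/G_i$ with $\alpha(U)/G_i=\bar g(U/G_i)\bar g^{-1}$. Taking preimages, any lift $g$ of $\bar g$ satisfies $\alpha(U)=gUg^{-1}$ (here we use $\alpha(G_i)=G_i$ and $G_i\subseteq U$).

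The main step is $(1)\Rightarrow(3)$, which is a compactness argument. The naive sets $\{g\mid \alpha(U)=gUg^{-1}\}$ are not monotone in $U$, so I would instead use one-sided containments. Let $A\subseteq H$ be closed, and let $V$ run over the open normal subgroups of $H$. Put
\begin{equation*}
E_V\overset{\mathrm{def}}{=}\{g\in G\,\vert\, gAg^{-1}\subseteq\alpha(AV)\}.
\end{equation*}
Each $E_V$ is closed, and it is nonempty by $(1)$ applied to the open subgroup $AV$. Moreover $V'\subseteq V$ implies $E_{V'}\subseteq E_V$. By compactness there is $g\in\bigcap_VE_V$. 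Since $\alpha(A)$ is closed,
\begin{equation*}
gAg^{-1}\subseteq\bigcap_V\alpha(A)\alpha(V)=\alpha(A).
\end{equation*}
Applying the same argument to $\alpha^{-1}$ (cf.\ Remark \ref{owrem}) gives $h\in G$ with $h\alpha(A)h^{-1}\subseteq A$. Setting $c\overset{\mathrm{def}}{=}hg$, we obtain $cAc^{-1}\subseteq h\alpha(A)h^{-1}\subseteq A$.

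It remains to see that $cAc^{-1}\subseteq A$ forces equality for closed subgroups of a profinite group. In every finite quotient of $G$, the images of $cAc^{-1}$ and $A$ have the same order, so the containment is an equality there; since both subgroups are closed, $cAc^{-1}=A$. This collapses the chain, so $gAg^{-1}=\alpha(A)$, which proves $(3)$.
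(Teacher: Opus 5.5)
Your proof is correct. It differs from the paper's mainly in how the implications are organized and in the step $(2)\Rightarrow(1)$.

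\textbf{The step $(2)\Rightarrow(1)$.} The paper proves the cycle $(3)\Rightarrow(2)\Rightarrow(1)\Rightarrow(3)$. For $(2)\Rightarrow(1)$ it runs a compactness argument in $G$ on the sets $\{g\in G\mid\alpha(U)\subseteq gUg^{-1}G_i\}$, which yields only $\alpha(U)\subseteq gUg^{-1}$. It then closes up with $\alpha^{-1}$ and the finite-quotient argument showing that $h^{-1}Uh\subseteq gUg^{-1}$ forces equality. You instead observe that $\bigcap_iG_i=\{1\}$, together with compactness, gives $G_i\subseteq U$ for a single $i$. Open-wise innerness at that one level then lifts directly, using $\alpha(G_i)=G_i$. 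This is shorter and needs no sandwich.

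\textbf{The step $(1)\Rightarrow(3)$.} This is essentially the paper's argument with the inclusion reversed. The paper intersects $\{g\mid\alpha(A)\subseteq gUg^{-1}\}$ over open $U\supseteq A$ to get $\alpha(A)\subseteq gAg^{-1}$. You intersect over the sets defined by $\alpha(AV)$ to get $gAg^{-1}\subseteq\alpha(A)$. Both then apply the same argument to $\alpha^{-1}$ and use the fact that a closed subgroup cannot be conjugated properly into itself.

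\textbf{The step $(1)\Rightarrow(2)$.} Your direct proof is valid but not needed. Once $(1)\Rightarrow(3)$ is established, $(3)\Rightarrow(2)$ is immediate: apply $(3)$ to $A=G_i$ and to open $U\supseteq G_i$, then descend. This is how the paper obtains it.

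\textbf{Comparison.} The paper's route uses a single template, compactness in $G$ plus the $\alpha^{-1}$ sandwich, for both nontrivial implications. Yours is more economical at $(2)\Rightarrow(1)$.
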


\begin{proof}
The implication $(3)\Rightarrow(2)$ is clear. First, we verify the implication $(2)\Rightarrow(1)$. Suppose that condition $(2)$ is satisfied. Let $U\subseteq H$ be an open subgroup of $H$. For each $i\in I$, we shall write
\begin{equation*}
E_i\overset{\mathrm{def}}{=}\{g\in G\,\vert\, \alpha(U)\subseteq gUg^{-1}G_i\}\subseteq G.
\end{equation*}
Then, since the image of $U$ via the natural surjection $G\twoheadrightarrow G/G_i$ is an open subgroup of $H/G_i$, $E_i$ is a nonempty [closed] subset of $G$. Moreover, if $j\ge i$, then it is clear that $E_j\subseteq E_i$. Thus, since $G$ is compact, it holds that $\bigcap_{i\in I}E_i\neq\emptyset$.

Let $g\in\bigcap_{i\in I}E_i$. Then it holds that $\alpha(U)\subseteq\bigcap_{i\in I}gUg^{-1}G_i=gUg^{-1}$. Similarly, there exists $h\in G$ such that $\alpha^{-1}(\alpha(U))\subseteq h\alpha(U)h^{-1}$ [cf.\ Remark \ref{owrem}], which implies that $h^{-1}Uh\subseteq\alpha(U)\subseteq gUg^{-1}$. On the other hand, by considering all finite quotients of $G$, $h^{-1}Uh\subseteq gUg^{-1}$ implies that $h^{-1}Uh=gUg^{-1}$, i.e., $\alpha(U)=gUg^{-1}$. This completes the proof of the implication $(2)\Rightarrow(1)$.

Finally, we verify the implication $(1)\Rightarrow(3)$. Suppose that $\alpha$ is open-wise inner in $G$. Let $A\subseteq H$ be a closed subgroup. For each open subgroup $U\subseteq H$ of $H$ containing $A$, we shall write
\begin{equation*}
F_U\overset{\mathrm{def}}{=}\{g\in G\,\vert\, \alpha(A)\subseteq gUg^{-1}\}\subseteq G.
\end{equation*}
Then, since $\alpha$ is open-wise inner, $F_U$ is a nonempty [closed] subset of $G$. Moreover, if $U'\subseteq U$, then, it is clear that $F_{U'}\subseteq F_U$. Thus, since $G$ is compact, it holds that $\bigcap_U F_U\neq\emptyset$.

Let $g\in\bigcap_U F_U$. Then it holds that $\alpha(A)\subseteq\bigcap_U gUg^{-1}=g\left(\bigcap_U U\right)g^{-1}=gAg^{-1}$. In light of Remark \ref{owrem}, it follows from an argument similar to the argument in the proof of implication $(2)\Rightarrow(1)$, we conclude that $\alpha(A)=gAg^{-1}$. This completes the proof of the implication $(1)\Rightarrow(3)$, hence also of Proposition \ref{subinn}.
\end{proof}

\begin{prop}\label{weak-NU}
Let $K$ be a field; $Q$ a quotient of $G_K$ [in the category of profinite groups]; $H,H'\subseteq Q$ closed subgroups of $Q$. Suppose that for any open subgroups $U,V$ of $H, H'$, respectively, if $U$ is isomorphic to $V$, then the field corresponding to [the inverse image of] $U$ [in $G_K$ via the quotient $G_K\twoheadrightarrow Q$] is isomorphic to that of $V$. Then every isomorphism from $H$ to $H'$ is open-wise inner in $Q$.
\end{prop}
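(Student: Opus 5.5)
Fix an isomorphism $\alpha:H\overset{\sim}{\to}H'$ and an open subgroup $U\subseteq H$. We must find $g\in Q$ with $\alpha(U)=gUg^{-1}$. Write $\pi:G_K\twoheadrightarrow Q$ for the quotient, with kernel $N_Q$. For a closed subgroup $A\subseteq Q$, write $K_A$ for the subfield of $K^{\mathrm{sep}}$ fixed by $\pi^{-1}(A)$. Note that $N_Q\subseteq\pi^{-1}(A)$. Since $\alpha(U)$ is open in $H'$ and $\alpha|_U:U\overset{\sim}{\to}\alpha(U)$, the hypothesis applies to the pair $(U,\alpha(U))$. It yields a field isomorphism $\tau:K_U\overset{\sim}{\to}K_{\alpha(U)}$.

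The plan is to convert $\tau$ into an element of $Q$ via Galois theory.
\begin{itemize}
\item First, extend $\tau$ to an automorphism $\widetilde{\tau}$ of $K^{\mathrm{sep}}$. This is possible because $K^{\mathrm{sep}}$ is a separable closure of both $K_U$ and $K_{\alpha(U)}$, so any isomorphism between these fields extends.
\item Then $\widetilde{\tau}\,\pi^{-1}(U)\,\widetilde{\tau}^{-1}=\pi^{-1}(\alpha(U))$ as subgroups of $\Aut(K^{\mathrm{sep}})$, since $\widetilde{\tau}$ carries the fixed field of one onto the fixed field of the other.
\item The key difficulty is that $\widetilde{\tau}$ need not fix $K$, so it need not lie in $G_K$.
\end{itemize}

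The main obstacle is therefore to arrange that $\tau$ restricts to the identity on $K$, or at least to an automorphism of $K$ that can be absorbed. My first approach would be to read the hypothesis as referring to isomorphisms of extension fields of $K$, i.e.\ $K$-isomorphisms. This is the natural reading: the field "corresponding to" a subgroup carries its $K$-structure, and the result is intended for the weak Neukirch--Uchida situations of Remark \ref{uchida}, where fields are determined over a fixed base. Under this reading $\widetilde{\tau}$ can be taken to be a $K$-automorphism, so $\widetilde{\tau}\in G_K$.

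If the hypothesis must instead be read as an abstract field isomorphism, I would try the following. Apply the hypothesis also to the pair $(H,H')$, or rather to the smallest open subgroups, to control $\tau|_K$. Then compose with an automorphism of $K^{\mathrm{sep}}$ inducing $(\tau|_K)^{-1}$ on $K$. This only works if such a correction normalizes the relevant subgroups, which is delicate. So I expect the proof to rely on the $K$-linear reading.

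Granting $\widetilde{\tau}\in G_K$, set $g\overset{\mathrm{def}}{=}\pi(\widetilde{\tau})\in Q$. Applying $\pi$ to $\widetilde{\tau}\,\pi^{-1}(U)\,\widetilde{\tau}^{-1}=\pi^{-1}(\alpha(U))$ gives $gUg^{-1}=\alpha(U)$, using the surjectivity of $\pi$ and $\pi(\pi^{-1}(A))=A$. Since $U$ was an arbitrary open subgroup of $H$, this is exactly open-wise innerness of $\alpha$ in $Q$ (Definition \ref{ow}). Combined with Proposition \ref{subinn}, it also shows that $\alpha$ is families preserving in $Q$.
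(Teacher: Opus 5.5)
Your argument is correct and is the paper's proof with the Galois-theoretic details filled in. The paper says only that the fields attached to $U$ and $\alpha(U)$ are isomorphic, hence conjugate, which tacitly uses the same $K$-linear reading you adopt; in the intended application (Remark \ref{uchida}) one has $K=\mathbb{Q}$, where every field isomorphism is automatically a $\mathbb{Q}$-isomorphism.

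You were also right not to pursue the abstract reading, because under it the statement is false in general. Take $K=\mathbb{Q}(i)$ and $H=H'=Q=G_K$. The hypothesis then holds by the Neukirch--Uchida theorem. Yet conjugation by a lift of complex conjugation sends the normal subgroup $G_{K(\sqrt{1+2i})}$ to the normal subgroup $G_{K(\sqrt{1-2i})}$. These are distinct because $5$ is not a square in $K$, so this automorphism is not open-wise inner.
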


\begin{proof}
Let $U\subseteq H$ be an open subgroup of $H$; $\alpha:H\overset{\sim}{\to} H'$ an isomorphism. Then, since $U$ is isomorphic to $\alpha(U)$, the field corresponding to $U$ is isomorphic to that of $\alpha(U)$, which implies that these two fields are conjugate to each other. This completes the proof of Proposition \ref{weak-NU}.
\end{proof}

\begin{rem}\label{uchida}
Theorem \ref{inngal}; Propositions \ref{subinn}, \ref{weak-NU} yield the equivalence of \cite{U1}, Theorem and \cite{U1}, Corollary 2 [see also \cite{I}, Main theorem].

As such, if we obtain a weak version of Neukirch-Uchida type result for a certain quotient of the absolute Galois groups for a class of fields, in many cases we can show the innerness of isomorphisms between open subgroups.
\end{rem}

Next, we consider groups such as free pro-$\mathcal{C}$ groups and surface groups, as well as profinite groups obtained by successive extensions of them. As applications, we obtain results on the fundamental groups of hyperbolic curves over number fields or $p$-adic local fields [cf.\ Theorem \ref{curve}], and on the fundamental group of hyperbolic polycurves [cf.\ Definition \ref{hyppolydef}] over algebraically closed fields of characteristic $0$ [cf.\ Theorem \ref{poly}].

\begin{thm}\label{innanab1}
Let $G$ be a profinite group; $\mathcal{C}$ a full-formation such that $\mathbb{Z}/p\mathbb{Z}$ is a $\mathcal{C}$-group. Suppose that $G$ is isomorphic to one of the following:
\renewcommand{\labelenumi}{(\alph{enumi})}
\begin{enumerate}
\item an almost pro-$\mathcal{C}$-maximal quotient of a free profinite group of [possibly infinite] rank $\ge 2$, or an almost symmetric pro-$p$-maximal quotient of a free profinite group of rank $\ge 2$;
\item an almost pro-$\mathcal{C}$ surface group, or an almost symmetric pro-$p$-maximal quotient of a profinite surface group;
\item a pro-$p$ Demu\v{s}kin group of rank $\ge 3$.
\end{enumerate}
Then the following hold:
\renewcommand{\labelenumi}{(\roman{enumi})}
\begin{enumerate}
\item For each open subgroup $U\subseteq G$ of $G$, the closure of the commutator subgroup of $U^p$ is a free pro-$p$ group of infinite rank.
\item $G$ is strongly sn-internally indecomposable.
\item Let $H,H'\subseteq G$ be closed subgroups. Suppose that $H,H'$ have nontrivial closed subgroups that are normal in $G$. Then every families preserving isomorphism $H\overset{\sim}{\to}H'$ in $G$ is induced by an inner automorphism of $G$.
\end{enumerate}
\end{thm}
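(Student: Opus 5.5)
We follow the pattern of the proof of Theorem \ref{inngal}: first establish the existence of free pro-$p$ normal subgroups in suitable quotients, which is (i). Then use Propositions \ref{projlim}, \ref{snopen} and Theorem \ref{sneanab} for (ii). Finally, combine Theorem \ref{innisom2} with Remark \ref{snfprem}, (iii), for (iii).

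For (i), let $U\subseteq G$ be open. In case (c), $U$ is again a pro-$p$ Demu\v{s}kin group; its commutator subgroup has infinite index because $U^{\mathrm{ab}}$ is infinite. So it is free pro-$p$ by Proposition \ref{Demushkin1}. It is not topologically finitely generated, since otherwise it would be trivial or open by sn-elasticity (Proposition \ref{pfreesne} does not apply directly, but Theorem \ref{sneanab} gives sn-elasticity of $U$). In cases (a), (b), $U$ is an open subgroup of an almost pro-$\mathcal{C}$-maximal (or almost symmetric) quotient of a free profinite or surface group $\Pi$. Hence $U$ surjects onto $U^p$, and $U^p$ is the maximal pro-$p$ quotient of an open subgroup $\Pi_U$ of $\Pi$. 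Here one uses that $\mathbb{Z}/p\mathbb{Z}\in\mathcal{C}$, or that, in the symmetric case, the kernel is pro-$p$-maximal on a normal subgroup, so that the maximal pro-$p$ quotients agree. Now $\Pi_U^p$ is free pro-$p$ of rank $\ge2$ or a pro-$p$ surface group. In the surface case it is free if the curve is affine, and a Demu\v{s}kin group of rank $\ge 2g\ge 4$ if the curve is proper. In either case the closed commutator subgroup has infinite index and is free pro-$p$, by Proposition \ref{Demushkin1} or by subgroups of free pro-$p$ groups being free. Its rank is infinite because it is a nontrivial normal subgroup of infinite index in an sn-elastic group (Proposition \ref{pfreesne}, Theorem \ref{sneanab}). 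The delicate point is the identification of $U^p$ with $\Pi_U^p$ in the almost symmetric case. For this we take $U$ small enough to lie inside the image of $N\cap\Pi_{\mathrm{sym}}$ and argue as in Proposition \ref{hilbsym}.

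For (ii), in cases (a)–(c) with pro-$\mathcal{C}$ quotients, the claim is exactly Theorem \ref{sneanab}. In the almost symmetric case, write $G$ as $\varprojlim Q_N$ of almost pro-$p$-maximal quotients of open subgroups, as in the proof of Proposition \ref{hilbsym}. Then apply Proposition \ref{projlim}, and Proposition \ref{snopen}, (i), to pass from open subgroups to $G$, using slimness.

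For (iii), fix nontrivial normal subgroups $N_1\subseteq H$ and $N_1'\subseteq H'$. By (ii) and Proposition \ref{snintersect}, the intersection $N_1\cap N_1'$ is nontrivial. Write $G\cong\varprojlim Q_N$ over those $N$ not containing $N_1\cap N_1'$. By (i), each $Q_N$ contains a nontrivial free pro-$p$ normal closed subgroup $F_N$: take the closed commutator subgroup of the normal open pro-$p$ part, which is characteristic and hence normal. Intersecting $F_N$ with the image of $N_1\cap N_1'$ gives, by Proposition \ref{snintersect}, a nontrivial free pro-$p$ normal subgroup inside $H\cap H'$ modulo the kernel. Theorem \ref{innisom2} together with Remark \ref{snfprem}, (ii), (iii), then yields that $\alpha$ is inner. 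In case (c), $G$ itself already has such an $F$, so Theorem \ref{innisom} applies directly. The main obstacle I expect is step (i) for the almost symmetric quotients, specifically checking that the pro-$p$ part is genuinely the maximal pro-$p$ quotient of an open subgroup of $\Pi$.
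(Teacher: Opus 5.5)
Your proposal is correct and follows the paper's own terse proof: (i) from Theorem~\ref{sneanab}, Proposition~\ref{Demushkin1}, freeness of closed subgroups of free pro-$p$ groups and the free/Demu\v{s}kin dichotomy for pro-$p$ surface groups; (ii) from Theorem~\ref{sneanab} with Proposition~\ref{projlim} (your detour through Proposition~\ref{snopen} is unnecessary); and (iii) by the argument of Theorem~\ref{inngal}, (ii). One correction concerns the almost symmetric case of (i): you cannot shrink $U$ into the image of $N\cap\Pi_{\mathrm{sym}}$, since that image has infinite index and (i) concerns every open $U$. No shrinking is needed, because the reason you already gave works verbatim: if $\ker(\Pi\twoheadrightarrow G)=\ker(M\twoheadrightarrow M^p)$ with $M$ closed normal in $\Pi$, then passing to $M\cap\Pi_U^0$, where $\Pi_U^0$ is the normal core of $\Pi_U$ in $\Pi$, gives $\ker(M\twoheadrightarrow M^p)\subseteq\ker(\Pi_U\twoheadrightarrow\Pi_U^p)$, hence $U^p=\Pi_U^p$ for every open $U$, exactly as in the pro-$\mathcal{C}$ case.
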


\begin{proof}
Assertion (i) follows from Theorem \ref{sneanab}; Proposition \ref{Demushkin1}; \cite{RZ}, Corollary 7.7.5; the discussion preceding \cite{NSW}, Theorem 3.9.11 [observe that every pro-$p$ surface group is a free pro-$p$ group of rank $\ge 2$ or a pro-$p$ Demu\v{s}kin group of rank $\ge 4$]. Assertion (ii) follows from Theorem \ref{sneanab}, Proposition \ref{projlim}. Assertion (iii) follows from an argument similar to the argument in the proof of Theorem \ref{inngal}, (ii), together with assertions (i), (ii).
\end{proof}

\begin{lem}\label{extinnlem}
Let $1\to N\to G\to Q\to 1$ be an exact sequence of profinite groups such that $N$ is center-free; $\alpha\in\Aut(G)$ a continuous automorphism. We regard $N$ as a closed subgroup of $G$ via the inclusion $N\hookrightarrow G$. Write $\rho$ for the outer representation $Q\rightarrow\Out(N)$ associated to the above exact sequence. Suppose that $\alpha$ preserves the subgroup $N\subseteq G$, and that the restriction $\alpha_N\overset{\mathrm{def}}{=}\alpha|_N\in\Aut(N)$ is induced by an inner automorphism of $G$. Write $\alpha_Q\in\Aut(Q)$ for the automorphism of $Q$ induced by $\alpha$. Suppose further that one of the following holds:
\begin{itemize}
\item $\rho$ is injective;
\item $\alpha_Q$ is an inner automorphism of $Q$ and $\Image\rho$ is center-free.
\end{itemize}
Then $\alpha$ is an inner automorphism of $G$.
\end{lem}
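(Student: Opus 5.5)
First normalize $\alpha$. By assumption there is $g_0\in G$ with $\alpha(n)=g_0ng_0^{-1}$ for all $n\in N$. Replace $\alpha$ by $\beta\overset{\mathrm{def}}{=}\mathrm{inn}(g_0)^{-1}\circ\alpha$. Then $\beta$ restricts to the identity on $N$. Its induced automorphism $\beta_Q$ of $Q$ differs from $\alpha_Q$ by the inner automorphism given by the image $\bar g_0\in Q$. Hence $\beta_Q$ is inner exactly when $\alpha_Q$ is, and it suffices to show that $\beta$ is inner.

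The key identity: for $g\in G$ and $n\in N$,
\[
gng^{-1}=\beta(gng^{-1})=\beta(g)n\beta(g)^{-1}.
\]
Hence $g^{-1}\beta(g)\in Z_G(N)$. Write $\bar g$ for the image of $g$ in $Q$. The image of $\beta(g)$ in $Q$ is $\beta_Q(\bar g)$. Since $\beta(g)$ and $g$ induce the same automorphism of $N$, we get $\rho(\beta_Q(\bar g))=\rho(\bar g)$ in $\Out(N)$.

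In the first case, $\rho$ injective gives $\beta_Q=\id_Q$. Then $c(g)\overset{\mathrm{def}}{=}g^{-1}\beta(g)$ lies in $N\cap Z_G(N)=Z(N)=\{1\}$ because $N$ is center-free. So $\beta=\id$.

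In the second case, $\beta_Q=\mathrm{inn}(\bar q)$ for some $\bar q\in Q$. The equality $\rho(\bar q\bar g\bar q^{-1})=\rho(\bar g)$ for all $\bar g$ says that $\rho(\bar q)$ centralizes $\Image\rho$. Since $\Image\rho$ is center-free, $\rho(\bar q)=1$. Lift $\bar q$ to $q\in G$. Then $\mathrm{inn}(q)|_N$ is an inner automorphism of $N$, say $\mathrm{inn}(m)|_N$ with $m\in N$. Set $q'\overset{\mathrm{def}}{=}m^{-1}q$. Then $q'$ centralizes $N$, and $q'$ still maps to $\bar q$ because $\bar m=1$. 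So $\gamma\overset{\mathrm{def}}{=}\mathrm{inn}(q')^{-1}\circ\beta$ is still the identity on $N$, and $\gamma_Q=\id_Q$. We are now reduced to the following claim: an automorphism $\gamma$ that is trivial on both $N$ and $Q$ is trivial.

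For this step, consider $c(g)=g^{-1}\gamma(g)$. It lies in $N$ because $\gamma_Q=\id$. It also lies in $Z_G(N)$ by the key identity. So $c(g)\in Z(N)=\{1\}$, and $\gamma=\id$.

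Thus $\alpha=\mathrm{inn}(g_0)\circ\mathrm{inn}(q')$ is inner, and the second case follows. The only subtle point is the second case: the element $q$ realizing $\beta_Q$ must be corrected by $m\in N$ to lie in $Z_G(N)$, so that the restriction to $N$ stays trivial. That correction is exactly what the center-freeness of $\Image\rho$ provides. Continuity is automatic throughout, since all maps are compositions of inner automorphisms with $\alpha$.
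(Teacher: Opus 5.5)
Your proof is correct and follows the paper's argument essentially step for step. You normalize so that $\alpha|_N=\id_N$, use $g^{-1}\alpha(g)\in Z_G(N)$ together with $\rho\circ\alpha_Q=\rho$, and in the second case correct the lift of the conjugating element by an element of $N$ so that it centralizes $N$; your $q'=m^{-1}q$ is exactly the paper's $d\in cN\cap Z_G(N)$.
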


\begin{proof}
Write $\phi$ for the [surjective] homomorphism $G\to Q$ in the exact sequence. By replacing $\alpha$ by the composite of $\alpha$ with a suitable inner automorphism of $G$, we may assume that $\alpha_N$ is the identity automorphism of $N$. Then it is clear that $\alpha_Q$ fits into the commutative diagram
\begin{equation*}
\xymatrix@M=5pt{
Q \ar[rd]_{\rho} \ar[rr]^{\alpha_Q} & & Q \ar[ld]^{\rho}\\
 & \rho(Q). & \\
}
\end{equation*}
In particular, if $\rho$ is injective, then $\alpha_Q$ is the identity automorphism of $Q$. In this case, for $g\in G$ and $h\in N$, since $ghg^{-1}\in N$, it holds that $ghg^{-1}=\alpha(ghg^{-1})=\alpha(g)h\alpha(g)^{-1}$, which implies that $g^{-1}\alpha(g)\in Z_G(N)$. On the other hand, since $\phi(\alpha(g))=\alpha_Q(\phi(g))=\phi(g)$, it holds that $g^{-1}\alpha(g)\in\ker\phi=N$. Thus, we conclude that $g^{-1}\alpha(g)\in Z(N)=\{1\}$, i.e., $\alpha$ is the identity automorphism. This completes the proof of Lemma \ref{extinnlem} in the case where $\rho$ is injective.

In the remainder of the proof of Lemma \ref{extinnlem}, we assume that $\alpha_Q$ is an inner automorphism of $Q$ and $\Image\rho$ is center-free.  Let us fix $c\in G$ such that $\alpha_Q$ is the inner automorphism determined by $\phi(c)$. Then, for any $g\in Q$, since $\alpha_Q(g)=\phi(c)g\phi(c)^{-1}$, it follows from the above diagram that $\rho(g)=\rho(\phi(c)g\phi(c)^{-1})$. Thus, it holds that $\rho(\phi(c))\in Z(\rho(Q))=\{1\}$, i.e., $\phi(c)\in\ker\rho$. This implies that $cN\cap Z_G(N)\neq\emptyset$.

Let us fix $d\in cN\cap Z_G(N)$ and write $\beta\in\Aut(G)$ for the automorphism of $G$ determined by $\beta(g)=d^{-1}\alpha(g)d$. Then the automorphisms of $N$ and $Q$ determined by $\beta$ are the identity automorphisms. Thus, by an argument similar to the above argument, we conclude that $\beta$ is the identity automorphism of $G$. This completes the proof of Lemma \ref{extinnlem}.
\end{proof}

\begin{thm}\label{extinn1}
Let $1\to N\to G\to Q\to 1$ be an exact sequence of profinite groups; $\alpha\in\Aut(G)$ a continuous automorphism; $\{G_i\}_{i\in I}$ a directed subset of the set of normal closed subgroups of $G$ [where $j\ge i\Leftrightarrow G_j\subseteq G_i$] such that the natural homomorphism $G\to\varprojlim_{i\in I}G/G_i$ is an isomorphism. We regard $N$ as a closed subgroup of $G$ via the inclusion $N\hookrightarrow G$. Write $\rho$ for the outer representation $Q\rightarrow\Out(N)$ associated to the above exact sequence. Suppose that for each $i\in I$,
\begin{itemize}
\item $\alpha$ preserves the subgroup $N\subseteq G$, and that the restriction of $\alpha$ to $N$ is families preserving in $G$;
\item $N/(N\cap G_i)$ is internally indecomposable;
\item $N/(N\cap G_i)$ contains a nontrivial free pro-$p_i$ normal closed subgroup $F_i$ of $G/G_i$ for some prime number $p_i$.
\end{itemize}
Write $\alpha_Q\in\Aut(Q)$ for the automorphism of $Q$ induced by $\alpha$. Suppose further that one of the following holds:
\begin{itemize}
\item $\rho$ is injective;
\item $\alpha_Q$ is an inner automorphism of $Q$ and $\Image\rho$ is center-free.
\end{itemize}
Then $\alpha$ is an inner automorphism of $G$.
\end{thm}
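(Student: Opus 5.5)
The plan is to reduce to Lemma \ref{extinnlem}. That lemma requires two inputs: that $N$ is center-free, and that the restriction $\alpha_N\overset{\mathrm{def}}{=}\alpha|_N$ is induced by an inner automorphism of $G$. Once both are in hand, either of the two alternative hypotheses on $\rho$ and $\alpha_Q$ feeds directly into Lemma \ref{extinnlem}, which gives the conclusion.

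\emph{Step 1: $N$ is center-free.} Each $N/(N\cap G_i)$ is internally indecomposable, hence center-free. The family $\{N\cap G_i\}_{i\in I}$ is directed, and $N\overset{\sim}{\to}\varprojlim_i N/(N\cap G_i)$ because $\bigcap_i G_i=\{1\}$. An element of $Z(N)$ therefore maps into $Z(N/(N\cap G_i))=\{1\}$ for every $i$, so it is trivial. Alternatively, one can invoke Proposition \ref{projlim} with $n=1$ to see that $N$ is internally indecomposable, hence center-free.

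\emph{Step 2: $\alpha_N$ is induced by an inner automorphism of $G$.} We apply Lemma \ref{innlim} to the closed subgroups $H=H'=N\subseteq G$, the isomorphism $\alpha_N$, and the family $\{G_i\}$. By Remark \ref{snfprem}, (ii), $\alpha_N$ is families preserving in $G$, hence normal in $G$. Since $N\cap G_i$ is normal in $G$ and contained in $N$, we get $\alpha(N\cap G_i)=N\cap G_i$. Thus the hypothesis of Lemma \ref{innlim} holds, and each induced map $\alpha_i$ is an automorphism of $\phi_i(N)=N/(N\cap G_i)$. By Remark \ref{snfprem}, (iii), $\alpha_i$ is families preserving, hence normal, in $G/G_i$. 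It remains to show that each $\alpha_i$ is induced by an inner automorphism of $G/G_i$. For this we use Theorem \ref{innisom} with $n=1$, ambient group $G_i$-quotient $\bar N\overset{\mathrm{def}}{=}N/(N\cap G_i)$, the subgroup $\widetilde F=\bar G\overset{\mathrm{def}}{=}G/G_i$, and $F=F_i$.

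\emph{Main obstacle: the ambient group in Theorem \ref{innisom}.} That theorem wants a $1$-sn-internally indecomposable ambient group, but we only know that $\bar N$ is internally indecomposable, not that $\bar G$ is. The way around this is to work inside $\bar G$ only to produce the conjugating element, and then to extend from $F_i$ using $\bar N$:
\begin{itemize}
\item[(a)] Apply Theorem \ref{autfree} to the pair $(\bar G, F_i)$. The group $F_i$ is normal in $\bar G$. It has rank $\ge 2$, since it is a nontrivial normal subgroup of the internally indecomposable group $\bar N$ and is therefore center-free. The restriction $\alpha_i|_{F_i}$ preserves every closed subgroup of $F_i$ normal in $\bar G$, by normality of $\alpha_i$. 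It also sends each pro-cyclic subgroup to a $\bar G$-conjugate, by the families preserving property. Hence there is $g\in\bar G$ with $\alpha_i|_{F_i}=\mathrm{inn}(g)|_{F_i}$.
\item[(b)] Consider $\beta\overset{\mathrm{def}}{=}\mathrm{inn}(g)^{-1}\circ\alpha_i:\bar N\to\bar G$. It is a homomorphism that is the identity on the nontrivial normal subgroup $F_i$ of the internally indecomposable group $\bar N$. For $h\in\bar N$ and $f\in F_i$, applying $\beta$ to $hfh^{-1}\in F_i$ gives $\beta(h)f\beta(h)^{-1}=hfh^{-1}$. So $h^{-1}\beta(h)$ centralizes $F_i$. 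It also lies in $g^{-1}\bar N g=\bar N$, so it lies in $Z_{\bar N}(F_i)=\{1\}$. This is the argument of Proposition \ref{snisom}, carried out with ambient group $\bar N$.
\end{itemize}
Thus each $\alpha_i$ is inner in $G/G_i$, and Lemma \ref{innlim} yields $g\in G$ with $\alpha|_N=\mathrm{inn}(g)|_N$.

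\emph{Step 3: conclusion.} With $N$ center-free and $\alpha_N$ induced by an inner automorphism of $G$, Lemma \ref{extinnlem} applies under either bullet. Hence $\alpha$ is an inner automorphism of $G$.
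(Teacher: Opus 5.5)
Your proposal is correct and follows the paper's proof essentially step for step. You get center-freeness of $N$ via Proposition \ref{projlim}, reduce to the quotients $G/G_i$ via Lemmas \ref{innlim} and \ref{extinnlem}, and apply Theorem \ref{autfree} to $F_i\subseteq G/G_i$, with rank $\ge 2$ coming from the internal indecomposability of $N/(N\cap G_i)$. You then pass from $F_i$ to all of $N/(N\cap G_i)$ by the argument of Proposition \ref{snisom} with ambient group $N/(N\cap G_i)$, which is also how the paper's citation of that proposition must be read. Your check that $\alpha(N\cap G_i)=N\cap G_i$ and your explanation of why Theorem \ref{innisom} cannot be applied directly fill in details the paper leaves implicit.
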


\begin{proof}
Write $\alpha_i\in\Aut(N/N\cap G_i)$ for the automorphism of $N/N\cap G_i$ induced by $\alpha$ [cf.\ Remark \ref{snfprem}, (ii)]. Since the natural homomorphism $N\to\varprojlim_{i\in I}N/(N\cap G_i)$ is an isomorphism, it follows from Proposition \ref{projlim} that $N$ is internally indecomposable, hence center-free. Thus, in light of Lemmas \ref{innlim}, \ref{extinnlem}, it suffices to show that $\alpha_i$ is induced by an inner automorphism of $G/G_i$ for each $i\in I$.

Since $N/(N\cap G_i)$ is internally indecomposable, the [nontrivial] free pro-$p_i$ normal closed subgroup $F_i$ is center-free, hence of rank $\ge 2$. Thus, it follows from Theorem \ref{autfree}; Remma \ref{snfprem}, (iii), that the restriction of $\alpha_i$ to $F_i$ is induced by an inner automorphism of $G/G_i$. By Proposition \ref{snisom}, $\alpha_i$ is induced by an inner automorphism of $G/G_i$. This completes the proof of Theorem \ref{extinn1}.
\end{proof}

\begin{cor}\label{extinn2}
Let $1\to N\to G\to Q\to 1$ be an exact sequence of profinite groups; $\alpha\in\Aut(G)$ a continuous automorphism; $\mathcal{C}$ a full-formation such that $\mathbb{Z}/p\mathbb{Z}$ is a $\mathcal{C}$-group. We regard $N$ as a closed subgroup of $G$ via the inclusion $N\hookrightarrow G$. Write $\rho$ for the outer representation $Q\rightarrow\Out(N)$ associated to the above exact sequence. Suppose that $\alpha$ preserves the subgroup $N\subseteq G$, and that the restriction of $\alpha$ to $N$ is families preserving in $G$. Write $\alpha_Q\in\Aut(Q)$ for the automorphism of $Q$ induced by $\alpha$. Suppose further that one of the following holds:
\begin{itemize}
\item $\rho$ is injective;
\item $\alpha_Q$ is an inner automorphism of $Q$ and $\Image\rho$ is center-free.
\end{itemize}
Furthermore, suppose that $N$ is isomorphic to one of the following:
\renewcommand{\labelenumi}{(\alph{enumi})}
\begin{enumerate}
\item an almost pro-$\mathcal{C}$-maximal quotient of a free profinite group of [possibly infinite] rank $\ge 2$, or an almost symmetric pro-$p$-maximal quotient of a free profinite group of rank $\ge 2$;
\item an almost pro-$\mathcal{C}$ surface group, or an almost symmetric pro-$p$-maximal quotient of a profinite surface group;
\item a pro-$p$ Demu\v{s}kin group of rank $\ge 3$;
\item an almost pro-$\mathcal{C}$-maximal quotient of $G_K$, or an almost symmetric pro-$p$-maximal quotient of $G_K$, where $K$ is a Henselian discrete valuation field of characteristic $p$.
\end{enumerate}
\renewcommand{\labelenumi}{(\roman{enumi})}
Then $\alpha$ is an inner automorphism of $G$.
\end{cor}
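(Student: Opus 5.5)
The plan is to deduce Corollary \ref{extinn2} from Theorem \ref{extinn1}. For this we must choose a directed family $\{G_i\}_{i\in I}$ of normal closed subgroups of $G$ with $G\overset{\sim}{\to}\varprojlim_i G/G_i$ such that, for each $i$, the group $N/(N\cap G_i)$ is internally indecomposable and contains a nontrivial free pro-$p$ closed subgroup that is normal in $G/G_i$. The hypotheses on $\alpha$, $\rho$ and $\alpha_Q$ are the same in both statements, so only the structural hypotheses on $N$ need checking.

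\emph{Case (c).} If $N$ is a pro-$p$ Demu\v{s}kin group of rank $\ge 3$, I take the single index $G_i=\{1\}$. Internal indecomposability of $N$ follows from Theorem \ref{innanab1}, (ii). For the free subgroup I take $F$ to be the closure of the commutator subgroup of $N$. It is characteristic in $N$, hence normal in $G$, and it is free pro-$p$ of infinite rank by Theorem \ref{innanab1}, (i) with $U=N=N^p$.

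\emph{Cases (a), (b), (d).} Here $N$ is an inverse limit of quotients $N_M$, namely the almost pro-$p$-maximal (resp.\ almost symmetric pro-$p$-maximal) quotients associated to normal open subgroups $M$ of the ambient free group, surface group, or $G_K$. The difficulty is that the kernels $\ker(N\twoheadrightarrow N_M)$ need not be normal in $G$. To fix this, I restrict to those $M$ that correspond to open subgroups of $N$ that are characteristic in $N$; I expect these to be cofinal when $N$ is topologically finitely generated. In general I would instead use open subgroups of $N$ normal in $G$, obtained via Lemma \ref{normalize} applied to $N\subseteq G$. For such an open $W\subseteq N$ normal in $G$, let $W'=\ker(W\twoheadrightarrow W^p)$ in the almost-maximal case, or $(W\cap\cdots)_{\mathrm{sym}}$-type kernels in the symmetric case. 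These are characteristic in $W$, hence normal in $G$. Since $N$ is a closed subgroup of $G$, for each such $W'$ there is an open normal $V\subseteq G$ with $V\cap N$ close to $W'$. I then set $G_i=W'\cap V$, so that $N\cap G_i=W'$ and the $G_i$ cut out the profinite topology of $G$.

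Each quotient $N/W'$ is again an almost pro-$p$-maximal (resp.\ almost symmetric) quotient of the same type. It is therefore strongly sn-internally indecomposable by Theorems \ref{innanab1}, (ii) and \ref{inngal}, (i) (for (d) via Theorem \ref{sneanab}, Propositions \ref{projlim}, \ref{hilbsym}), and in particular internally indecomposable. The nontrivial free pro-$p$ subgroup comes from Theorem \ref{innanab1}, (i), Proposition \ref{henselfree}, or Proposition \ref{hilbsym}. Concretely, $(W/W')^{\mathrm{ab}}$-kernel is free pro-$p$ and characteristic in $W/W'$, hence normal in $G/G_i$. In case (d), characteristic $p$ makes $W^p$ itself free pro-$p$.

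After this, Theorem \ref{extinn1} applies directly, with $\alpha|_N$ families preserving by assumption. The main obstacle is the cofinality step, that is, producing $G$-normal kernels whose intersection with $N$ recovers the almost-maximal quotient system of $N$ while still giving $G\cong\varprojlim G/G_i$. I would handle it by intersecting, over $G$-conjugates, the characteristic kernels attached to $G$-normal open subgroups of $N$, and then intersecting with open normal subgroups of $G$.
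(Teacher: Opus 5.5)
Your skeleton is the paper's. You apply Theorem \ref{extinn1} with $G_i=\ker(W\twoheadrightarrow W^p)$, where $W$ runs over open subgroups of $N$ normal in $G$. The free subgroup is the closure of the commutator subgroup of $W^p$ (Theorem \ref{innanab1}, (i)), or $W^p$ itself in characteristic $p$; it is characteristic in $W^p$, hence normal in $G/G_i$. Your case (c) with $G_i=\{1\}$ is what this family gives anyway.

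However, the ``main obstacle'' you identify does not exist, and your fix for it is incoherent. Theorem \ref{extinn1} never asks the $G_i$ to be open in $G$. For a directed family of closed normal subgroups of a profinite group, $G\overset{\sim}{\to}\varprojlim G/G_i$ only requires $\bigcap_i G_i=\{1\}$. Here $\bigcap_W\ker(W\twoheadrightarrow W^p)\subseteq\bigcap_W W=\{1\}$, because every $N\cap V$ with $V\subseteq G$ open normal is such a $W$. Directedness holds since $\ker(W_1\cap W_2\twoheadrightarrow(W_1\cap W_2)^p)\subseteq\ker(W_j\twoheadrightarrow W_j^p)$. Your choice $G_i=W'\cap V$ with ``$N\cap G_i=W'$'' forces $W'\subseteq V$, i.e.\ $G_i=W'$. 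Moreover, subgroups of $N$ cannot cut out the topology of $G$. Just take $G_i=W'$.

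The real gap is in the almost symmetric sub-cases of (a), (b), (d). There you switch to unspecified ``sym-type'' kernels and then assert that the closure of the commutator subgroup of $W/W'$ is free pro-$p$. But Theorem \ref{innanab1}, (i) and the characteristic-$p$ remark in the proof of Proposition \ref{henselfree} only produce free pro-$p$ subgroups inside a maximal pro-$p$ quotient $W^p$. For a sym-type $W'$, the group $W/W'$ need not be pro-$p$. Proposition \ref{hilbsym}, which you cite, concerns Hilbertian fields and does not apply here. Your claim that $N/W'$ is ``again of the same type'', which you need for internal indecomposability, is likewise unsupported.

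The missing observation is that no separate treatment is needed: take $W'=\ker(W\twoheadrightarrow W^p)$ in every case. Suppose $N\cong\Phi/\ker(R\twoheadrightarrow R^p)$ with $R=M_0\cap(U_0)_{\mathrm{sym}}$ as in Definition \ref{symquot}, and let $\widetilde W\subseteq\Phi$ be the preimage of $W$. Then $R/(R\cap\ker(\widetilde W\twoheadrightarrow\widetilde W^p))$ is an extension of the finite $p$-group $R/(R\cap\widetilde W)$ by a subgroup of $\widetilde W^p$, hence pro-$p$. So $\ker(R\twoheadrightarrow R^p)\subseteq\ker(\widetilde W\twoheadrightarrow\widetilde W^p)$, and $N/W'\cong\Phi/\ker(\widetilde W\twoheadrightarrow\widetilde W^p)$ is an almost pro-$p$-maximal quotient of $\Phi$. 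The almost pro-$\mathcal{C}$ case is identical, using that $\mathbb{Z}/p\mathbb{Z}$ is a $\mathcal{C}$-group. This is what lets Theorems \ref{inngal}, (i) and \ref{innanab1}, (ii) give internal indecomposability of $N/G_i$. Meanwhile, Theorem \ref{innanab1}, (i), applied to $N$ itself, already covers open subgroups of almost symmetric quotients. With these corrections your argument becomes the paper's proof.
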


\begin{proof}
For an open subgroup $U\subseteq N$ of $N$ that is normal in $G$, it follows from Theorem \ref{innanab1}, (i); the proof of Proposition \ref{henselfree}, that $U^p$ has a characteristic closed subgroup $F_U$ that is free pro-$p$ of rank $\ge 2$. Since $U^p$ is the image of $U\subseteq G$ via the natural surjection $G\twoheadrightarrow G/\ker(U\twoheadrightarrow U^p)$, it holds that $U^p$, hence also $F_U$, is normal in $G/\ker(U\twoheadrightarrow U^p)$. Moreover, it follows from Theorem \ref{inngal}, (i); Theorem \ref{innanab1}, (ii), that the almost pro-$p$-maximal quotient of $N$ associated to $U$ is internally indecomposable. Thus, it follows from Theorem \ref{extinn1} that $\alpha$ is an inner automorphism of $G$. This completes the proof of Corollary \ref{extinn2}.
\end{proof}

\begin{thm}\label{extinn3}
Let $n$ be a positive integer; $G_1,\ldots,G_{n+1}$ profinite groups. Suppose that
\begin{itemize}
\item for each $i\in\{1,\ldots,n\}$, there exists a surjective homomorphism $\phi_i:G_i\twoheadrightarrow G_{i+1}$ such that $\ker\phi_i$ is isomorphic to one of (a)-(d) in Corollary \ref{extinn2}, and, moreover, the outer representation $G_{i+1}\to\Out(\ker\phi_i)$ is injective or has a center-free image;
\item if the outer representation $G_{n+1}\to\Out(\ker\phi_n)$ is not injective, then every families preserving automorphism [in $G_{n+1}$] of $G_{n+1}$ is an inner automorphism.
\end{itemize}
Then every families preserving automorphism [in $G_1$] of $G_1$ is an inner automorphism.
\end{thm}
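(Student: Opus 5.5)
We argue by descending induction on $i$: we show that for each $i\in\{1,\ldots,n+1\}$, every families preserving automorphism of $G_i$ (in $G_i$) is inner, with the case $i=1$ being the assertion. To go from $G_{i+1}$ to $G_i$, fix a families preserving automorphism $\alpha$ of $G_i$ and put $N_i\overset{\mathrm{def}}{=}\ker\phi_i$. We will apply Corollary \ref{extinn2} to the exact sequence $1\to N_i\to G_i\to G_{i+1}\to 1$. For this we must check three things: that $\alpha$ preserves $N_i$, that $\alpha|_{N_i}$ is families preserving in $G_i$, and that the hypothesis on the outer representation $\rho_i:G_{i+1}\to\Out(N_i)$ holds.

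The first two are immediate. By Remark \ref{snfprem}, (ii), $\alpha$ is normal, so $\alpha(N_i)=N_i$. Any pro-cyclic subgroup of $N_i$ is a pro-cyclic subgroup of $G_i$, so $\alpha|_{N_i}$ is families preserving in $G_i$.

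For the outer representation: if $\rho_i$ is injective, Corollary \ref{extinn2} applies directly. Otherwise, by hypothesis $\Image\rho_i$ is center-free, and we additionally need $\alpha_{G_{i+1}}$ to be inner. By Remark \ref{snfprem}, (iii), the automorphism $\alpha_{G_{i+1}}$ of $G_{i+1}=G_i/N_i$ induced by $\alpha$ is families preserving in $G_{i+1}$. We then need every families preserving automorphism of $G_{i+1}$ to be inner. For $i=n$ this is exactly the second hypothesis. For $i<n$ it is the induction hypothesis for $G_{i+1}$.

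This is the delicate point. The induction hypothesis at $G_{i+1}$ is only established when its own chain $G_{i+1}\twoheadrightarrow\cdots\twoheadrightarrow G_{n+1}$ satisfies the hypotheses of the theorem. That chain inherits the first hypothesis verbatim, and its bottom step is the same $\phi_n$, so it also inherits the second. So we should state the inductive claim as the theorem itself, proved by induction on $n$. The case $n=1$ is the argument above with $i=1$. For general $n$, we apply the statement for $n-1$ to the chain $G_2,\ldots,G_{n+1}$; this shows that every families preserving automorphism of $G_2$ is inner, and that is the input needed in the non-injective case for $G_1$. With all conditions verified, Corollary \ref{extinn2} gives that $\alpha$ is inner in $G_i$.

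The main obstacle is making sure the families preserving property descends to the quotient in the sense required. Remark \ref{snfprem}, (iii) handles this: a pro-cyclic subgroup of $G_{i+1}$ is the image of a pro-cyclic subgroup of $G_i$, obtained by lifting a topological generator.
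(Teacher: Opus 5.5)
Your proof is correct and follows essentially the same route as the paper. The paper also uses Remark \ref{snfprem}, (ii), (iii), to see that a families preserving $\alpha$ preserves each $\ker\phi_i$, restricts to a families preserving automorphism of it in $G_i$, and induces families preserving automorphisms of each $G_{i+1}$; it then applies Corollary \ref{extinn2} inductively from the bottom of the chain upward. Your initial claim over all $i\in\{1,\ldots,n+1\}$ is not available at $i=n+1$ when the bottom outer representation is injective, and your reformulation as an induction on $n$ is the right way to make the induction precise.
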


\begin{proof}
Let $\alpha\in\Aut(G_1)$ be a families preserving automorphism. Then it follows from Remark \ref{snfprem}, (iii), that, for each $i\in\{1,\ldots,n+1\}$, $\alpha$ determines a families preserving automorphism $\alpha_i\in\Aut(G_i)$. In particular, if the outer representation $G_{n+1}\to\Out(\ker\phi_n)$ is not injective, then $\alpha_{n+1}$ is an inner automorphism. Moreover, for each $i\in\{1,\ldots,n\}$, the automorphism of $\ker\phi_i$ determined by $\alpha_i$ is families preserving in $G_i$. Thus, by applying Corollary \ref{extinn2} inductively, we conclude that $\alpha_i$ is an inner automorphism of $G_i$ for each $i\in\{1,\ldots,n\}$. This completes the proof of Theorem \ref{extinn3}.
\end{proof}

\begin{thm}\label{curve}
Let $K$ be an algebraic extension of a number field or its completion with respect to a nontrivial valuation; $X$ a hyperbolic curve over $K$. Then every families preserving automorphism of $\pi_1(X)$ is an inner automorphism.
\end{thm}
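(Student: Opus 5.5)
We apply Theorem \ref{extinn3} with $n=1$ to the homotopy exact sequence
\begin{equation*}
1\to\Delta_X\to\pi_1(X)\to G_K\to 1,
\end{equation*}
where $\Delta_X\overset{\mathrm{def}}{=}\pi_1(X\times_K\overline{K})$ is the geometric fundamental group. Set $G_1\overset{\mathrm{def}}{=}\pi_1(X)$, $G_2\overset{\mathrm{def}}{=}G_K$, and $\phi_1:\pi_1(X)\twoheadrightarrow G_K$. Since $K$ has characteristic $0$, $\Delta_X$ is a profinite surface group, i.e.\ the maximal pro-$\mathcal{C}$ quotient for $\mathcal{C}$ the class of all finite groups, which is trivially almost pro-$\mathcal{C}$-maximal. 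Hence $\ker\phi_1$ is of type (b) in Corollary \ref{extinn2}, and it remains to check the hypotheses on the outer representation $\rho_X:G_K\to\Out(\Delta_X)$.

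\textbf{Center-freeness of $\Image\rho_X$.} I would argue directly instead of quoting injectivity results. First, $\Delta_X$ is slim, so $\Out(\Delta_X)$ sits in $\Aut(\Delta_X)$-type extensions without center issues. Second, $\Image\rho_X$ is a quotient of $G_K$ by a closed normal subgroup $M$, hence equals $\Gal(L/K)$ for $L$ the fixed field of $M$. By the Tate conjecture/Faltings type argument, or more elementarily the $l$-adic cyclotomic character, $L$ contains $K(\mu_{l^\infty})$ and other nontrivial extensions. Suppose $z\in Z(\Image\rho_X)$. Then $\rho_X^{-1}(\langle z\rangle)\cdot\ker\rho_X$ is a normal subgroup of $G_K$ whose image in $\Image\rho_X$ is central. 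I would show it equals $\ker\rho_X$ using the known fact that $\Gal(L/K)$ has no nontrivial abelian normal subgroups for these $K$. For $K$ Hilbertian (algebraic over a number field but not ``too large''), this follows from the strong internal indecomposability in Theorem \ref{inngal}, (i), applied to $G_K$. For the other fields, alternatives are needed: for $K$ a Henselian DVF completion, Theorem \ref{sneanab} applies; for the archimedean completion $K=\mathbb{R},\mathbb{C}$, $\Image\rho_X$ has order $\le2$. The case $K=\mathbb{R}$ must be checked separately: complex conjugation acts nontrivially on $\Out(\Delta_X)$, so $\rho_X$ is injective there.

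\textbf{The case where $\rho_X$ is not injective.} Theorem \ref{extinn3} then requires every families preserving automorphism of $G_K$ to be inner. I would verify this by cases. If $K$ is Hilbertian, or a Henselian DVF with residue characteristic $p>0$ (nonarchimedean completions of number fields), it follows from Theorem \ref{inngal}, (ii), with $H=H'=G_K$. If $K$ is $\mathbb{R}$ or $\mathbb{C}$, $G_K$ has order $\le 2$ and every automorphism is trivial. The remaining cases are algebraic extensions of number fields that are neither Hilbertian nor Henselian, for instance algebraically closed $K$. If $K$ is algebraically closed, $G_K$ is trivial. For the genuinely remaining fields (e.g.\ large non-Hilbertian algebraic extensions, or completions of infinite extensions), the goal would be to prove injectivity of $\rho_X$ instead, using the anabelian result that for $K$ algebraic over $\mathbb{Q}$ or $\mathbb{Q}_p$ the outer Galois representation of a hyperbolic curve is injective (Belyi/Hoshi--Mochizuki type results, extended from finite to algebraic $K$ since $G_K$ is a closed subgroup of $G_{\mathbb{Q}}$ or $G_{\mathbb{Q}_p}$ and injectivity restricts). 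A completion of an algebraic extension of a number field is a Henselian valued field with possibly nondiscrete valuation; here $G_K$ is isomorphic to the Galois group of the algebraic part, so the same restriction argument applies.

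\textbf{Expected obstacle.} I expect the main obstacle to be this last bookkeeping: guaranteeing, uniformly over all the listed $K$, that either $\rho_X$ is injective or $G_K$ satisfies the families preserving hypothesis. I would first try to prove injectivity of $\rho_X$ in all cases by reduction to $\mathbb{Q}$ and $\mathbb{Q}_p$. I would fall back on Theorem \ref{inngal} only when $\rho_X$ might fail to be injective.
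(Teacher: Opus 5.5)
Your final strategy is the paper's: apply Theorem \ref{extinn3} with $n=1$, $G_1=\pi_1(X)$, $G_2=G_K$ and $\ker\phi_1=\Delta_X$ (a profinite surface group, type (b) of Corollary \ref{extinn2}), and use injectivity of $\rho_X$ so that the second hypothesis of Theorem \ref{extinn3} is vacuous. The paper gets this injectivity for every $K$ in the statement at once by quoting \cite{NodNon}, Theorem C. It needs neither center-freeness of $\Image\rho_X$ nor any families preserving statement for $G_K$.

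Two of your auxiliary steps are wrong and should be dropped.

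First, the center-freeness argument does not work. Theorem \ref{inngal}, (i), and Theorem \ref{sneanab} are statements about $G_K$, not about its quotient $\Image\rho_X\cong G_K/\ker\rho_X$, and internal indecomposability does not pass to quotients (think of $G_K^{\mathrm{ab}}$). An element $g$ that is central modulo $M$ only satisfies $[g,G_K]\subseteq M$, which says nothing about centralizers in $G_K$. So the ``known fact'' that $\Gal(L/K)$ has no nontrivial abelian normal subgroups is available only when $\ker\rho_X=\{1\}$, which is exactly when you no longer need it.

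Second, your reduction of injectivity to the algebraic case fails for a completion $K$ of an \emph{infinite} algebraic extension, a case you explicitly include. It is true that $G_K\cong G_{K\cap\overline{\mathbb{Q}}}$, but the curve $X$ need not be defined over $K\cap\overline{\mathbb{Q}}$, since its moduli may be transcendental. Hence $\rho_X$ is not the restriction to $G_K$ of the outer representation of any curve over a finite extension of $\mathbb{Q}$ or $\mathbb{Q}_p$. Descent plus restriction is fine when $K$ is algebraic over a number field or over a $p$-adic local field. In that case $X$ descends to a hyperbolic curve $X_0$ over a finite subextension $K_0$, $\overline{K}=\overline{K_0}$, and $\rho_X=\rho_{X_0}\vert_{G_K}$. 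For the remaining fields you must quote an injectivity theorem that applies to hyperbolic curves over $K$ itself. That is precisely the role of \cite{NodNon}, Theorem C in the paper's two-line proof.
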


\begin{proof}
It follows from \cite{NodNon}, Theorem C, that the outer representation $G_K\to\Out(\pi_1(X\times_K \overline{K}))$ is injective. Since $\pi_1(X\times_K \overline{K})$ is a surface group, Theorem \ref{curve} follows from Theorem \ref{extinn3}.
\end{proof}

\begin{defi}[cf.\ \cite{H1}, Definition 2.1, (ii)]\label{hyppolydef}
Let $n$ be a positive integer; $S$ a scheme; $X$ a scheme over $S$. Then we shall say that $X$ is a \textit{hyperbolic polycurve [of relative dimension $n$]} over $S$ if there exists a [not necessarily unique] sequence of schemes
\begin{equation*}
X=X_n\to X_{n-1}\to\cdots\to X_2\to X_1\to X_0=S
\end{equation*}
such that $X_i\to X_{i-1}$ is a hyperbolic curve for each $i\in\{1,\ldots,n\}$. We shall refer to the above sequence as a \textit{sequence of parametrizing morphisms}.
\end{defi}

\begin{thm}\label{poly}
Let $n$ be a positive integer; $K$ an algebraically closed field of characteristic $0$; $X$ a hyperbolic polycurve of relative dimension $n$ over $K$;
\begin{equation*}
X=X_n\to X_{n-1}\to\cdots\to X_2\to X_1\to X_0=\Spec K
\end{equation*}
a sequence of parametrizing morphisms (respectively, a sequence of parametrizing morphisms satisfying condition $(*)_p$ defined in \cite{Sa1}, Definition 3.10). Suppose that, for each $i\in\{1,\ldots,n\}$, the image of the outer representation $\pi_1(X_{i-1})\to\Out(\ker(\pi_1(X_i)\to\pi_1(X_{i-1})))$ (respectively, $\pi_1(X_{i-1})^p\to\Out(\ker(\pi_1(X_i)^p\to\pi_1(X_{i-1})^p))$) is center-free [e.g.\ the case where the outer representation is injective]. Then every families preserving automorphism of $\pi_1(X)$ is an inner automorphism.
\end{thm}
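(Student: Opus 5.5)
The plan is to apply Theorem \ref{extinn3} to the tower of groups $G_i\overset{\mathrm{def}}{=}\pi_1(X_{n+1-i})$ (respectively, $G_i\overset{\mathrm{def}}{=}\pi_1(X_{n+1-i})^p$) for $i\in\{1,\ldots,n+1\}$. Here $G_1=\pi_1(X)$ (respectively, $\pi_1(X)^p$) and $G_{n+1}=\pi_1(\Spec K)=\{1\}$, since $K$ is algebraically closed. The surjection $\phi_i:G_i\twoheadrightarrow G_{i+1}$ is the one induced by the parametrizing morphism $X_{n+1-i}\to X_{n-i}$. In the pro-$p$ case, condition $(*)_p$ of \cite{Sa1}, Definition 3.10, is exactly what guarantees that the pro-$p$ homotopy sequence remains exact on the left. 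We will assume that the statement concerns $\pi_1(X)^p$ in the respective case.

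\textbf{Kernels.} First I must identify each $\ker\phi_i$. For a hyperbolic curve $X_j\to X_{j-1}$ over a base of characteristic $0$, the homotopy exact sequence
\[
1\to\pi_1(X_{j,\bar\eta})\to\pi_1(X_j)\to\pi_1(X_{j-1})\to 1
\]
holds, and the geometric fiber is a hyperbolic curve over an algebraically closed field of characteristic $0$. Hence $\ker\phi_i$ is a profinite surface group, which is an almost pro-$\mathcal{C}$ surface group with $\mathcal{C}$ the class of all finite groups. Thus it falls under case (b) of Corollary \ref{extinn2}. In the respective case the kernel is a pro-$p$ surface group, so it falls under (b) with $\mathcal{C}$ the class of $p$-groups; note that $\mathbb{Z}/p\mathbb{Z}$ is a $\mathcal{C}$-group in both cases.

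\textbf{Outer representations and the bottom level.} By hypothesis, the outer representation $G_{i+1}\to\Out(\ker\phi_i)$ has center-free image, which is the first condition of Theorem \ref{extinn3}. For the second condition, $G_{n+1}$ is trivial, so every automorphism of it, in particular every families preserving one, is inner. Theorem \ref{extinn3} then yields that every families preserving automorphism of $G_1$ is inner.

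\textbf{Main obstacle.} The delicate points are the exactness of the homotopy sequences (injectivity on the left) and the fact that the fiber groups are surface groups in the sense of Definition \ref{surfacedef}. In the profinite case these follow from the standard theory of hyperbolic curves in characteristic $0$ (cf.\ \cite{H1}). In the pro-$p$ case I would invoke \cite{Sa1}, where $(*)_p$ is designed to make the pro-$p$ sequence exact with pro-$p$ surface group kernel. Beyond this, the argument only requires checking that the hypotheses of Theorem \ref{extinn3} are met exactly as stated.
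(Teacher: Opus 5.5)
Your proposal is correct and is essentially the paper's proof: the paper likewise identifies each kernel $\ker(\pi_1(X_i)\to\pi_1(X_{i-1}))$ (respectively, its pro-$p$ analogue) as a surface group (respectively, pro-$p$ surface group) via \cite{H1}, Proposition 2.4, (i) (respectively, \cite{Sa1}, Definition 3.10), and then applies Theorem \ref{extinn3}. You also make explicit a step the paper leaves implicit: taking the bottom of the tower to be $\pi_1(X_0)=\{1\}$, which makes the second hypothesis of Theorem \ref{extinn3} vacuous.
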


\begin{proof}
Since $\ker(\pi_1(X_i)\to\pi_1(X_{i-1}))$ (respectively, $\ker(\pi_1(X_i)^p\to\pi_1(X_{i-1})^p)$) is a surface group (respectively, pro-$p$ surface group) [cf.\ \cite{H1}, Proposition 2.4, (i); \cite{Sa1}, Definition 3.10], Theorem \ref{poly} follows from Theorem \ref{extinn3}.
\end{proof}

\begin{rem}\label{polyrem}
If $X$ is the $n$-th configuration space associated to a hyperbolic curve over $K$ [cf.\ \cite{MoTa}, Definition 2.1], then, with respect to the sequence determined by projection morphisms, $X$ satisfies condition $(*)_p$ defined in \cite{Sa1}, Definition 3.10 [cf.\ \cite{MoTa}, Proposition 2.2, (iii)], and, moreover, the outer representations appearing in Theorem \ref{poly} is injective [cf.\ \cite{As}, Theorem 1; \cite{As}, Remark following the proof of Theorem 1].
\end{rem}

At the end of the present section, we show that similar results to the case of profinite groups hold for cases such as topological fundamental groups.

\begin{lem}\label{snfplim}
Let $n$ be a positive integer; $G$ a group; $H,H'\subseteq G$ subgroups; $\alpha:H\overset{\sim}{\to}H'$ an isomorphism; $\{G_i\}_{i\in I}$ a directed subset of the set of normal subgroups of $G$ of finite index [where $j\ge i\Leftrightarrow G_j\subseteq G_i$]. Suppose that for each $i\in I$, it holds that $\alpha(H\cap G_i)=H'\cap G_i$, and, moreover, the isomorphism $\alpha_i:H/(H\cap G_i)\overset{\sim}{\to}H'/(H'\cap G_i)$ of finite groups determined by $\alpha$ is families preserving (respectively, $n$-subnormal) in $G/G_i$. Then the isomorphism $\varprojlim_{i\in I} H/(H\cap G_i)\overset{\sim}{\to}\varprojlim_{i\in I} H'/(H'\cap G_i)$ between [closed] subgroups of [the profinite group] $\varprojlim_{i\in I} G/G_i$ is families preserving (respectively, $n$-subnormal) in $\varprojlim_{i\in I} G/G_i$.
\end{lem}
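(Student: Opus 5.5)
Write $\widehat{G}\overset{\mathrm{def}}{=}\varprojlim_{i\in I}G/G_i$, $\widehat{H}\overset{\mathrm{def}}{=}\varprojlim_{i\in I}H/(H\cap G_i)$, $\widehat{H'}\overset{\mathrm{def}}{=}\varprojlim_{i\in I}H'/(H'\cap G_i)$, and $\widehat{\alpha}\overset{\mathrm{def}}{=}\varprojlim_i\alpha_i$. The maps $\alpha_i$ are compatible, so $\widehat{\alpha}$ is a continuous isomorphism. Since $I$ is directed, the natural maps identify $\widehat{H}$ with a closed subgroup of $\widehat{G}$: its image in $G/G_i$ is $\pi_i(\widehat{H})=HG_i/G_i$, where $\pi_i:\widehat{G}\twoheadrightarrow G/G_i$ denotes the projection. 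The plan is to verify both properties by a compactness argument, projecting to each finite level and then passing to the limit.

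\textbf{Families preserving.} Let $J\subseteq\widehat{H}$ be a pro-cyclic subgroup, topologically generated by $x$. Write $x_i\overset{\mathrm{def}}{=}\pi_i(x)$ and $J_i\overset{\mathrm{def}}{=}\pi_i(J)=\langle x_i\rangle$. Then $\pi_i(\widehat{\alpha}(J))=\alpha_i(J_i)$. For each $i$ set
\begin{equation*}
D_i\overset{\mathrm{def}}{=}\{g\in\widehat{G}\,\vert\,\pi_i(\widehat{\alpha}(J))=\pi_i(g)J_i\pi_i(g)^{-1}\}.
\end{equation*}
Because $\alpha_i$ is families preserving in $G/G_i$, each $D_i$ is nonempty, and it is closed since it is a union of cosets of $\ker\pi_i$. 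If $j\ge i$, projecting the level-$j$ equality to level $i$ gives $D_j\subseteq D_i$. By compactness of $\widehat{G}$ we may pick $g\in\bigcap_i D_i$. Then $\widehat{\alpha}(J)$ and $gJg^{-1}$ are closed subgroups of $\widehat{G}$ with the same image in every $G/G_i$. Since $\widehat{G}=\varprojlim G/G_i$, a closed subgroup equals the inverse limit of its images, so $\widehat{\alpha}(J)=gJg^{-1}$.

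\textbf{$n$-subnormal.} Let $S\subseteq\widehat{H}\cap\widehat{H'}$ be a closed subgroup that is $n$-subnormal in $\widehat{G}$, with a closed chain $\widehat{G}=S_0\supseteq S_1\supseteq\cdots\supseteq S_n=S$. Projecting the chain gives $\pi_i(S)$ $n$-subnormal in $G/G_i$, and $\pi_i(S)\subseteq\pi_i(\widehat{H})\cap\pi_i(\widehat{H'})=H/(H\cap G_i)\cap H'/(H'\cap G_i)$. The hypothesis then gives $\alpha_i(\pi_i(S))=\pi_i(S)$, so $\pi_i(\widehat{\alpha}(S))=\pi_i(S)$ for all $i$. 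As before, closed subgroups with equal images at every level coincide, hence $\widehat{\alpha}(S)=S$.

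\textbf{Main obstacle.} The delicate point is the bookkeeping at the finite levels. In the subnormal case, the image $\pi_i(S)$ lies in $\pi_i(\widehat{H})\cap\pi_i(\widehat{H'})$, but this may be strictly larger than the image of $\widehat{H}\cap\widehat{H'}$. This is harmless, since the hypothesis on $\alpha_i$ applies to any $n$-subnormal subgroup of that intersection. One must also check that $\pi_i(\widehat{H})$ really equals $H/(H\cap G_i)$. This follows from surjectivity of projective limits of finite sets with surjective transition maps.
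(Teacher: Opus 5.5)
Your proposal is correct and takes essentially the same route as the paper. The paper likewise takes the nonempty, closed, decreasing sets of conjugating elements at each finite level $G/G_i$, picks a point in their intersection by compactness, and concludes from the fact that a closed subgroup of $\varprojlim_{i} G/G_i$ is determined by its images in the $G/G_i$. It handles the $n$-subnormal case exactly as you do, by projecting $S$ to each $G/G_i$ and applying the hypothesis on $\alpha_i$ there.
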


\begin{proof}
Write $\tilde{G}\overset{\mathrm{def}}{=}\varprojlim_{i\in I} G/G_i$, and, moreover, for each $i\in I$, write $p_i:\tilde{G}\twoheadrightarrow G/G_i$ for the natural surjective homomorphism [cf.\ \cite{RZ}, Proposition 1.1.10].

First, we verify the families preserving case. Let $J\subseteq\tilde{G}$ be a pro-cyclic subgroup of $\tilde{G}$, and for each $i\in I$, write
\begin{equation*}
E_i\overset{\mathrm{def}}{=}\{g\in\tilde{G}\,\vert\, \alpha_i(p_i(J))=p_i(g)p_i(J)p_i(g)^{-1}\}\subseteq\tilde{G}.
\end{equation*}
Then, since $\alpha_i$ is families preserving in $G/G_i$, $E_i$ is a nonempty [closed] subset of $\tilde{G}$. Moreover, if $j\ge i$, then it holds that $E_j\subseteq E_i$. Thus, since $\tilde{G}$ is compact, it holds that $\bigcap_{i\in I}E_i\neq\emptyset$.

Let $g\in\bigcap_{i\in I}E_i$. Then, for each $i\in I$, it holds that $p_i(\alpha(J))=\alpha_i(p_i(J))=p_i(gJg^{-1})$. Thus, it follows from \cite{RZ}, Proposition 2.1.4, (a), that $\alpha(J)=gJg^{-1}$. This completes the proof of the families preserving case of Lemma \ref{snfplim}.

Finally, we verify the $n$-subnormal case. Let $S\subseteq\tilde{G}$ be an $n$-subnormal closed subgroup of $\tilde{G}$ contained in both $\varprojlim_{i\in I} H/(H\cap G_i)$ and $\varprojlim_{i\in I} H'/(H'\cap G_i)$. Then, for each $i\in I$, since $p_i(S)$ is an $n$-subnormal [closed] subgroup of $G/G_i$ contained in both $H/(H\cap G_i)$ and $H'/(H'\cap G_i)$, it holds that $p_i(\alpha(S))=\alpha_i(p_i(S))=p_i(S)$. Thus, it follows from \cite{RZ}, Proposition 2.1.4, (a), that $\alpha(S)=S$. This completes the proof of Lemma \ref{snfplim}.
\end{proof}

\begin{lem}\label{commensurator}
Let $G$ be a group (respectively, topological group) isomorphic to a free group of infinite rank or the topological fundamental group of a hyperbolic curve over $\mathbb{C}$ (respectively, isomorphic to the tempered fundamental group of a hyperbolic curve over $\mathbb{C}_p$). Then $G$ is commensurably terminal in $G^{\wedge}$, i.e., for $g\in G^{\wedge}$, if $gGg^{-1}\cap G$ has finite index in both $G$ and $gGg^{-1}$, then it holds that $g\in G$.
\end{lem}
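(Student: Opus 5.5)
The approach is to turn the commensuration condition into a normalization condition using centralizers in $G^{\wedge}$, and then to show that anything in $G^{\wedge}$ normalizing $G$ already lies in $G$. Let $g\in G^{\wedge}$ satisfy the hypothesis and write $K\overset{\mathrm{def}}{=}gGg^{-1}\cap G$.

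\textbf{Step 1: $gGg^{-1}\subseteq G$.} Let $t\in G$ be nontrivial. Since $K$ has finite index in $gGg^{-1}$, some power $x^n$ of $x\overset{\mathrm{def}}{=}gtg^{-1}$ lies in $K\subseteq G$, and $x^n\neq 1$ because $G^{\wedge}$ is torsion-free in all three cases. I would then use the following standard fact about free and surface groups (and their tempered analogues): for nontrivial $y\in G$, the centralizer $Z_{G^{\wedge}}(y)$ is the closure $\overline{\langle c\rangle}\cong\widehat{\mathbb{Z}}$ of the maximal cyclic subgroup $\langle c\rangle=Z_G(y)$. Write $y=c^m$. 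Since $x$ commutes with $x^n=y$, we get $x=c^{\lambda}$ for some $\lambda\in\widehat{\mathbb{Z}}$, and then $n\lambda=m$ in $\widehat{\mathbb{Z}}$. Hence $m/n\in\widehat{\mathbb{Z}}\cap\mathbb{Q}=\mathbb{Z}$, so $x\in G$. The same argument with $g^{-1}$ in place of $g$ gives the reverse inclusion, so $g\in N_{G^{\wedge}}(G)$.

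\textbf{Step 2: $N_{G^{\wedge}}(G)=G$.} Conjugation by $g$ restricts to an automorphism $\sigma$ of $G$. Its extension to $G^{\wedge}$ is inner, so $\sigma$ preserves every conjugacy class of every finite quotient of $G$, i.e. it is class-preserving on finite quotients. Free groups and surface groups are conjugacy separable, so $\sigma$ maps each element to a conjugate. Moreover they have Grossman's property that such pointwise inner automorphisms are inner. Hence $\sigma=c_a$ for some $a\in G$. Then $a^{-1}g$ centralizes $G$, and therefore centralizes its closure $G^{\wedge}$. Since $G^{\wedge}$ is center-free (it is slim; cf.\ Theorem \ref{sneanab}), $g=a\in G$.

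\textbf{Infinite rank.} For the free group of infinite rank I would first reduce to finitely generated free factors. Choose a finitely generated free factor $G_0$ with $G=G_0*G_1$ such that $\langle g_0\rangle$-type data (the relevant elements in Step 1) lie in $G_0$. Then use that $G_0^{\wedge}$ is a retract of $G^{\wedge}$ and that the centralizer statement descends to it. This also handles centralizers directly, since they are cyclic.

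\textbf{Tempered case.} Here $G^{\wedge}$ is the profinite surface group. The centralizer description in Step 1 comes from the vertex/edge structure of the tempered group as a fundamental group of a graph of profinite surface groups (Andr\'e, Lepage). For Step 2 I would run the same argument using that the tempered group is dense in $G^{\wedge}$ and that the profinite group is slim.

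The main obstacle is Step 2, particularly in the tempered case, where Grossman-type results are not classical. There I would instead try to show directly that $\sigma$ preserves the graph-of-groups decomposition, and hence is induced by an element of $G$.
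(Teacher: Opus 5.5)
\paragraph{The discrete cases.}
Your treatment of the discrete cases works, but by a different route from the paper's. For topological fundamental groups of hyperbolic curves, which already include free groups of finite rank, the paper simply applies \cite{IUTI}, Theorem 2.6, to $g^{-1}(gGg^{-1}\cap G)g\subseteq G$.

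For free groups of infinite rank the paper skips your Step 1 entirely:
\begin{itemize}
\item Choose $n\ge 1$ with $h_1^n,h_2^n\in gGg^{-1}\cap G$ for two basis elements $h_1\neq h_2$.
\item Conjugacy separability gives $g^{-1}h_i^ng=k_i^{-1}h_i^nk_i$ with $k_i\in G$, so $gk_i^{-1}\in Z_{G^{\wedge}}(h_i^n)=\overline{\langle h_i\rangle}$.
\item Then $h_1^{-a_1}h_2^{a_2}=k_1k_2^{-1}\in G$ forces $a_1,a_2\in\mathbb{Z}$.
\end{itemize}

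Your combination of root extraction via $Z_{G^{\wedge}}(y)=\overline{Z_G(y)}$, conjugacy separability, Grossman's theorem and slimness of $G^{\wedge}$ is a legitimate alternative for finitely generated free and surface groups. In infinite rank it needs two small repairs.
\begin{itemize}
\item The element $x=gtg^{-1}$ need not lie in $G_0^{\wedge}$. What you actually need is that the retraction $G^{\wedge}\twoheadrightarrow G_0^{\wedge}$ is injective on the torsion-free pro-cyclic group $Z_{G^{\wedge}}(y)$.
\item Grossman's theorem is a finite-rank statement. For free groups of rank $\ge 3$, innerness already follows from class preservation on the $x_i$ and the $x_ix_j$, by comparing cyclically reduced lengths. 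Alternatively, once Step 1 shows that $g$ normalizes $G$, use the paper's two-basis-element argument.
\end{itemize}

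\paragraph{The gap: the tempered case.}
The genuine gap is the tempered case, which your proposal does not prove.

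Step 1 rests on ``$Z_G(y)=\langle c\rangle$ is infinite cyclic and $Z_{G^{\wedge}}(y)=\overline{\langle c\rangle}$''. This fails for $G=\pi_1^{\mathrm{temp}}(X)$. That group is not discrete: it contains nontrivial profinite subgroups, such as verticial subgroups. For $y$ in such a subgroup, the closed subgroup $Z_G(y)$ contains the uncountable group $\overline{\langle y\rangle}$, so it is not infinite cyclic. The computation $m/n\in\widehat{\mathbb{Z}}\cap\mathbb{Q}=\mathbb{Z}$ is then meaningless, and such elements need a different argument.

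Step 2 requires tempered analogues of conjugacy separability and of Grossman's theorem, and you supply neither. Your fallback, showing that $\sigma$ preserves the graph-of-groups structure, is exactly the missing content.

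The paper's input here is of another kind. Since the moduli stack of stable curves is proper, the valuative criterion shows that $X$ has stable reduction over the ring of integers of $\mathbb{C}_p$. Thus $G$ is governed by the dual semi-graph of the special fiber. The proof of \cite{IUTI}, Proposition 2.4, (iii), exploits this combinatorial structure, rather than any Grossman-type result, to conclude $g\in G$.
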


\begin{proof}
Since the moduli stack of stable curves of given type is proper [cf.\ \cite{DM}, Definition 4.11; \cite{DM}, Theorem 5.2; \cite{Kn}, Theorem 2.7], the valuative criterion implies that a hyperbolic curve over $\mathbb{C}_p$ always has stable reduction. Thus, the resp'd case follows from the proof of \cite{IUTI}, Proposition 2.4, (iii). We verify the non-resp'd case. Let $g\in G^{\wedge}$ be an element such that $gGg^{-1}\cap G$ has finite index in both $G$ and $gGg^{-1}$. Write $H\overset{\mathrm{def}}{=}gGg^{-1}\cap G$. Then, since $H\subseteq gGg^{-1}$, it holds that $g^{-1}Hg\subseteq G$. In particular, if $G$ is isomorphic to the topological fundamental group of a hyperbolic curve over $\mathbb{C}$, then it follows from \cite{IUTI}, Theorem 2.6, that $g^{-1}\in G$, hence $g\in G$.

Now suppose that $G$ is a free group on an infinite set $X$. Let us fix distinct elements $h_1,h_2\in X$. Write $n\overset{\mathrm{def}}{=}[G:H]$. Then, since $g^{-1}h_1^n g,g^{-1}h_2^ng\in g^{-1}Hg\subseteq G$, it follows from \cite{P}, Theorem 3.2, that for $i=1,2$, there exists $k_i\in G$ such that $g^{-1}h_i^n g=k_i^{-1}h_i^nk_i$, which implies that $gk_i^{-1}\in Z_{G^{\wedge}}(h_i^n)$. On the other hand, $G^{\wedge}$ is a free profinite group on a set containing $X$ [cf.\ \cite{RZ}, Exercise 3.3.3; \cite{RZ}, Exercise 3.5.14, (d)]. Thus, it holds that $Z_{G^{\wedge}}(h_i^n)\subseteq G^{\wedge}$ is the pro-cyclic subgroup of $G^{\wedge}$ generated by $h_i$. This implies that there exists $a_i\in\widehat{\mathbb{Z}}$ such that $gk_i^{-1}=h_i^{a_i}$. Now we conclude from $h_1^{-a_1}h_2^{a_2}=k_1g^{-1}\cdot gk_2^{-1}=k_1k_2^{-1}\in G$ that $a_1,a_2\in\mathbb{Z},\ g=h_1^{a_1}k_1\in G$. This completes the proof of Lemma \ref{commensurator}.
\end{proof}

\begin{thm}\label{innsurface}
Let $G$ be a group (respectively, topological group) isomorphic to the topological fundamental group of a hyperbolic curve over $\mathbb{C}$ (respectively, isomorphic to the tempered fundamental group of a hyperbolic curve over $\mathbb{C}_p$); $H,H'\subseteq G$ subgroups (respectively, open subgroups) of finite index; $\alpha:H\overset{\sim}{\to}H'$ an isomorphism (respectively, a continuous isomorphism). 

Suppse that for every cyclic subgroup $I\subseteq H$, there exists $g\in G$ such that $\alpha(I)=gIg^{-1}$. Then $\alpha$ is induced by an inner automorphism of $G$.
\end{thm}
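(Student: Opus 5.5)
The strategy is to pass to the profinite completion $G^{\wedge}$ (respectively, a suitable profinite completion of the tempered group), apply the profinite results already proved, and then descend back to $G$ using Lemma \ref{commensurator}.

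\textbf{Step 1: extend $\alpha$ to the completion.} Let $\{G_i\}_{i\in I}$ be the directed set of normal subgroups of finite index of $G$; in the resp'd case, take normal open subgroups of finite index. Since $H$ and $H'$ have finite index, the finite-index subgroups of $H$ are cofinal among those of $G$ intersected with $H$, and similarly for $H'$. Hence $\alpha$ extends to a continuous isomorphism $\widehat{\alpha}\colon \overline{H}\overset{\sim}{\to}\overline{H'}$ between open subgroups of $G^{\wedge}$, where $\overline{H}\cong H^{\wedge}$. It remains to check that $\widehat{\alpha}$ is families preserving in $G^{\wedge}$.

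\textbf{Step 2: show $\widehat{\alpha}$ is families preserving.} The plan is to use Lemma \ref{snfplim}, so I need the induced isomorphisms $\alpha_i$ on finite quotients. The difficulty is that $\alpha(H\cap G_i)$ need not equal $H'\cap G_i$ for the original system. I would pass to the cofinal subsystem of characteristic-type subgroups: fix $W\subseteq H\cap H'$ normal of finite index in $G$, and use the subgroups $\bigcap_{\phi}\phi(W_j)$ for $\alpha$-compatible choices. Alternatively, I would work directly in $G^{\wedge}$.

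The direct argument goes as follows. Any pro-cyclic $J\subseteq\overline{H}$ is a limit of the closures of cyclic subgroups $\langle h_k\rangle$ with $h_k\in H$ converging to a generator. For each $h_k$ the hypothesis gives $g_k\in G$ with $\alpha(\langle h_k\rangle)=g_k\langle h_k\rangle g_k^{-1}$, and closing up yields $\widehat{\alpha}(\overline{\langle h_k\rangle})=g_k\overline{\langle h_k\rangle}g_k^{-1}$. A compactness argument, as in Lemma \ref{snfplim} with accumulation points of the $g_k$ in $G^{\wedge}$, then gives $\widehat{\alpha}(J)=gJg^{-1}$ for some $g\in G^{\wedge}$.

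I expect this to be the main obstacle. Images of pro-cyclic groups under limits need care: one should work modulo each finite quotient, where the image of $J$ equals the image of some $\langle h\rangle$ with $h\in H$, and then intersect the nonempty closed sets of good conjugators. For the tempered case one uses the appropriate completion, which is profinite and contains a pro-$p$ surface quotient.

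\textbf{Step 3: apply the profinite theorem.} $G^{\wedge}$ is a profinite surface group. In the tempered case, the relevant completion is the profinite fundamental group, which is again a surface group. Since $\overline{H}$ and $\overline{H'}$ are open, each contains a nontrivial normal closed subgroup (Remark \ref{inngalrem}, (i)). Theorem \ref{innanab1}, (iii), applied with $\mathcal{C}$ the class of all finite groups, therefore gives $g\in G^{\wedge}$ with $\widehat{\alpha}(x)=gxg^{-1}$ for all $x\in\overline{H}$.

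\textbf{Step 4: descend to $G$.} For $x\in H$ we get $gxg^{-1}=\alpha(x)\in H'\subseteq G$. Thus $gHg^{-1}=H'$, so $gGg^{-1}\cap G\supseteq H'$ has finite index in both $G$ and $gGg^{-1}$. Lemma \ref{commensurator} then yields $g\in G$, which proves that $\alpha$ is induced by an inner automorphism of $G$.
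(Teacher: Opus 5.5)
Your proposal is correct and follows essentially the paper's route: you pass to $G^{\wedge}$ (in the tempered case, the profinite completion is the \'etale fundamental group, and residual finiteness lets you regard $G\subseteq G^{\wedge}$), show that the extension is families preserving by the compactness argument of Lemma \ref{snfplim}, apply Theorem \ref{innanab1}, and descend with Lemma \ref{commensurator}. The obstacle you flag in Step 2 does not arise. For any normal $N\subseteq G$ and $h\in H\cap N$, the hypothesis gives $\alpha(h)\in g\langle h\rangle g^{-1}\subseteq N$, and $\alpha^{-1}$ satisfies the same hypothesis, so $\alpha(H\cap N)=H'\cap N$. Hence the induced isomorphisms on the finite quotients $G/N$ are families preserving, which is exactly the paper's opening observation, and Lemma \ref{snfplim} applies directly with no need for characteristic subsystems.
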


\begin{proof}
Let us observe that, for any normal subgroup $N\subseteq G$ of $G$ of finite index, the isomorphism $H/(H\cap N)\overset{\sim}{\to}H'/(H'\cap N)$ determined by $\alpha$ is families preserving in $G/N$. Since $G$ is residually finite [cf.\ \cite{MoTa}, Proposition 7.1, (ii); \cite{An}, \S 4.5], it follows from Theorem \ref{innanab1}; Lemma \ref{snfplim}; \cite{An}, Proposition 4.4.1, that there exists $g\in G^{\wedge}$ such that for any $h\in H$, it holds that $\alpha(h)=ghg^{-1}$. In particular, it holds that $H'=gHg^{-1}$. Since $H'=gHg^{-1}(\subseteq gGg^{-1}\cap G)$ has finite index in both $G$ and $gGg^{-1}$, we conclude from Lemma \ref{commensurator} that $g\in G$. This completes the proof of Theorem \ref{innsurface}.
\end{proof}

Although it is not the main subject of this paper to study normal automorphisms, if $G$ is free, then, by using a result in \cite{J}, we obtain a result stronger than Theorem \ref{innsurface}:

\begin{thm}\label{innfree}
Let $G$ be a group (respectively, topological group) isomorphic to a free group of [possibly infinite] rank $\ge 2$ (respectively, isomorphic to the tempered fundamental group of an affine hyperbolic curve over $\mathbb{C}_p$); $H,H'\subseteq G$ subgroups (respectively, open subgroups) of finite index; $\alpha:H\overset{\sim}{\to}H'$ an isomorphism (respectively, a continuous isomorphism). 

Suppose that $\alpha(N)=N$ for any normal subgroup (respectively, normal closed subgroup) $N\subseteq G$ of $G$ contained in $H\cap H'$. Then $\alpha$ is induced by an inner automorphism of $G$.
\end{thm}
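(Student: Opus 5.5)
We follow the pattern of the proof of Theorem \ref{innsurface}, replacing the families-preserving input by Jarden's theorem on normal automorphisms of free profinite groups. The plan is to pass to the profinite completion, show there that $\alpha$ is induced by conjugation by some element of $G^{\wedge}$, and then descend that element to $G$ via Lemma \ref{commensurator}.

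\emph{Step 1: transfer normality to finite quotients.} Let $N\subseteq G$ be a normal subgroup of finite index (respectively, a normal open subgroup) contained in $H\cap H'$. We first check that $\alpha(N)=N$. Next we show that, for each normal subgroup $M\subseteq N$ of finite index that is normal in $G$, the induced isomorphism $\alpha_M:H/M\overset{\sim}{\to}H'/M$ is normal in $G/M$. Any normal subgroup of $G/M$ contained in $(H\cap H')/M$ is of the form $S/M$ with $S$ normal in $G$ and $S\subseteq H\cap H'$, so the hypothesis gives $\alpha(S)=S$. Such $M$ are cofinal among finite-index normal subgroups of $G$. By the $n$-subnormal case of Lemma \ref{snfplim}, the extension $\widehat{\alpha}:\widehat H\overset{\sim}{\to}\widehat{H'}$ is therefore normal in $G^{\wedge}$. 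In the tempered case we use \cite{An}, Proposition 4.4.1, to identify the closures of $H,H'$ with their profinite completions, and the residual finiteness of $G$ to embed $G$ in $G^{\wedge}$.

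\emph{Step 2: show $\widehat{\alpha}$ is inner on the completion.} In the free case $G^{\wedge}$ is a free profinite group, and in the tempered case of an affine curve $G^{\wedge}\cong\pi_1$ of an affine hyperbolic curve over $\overline{\mathbb{Q}}_p$ in characteristic $0$, which is again free profinite of finite rank $\ge 2$. The open subgroups $\widehat H,\widehat{H'}$ contain a common open subgroup $W$, normal in $G^{\wedge}$ and free profinite of rank $\ge 2$. We then need that a normal isomorphism between open subgroups of a free profinite group is induced by an inner automorphism. We reduce this to Jarden's theorem \cite{J}, which says that normal automorphisms of nonabelian free profinite groups are inner. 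This reduction is the step we expect to be the main obstacle: Jarden's theorem concerns automorphisms of the whole group, whereas $\widehat\alpha$ is only an isomorphism between open subgroups. We would try two routes.

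\begin{enumerate}
\item Restrict $\widehat\alpha$ to $W$. This gives an automorphism of $W$ preserving every closed subgroup of $W$ normal in $G^{\wedge}$, which is weaker than being a normal automorphism of $W$. To get around this we would use Jarden's argument relative to a container, in the spirit of Theorem \ref{autfree}.
\item Alternatively, use Remark \ref{innisomrem}, (i): normal automorphisms satisfy the third assumption of Theorem \ref{autfree} in almost pro-$p$ quotients. Then apply Corollary \ref{innaut} with the system of almost pro-$p$-maximal quotients, which are sn-internally indecomposable by Theorem \ref{innanab1}, (ii), and have free pro-$p$ normal subgroups by Theorem \ref{innanab1}, (i). This produces $g\in G^{\wedge}$ with $\widehat\alpha(h)=ghg^{-1}$.
\end{enumerate}

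\emph{Step 3: descend the element to $G$.} Once $g\in G^{\wedge}$ is found, we have $H'=gHg^{-1}$, which has finite index in both $G$ and $gGg^{-1}$. Lemma \ref{commensurator} then gives $g\in G$. It covers free groups of infinite rank and tempered groups; for free groups of finite rank, $G$ is a surface group of a punctured sphere, so the lemma applies there as well. This shows that $\alpha$ is induced by an inner automorphism of $G$.
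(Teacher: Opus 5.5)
Your Steps 1 and 3 match the paper's proof. Normality passes to the finite quotients $G/M$. Lemma \ref{snfplim}, in its $n$-subnormal form with $n=1$, then makes the extension $\widehat{\alpha}:\widehat{H}\overset{\sim}{\to}\widehat{H'}$ normal in $G^{\wedge}$. Finally, once $\widehat{\alpha}$ is conjugation by some $g\in G^{\wedge}$, Lemma \ref{commensurator} forces $g\in G$.

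The gap is Step 2, which you leave open, and neither of your routes closes it. Route (i) is only an intention to redo Jarden's argument relative to a container. Route (ii) fails as stated, for two reasons:
\begin{itemize}
\item Corollary \ref{innaut} requires $\widehat{\alpha}$ to be families preserving, which is exactly what you do not have.
\item Remark \ref{innisomrem}, (i), only treats the case where the container \emph{is} the free pro-$p$ group $F$ and $\alpha$ preserves every normal closed subgroup of $F$. In an almost pro-$p$-maximal quotient $Q$ of $G^{\wedge}$ you only know that $\widehat{\alpha}$ preserves closed subgroups normal in $Q$. So its restriction to the free pro-$p$ subgroup need not be a normal automorphism of that subgroup.
\end{itemize}
Nothing in the paper derives the pro-cyclic conjugacy hypothesis of Theorem \ref{autfree} (the corresponding last hypothesis of Theorem \ref{innisom2}) from normality in the container. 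Deducing it is, in general, the question left open in Remark \ref{innisomrem}, (iii).

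No reduction is needed. The paper's proof cites \cite{J}, Main Theorem, at exactly this step. It applies it directly to the normal isomorphism $\widehat{H}\overset{\sim}{\to}\widehat{H'}$ between open subgroups of the free profinite group $G^{\wedge}$ of rank $\ge 2$. In other words, the paper uses Jarden's result in the relative form you yourself identified as the needed input: an isomorphism between open subgroups of a free profinite group of rank $\ge 2$ that preserves the closed subgroups normal in the ambient group is induced by an inner automorphism. Your premise that Jarden only covers automorphisms of the whole group is what creates the obstacle. Replace Step 2 by this citation and your argument becomes the paper's.
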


\begin{proof}
For any normal subgroup $N\subseteq G$ of $G$ of finite index, if $N$ is contained in $H\cap H'$, then the isomorphism $H/N\overset{\sim}{\to}H'/N$ determined by $\alpha$ is normal in $G/N$. Since $G$ is residually finite [cf.\ \cite{RZ}, Proposition 3.3.15; \cite{An}, \S 4.5], Theorem \ref{innfree} follows from an argument similar to the argument in the proof of Theorem \ref{innsurface}, together with \cite{J}, Main Theorem.
\end{proof}

\vskip.5\baselineskip
\section{A generalization of the Neukirch-Uchida theorem to $l$-quasi-number fields}\label{lqnf}
In the present section, in a certain case where a weak version of Neukirch-Uchida type result is not known [and hence Proposition \ref{weak-NU} cannot be applied], from an anabelian geometric point of view, we directly prove families preservingness of isomorphisms between [certain closed subgroups of quotients of] absolute Galois groups. This approach yields a generalization of the Neukirch-Uchida type result to a wider class of fields [cf.\ Theorem \ref{lqnfinn}].

\begin{defi}\label{qnf}
Let $l$ be a prime number; $\Sigma$ a set of prime numbers; $K$ an algebraic extension of $\mathbb{Q}$. Then we shall say that $K$ is an \textit{$l$-quasi-number field} if we write $L\supseteq K$ for the Galois closure of $K$ over $\mathbb{Q}$, then $[L:\mathbb{Q}]$ is not divided by $l^\infty$. We shall say that $K$ is a \textit{$\Sigma$-quasi-number field} if $K$ is a $p$-quasi-number field for every $p\in\Sigma$.
\end{defi}

\begin{prop}\label{qnfext}
Let $\Sigma$ be a set of prime numbers; $K,L$ algebraic extensions of $\mathbb{Q}$ such that $L\supseteq K$. Then the following hold:
\begin{enumerate}
\item If $L$ is a $\Sigma$-quasi-number field, then so is $K$.
\item Let $F$ be a number field contained in $K$. Write $M\supseteq K$ for the Galois closure of $K$ over $F$. Then $K$ is a $\Sigma$-quasi-number field if and only if $[M:F]$ is not divided by $l^\infty$ for every $l\in\Sigma$.
\item If $L/K$ is a finite extension or a pro-prime-to-$\Sigma$ extension, then $K$ is a $\Sigma$-quasi-number field if and only if so is $L$.
\item If $K$ is a $\Sigma$-quasi-number field, then $(G_K^l)^{\mathrm{ab}}$ is infinite for every $l\in\Sigma$.
\end{enumerate}
\end{prop}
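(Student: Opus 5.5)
The plan is to prove (i)--(iv) in order, working throughout with supernatural degrees and Galois closures over $\mathbb{Q}$.

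For (i): the Galois closure $L_K$ of $K$ over $\mathbb{Q}$ is contained in the Galois closure $L_L$ of $L$. Hence $[L_K:\mathbb{Q}]$ divides $[L_L:\mathbb{Q}]$, and if the latter is not divisible by $l^\infty$, neither is the former.

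For (ii): let $L$ be the Galois closure of $K$ over $\mathbb{Q}$ and $M$ its Galois closure over $F$, so $M\subseteq L$. Since $L$ is the compositum of the finitely many $\mathbb{Q}$-conjugates $\sigma(M)$, with $\sigma$ running over representatives of the embeddings of $F$, we get the inclusion
\[
\Gal(L/\mathbb{Q})\supseteq\Gal(L/F)\hookrightarrow\prod_\sigma\Gal(\sigma M/\sigma F).
\]
Each factor has order $[M:F]$, and $[F:\mathbb{Q}]$ is finite. Therefore $l^\infty$ divides $[L:\mathbb{Q}]$ if and only if it divides $[M:F]$.

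For (iii): one direction is (i). For the converse, suppose $K$ is a $\Sigma$-quasi-number field. Fix a number field $F\subseteq K$ (for instance $\mathbb{Q}$) and use (ii) to reduce to degrees over $F$. Let $M_K$ and $M_L$ be the Galois closures over $F$. Then $M_L$ is the compositum of the conjugates of $L$ over $F$, and each conjugate $\sigma L$ is a finite (resp.\ pro-prime-to-$\Sigma$) extension of $\sigma K\subseteq M_K$. If $L/K$ is finite, then $L=K(\alpha)$ for some $\alpha$, since $K$ is perfect of characteristic $0$ and $L/K$ is separable. Then $M_L=M_K(\text{conjugates of }\alpha)$, which is a finite extension of $M_K$.

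In the pro-prime-to-$\Sigma$ case I read the convention as saying that the Galois closure $N$ of $L/K$ has $[N:K]$ prime to every $l\in\Sigma$. Then $M_L$ is contained in the compositum of the $F$-conjugates of $N$. Each conjugate $\sigma N M_K$ is Galois over $M_K$ with group embedding into $\Gal(\sigma N/\sigma K)$, so its degree over $M_K$ is prime to $\Sigma$. I then need to control the compositum of possibly infinitely many such conjugates. This is the step I expect to be the main obstacle.

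To handle it, I will argue via Sylow subgroups. Let $P$ be an $l$-Sylow subgroup of $G_{M_K}$, and consider its image in $\Gal(M_L/M_K)$. The group $\Gal(M_L/M_K)$ embeds into the product of the groups $\Gal(\sigma N M_K/M_K)$. Each of these has order prime to $l$, so $P$ acts trivially on each factor. Hence $P$ acts trivially on $M_L$, and $[M_L:M_K]$ is prime to $l$. It follows that $[M_L:F]$ is not divisible by $l^\infty$, and (ii) gives the claim.

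For (iv): let $L$ be the Galois closure of $K$ and $l\in\Sigma$. The cyclotomic $\mathbb{Z}_l$-extension $\mathbb{Q}_\infty$ of $\mathbb{Q}$ is abelian, and so is $K\mathbb{Q}_\infty/K$. Moreover $\Gal(K\mathbb{Q}_\infty/K)\cong\Gal(\mathbb{Q}_\infty/K\cap\mathbb{Q}_\infty)$. If $K\cap\mathbb{Q}_\infty$ were infinite over $\mathbb{Q}$, it would be all of $\mathbb{Q}_\infty$, and then $l^\infty$ would divide $[L:\mathbb{Q}]$, a contradiction. So $K\cap\mathbb{Q}_\infty$ is finite over $\mathbb{Q}$, and $\Gal(K\mathbb{Q}_\infty/K)\cong\mathbb{Z}_l$ is an infinite abelian pro-$l$ quotient of $G_K$. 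This shows that $(G_K^l)^{\mathrm{ab}}$ is infinite.
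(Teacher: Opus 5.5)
Your argument is correct in substance and agrees with the paper on (i) and (iv); the cyclotomic $\mathbb{Z}_l$-extension is exactly what the paper uses for (iv).

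In (ii), the displayed injection needs a one-line repair. When $F$ is not Galois over $\mathbb{Q}$, an element of $\Gal(L/F)$ permutes the conjugates $\sigma M$ rather than preserving each one. For example, with $K=M=F=\mathbb{Q}(\sqrt[3]{2})$, the group $\Gal(L/F)$ has order $2$ while your product is trivial. The fix is to replace $F$ by its Galois closure $\widetilde{F}\subseteq L$ over $\mathbb{Q}$. Then $\Gal(L/\widetilde{F})\hookrightarrow\prod_\sigma\Gal(\sigma M\widetilde{F}/\widetilde{F})$, each factor embeds into $\Gal(\sigma M/\sigma F)$, and the finite factor $[\widetilde{F}:\mathbb{Q}]$ is harmless. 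Once repaired, your (ii) is the field-theoretic version of the paper's passage to the intersection of the finitely many $G_{\mathbb{Q}}$-conjugates of $G_M$. It also supplies the justification for the paper's bare assertion that $[M':F]$ is not divisible by $l^\infty$.

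Your route genuinely differs in (iii).

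\emph{Finite case.} The paper uses \cite{FJ}, Lemma 1.2.5, (b), to write $L=KF$ with $F$ a number field, and then applies (ii) to the Galois extension $MF/F$. Your primitive-element argument that $M_L/M_K$ is finite is more elementary and just as short.

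\emph{Pro-prime-to-$\Sigma$ case.} The paper avoids your ``main obstacle'' altogether. It takes the maximal pro-prime-to-$\Sigma$ extension $M'$ of the Galois closure $M$ of $K$. This $M'$ is Galois over $\mathbb{Q}$ because $G_{M'}$ is characteristic in $G_M$, its degree is not divisible by $l^\infty$ for $l\in\Sigma$, and it contains $LM$. So no compositum of infinitely many conjugates has to be analyzed.

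Your Sylow argument is valid and in effect proves the same containment one prime at a time. A closed normal subgroup of $G_{M_K}$ containing every $l$-Sylow subgroup cuts out a subfield of the maximal pro-prime-to-$l$ extension of $M_K$. You should phrase the key step as ``$P$ fixes each $\sigma NM_K$, hence their compositum, which contains $M_L$.'' The reason is that $\Gal(M_L/M_K)$ is a quotient of the Galois group of that compositum, not itself a subgroup of the product.
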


\begin{proof}
Assertion (i) is immediate. Assertion (iv) immediately follows by considering the cyclotomic $\mathbb{Z}_l$-extension. First, we verify assertion (ii). Since $M$ is contained in the Galois closure of $K$ over $\mathbb{Q}$, necessity is immediate. We verify sufficiency. Suppose that $[M:F]$ is not divided by $l^\infty$ for every $l\in\Sigma$. Then, since $G_M$ is normal in $G_F$ and $G_F$ is open in $G_{\mathbb{Q}}$, the number of $G_{\mathbb{Q}}$-conjugates of $G_M$ is finite. Thus, if we write $M'$ for the extension of $G_M$ corresponding to $\bigcap_{g\in G_\mathbb{Q}}gG_Mg^{-1}\subseteq G_M$, then $M'$ is Galois over $\mathbb{Q}$, and $[M':\mathbb{Q}]=[M':F]\cdot[F:\mathbb{Q}]$ is not divided by $l^\infty$ for every $l\in\Sigma$. Since $K$ is contained in $M'$, we conclude that $K$ is a $\Sigma$-quasi-number field. This completes the proof of assertion (ii).

Finally, we verify assertion (iii). In light of assertion (i), it suffices to show that if $K$ is a $\Sigma$-quasi-number field, then so is $L$. Suppose that $K$ is a $\Sigma$-quasi-number field. Write $M\supseteq K$ for the Galois closure of $K$ over $\mathbb{Q}$.

If $L/K$ is finite, then it follows from \cite{FJ}, Lemma 1.2.5, (b), that there exists a finite extension $F\supseteq\mathbb{Q}$ of $\mathbb{Q}$ such that $G_L=G_K\cap G_F=G_{KF}$. Then, since $MF\supseteq KF=L$ is Galois over $F$, it follows from assertion (ii) that $L$ is a $\Sigma$-quasi-number field.

If $L/K$ is a pro-prime-to-$\Sigma$ extension, then write $M'\supseteq M$ for the maximal pro-prime-to-$\Sigma$ extension of $M$. Then, since $G_{M'}$ is characteristic in $G_M$, $M'$ is Galois over $\mathbb{Q}$. Moreover, since $M$ is a $\Sigma$-quasi-number field, $M'$ is also a $\Sigma$-quasi-number field. On the other hand, since $LM$ is a pro-prime-to-$\Sigma$ extension of $M$, it holds that $L\subseteq LM\subseteq M'$. Thus, assertion (i) implies that $L$ is a $\Sigma$-quasi-number field. This completes the proof of assertion (iii), hence also of Proposition \ref{qnfext}.
\end{proof}

\begin{cor}\label{qnfextcor}
Let $\Sigma$ be a set of prime numbers; $K_1,K_2$ $\Sigma$-quasi-number fields; $\varphi:G_{K_1}\overset{\sim}{\to}G_{K_2}$ a continuous isomorphism. Write $L_1\supseteq K_1$ for the Galois closure of $K$ over $\mathbb{Q}$; $L_2\supseteq K_2$ for the algebraic extension of $K$ corresponding to $\varphi(G_{L_1})\subseteq G_{K_2}$. Then $L_2$ is a $\Sigma$-quasi-number field.
\end{cor}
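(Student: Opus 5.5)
The plan is to make $L_2$ visible from the Galois group alone and then use only index bookkeeping and Proposition \ref{qnfext}. Write $M_2\supseteq K_2$ for the Galois closure of $K_2$ over $\mathbb{Q}$. The goal is to show that $[M_2L_2:M_2]$ is not divisible by $l^\infty$ for every $l\in\Sigma$. Once that is known, $M_2L_2$ is a $\Sigma$-quasi-number field: indeed, $M_2$ is Galois over $\mathbb{Q}$ with $[M_2:\mathbb{Q}]$ not divisible by $l^\infty$. The extension $M_2L_2/M_2$ is then finite times pro-prime-to-$\Sigma$ in the $l$-adic sense, so one applies Proposition \ref{qnfext}, (ii) and (iii), over a number field. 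Finally $L_2\subseteq M_2L_2$ is a $\Sigma$-quasi-number field by Proposition \ref{qnfext}, (i).

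The first key step is to show that $L_2$ is Galois over $K_2$ with $[L_2:K_2]$ not divisible by $l^\infty$ for each $l\in\Sigma$. Since $G_{L_1}$ is normal in $G_{K_1}$ and $\varphi$ is an isomorphism, $\varphi(G_{L_1})$ is normal in $G_{K_2}$. Moreover $\varphi$ induces $\Gal(L_1/K_1)\cong\Gal(L_2/K_2)$, so $[L_2:K_2]=[L_1:K_1]$ as supernatural numbers. This index divides $[L_1:\mathbb{Q}]$, which is not divisible by $l^\infty$ because $K_1$ is a $\Sigma$-quasi-number field.

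The second step is to control $[M_2L_2:M_2]$. We have $\Gal(M_2L_2/M_2)\cong\Gal(L_2/L_2\cap M_2)$, which is a closed subgroup of $\Gal(L_2/K_2)$. Its order therefore divides $[L_2:K_2]$, and so it is not divisible by $l^\infty$. Now choose a number field $F\subseteq M_2$ (for instance $F=\mathbb{Q}$) and consider the Galois extension $M_2L_2/\mathbb{Q}$. It is Galois because $L_2/K_2$ is Galois and $M_2$ is normal, once we pass to the compositum of the conjugates of $L_2$. Its degree is $[M_2L_2:M_2]\cdot[M_2:\mathbb{Q}]$.

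The main obstacle is this last Galois-ness. The conjugates $\sigma(L_2)$ for $\sigma\in G_{\mathbb{Q}}$ need not equal $L_2$, so the correct object is the compositum $\widetilde L$ of all $\mathbb{Q}$-conjugates of $L_2$. Each conjugate is Galois over $\sigma(K_2)\subseteq M_2$, so $\sigma(L_2)M_2/M_2$ is Galois of degree not divisible by $l^\infty$. However, infinitely many conjugates could multiply the $l$-part.

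To handle this, I would argue by a different route. The set $\Sigma$-quasi-number-ness can be read off group-theoretically through $\cdim_l$ or the $l$-Sylow subgroups: $K$ is an $l$-quasi-number field if and only if an $l$-Sylow subgroup of $G_K$ is open in an $l$-Sylow subgroup of $G_{\mathbb{Q}}$. This is equivalent to the $l$-Sylow subgroup of $G_K$ being isomorphic to an open subgroup of that of some $G_{\mathbb{Q}_l}$-type local group, detectable by $\cdim_l$ and the generator rank. Given this, $L_2$'s Sylow subgroup is $\varphi$ of $L_1$'s, which is open in that of $G_{K_1}$, whose image is that of $G_{K_2}$, which is open in that of $G_{\mathbb{Q}}$. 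Hence $L_2$ is an $l$-quasi-number field.

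I expect justifying this Sylow characterization, namely that openness in $G_{\mathbb{Q}}$'s $l$-Sylow subgroup follows from openness in $G_{K_2}$'s, to be the delicate point. Since $K_2$ is itself $l$-quasi, its Sylow subgroup is open in that of $G_{\mathbb{Q}}$. Openness is transitive, so the characterization reduces to the statement: $[L:\mathbb{Q}]_l$ is finite for the Galois closure $L$ if and only if the Sylow index is finite. I would verify this through normal cores, using that there are only finitely many conjugates modulo an open normal subgroup.
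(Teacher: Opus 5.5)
Your first route is correct, but the Sylow route you actually commit to rests on a false characterization, so the proposal as written has a gap.

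\textbf{The Sylow characterization fails.} The claim ``$K$ is an $l$-quasi-number field if and only if an $l$-Sylow subgroup of $G_K$ is open in an $l$-Sylow subgroup of $G_{\mathbb{Q}}$'' is false in the direction you need. The Sylow index only measures the $l$-part of $[K:\mathbb{Q}]$, whereas Definition \ref{qnf} concerns the Galois closure. Passing to the normal core involves infinitely many conjugates, which is exactly what you were worried about.

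For example, let $K=\mathbb{Q}(\alpha_1,\alpha_2,\ldots)$, where $\alpha_i$ is a root of a cubic whose splitting field $N_i$ has group $S_3$, and the discriminants $d_i$ are independent modulo squares. Then $\Gal(N_1\cdots N_n/\mathbb{Q})\cong S_3^n$, so $[K:\mathbb{Q}]=3^\infty$. Hence a $2$-Sylow subgroup of $G_K$ is a $2$-Sylow subgroup of $G_{\mathbb{Q}}$. But the Galois closure of $K$ has group $\prod_i S_3$, so $K$ is not a $2$-quasi-number field.

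Accordingly, your chain of Sylow subgroups only shows that $[L_2:\mathbb{Q}]$ has finite $l$-part. It discards the normality of $\varphi(G_{L_1})$ in $G_{K_2}$, which is the one input that makes the corollary true. The remarks about ``$G_{\mathbb{Q}_l}$-type'' groups detected by $\cdim_l$ and rank are unsupported and not needed.

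\textbf{The fix is the argument you abandoned.} It is correct and is essentially the paper's proof. The obstacle you identified is not real. You never need $M_2L_2/\mathbb{Q}$ to be Galois, and in general it need not be. Proposition \ref{qnfext}, (iii) holds for arbitrary algebraic extensions of $\mathbb{Q}$. Its proof already absorbs the conjugates by working inside the maximal pro-prime-to-$l$ extension of the Galois closure. That field is Galois over $\mathbb{Q}$ because the corresponding subgroup is characteristic.

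The paper does not even need $M_2$. The subgroup $G_{L_2}=\varphi(G_{L_1})$ is normal in $G_{K_2}$, and $[G_{K_2}:G_{L_2}]=[L_1:K_1]$ is not divisible by $l^\infty$. So there is an open $U$ with $G_{L_2}\subseteq U\subseteq G_{K_2}$ and $[U:G_{L_2}]$ prime to $l$. Let $M$ be the fixed field of $U$. Then $M/K_2$ is finite and $L_2/M$ is Galois of prime-to-$l$ degree. Two applications of Proposition \ref{qnfext}, (iii), with $\Sigma=\{l\}$, show that $L_2$ is an $l$-quasi-number field, and this works for each $l\in\Sigma$.

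Your version over $M_2$ works the same way. Here $M_2L_2/M_2$ is Galois with group embedding in $\Gal(L_2/K_2)$, and you finish with Proposition \ref{qnfext}, (i). The appeal to Proposition \ref{qnfext}, (ii) and any reduction ``over a number field'' are unnecessary.
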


\begin{proof}
Let us fix a prime number $l\in\Sigma$. It suffices to show that $L_2$ is an $l$-quasi-number field. Since $K_1$ is an $l$-quasi-number field, $[L_1:\mathbb{Q}]$, hence also $[L_1:K_1]$, is not divided by $l^\infty$. Moreover, since $L_1$ is Galois over $K_1$, $G_{L_2}=\varphi(G_{L_1})\subseteq G_{K_2}$ is normal in $G_{K_2}$. Thus, $L_2$ is Galois over $K_2$, and, moreover, $[G_{K_2}:G_{L_2}]=[G_{K_1}:G_{L_1}]=[L_1:K_1]$ is not divided by $l^\infty$.

Now it follows from \cite{RZ}, Proposition 2.3.2, (b), that there exists an open subgroup $U\subseteq G_{K_2}$ of $G_{K_2}$ containing $G_{L_2}$ such that $[U:G_{L_2}]$ is not divided by $l$. Write $M_2$ for the subextension of $L_2/K_2$ corresponding to $U$, then $L_2/M_2$ is a prime-to-$l$ [Galois] extension and $M_2/K_2$ is a finite extension. Thus, it follows from Proposition \ref{qnfext}, (iii), that $L_2$ is an $l$-quasi-number field. This completes the proof of Corollary \ref{qnfextcor}.
\end{proof}

\begin{prop}\label{decomp}
Let $l$ be a prime number; $K$ an algebraic extension of $\mathbb{Q}$; $H\subseteq G_K$ a closed subgroup of $G_K$. Consider the condition

\begin{tabular}{m{4em} m{24em}}
$(\dagger)_{l,K,H}:$ & There exists an open subgroup $V\subseteq H$ of $H$ such that, for any open subgroup $U\subseteq V$ of $V$, it holds that $\dim_{\mathbb{F}_l}H^2(U,\mathbb{F}_l)=1$, where $\mathbb{F}_l$ is a $U$-module equipped with trivial action.
\end{tabular}

Then the following hold:
\begin{enumerate}
\item If $(\dagger)_{l,K,H}$ is satisfied, then $H$ is a closed subgroup of $D_{K,\overline{v}}$ for some $\overline{v}\in\mathbb{V}(\overline{\mathbb{Q}})^{\mathrm{non}}$.
\item Let $\overline{v}\in\mathbb{V}(\overline{\mathbb{Q}})^{\mathrm{non}}$. Suppose that $[D_{\mathbb{Q},\overline{v}}:D_{K,\overline{v}}]$ is not divided by $l^\infty$, and that $H$ is an open subgroup of $D_{K,\overline{v}}$. Then $(\dagger)_{l,K,H}$ is satisfied.
\item If $K$ is an $l$-quasi-number field, then the set of decomposition subgroups of $G_K$ associated to an element of $\mathbb{V}(\overline{\mathbb{Q}})^{\mathrm{non}}$ coincides with the set of maximal elements [with respect to inclusion] of the set of closed subgroups $H\subseteq G_K$ satisfying $(\dagger)_{p,K,H}$ for some prime number $p$.
\end{enumerate}
\end{prop}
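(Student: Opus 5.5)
The plan is to treat the three assertions separately. The main inputs are local-global facts for Galois cohomology of algebraic extensions of $\mathbb{Q}$, and the fact that $p$-adic decomposition groups are Poincaré duality groups of dimension $2$ at every prime.

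\emph{Assertion (i).} Let $V$ be as in $(\dagger)_{l,K,H}$. For every open $U\subseteq V$ we have $H^2(U,\mathbb{F}_l)\neq 0$, so $\cdim_l V=2$. For every open $U$ with $\mu_l\subseteq$ the fixed field, we also get $H^2(U,\mu_l)\neq0$.
\begin{itemize}
\item Write $M$ for the fixed field of $U$, an algebraic extension of $\mathbb{Q}$. The Hasse principle for the $l$-torsion of the Brauer group gives an injection
$H^2(U,\mu_l)=\mathrm{Br}(M)[l]\hookrightarrow \bigoplus_{w}\mathrm{Br}(M_w)[l]$.
Here $w$ runs over the places of $M$, and the sum is taken as a direct limit over number fields.
\item After shrinking $V$ so that $\mu_{2l}$ lies in the fixed field, archimedean places contribute nothing. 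Nonvanishing of the local terms then forces some decomposition group $D_{M,\bar v}$ to have $\cdim_l=2$.
\item I then argue by contradiction. If $V$ were not contained in a single decomposition group, I would use the Neukirch-style argument as in the proof of Theorem~\ref{NU}. By Chebotarev and Grunwald--Wang, one finds an open $U\subseteq V$ for which at least two inequivalent places $w_1,w_2$ have local Brauer groups with nonzero $l$-part surviving in $H^2(U,\mathbb{F}_l)$. This gives $\dim\ge2$.
\item The step that produces two independent classes is the global reciprocity sequence. It says the sum of invariants is zero, so one needs at least \emph{three} contributing places to get dimension $\ge2$. Infinitely many such places appear as $U$ shrinks, since the $l$-part at each place persists.
\end{itemize}
Having $V\subseteq D_{K,\bar v}$, I conclude $H\subseteq D_{K,\bar v}$. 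The reason is that $H$ normalizes an open subgroup of a decomposition group. Decomposition groups are self-commensurating, because distinct places have decomposition groups with no common open subgroup, by the standard separation result (cf.\ Neukirch).

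\emph{Assertion (ii).} The hypothesis that $[D_{\mathbb{Q},\bar v}:D_{K,\bar v}]$ is not divisible by $l^\infty$ lets me choose an open $V\subseteq H$ whose fixed field $\mathbb{Q}_p$-locally is a finite extension of $\mathbb{Q}_{p}$ (with $p=p_{\bar v}$) followed by a pro-prime-to-$l$ extension. By shrinking $V$, I may assume that the residual part involving $l$ is finite and that $\mu_l$ is contained in the fixed field.
\begin{itemize}
\item For open $U\subseteq V$, the group $U$ is then the absolute Galois group of an algebraic extension $F$ of a $p$-adic field whose degree has bounded $l$-part.
\item Local Tate duality and a limit over finite layers give $H^2(U,\mathbb{F}_l)\cong H^0(U,\mu_l)^\vee=\mathbb{F}_l$.
\item Corestriction along prime-to-$l$ layers is an isomorphism on the $\mathbb{F}_l$-dimension-one groups. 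This is because $\mathrm{inv}$ is multiplied by the local degree, which is a unit mod $l$ along those layers.
\end{itemize}

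\emph{Assertion (iii).} This follows from (i) and (ii). If $K$ is $l$-quasi-number, then every $[D_{\mathbb{Q},\bar v}:D_{K,\bar v}]$ is not divisible by $l^\infty$. By (ii), each $D_{K,\bar v}$ therefore satisfies $(\dagger)_{l,K,\cdot}$. By (i), every $H$ satisfying $(\dagger)_{p,K,H}$ for some $p$ lies in some decomposition group. It remains to check that a decomposition group contained in another one coincides with it. This holds because distinct places have disjoint decomposition groups up to commensurability, as above.

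I expect assertion (i) to be the main obstacle. The difficulty is controlling $H^2$ of infinite extensions globally, in particular ensuring that at least three places contribute $l$-torsion that survives the limit. Without the quasi-number hypothesis, local degrees may be divisible by $l^\infty$ and kill the local Brauer classes. I would handle this by choosing $U$ via Chebotarev, using the dimension-one condition on \emph{all} open subgroups of $V$ to force a single place.
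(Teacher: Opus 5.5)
Your overall strategy is the paper's. Part (ii) is the same computation: pick an open $U_0\supseteq U$ of $G_{\mathbb{Q}_p}$ with $[U_0:U]$ prime to $l$, and take $\varinjlim_W H^2(W,\mu_l)$, each term being $\cong\mathbb{F}_l$. Note that the transition maps there are \emph{restrictions}, which multiply invariants by $[W_1:W_2]$; they are not corestrictions. Part (i) is Neukirch's Brauer-group counting, which the paper imports from the proof of \cite{NSW}, Theorem 12.1.9. Part (iii) is deduced from (i) and (ii).

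\textbf{The separation fact you use is too weak.} The gap is the separation input: ``decomposition groups are self-commensurating / have no common open subgroup'' does not suffice anywhere you invoke it.
\begin{itemize}
\item At the end of (i), the subgroup $N=\bigcap_{h\in H}hVh^{-1}$ normalized by $H$ is open in $V$. But $V$ and $H$ may have infinite index in $D_{K,\overline{v}}$, so $N\subseteq D_{\mathbb{Q},\overline{v}}\cap D_{\mathbb{Q},h\overline{v}}$ says nothing about commensurability of $D_{\mathbb{Q},\overline{v}}$ and $D_{\mathbb{Q},h\overline{v}}$.
\item Likewise, in (iii) an inclusion $D_{K,\overline{v}}\subseteq D_{K,\overline{w}}$ involves no openness.
\end{itemize}
What is needed is the F.~K.~Schmidt-type statement $D_{\mathbb{Q},\overline{v}}\cap D_{\mathbb{Q},\overline{w}}=\{1\}$ for distinct $\overline{v},\overline{w}\in\mathbb{V}(\overline{\mathbb{Q}})^{\mathrm{non}}$ (a field Henselian for two inequivalent valuations is separably closed). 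The paper uses exactly this, via \cite{NSW}, Lemma 12.1.10 and Corollary 12.1.3. With it, the two steps go through:
\begin{itemize}
\item In (i), $N\neq\{1\}$ because $H^2(N,\mathbb{F}_l)\neq 0$, which forces $h\overline{v}=\overline{v}$ and hence $H\subseteq D_{K,\overline{v}}$.
\item In (iii), $\{1\}\neq D_{K,\overline{v}}\subseteq D_{K,\overline{w}}$ forces $\overline{v}=\overline{w}$.
\end{itemize}

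\textbf{The same fact is missing inside your counting argument for (i).} Let $L$ be the fixed field of $V$, and fix $\overline{v}$ with $\cdim_l(V\cap D_{\mathbb{Q},\overline{v}})=2$.
\begin{itemize}
\item Suppose $V\cap D_{\mathbb{Q},\overline{v}}$ has infinite index in $V$. Then open subgroups $U\supseteq V\cap D_{\mathbb{Q},\overline{v}}$ of large index split the place of $L$ below $\overline{v}$ into $s\geq 3$ places. Their completions are finite over the completion of $L$, so each still has $l$-torsion of the Brauer group $\cong\mathbb{Z}/l$. The direct limit of the Albert--Brauer--Hasse--Noether sequences has last term with $l$-torsion of rank at most $1$, so $\dim_{\mathbb{F}_l}H^2(U,\mathbb{F}_l)\geq s-1\geq 2$. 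Chebotarev and Grunwald--Wang play no role here.
\item Suppose instead $V\cap D_{\mathbb{Q},\overline{v}}$ is open of index $n\geq 2$ in $V$. The places above $\overline{v}\vert_L$ in the fixed field of any open $U\subseteq V$ correspond to $U\backslash V/(V\cap D_{\mathbb{Q},\overline{v}})$, so there are at most $n$ of them. For $n=2$ the counting never produces a third place, and dimension $1$ is not contradicted.
\end{itemize}
In the second case, the open normal core of $V\cap D_{\mathbb{Q},\overline{v}}$ in $V$ is a nontrivial subgroup of $D_{\mathbb{Q},\overline{v}}\cap D_{\mathbb{Q},g\overline{v}}$ for any $g\in V\setminus D_{\mathbb{Q},\overline{v}}$. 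Only the trivial-intersection theorem rules this out. So your claim that ``if $V$ is not contained in a single decomposition group, shrinking $U$ produces three contributing places'' is not justified as stated. It becomes correct once you split into the infinite-index and finite-index cases and use trivial intersection in the latter.
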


\begin{proof}
First, we verify assertion (i). Suppose that condition $(\dagger)_{l,K,H}$ is satisfied, and let $V$ be as in condition $(\dagger)_{l,K,H}$. Write $L$ for the separable extension of $K$ corresponding to $V$. We may assume that $L$ is totally imaginary and contains $\mu_{l}$. Let $U\subseteq V$ be an open subgroup of $V$. Then it follows from condition $(\dagger)_{l,K,H}$ that $\dim_{\mathbb{F}_l}H^2(U,\mu_l)=\dim_{\mathbb{F}_l}H^2(U,\mathbb{F}_l)=1$ [cf.\ our assumption that $\mu_l\subseteq L$]. Then it follows from an argument similar to the argument in the proof of \cite{NSW}, Theorem 12.1.9, that $V=D_{L,\overline{v}}$ for some $\overline{v}\in\mathbb{V}(\overline{\mathbb{Q}})^{\mathrm{non}}$. Then it follows from \cite{NSW}, Lemma 12.1.10, that $H\subseteq D_{K,\overline{v}}$. This completes the proof of assertion (i).

Next, we verify assertion (ii). Write $p=p_{\overline{v}}$ and identify $D_{\mathbb{Q},\overline{v}}$ with $G_{\mathbb{Q}_p}$. Then there exists an open subgroup $V$ of $H$ such that the algebraic extension of $\mathbb{Q}_p$ corresponding to $V$ contains $\mu_l$. Now let $U\subseteq V$ be an open subgroup. Then the action of $U$ on $\mu_l$ is trivial, i.e., $H^2(U,\mathbb{F}_l)\cong H^2(U,\mu_l)$. On the other hand, since $[G_{\mathbb{Q}_p}:D_{K,\overline{v}}]$, hence also $[G_{\mathbb{Q}_p}:U]$, is not divided by $l^\infty$, it follows from \cite{RZ}, Proposition 2.3.2, (b), that there exists an open subgroup $U_0\subseteq G_{\mathbb{Q}_p}$ of $G_{\mathbb{Q}_p}$ containing $U$ such that $[U_0:U]$ is not divided by $l$. Then it follows from \cite{NSW}, Proposition 1.5.1, that $H^2(U,\mu_l)\cong\varinjlim_W H^2(W,\mu_l)$, where $W$ runs over all open subgroups of $U_0$ containing $U$. Moreover, if $W_1,W_2$ are open subgroups of $U_0$ containing $U$ such that $W_1\supseteq W_2$, then it follows from \cite{NSW}, Corollary 7.1.4; \cite{NSW}, Theorem 7.1.8, (ii), that, $H^2(W_1,\mu_l)\cong H^2(W_2,\mu_l)\cong\mathbb{F}_l$, and the transition map $H^2(W_1,\mu_l)\to H^2(W_2,\mu_l)$ is obtained by multiplication by $[W_1:W_2]$. Thus, since $[W_1:W_2]$ is not divided by $l$, the transition map is isomorphic, which implies that $H^2(U,\mu_l)\cong H^2(U_0,\mu_l)\cong\mathbb{F}_l$. This completes the proof of assertion (ii). Finally, assertion (iii) immediately follows from assertions (i), (ii), together with \cite{NSW}, Corollary 12.1.3. This completes the proof of Proposition \ref{decomp}.
\end{proof}

\begin{rem}\label{decomprem}
Proposition \ref{decomp} is based on \cite{Sa2}, Lemma 3.4, which treats positive characteristic global fields. Note that, as mentioned in \cite{Sa2}, Remark 5, it is essentially given by J.\ Neukirch [cf.\ \cite{N1}, Theorem 1]. Assertion (iii) also follows from \cite{EP}, Theorem 5.4.3.
\end{rem}

\begin{prop}\label{plocprime}
Let $p$ be a prime number; $H\subseteq G_{\mathbb{Q}_p}$ a closed subgroup of $G_{\mathbb{Q}_p}$. Suppose that $H$ contains a nontrivial normal closed subgroup of $G_{\mathbb{Q}_p}$. Then $p$ is the unique prime number $l$ such that $H$ has a nontrivial pro-$l$ normal closed subgroup.
\end{prop}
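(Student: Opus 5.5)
Two things have to be shown: $H$ has a nontrivial pro-$p$ normal closed subgroup, and $H$ has no nontrivial pro-$l$ normal closed subgroup for any prime $l\neq p$.

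\emph{Existence for $l=p$.} Let $N_1\subseteq H$ be a nontrivial closed subgroup that is normal in $G_{\mathbb{Q}_p}$. By Theorem \ref{sneanab}, $G_{\mathbb{Q}_p}$ is strongly sn-internally indecomposable. By Proposition \ref{henselfree} with $N=G_{\mathbb{Q}_p}$, the quotient $G_{\mathbb{Q}_p}^p$ has a nontrivial free pro-$p$ normal closed subgroup. This is not yet inside $H$, so I would work in $G_{\mathbb{Q}_p}$ itself instead.

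Take the wild inertia subgroup $P\subseteq G_{\mathbb{Q}_p}$. It is a nontrivial pro-$p$ normal closed subgroup of $G_{\mathbb{Q}_p}$, and it is infinite. Both $N_1$ and $P$ are normal in $G_{\mathbb{Q}_p}$ and nontrivial, so Proposition \ref{snintersect}, (ii) shows that $P\cap N_1$ is nontrivial. It is pro-$p$, and it is normal in $G_{\mathbb{Q}_p}$, hence normal in $H$. So $H$ has a nontrivial pro-$p$ normal closed subgroup.

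\emph{Nonexistence for $l\neq p$.} Let $l\neq p$ and suppose $A\subseteq H$ is a nontrivial pro-$l$ closed subgroup that is normal in $H$. Then $A':=A\cap N_1$ is normal in $N_1$, and $N_1$ is normal in $G_{\mathbb{Q}_p}$, so $A'$ is $2$-subnormal in $G_{\mathbb{Q}_p}$. First I check that $A'$ is nontrivial. Note that $A$ is normal in $H$ but not obviously subnormal in $G_{\mathbb{Q}_p}$. However, $N_1$ is normal in $H$, so $A$ and $N_1$ are both normal in $H$ and $[A,N_1]\subseteq A\cap N_1$. If $A\cap N_1=\{1\}$, then $A$ centralizes $N_1$, i.e.\ $A\subseteq Z_{G_{\mathbb{Q}_p}}(N_1)=\{1\}$ by internal indecomposability. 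This contradicts $A\neq\{1\}$, so $A'$ is nontrivial.

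Next, $A'$ and $P$ are both nontrivial $2$-subnormal closed subgroups of $G_{\mathbb{Q}_p}$. By Proposition \ref{snintersect}, (ii) together with $2$-sn-internal indecomposability, $A'\cap P\neq\{1\}$. But $A'\cap P$ is simultaneously pro-$l$ and pro-$p$ with $l\neq p$, so it is trivial. This contradiction shows no such $A$ exists.

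\emph{Main obstacle.} The delicate step is showing $A\cap N_1\neq\{1\}$, since $A$ is only normal in $H$ and not a priori subnormal in $G_{\mathbb{Q}_p}$. The commutator trick $[A,N_1]\subseteq A\cap N_1$ combined with $Z_{G_{\mathbb{Q}_p}}(N_1)=\{1\}$ handles it. This requires citing Theorem \ref{sneanab} to get internal indecomposability of $G_{\mathbb{Q}_p}$, which is a Henselian discrete valuation field of residue characteristic $p$.
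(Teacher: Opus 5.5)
Your proof is correct and rests on the same ingredients as the paper's: the wild inertia subgroup $P$, Proposition \ref{snintersect}, (ii), and the internal indecomposability of $G_{\mathbb{Q}_p}$ from Theorem \ref{sneanab}. The aside about Proposition \ref{henselfree} is not needed. The paper shortens your nonexistence step by applying the commutator argument directly to $N_1\cap P$: this group is normal in $H$ and meets any pro-$l$ normal subgroup $A$ of $H$ trivially, so $A\subseteq Z_{G_{\mathbb{Q}_p}}(N_1\cap P)=\{1\}$ at once, with no detour through $2$-subnormal intersections.
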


\begin{proof}
Let $N\subseteq H$ be a nontrivial closed subgroup of $H$ that is normal in $G_{\mathbb{Q}_p}$. Write $P$ for the wild inertia subgroup of $G_{\mathbb{Q}_p}$, which is a nontrivial pro-$p$ normal closed subgroup of $G_{\mathbb{Q}_p}$. Then it follows from Theorem \ref{sneanab}; Proposition \ref{snintersect}, (ii), that $N\cap P\neq\{1\}$.

Now let $l\neq p$ be a prime number; $N'\subseteq H$ a pro-$l$ normal closed subgroup of $H$. Then, since $N\cap P$ is pro-$p$, it holds that $N'\cap (N\cap P)=\{1\}$. Thus, since $N'$ and $N\cap P$ are normal closed subgroup of $H$, it holds that $N'\subseteq Z_H(N\cap P)\subseteq Z_{G_{\mathbb{Q}_p}}(N\cap P)$. Now it follows from Theorem \ref{sneanab} that $Z_{G_{\mathbb{Q}_p}}(N\cap P)=\{1\}$, which implies that $N'=\{1\}$. This completes the proof of Proposition \ref{plocprime}.
\end{proof}

\begin{cor}\label{decompprime}
Let $l$ be a prime number; $K$ an $l$-quasi-number field; $\overline{v}\in\mathbb{V}(\overline{\mathbb{Q}})^{\mathrm{non}}$. Then $p_{\overline{v}}$ is the unique prime number $p$ such that $D_{K,\overline{v}}$ has a nontrivial pro-$p$ normal closed subgroup.
\end{cor}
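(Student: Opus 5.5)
We reduce to Proposition \ref{plocprime} by identifying $D_{K,\overline{v}}$ with a closed subgroup of $G_{\mathbb{Q}_p}$, where $p\overset{\mathrm{def}}{=}p_{\overline{v}}$. Restriction gives a natural identification $D_{\mathbb{Q},\overline{v}}\cong G_{\mathbb{Q}_p}$, and $D_{K,\overline{v}}=D_{\mathbb{Q},\overline{v}}\cap G_K$ becomes a closed subgroup $H\subseteq G_{\mathbb{Q}_p}$. By Proposition \ref{plocprime}, it then suffices to check that $H$ contains a nontrivial closed subgroup that is normal in $G_{\mathbb{Q}_p}$.

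To produce such a subgroup, write $L$ for the Galois closure of $K$ over $\mathbb{Q}$. Since $G_L$ is normal in $G_{\mathbb{Q}}$, the intersection $N\overset{\mathrm{def}}{=}G_L\cap D_{\mathbb{Q},\overline{v}}$ is normal in $D_{\mathbb{Q},\overline{v}}$, and it is contained in $G_K\cap D_{\mathbb{Q},\overline{v}}=D_{K,\overline{v}}$. Under the identification above, $N$ corresponds to $G_{L_w}\subseteq G_{\mathbb{Q}_p}$, where $w$ is the place of $L$ below $\overline{v}$; this is normal because $L_w/\mathbb{Q}_p$ is Galois.

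The remaining step, which I expect to be the main obstacle, is to show that $N$ is nontrivial, i.e.\ $L_w\neq\overline{\mathbb{Q}}_p$. This is where the $l$-quasi-number field hypothesis enters. The index $[G_{\mathbb{Q}_p}:G_{L_w}]=[L_w:\mathbb{Q}_p]$ divides $[L:\mathbb{Q}]$, since a decomposition group is a closed subgroup of $\Gal(L/\mathbb{Q})$. Hence $l^\infty$ does not divide $[L_w:\mathbb{Q}_p]$. On the other hand, $G_{\mathbb{Q}_p}$ surjects onto $\mathbb{Z}_l$, for instance via its unramified quotient $\widehat{\mathbb{Z}}$, so $l^\infty$ divides $[\overline{\mathbb{Q}}_p:\mathbb{Q}_p]$. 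Therefore $L_w\neq\overline{\mathbb{Q}}_p$, so $N\neq\{1\}$, and $N$ is a nontrivial normal closed subgroup of $G_{\mathbb{Q}_p}$ contained in $H$.

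Proposition \ref{plocprime} then shows that $p$ is the unique prime for which $H$, and hence $D_{K,\overline{v}}$, has a nontrivial pro-prime normal closed subgroup. In particular, its wild inertia intersection with $N$ gives the nontrivial pro-$p$ normal closed subgroup for existence. This completes the plan.
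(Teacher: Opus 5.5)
Your proposal is correct and follows the paper's proof: the paper likewise takes a closed subgroup $N\subseteq D_{K,\overline{v}}$ that is normal in $D_{\mathbb{Q},\overline{v}}$ with $[D_{\mathbb{Q},\overline{v}}:N]$ not divisible by $l^\infty$ (your $G_L\cap D_{\mathbb{Q},\overline{v}}$), concludes that $N$ is nontrivial, and applies Proposition \ref{plocprime}. The only difference is that you spell out two points the paper leaves implicit: why the index divides $[L:\mathbb{Q}]$, and why nontriviality follows from $G_{\mathbb{Q}_p}$ surjecting onto $\mathbb{Z}_l$.
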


\begin{proof}
Since $K$ is an $l$-quasi-number field, there exists a closed subgroup $N\subseteq D_{K,\overline{v}}$ of $D_{K,\overline{v}}$ such that $N$ is normal in $D_{\mathbb{Q},\overline{v}}$ and that $[D_{\mathbb{Q},\overline{v}}:N]$ is not divided by $l^\infty$. In particular, $N$ is nontrivial. Thus, Corollary \ref{decompprime} follows from Proposition \ref{plocprime}.
\end{proof}

\begin{prop}\label{qnftchebotarev}
Let $l$ be a prime number; $K$ an $l$-quasi-number field; $L\supseteq K$ a finite Galois $l$-extension. Then for any cyclic subgroup $I\subseteq\Gal(L/K)$, there exist a number field $K_0$ contained in $K$; a finite Galois extension $L_0\supseteq K_0$ of $K_0$ contained in $L$; elements $v\in\mathbb{V}(K)^{\mathrm{non}}$, $w\in\mathbb{V}(L)^{\mathrm{non}}$, $v_0\in\mathbb{V}(K_0)^{\mathrm{non}}$, and $w_0\in\mathbb{V}(L_0)^{\mathrm{non}}$ satisfying the following conditions:
\begin{itemize}
\item $L=KL_0$, and, moreover, the natural homomorphism $\Gal(L/K)\to\Gal(L_0/K_0)$ is an isomorphism;
\item $[K:K_0]$ is not divided by $l$;
\item $w$ is lying over both $v$ and $w_0$;
\item $L/K$ is unramified over $v$;
\item $w_0$ is unramified over $\mathbb{Q}_{w_0}$;
\item $D_{L/K,w}=I$;
\item $v$ is lying over $v_0$, and the order of the inertia subgroup of $v$ over $v_0$ is not divided by $l$ [as a supernatural number].
\end{itemize}
\end{prop}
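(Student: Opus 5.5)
We descend $L/K$ to a number field, apply Chebotarev there, and then lift the resulting prime back up to $K$ without losing control of the decomposition group.

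\textbf{Step 1: descent of $L/K$.} Write $K=\bigcup_j F_j$ as an increasing union of number fields. Choose a primitive element $\theta$ of $L/K$ together with generators of $\Gal(L/K)$. For $j$ large, the minimal polynomial of $\theta$ has coefficients in $F_j$, and $L_j\overset{\mathrm{def}}{=}F_j(\theta)$ is Galois over $F_j$. Then $L=KL_j$, and restriction gives an isomorphism $\Gal(L/K)\overset{\sim}{\to}\Gal(L_j/F_j)$.

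\textbf{Step 2: making $[K:K_0]$ prime to $l$.} Let $M$ be the Galois closure of $K$ over $\mathbb{Q}$. Since $K$ is an $l$-quasi-number field, $[M:\mathbb{Q}]$ is not divided by $l^\infty$. Hence the $l$-part of $[K:F_j]$ stabilizes: it is bounded by the $l$-part of $[M:\mathbb{Q}]$, which is finite. Choose $j$ so large that $[K:F_j]$ is prime to $l$, and put $K_0=F_j$, $L_0=L_j$. These satisfy the first two conditions.

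\textbf{Step 3: Chebotarev.} Apply the Chebotarev density theorem to the finite Galois extension $L_0/K_0$ and the cyclic subgroup $I$, identified with its image in $\Gal(L_0/K_0)$. Let $\sigma$ be a generator of $I$. Take a prime $w_0$ of $L_0$ that is unramified over $K_0$ with Frobenius $\sigma$, so that $D_{L_0/K_0,w_0}=I$. We may also require the residue characteristic $p$ to be unramified in $L_0/\mathbb{Q}$ and $p\neq l$; this gives the condition that $w_0$ is unramified over $\mathbb{Q}_{w_0}$. Let $v_0$ be the prime of $K_0$ below $w_0$.

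\textbf{Step 4: lifting to $K$.} This is the main obstacle: we must choose $v\in\mathbb{V}(K)^{\mathrm{non}}$ over $v_0$ and $w$ over $v$ and $w_0$ so that $D_{L/K,w}=I$ and the inertia of $v$ over $v_0$ is prime to $l$.

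Take any $\overline{v}\in\mathbb{V}(\overline{\mathbb{Q}})^{\mathrm{non}}$ above $w_0$, and let $w,v$ be its restrictions to $L,K$. Then $D_{L/K,w}$ is identified, via the isomorphism of Step 1, with the image of $D_{K,\overline{v}}$ in $\Gal(L_0/K_0)$. This image is contained in $D_{L_0/K_0,w_0}=I$.

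To get equality, note that the index $[D_{K_0,\overline{v}}:D_{K,\overline{v}}]=[K_v:K_{0,v_0}]$ divides $[K:K_0]$ in the supernatural sense, so it is prime to $l$. Since $I$ is a quotient of an $l$-group and hence an $l$-group, the image of $D_{K,\overline{v}}$ is a subgroup of $I$ of index prime to $l$, hence equal to $I$. The same divisibility shows that the inertia of $v$ over $v_0$ has order prime to $l$.

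Finally, $L/K$ is unramified over $v$ because $L_0/K_0$ is unramified at $w_0$ and $L=KL_0$.
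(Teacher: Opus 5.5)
\textbf{Gap in Step 4.} Steps 1--3 are fine. The primitive-element descent is a legitimate substitute for the paper's use of Lemma~\ref{normalize}. Some $F_j$ with $l\nmid[K:F_j]$ exists, because $[K:\mathbb{Q}]$ is the supernatural lcm of the $[F_j:\mathbb{Q}]$ and has finite $l$-adic valuation.

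The failure is your assertion that $[D_{K_0,\overline{v}}:D_{K,\overline{v}}]=[K_v:K_{0,v_0}]$ divides $[K:K_0]$. This holds when $K/K_0$ is Galois, but not in general. On each finite layer $M$ of $K/K_0$, the local degrees above $v_0$ only \emph{add up} to the global degree, $[M:K_0]=\sum_{v'\mid v_0}e(v'/v_0)f(v'/v_0)$, and an individual summand can be divisible by $l$. So ``take any $\overline{v}$ above $w_0$'' genuinely fails.

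Here is a counterexample. Let $l=2$, $K_0=\mathbb{Q}$, $K=\mathbb{Q}(\sqrt[3]{2})$ (a $2$-quasi-number field), $L_0=\mathbb{Q}(\sqrt{2})$, $L=K(\sqrt{2})$, $I=\Gal(L/K)$. Let $w_0$ be the prime of $L_0$ above $5$. It is inert in $L_0$ and $5$ is unramified in $K$ and $L_0$, so $D_{L_0/\mathbb{Q},w_0}=I$. Since $x^3-2\equiv(x-3)(x^2+3x+4)\pmod 5$ with irreducible quadratic factor, $5=\mathfrak{p}_1\mathfrak{p}_2$ in $K$ with residue degrees $1$ and $2$. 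If $\overline{v}$ lies over $\mathfrak{p}_2$, then $K_v$ is the unramified quadratic extension of $\mathbb{Q}_5$, which already contains $\sqrt{2}$. Hence $D_{L/K,w}=\{1\}\neq I$, and your index equals $2$, which does not divide $3$.

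\textbf{The missing idea: a choice of the place.} The paper makes this choice via the fundamental identity. For every finite subextension $M$ of $K/K_0$ we have $l\nmid[M:K_0]$, so some $v'\mid v_0$ in $M$ has both $e(v'/v_0)$ and $f(v'/v_0)$ prime to $l$. Since $ML_0/M$ is unramified at $v'$, the Frobenius there restricts to $\sigma^{f(v'/v_0)}$, which still generates the cyclic $l$-group $I$. Hence exactly one place of $ML_0$ lies above both $v'$ and $w_0$. These properties pass to subfields, so a compactness argument over the finite layers produces the required $v$ on $K$.

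\textbf{Alternative repair via Sylow subgroups.} You can also fix your own Step 4 directly.
\begin{itemize}
\item Let $P$ be a Sylow $l$-subgroup of $D_{K_0,\overline{v}}$.
\item $G_K$ contains a Sylow $l$-subgroup of $G_{K_0}$, and $G_{K_0}=G_{L_0}G_K$ by the isomorphism of Step 1. Hence there is $\lambda\in G_{L_0}$ with $P\subseteq\lambda G_K\lambda^{-1}$.
\item Replace $\overline{v}$ by $\lambda^{-1}\overline{v}$, which still lies above $w_0$. This makes $[D_{K_0,\overline{v}}:D_{K,\overline{v}}]$ prime to $l$.
\end{itemize}
After this replacement, the rest of your argument goes through.
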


\begin{proof}
Since $[K:\mathbb{Q}]$ is not divided by $l^\infty$, there exists a number field $F$ contained in $K$ such that $[K:F]$ is not divided by $l$. Then it follows from Lemma \ref{normalize} that there exists an open subgroup $U\subseteq G_F$ of $G_F$ containing $G_K$ and a normal open subgroup $V\subseteq U$ of $U$ such that $G_L=G_K\cap V$ and that $(\Gal(L/K)\cong)G_K/G_L\overset{\sim}{\to}U/V$. Write $K_0$ for the finite separable extension of $F$ corresponding to $U\subseteq G_F$; $L_0$ for the finite Galois extension of $K_0$ corresponding to $V\subseteq U=G_{K_0}$; $f$ for the isomorphism $\Gal(L/K)\overset{\sim}{\to}\Gal(L_0/K_0)$.

Here, it follows from Tchebotarev's density theorem [cf.\ \cite{NSW}, (9.1.3)] that there exist elements $v_0\in\mathbb{V}(K_0)^{\mathrm{non}}$ and $w_0\in\mathbb{V}(L_0)^{\mathrm{non}}$ such that
\begin{itemize}
\item $w_0$ is lying over $v_0$;
\item $L_0/K_0$ is unramified over $v_0$;
\item $w_0$ is unramified over $\mathbb{Q}_{w_0}$;
\item $D_{L_0/K_0,w_0}=f(I)$.
\end{itemize}
Let $v\in\mathbb{V}(K)^{\mathrm{non}}$ be an element lying over $v_0$. It suffices to show that there exists $w\in\mathbb{V}(L)^{\mathrm{non}}$ such that it is lying over both $v$ and $w_0$, and that $D_{L/K,w}=I$.

Now we claim the following:

\begin{quotation}\hypertarget{qnftchebotarevclaim}{}
Claim \ref*{qnftchebotarev}.A: For every finite subextension field $M$ of $K/K_0$, there exists $v_M\in\mathbb{V}(M)^{\mathrm{non}}$ lying over $v_0$ such that the ramification index of $v_M$ over $v_0$ is not divided by $l$, and that there is a unique element of $\mathbb{V}(ML_0)^{\mathrm{non}}$ lying over both $v_M$ and $w_0$.
\end{quotation}

Indeed, suppose that there is no such an element $v_M\in\mathbb{V}(M)^{\mathrm{non}}$. For each $v'\in\mathbb{V}(M)^{\mathrm{non}}$ lying over $v_0$, if the ramification index of $v'$ over $v_0$ is not divided by $l$, then there are at least two elements of $\mathbb{V}(ML_0)^{\mathrm{non}}$ lying over both $v'$ and $w_0$. Then the residue class degree of $v'$ over $v_0$ is divided by $l$. Thus, it follows from the assumption of the nonexistence of $v_M$, for each $v'\in\mathbb{V}(M)^{\mathrm{non}}$ lying over $v_0$, at least one of the ramification index and the residue class degree of $v'$ over $v_0$ is divided by $l$. Thus, the fundamental equation implies that $[M:K_0]$ is divided by $l$, which contradicts our choice of $K$. This completes the proof of Claim \hyperlink{qnftchebotarevclaim}{\ref*{qnftchebotarev}.A}.

It follows from Claim \hyperlink{qnftchebotarevclaim}{\ref*{qnftchebotarev}.A} that there exists an element $(v_M)_M\in\prod_M\mathbb{V}(M)^{\mathrm{non}}$, where $M$ runs over all finite subextensions of $K/K_0$, such that $v_M$ satisfies the condition of Claim \hyperlink{qnftchebotarevclaim}{\ref*{qnftchebotarev}.A}, and that if $M_1\subseteq M_2$, then $v_{M_2}$ is lying over $v_{M_1}$. Thus, by taking a limit, we obtain an element $v\in\mathbb{V}(K)^{\mathrm{non}}$ such that the order of the inertia subgroup of $v$ over $v_0$ is not divided by $l$, and that there exists a unique nonarchimedean place $w$ of $L=KL_0$ lying over both $v$ and $w_0$. In particular, it holds that $D_{L/K,w}=f^{-1}(D_{L_0/K_0,w_0})=f^{-1}(f(I))=I$. This completes the proof of Proposition \ref{qnftchebotarev}.
\end{proof}

\begin{lem}\label{l-inertia}
Let $p,l$ be distinct prime numbers; $F$ an algebraic extension of $\mathbb{Q}_p$ such that $[F:\mathbb{Q}_p]$ is not divided by $l^\infty$. Then the cardinality of the torsion subgroup of $(G_F^l)^{\mathrm{ab}}$ coincides with the cardinality of the set of all $l$-power roots of unity of $F$.
\end{lem}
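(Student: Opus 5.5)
The idea is to determine the structure of the maximal pro-$l$ quotient $G_F^l$ explicitly, as one does for $p$-adic local fields when $l\neq p$, and then read off its abelianization.

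\emph{Step 1: residue field and ramification.} Write $k_F$ for the residue field of $F$ and $e_F$ for its (supernatural) ramification index over $\mathbb{Q}_p$. Since $[F:\mathbb{Q}_p]$ is not divided by $l^\infty$, neither $[k_F:\mathbb{F}_p]$ nor $e_F$ is divided by $l^\infty$. Consequently $G_{k_F}^l\cong\mathbb{Z}_l$, and the $l$-part of the value group of $F$ is still cyclic.

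\emph{Step 2: structure of $G_F^l$.} I will write $F=\bigcup_i F_i$ as an increasing union of finite extensions of $\mathbb{Q}_p$. For each $i$, since $l\neq p$, the maximal pro-$l$ extension of $F_i$ is tamely ramified, and classically
\begin{equation*}
G_{F_i}^l\cong\mathbb{Z}_l(1)\rtimes\mathbb{Z}_l .
\end{equation*}
Here $\mathbb{Z}_l(1)$ is the $l$-part of the tame inertia, and a Frobenius lift acts on it through the cyclotomic character, i.e.\ by $q_i=\#k_{F_i}$. Passing to the limit over $i$, I expect
\begin{equation*}
G_F^l\cong\mathbb{Z}_l(1)\rtimes\mathbb{Z}_l ,
\end{equation*}
where the generator $\varphi$ of the quotient $\mathbb{Z}_l\cong G_{k_F}^l$ acts on $\mathbb{Z}_l(1)$ by $\chi(\varphi)\in\mathbb{Z}_l^\times$, with $\chi$ the $l$-adic cyclotomic character. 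The justification for the limit is that, by Step 1, the maps on the inertia factors and on the Frobenius factors are eventually multiplication by prime-to-$l$ integers, hence isomorphisms of $\mathbb{Z}_l$'s. Alternatively, one can argue directly: the maximal pro-$l$ extension of $F$ is the maximal unramified pro-$l$ extension followed by adjoining all $l$-power roots of a uniformizer. This step is the main obstacle, namely checking the compatibility of the semidirect product presentations along the tower $F_i\subseteq F_{i+1}$, where the Frobenius of $F_{i+1}$ corresponds to a power $\varphi_i^{f}$ and $\mathbb{Z}_l(1)$ gets rescaled by the ramification index.

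\emph{Step 3: abelianization.} From the presentation in Step 2,
\begin{equation*}
(G_F^l)^{\mathrm{ab}}\cong\mathbb{Z}_l\times\mathbb{Z}_l(1)/(\chi(\varphi)-1),
\end{equation*}
so its torsion subgroup is $\mathbb{Z}_l/(\chi(\varphi)-1)\mathbb{Z}_l$, whose cardinality is $l^{v_l(\chi(\varphi)-1)}$. This is finite as long as $\chi(\varphi)\neq 1$, and Step 4 will show this always holds.

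\emph{Step 4: comparison with roots of unity.} The extension $F(\mu_{l^n})/F$ is unramified. Hence $\mu_{l^n}\subseteq F$ if and only if the Frobenius $\varphi$ acts trivially on $\mu_{l^n}$, which means $\chi(\varphi)\equiv 1\bmod l^n$. Therefore $\#\mu_{l^\infty}(F)=l^{v_l(\chi(\varphi)-1)}$, matching the torsion computed in Step 3.

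It remains to exclude the degenerate case $\chi(\varphi)=1$, in which the torsion would be trivial while $\mu_{l^\infty}\subseteq F$. That case would force $\mathbb{Q}_p(\mu_{l^\infty})\subseteq F$. But $\mathbb{Q}_p(\mu_{l^\infty})$ is an unramified extension of $\mathbb{Q}_p$ whose degree is the order of $p$ in $\mathbb{Z}_l^\times$, and this degree is divided by $l^\infty$. This contradicts the hypothesis on $[F:\mathbb{Q}_p]$, so both cardinalities are finite and equal.
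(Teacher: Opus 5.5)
Your overall route is the paper's: kill the pro-$p$ wild inertia, write $G_F^l$ as the $l$-part of tame inertia extended by $G_{k_F}^l\cong\mathbb{Z}_l$, compute the abelianization as coinvariants times $\mathbb{Z}_l$, and compare with $\mu_{l^\infty}(F)$. However, Step 2 is false as stated when $l$ is odd and $\zeta_l\notin F$.

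\textbf{The gap.} When $l$ is odd and $\zeta_l\notin F$, we have $\chi(G_{k_F})\not\subseteq 1+l\mathbb{Z}_l$. So $\chi$ does not factor through the pro-$l$ group $G_{k_F}^l$, and $\chi(\varphi)$ is not even defined for $\varphi\in G_{k_F}^l$. Moreover, there is no continuous semidirect product $\mathbb{Z}_l(1)\rtimes\mathbb{Z}_l$ with the action you describe, since a continuous action of $\mathbb{Z}_l$ on $\mathbb{Z}_l(1)$ lands in the pro-$l$ subgroup $1+l\mathbb{Z}_l$ of $\mathbb{Z}_l^\times$.

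What actually happens is that the $l$-part of tame inertia dies. Its image $A$ in $G_F^l$ is a $G_F$-equivariant quotient of $\mathbb{Z}_l(1)$. If $A\neq 0$, then $G_F$ acts on $A/lA\cong\mu_l$ by conjugation inside the pro-$l$ group $G_F^l$, hence trivially, because $\#\mathbb{F}_l^\times=l-1$ is prime to $l$. This contradicts $\zeta_l\notin F$. Hence $G_F^l\cong G_{k_F}^l\cong\mathbb{Z}_l$.

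The same defect affects two other parts of your argument:
\begin{enumerate}
\item The ``classical'' input $G_{F_i}^l\cong\mathbb{Z}_l(1)\rtimes\mathbb{Z}_l$ holds only when $\mu_l\subseteq F_i$; otherwise $G_{F_i}^l\cong\mathbb{Z}_l$.
\item Your alternative description also fails here: without $\mu_l$, adjoining $l$-th roots of a uniformizer to the maximal unramified pro-$l$ extension does not give a Galois pro-$l$ extension of $F$.
\end{enumerate}

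\textbf{The repair.} Make the case split the paper makes.
\begin{enumerate}
\item If $\zeta_l\notin F$, then $(G_F^l)^{\mathrm{ab}}\cong\mathbb{Z}_l$ is torsion-free and $\mu_{l^\infty}(F)$ is trivial, so the lemma holds.
\item If $\zeta_l\in F$ (automatic for $l=2$), then $\chi|_{G_{k_F}}$ takes values in a pro-$l$ group. Your Steps 2--4 are then correct and reproduce the paper's computation $(G_F^l)^{\mathrm{ab}}\cong\mathbb{Z}_l(1)_{G_{k_F}^l}\times\mathbb{Z}_l$.
\end{enumerate}
Your count $l^{v_l(\chi(\tilde\varphi)-1)}$, with $\tilde\varphi$ a generator of $G_{k_F}$, would still return $1$ in the bad case. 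That agreement is not justified by the structure you assert, though.

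\textbf{Two smaller points.} First, the paper avoids your ``main obstacle'' (compatibility along $F_i\subseteq F_{i+1}$) entirely. It divides $G_F$ by its own wild inertia subgroup $P$, which is pro-$p$, so $G_F^l\cong(G_F/P)^l$ directly. Second, your explicit exclusion of $\chi(\varphi)=1$ via $l^\infty\mid[\mathbb{Q}_p(\mu_{l^\infty}):\mathbb{Q}_p]$ is a good addition. The paper uses this only implicitly, when it identifies the coinvariants with the finite group of $l$-power roots of unity of the residue field.
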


\begin{proof}
Write $k$ for the residue field of $F$; $P$ for the wild inertia subgroup of $G_F$. Then it follows from Hensel's lemma that the cardinality of the set of all $l$-power roots of unity of $F$ coincides that of $k$. On the other hand, the $l$-adic cyclotomic character $G_k\to\mathbb{Z}_l(1)$ determines an isomorphism $(G_F/P)^l\overset{\sim}{\to}\mathbb{Z}_l(1)\rtimes G_k^l$ (respectively, $(G_F/P)^l\overset{\sim}{\to} G_k^l$) if $\zeta_l\in F$ (respectively, $\zeta_l\notin F$).

Since $P$ is pro-$p$ group, it holds that $G_F^l\cong(G_F/P)^l$. Thus, if $\zeta_l\notin F$, it holds that $(G_F^l)^{\mathrm{ab}}\cong(G_k^l)^\mathrm{ab}\cong\mathbb{Z}_l$. In particular, both the torsion subgroup of $(G_F^l)^{\mathrm{ab}}$ and the set of all $l$-power roots of unity of $F$ are of cardinality $1$. On the other hand, if $\zeta_l\in F$, then it holds that
\begin{equation*}
(G_F^l)^{\mathrm{ab}}\cong(\mathbb{Z}_l(1)\rtimes G_k^l)^{\mathrm{ab}}\cong(\mathbb{Z}_l(1))_{G_k^l}\times G_k^l\cong(\mathbb{Z}_l(1))_{G_k^l}\times\mathbb{Z}_l.
\end{equation*}
Since the coinvariant $(\mathbb{Z}_l(1))_{G_k^l}$ is naturally isomorphic to the group of all $l$-power roots of unity of $k$, we conclude that the cardinality of the torsion subgroup of $(G_F^l)^{\mathrm{ab}}$ coincides with the cardinality of the set of all $l$-power roots of unity of $k$. This completes the proof of Lemma \ref{l-inertia}.
\end{proof}

\begin{prop}\label{cyclquot-l-qnf}
Let $q,l$ be [not necessarily distinct] prime numbers; $K_1$ a $q$-quasi-number field; $K_2$ an $l$-quasi-number field; $\varphi:G_{K_1}\overset{\sim}{\to}G_{K_2}$ a continuous isomorphism; $I\subseteq G_{K_1}$ a pro-$l$ pro-cyclic closed subgroup. Then the following hold:
\begin{enumerate}
\item Let $L_1\supseteq K_1$ be a finite Galois extension of $K_1$. Write $L_2\supseteq K_2$ for the [finite Galois] extension of $K_2$ corresponding to $\varphi(G_{L_1})\subseteq G_{K_2}$; $p_1:G_{K_1}\twoheadrightarrow\Gal(L_1/K_1), p_2:G_{K_2}\twoheadrightarrow\Gal(L_2/K_2)$ for the narutal surjections. Then there exist a pro-cyclic closed subgroup $J\subseteq G_{K_1}$ and an element $\sigma\in G_{\mathbb{Q}}$ satisfying the following conditions:
\begin{itemize}
\item $p_1(I)=p_1(J)$;
\item $\sigma J\sigma^{-1}\subseteq G_{K_2}$;
\item $p_2(\sigma J\sigma^{-1})\subseteq p_2(\varphi(J))$.
\end{itemize}
\item There exists $\sigma\in G_{\mathbb{Q}}$ such that $\sigma I\sigma^{-1}\subseteq\varphi(I)$.
\end{enumerate}
\end{prop}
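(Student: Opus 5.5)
The plan is to prove (i) by realizing $p_1(I)$ as a decomposition group of an unramified place and transporting that place across $\varphi$ using the group-theoretic characterization of decomposition groups in Proposition \ref{decomp}. Assertion (ii) should then follow from (i) by a compactness argument.

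\emph{Reduction in (i).} The group $p_1(I)$ is a cyclic $l$-group. Let $K_1'\subseteq L_1$ be its fixed field, so that $L_1/K_1'$ is a cyclic $l$-extension with group $p_1(I)$. Let $K_2'\subseteq L_2$ be the field corresponding to $\varphi(p_1^{-1}(p_1(I)))$; then $L_2/K_2'$ is a cyclic $l$-extension whose group is $p_2(\varphi(I))$. Since $K_2'/K_2$ is finite, $K_2'$ is an $l$-quasi-number field by Proposition \ref{qnfext}, (iii).

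\emph{Choice of place and of $J$.} Apply Proposition \ref{qnftchebotarev} to $L_2/K_2'$ with the cyclic subgroup taken to be all of $\Gal(L_2/K_2')$. This yields a place $w$ of $L_2$, unramified over $v$, with $D_{L_2/K_2',w}=\Gal(L_2/K_2')$, and with the order of the inertia of $v$ over the number-field place $v_0$ prime to $l$. Let $\overline{w}$ extend $w$ and set $D_2\overline{\mathrm{def}}=D_{K_2',\overline{w}}$, whose image in $\Gal(L_2/K_2')$ is surjective. By Proposition \ref{decomp}, (ii), $D_2$ satisfies $(\dagger)_{l,K_2',D_2}$. This condition is purely group-theoretic, so $\varphi^{-1}(D_2)$ satisfies the corresponding $(\dagger)$ condition in $G_{K_1'}$. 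Proposition \ref{decomp}, (i), which needs no quasi-number hypothesis on $K_1'$, then gives $\varphi^{-1}(D_2)\subseteq D_{K_1',\overline{u}}$ for some $\overline{u}$. The residue characteristics of $\overline{u}$ and $\overline{w}$ should agree by Corollary \ref{decompprime} together with Proposition \ref{plocprime}, applied to the normal subgroup of $D_2$ coming from $D_{\mathbb{Q},\overline{w}}$.

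Now choose a pro-$l$ procyclic $J\subseteq\varphi^{-1}(D_2)$ mapping onto $p_1(I)$; for instance, take the $l$-part of the closure of a lift of a generator. Then $p_1(J)=p_1(I)$, and $p_2(\varphi(J))=\Gal(L_2/K_2')$ is all of the cyclic group. So the third condition in (i) holds automatically once $\sigma J\sigma^{-1}\subseteq G_{K_2'}\subseteq G_{K_2}$.

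\emph{Main obstacle: choosing $\sigma$.} It remains to find $\sigma\in G_{\mathbb{Q}}$, with $\sigma\overline{u}=\overline{w}$ up to adjustment, such that $\sigma J\sigma^{-1}\subseteq D_{K_2',\overline{w}}$. Conjugating $\overline{u}$ to $\overline{w}$ puts $\sigma J\sigma^{-1}$ inside $D_{\mathbb{Q},\overline{w}}\cong G_{\mathbb{Q}_p}$, but not yet inside the smaller group $D_2$. Since $[D_{\mathbb{Q},\overline{w}}:D_2]$ is not divisible by $l^\infty$, I would first pass, via \cite{RZ}, Proposition 2.3.2, (b), to an open $U_0\supseteq D_2$ with $[U_0:D_2]$ prime to $l$. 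A Sylow-type argument for the pro-$l$ group $J$ should then move $\sigma J\sigma^{-1}$ into $D_2$, provided we control where its image sits in the finite quotient by $U_0$. For that control I would compare the maximal pro-$l$ quotients $D_{K_1',\overline{u}}^l$ and $D_2^l$. Lemma \ref{l-inertia} identifies their torsion with the $l$-power roots of unity, and the unramified, prime-to-$l$-inertia choice from Proposition \ref{qnftchebotarev} makes the $l$-part essentially the unramified (Frobenius) quotient, on which $J$ and the corresponding subgroup of $D_2$ agree. I expect this local comparison to be the delicate step.

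\emph{Assertion (ii).} For each finite Galois $L_1/K_1$ (with $L_2$ as in (i)), let $E_{L_1}$ be the set of $\sigma\in G_{\mathbb{Q}}$ such that $\sigma I\sigma^{-1}\subseteq G_{K_2}$ and $p_2(\sigma I\sigma^{-1})\subseteq p_2(\varphi(I))$. Since $p_1(J)=p_1(I)$ and the conditions only involve images modulo $G_{L_2}$, part (i) should give $E_{L_1}\neq\emptyset$, after enlarging $L_1$ so that the $p_2$-images of $J$ and $I$ are compared consistently. These sets are closed and decrease as $L_1$ grows. By compactness of $G_{\mathbb{Q}}$ there is $\sigma\in\bigcap_{L_1}E_{L_1}$, and then $\sigma I\sigma^{-1}\subseteq\bigcap_{L_2}\varphi(I)G_{L_2}=\varphi(I)$.
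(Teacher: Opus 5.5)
Your set-up for (i) is sound, but the step you flag as the ``main obstacle'' is the real content of (i), and you do not supply it. The sound parts are the passage to $L_2/K_2'$, the place $\overline w$ from Proposition~\ref{qnftchebotarev}, the pro-$l$ procyclic $J\subseteq\varphi^{-1}(D_2)$ with $p_1(J)=p_1(I)$, and your observation that the third condition is automatic once $\sigma J\sigma^{-1}\subseteq G_{K_2'}$.

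An arbitrary open $U_0\supseteq D_2$ of prime-to-$l$ index is useless. $[D_{\mathbb{Q},\overline w}:D_2]$ may have a nontrivial finite $l$-part, so a pro-$l$ subgroup of $D_{\mathbb{Q},\overline w}$ need not be conjugate into $D_2$. What you need is the specific group $D_2^0\subseteq D_{\mathbb{Q},\overline w}$ cut out by the maximal unramified subextension of the completion of $K_2'$ at $v$. Its index over $D_2$ is the ramification index, which is prime to $l$ by the choices in Proposition~\ref{qnftchebotarev}. You also need the statement that any $\sigma$ conjugating $D_{\mathbb{Q},\overline u}$ onto $D_{\mathbb{Q},\overline w}$ carries the analogous group $D_1^0$ for $K_1'$ onto $D_2^0$. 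This is Claim A in the paper's proof, and it gives $\sigma\varphi^{-1}(D_2)\sigma^{-1}\subseteq D_2^0$.

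Proving Claim A needs three things absent from your sketch.
\begin{enumerate}
\item You need the \emph{equality} $\varphi(D_{K_1',\overline u})=D_2$ from Proposition~\ref{decomp}, (iii), not the inclusion from (i). This is where the $q$-quasi-number hypothesis on $K_1$ enters. Without equality, neither Corollary~\ref{decompprime} (to match residue characteristics) nor Lemma~\ref{l-inertia} (to see that both completions contain the same $l$-power roots of unity) applies. Note that Lemma~\ref{l-inertia} requires $p\neq l$, which you can arrange in the Chebotarev step.
\item You need the preliminary replacement of $K_1,K_2$ by their maximal pro-prime-to-$l$ extensions. $l$-power roots of unity alone cannot detect the $l$-part of the residue degree when $\mu_l$ is absent: residue degrees $1$ and $l$ look identical if $p\not\equiv 1\bmod l$. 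After the reduction, the unramified part of each completion is determined by its $l$-power roots of unity.
\item You need that unramified extensions of $\mathbb{Q}_p$ inside $\overline{\mathbb{Q}}_p$ are determined by their degree. This is what turns the numerical agreement into an equality of subgroups.
\end{enumerate}
Your remark that $J$ and ``the corresponding subgroup of $D_2$'' agree on the Frobenius quotient does not replace this. $\varphi$ is not known to come from conjugation, so an abstract comparison of $D_{K_1',\overline u}^l$ and $D_2^l$ yields only numerical invariants, and these must be converted into an identity of subfields of $\overline{\mathbb{Q}}_p$. Once $\sigma J\sigma^{-1}\subseteq D_2^0$ is known, your Sylow conjugation inside $D_2^0$ does finish the proof. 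The paper instead replaces $J$ by $J\cap\sigma^{-1}D_2\sigma$, which has index prime to $l$.

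Your argument for (ii) also fails as written, because nonemptiness of $E_{L_1}$ does not follow from (i). You only know $I\subseteq J\,G_{L_1}$ and $\sigma J\sigma^{-1}\subseteq G_{K_2}$. Since $\sigma\in G_{\mathbb{Q}}$ is unrelated to $\varphi$, nothing puts $\sigma G_{L_1}\sigma^{-1}$ inside $G_{K_2}$, let alone inside $G_{L_2}=\varphi(G_{L_1})$, and enlarging $L_1$ does not help. The fix is to let $U$ run over open normal subgroups of $G_{\mathbb{Q}}$, take $G_{L_1}=G_{K_1}\cap U$, and weaken the closed condition to $\sigma I\sigma^{-1}\subseteq\varphi(I)\,\varphi(G_{K_1}\cap U)\,U$. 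Part (i) then gives $\sigma I\sigma^{-1}\subseteq\sigma J\sigma^{-1}\cdot\sigma(G_{K_1}\cap U)\sigma^{-1}\subseteq\varphi(I)\,\varphi(G_{K_1}\cap U)\cdot U$, because $U\trianglelefteq G_{\mathbb{Q}}$. These closed sets decrease with $U$, and $\bigcap_U\varphi(I)\varphi(G_{K_1}\cap U)U=\varphi(I)$.
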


\begin{proof}
First, we verify assertion (i). In light of Proposition \ref{qnfext}, (iii), by replacing $K_1,K_2$ by their maximal pro-prime-to-$l$ extensions, we may assume that $K_1,K_2$ do not have nontrivial prime-to-$l$ Galois extension. Write $M_2$ for the [finite] subextension of $L_2/K_2$ corresponding to $p_2(\varphi(I))$, and let $(K_0,L_0,v,w,v_0,w_0)$ be as in Proposition \ref{qnftchebotarev}, where we take ``$(K,L,I)$'' to be $(M_2,L_2,p_2(\varphi(I)))$.
 
Let $\overline{w}\in\mathbb{V}(\overline{\mathbb{Q}})^{\mathrm{non}}$ be a place lying over $w$. Then, since $D_{L_2/M_2,w}=p_2(D_{M_2,\overline{w}})$, there exists a pro-cyclic closed subgroup $J_2\subseteq D_{M_2,\overline{w}}$ such that $p_2(J_2)=p_2(\varphi(I))(=D_{L_2/M_2,w})$. Now it follows from Proposition \ref{decomp}, (iii), that there exists $\overline{u}\in\mathbb{V}(\overline{\mathbb{Q}})^{\mathrm{non}}$ such that $\varphi(D_{M_1,\overline{u}})=D_{M_2,\overline{w}}$, where $M_1$ is the [finite] subextension of $L_1/K_1$ corresponding to $\varphi^{-1}(G_{M_2})$.

Write $u\in\mathbb{V}(M_1)^{\mathrm{non}}$ for the place lying under $\overline{u}$; $p=p_{\overline{w}}(=p_{\overline{u}})$ [cf.\ Corollary \ref{decompprime}]. Let $\sigma\in G_{\mathbb{Q}}$ be an element such that $\sigma(\overline{w})=\overline{u}$. Write $D_1^0$ (respectively, $D_2^0$) for the subgroup of $D_{\mathbb{Q},\overline{u}}$ (respectively, $D_{\mathbb{Q},\overline{w}}$) such that, by identifying $D_{\mathbb{Q},\overline{u}}$ (respectively, $D_{\mathbb{Q},\overline{w}}$) with $G_{\mathbb{Q}_p}$, $D_1^0$ (respectively, $D_2^0$) corresponds to the maximal unramified extension of $\mathbb{Q}_p$ in $(M_1)_u\cap\overline{\mathbb{Q}_p}$ (respectively, $(M_2)_v\cap\overline{\mathbb{Q}_p}$). Now we claim the following:
\begin{quotation}\hypertarget{cyclquot-l-qnfA}{}
Claim \ref*{cyclquot-l-qnf}.A: It holds that $\sigma D_1^0\sigma^{-1}=D_2^0$.
\end{quotation}

Indeed, since any unramified extension of $\mathbb{Q}_p$ is generated by [possibly infinitely many] elements of the form $\zeta_{p^n-1}$, it suffices to show that, for each positive integer $n$, $\zeta_{p^n-1}$ is contained in $(M_1)_u\cap\overline{\mathbb{Q}_p}$ if and only if it is contained in $(M_2)_v\cap\overline{\mathbb{Q}_p}$. Now, since $\varphi(D_{M_1,\overline{u}})=D_{M_2,\overline{w}}$, it follows from Lemma \ref{l-inertia} that, for each $l$-power roots of unity, it is contained in $(M_1)_u\cap\overline{\mathbb{Q}_p}$ if and only if it is contained in $(M_2)_v\cap\overline{\mathbb{Q}_p}$. On the other hand, since $K_1$ does not have nontrivial prime-to-$l$ Galois extension, by considering the residue class degree, we observe that $\zeta_{p^n-1}$ is contained in $(M_1)_u\cap\overline{\mathbb{Q}_p}$ if and only if there exists an $l$-power root of unity $\zeta$ contained in $(M_1)_u\cap\overline{\mathbb{Q}_p}$ such that $v_l(n)\leqq v_l([\mathbb{Q}_p(\zeta):\mathbb{Q}_p])$, where $v_l$ is a fixed $l$-adic valuation. Since the same holds for $(M_2)_v\cap\overline{\mathbb{Q}_p}$, we conclude that the desired equivalence holds. This completes the proof of Claim \hyperlink{cyclquot-l-qnfA}{\ref*{cyclquot-l-qnf}.A}.

Now we write $J_1\overset{\mathrm{def}}{=}\varphi^{-1}(J_2)(\subseteq\varphi^{-1}(D_{M_2,\overline{w}})=D_{M_1,\overline{u}}$); $J\overset{\mathrm{def}}{=}J_1\cap\sigma^{-1}D_{M_2,\overline{w}}\sigma$. Since $w_0$ is unramified over $\mathbb{Q}_p$ and the order of the inertia subgroup of $v$ over $v_0$ is not divided by $l$, $[D_2^0:D_{M_2,\overline{w}}]$ is not divided by $l$. On the other hand, it follows from Claim \hyperlink{cyclquot-l-qnfA}{\ref*{cyclquot-l-qnf}.A} that $\sigma J_1\sigma^{-1}\subseteq D_2^0$. Thus,
\begin{equation*}
[J_1:J]=[\sigma J_1\sigma^{-1}:\sigma J_1\sigma^{-1}\cap D_{M_2,\overline{w}}],
\end{equation*}
is not divided by $l$, which implies that $[J_1:J]$ and $[L_2:M_2]=\#\Gal(L_2/M_2)$ are coprime. In particular, it holds that $p_2(\varphi(J))=p_2(\varphi(J_1))=p_2(J_2)=p_2(\varphi(I))=D_{L_2/M_2,w}$. This implies that $p_1(I)=p_1(J)$. Moreover, it holds that $\sigma J\sigma^{-1}\subseteq D_{M_2,\overline{w}}\subseteq G_{M_2}\subseteq G_{K_2}$. Furthermore, we obtain that $p_2(\sigma J\sigma^{-1})\subseteq p_2(D_{M_2,\overline{w}})=D_{L_2/M_2,w}=p_2(\varphi(J))$. This completes the proof of assertion (i).

Next, we verify assertion (ii). For each normal open subgroup $U$ of $G_{\mathbb{Q}}$, if we write $p_1:G_{K_1}\twoheadrightarrow G_{K_1}/(G_{K_1}\cap U)$; $p_2:G_{K_2}\twoheadrightarrow G_{K_2}/(\varphi(G_{K_1}\cap U))$ for natural surjections, then it follows from assertion (i) that there exists a pro-cyclic subgroup $J\subseteq G_{K_1}$ and an element $\sigma\in G_{\mathbb{Q}}$ such that
\begin{itemize}
\item $p_1(I)=p_1(J)$;
\item $\sigma J\sigma^{-1}\subseteq G_{K_2}$;
\item $p_2(\sigma J\sigma^{-1})\subseteq p_2(\varphi(J))(=p_2(\varphi(I)))$.
\end{itemize}
Then it holds that
\begin{equation*}
\sigma I\sigma^{-1}\subseteq \sigma J(G_{K_1}\cap U)\sigma^{-1}\subseteq\sigma J\sigma^{-1}U\subseteq\varphi(\tilde{I})(\varphi(G_{K_1}\cap U))U.
\end{equation*}
Let us write
\begin{equation*}
C_U\overset{\mathrm{def}}{=}\{\sigma\in G_{\mathbb{Q}}\,\vert\,\sigma I\sigma^{-1}\subseteq\varphi(I)(\varphi(G_{K_1}\cap U))U\}.
\end{equation*}
Then $C_U$ is a nonempty [closed] subset of $G_{\mathbb{Q}}$. Moreover, for normal open subgroups $U'\subseteq U\subseteq G_{\mathbb{Q}}$ of $G_{\mathbb{Q}}$, it is clear that $C_{U'}\subseteq C_U$. Thus, since $G_{\mathbb{Q}}$ is compact, $\bigcap_U C_U\neq\emptyset$. Let $\sigma\in\bigcap_U C_U$. Then it follows from \cite{RZ}, Proposition 2.1.4, (i), that
\begin{equation*}
\sigma I\sigma^{-1}\subseteq\bigcap_U \varphi(I)(\varphi(G_{K_1}\cap U))U=\varphi(I).
\end{equation*}
This completes the proof of assertion (ii), hence also of Proposition \ref{cyclquot-l-qnf}.
\end{proof}

\begin{lem}\label{l-fp}
Let $l$ be a prime number; $K_1,K_2$ $l$-quasi-number fields; $\varphi:G_{K_1}\overset{\sim}{\to}G_{K_2}$ a continuous isomorphism; $I\subseteq G_{K_1}$ a pro-$l$ pro-cyclic closed subgroup. Then there exists $\sigma\in G_{\mathbb{Q}}$ such that $\varphi(I)=\sigma I\sigma^{-1}$.
\end{lem}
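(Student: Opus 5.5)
The plan is to apply Proposition \ref{cyclquot-l-qnf}, (ii), in both directions and then upgrade the resulting pair of inclusions to an equality. With $q=l$, Proposition \ref{cyclquot-l-qnf}, (ii), applied to $\varphi$ and $I$ gives $\sigma\in G_{\mathbb{Q}}$ with $\sigma I\sigma^{-1}\subseteq\varphi(I)$. Note that $\varphi(I)$ is again a pro-$l$ pro-cyclic closed subgroup of $G_{K_2}$. So the same proposition, applied to $\varphi^{-1}:G_{K_2}\overset{\sim}{\to}G_{K_1}$ and $\varphi(I)$ (the roles of $K_1,K_2$ are symmetric, both being $l$-quasi-number fields), gives $\tau\in G_{\mathbb{Q}}$ with $\tau\varphi(I)\tau^{-1}\subseteq I$. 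Combining the two inclusions yields
\begin{equation*}
\tau\sigma I\sigma^{-1}\tau^{-1}\subseteq\tau\varphi(I)\tau^{-1}\subseteq I .
\end{equation*}

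The key step is to show that a conjugate of a closed subgroup contained in the subgroup itself must coincide with it. This is the argument already used in the proof of Proposition \ref{subinn}: pass to finite quotients. For each normal open $U\subseteq G_{\mathbb{Q}}$, the image of $\tau\sigma I\sigma^{-1}\tau^{-1}$ in $G_{\mathbb{Q}}/U$ is a conjugate of the image of $I$, so it has the same finite cardinality. Hence the inclusion of images is an equality. Since this holds for every $U$, \cite{RZ}, Proposition 2.1.4 gives $\tau\sigma I\sigma^{-1}\tau^{-1}=I$.

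From this equality both inclusions in the chain above are forced to be equalities, and in particular $\tau\varphi(I)\tau^{-1}=I$. Therefore $\varphi(I)=\tau^{-1}I\tau$, and $\tau^{-1}\in G_{\mathbb{Q}}$ is the desired element.

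The only point that needs checking is that the hypotheses of Proposition \ref{cyclquot-l-qnf} hold for $\varphi^{-1}$, namely that $\varphi(I)$ is pro-$l$ pro-cyclic and closed. This is immediate because $\varphi$ is an isomorphism of profinite groups. All the real difficulty is contained in Proposition \ref{cyclquot-l-qnf}, so I expect no substantial obstacle at this stage.
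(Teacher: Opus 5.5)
Your proposal is correct and follows the paper's proof essentially verbatim. Proposition \ref{cyclquot-l-qnf}, (ii), applied to $\varphi$ and to $\varphi^{-1}$, gives $\sigma I\sigma^{-1}\subseteq\varphi(I)\subseteq\tau^{-1}I\tau$, and the finite-quotient cardinality argument forces the chain to consist of equalities; the only cosmetic difference is that the paper takes $\sigma$ as the witness where you take $\tau^{-1}$, which is equally valid.
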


\begin{proof}
It follows from Proposition \ref{cyclquot-l-qnf}, (ii), that there exists $\sigma\in G_{\mathbb{Q}}$ such that $\sigma I\sigma^{-1}\subseteq\varphi(I)$. Moreover, by applying Proposition \ref{cyclquot-l-qnf}, (ii), for $\varphi^{-1}$, there exists $\tau\in G_{\mathbb{Q}}$ such that $\tau\varphi(I)\tau^{-1}\subseteq\varphi^{-1}(\varphi(I))=I$. Thus, it holds that $\sigma I\sigma^{-1}\subseteq\varphi(I)\subseteq\tau^{-1}I\tau$. On the other hand, by considering all finite quotients of $G_{\mathbb{Q}}$, $\sigma I\sigma^{-1}\subseteq\tau^{-1}I\tau$ implies that $\sigma I\sigma^{-1}=\tau^{-1}I\tau$. Thus, we conclude that $\varphi(I)=\sigma I\sigma^{-1}$. This completes the proof of Lemma \ref{l-fp}.
\end{proof}

\begin{lem}\label{normalpreserve}
Let $l$ be a prime number; $K_1,K_2$ $l$-quasi-number fields; $\varphi:G_{K_1}\overset{\sim}{\to}G_{K_2}$ a continuous isomorphism. Suppose that $K_1$ is Galois over $\mathbb{Q}$. Then there exists a closed subgroup $N\subseteq G_{K_1}\cap G_{K_2}$ satisfying the following conditions:
\begin{itemize}
\item $N$ is normal in $G_{\mathbb{Q}}$;
\item the [Galois] extension of $\mathbb{Q}$ corresponding to $N$ is an $l$-quasi-number field;
\item $\varphi(N)=N$.
\end{itemize}
\end{lem}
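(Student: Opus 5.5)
Since $K_1$ is Galois over $\mathbb{Q}$, the subgroup $G_{K_1}$ is normal in $G_{\mathbb{Q}}$. By Corollary \ref{qnfextcor}, the field $L_2$ corresponding to $\varphi(G_{K_1})=G_{K_2}$ is $K_2$ itself, so this gives nothing new. What we need is a normal subgroup of $G_{\mathbb{Q}}$ sitting inside $G_{K_2}$ that $\varphi$ preserves. The natural candidate is the intersection of all $G_{\mathbb{Q}}$-conjugates of $G_{K_2}$, i.e.\ $G_{L}$ where $L$ is the Galois closure of $K_2$ over $\mathbb{Q}$. Then I would set
\[
N \overset{\mathrm{def}}{=} G_{K_1}\cap G_L ,
\]
which corresponds to the compositum $K_1L$ and is normal in $G_{\mathbb{Q}}$, being an intersection of two normal subgroups.

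First I would check that $K_1L$ is an $l$-quasi-number field. Choose number fields $F_1\subseteq K_1$ and $F_2\subseteq L$ such that $[K_1:F_1]$ and $[L:F_2]$ are not divided by $l^\infty$, and put $F=F_1F_2$. Then $[K_1F:F]$ and $[LF:F]$ are not divided by $l^\infty$, and $[K_1L:F]$ divides $[K_1F:F]\cdot[LF:F]$. Since $K_1L$ is Galois over $\mathbb{Q}$, hence over $F$, Proposition \ref{qnfext}, (ii) shows that $K_1L$ is an $l$-quasi-number field.

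The main obstacle is proving $\varphi(N)=N$, since $\varphi$ is a priori not related to conjugation in $G_{\mathbb{Q}}$. I would avoid the literal intersection and instead describe a large normal subgroup purely group-theoretically inside $G_{K_1}$, so that $\varphi$ transports it. A first attempt: $N_1=G_{K_1}\cap G_L$ is normal in $G_{K_1}$, so $\varphi(N_1)$ is normal in $G_{K_2}$. Symmetrically, $\varphi^{-1}(N_2)$ is normal in $G_{K_1}$, where $N_2$ is defined analogously. Iterating $N\mapsto N\cap\varphi(N)\cap\varphi^{-1}(N)\cap\cdots$ need not stabilize, however.

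A cleaner route uses characteristic subgroups. Since $K_1$ is an $l$-quasi-number field, there is an open subgroup $U\subseteq G_{K_1}$ such that $[U:N_1]$ is prime to $l$, by \cite{RZ}, Proposition 2.3.2, (b). Replacing $U$ by its normal core in $G_{\mathbb{Q}}$ keeps it open, since only finitely many conjugates occur, with index still not divided by $l^\infty$. I would then take $N$ to be the kernel of $U\twoheadrightarrow U^{\mathrm{(prime\text{-}to\text{-}}l)}$, the maximal pro-prime-to-$l$ quotient, intersected over the finitely many open subgroups of the relevant index. More precisely, let $N$ be the intersection of all open subgroups of $G_{K_1}$ of index dividing a fixed integer $d$, then take the kernel to the maximal pro-prime-to-$l$ quotient. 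This subgroup is characteristic in $G_{K_1}$, hence normal in $G_{\mathbb{Q}}$ because $G_{K_1}\lhd G_{\mathbb{Q}}$. Because it is defined by the abstract group structure, it is carried by $\varphi$ onto the analogous subgroup $N'$ of $G_{K_2}$.

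It then remains to show $N'=N$ as subgroups of $G_{\overline{\mathbb{Q}}/\mathbb{Q}}$. For this I would use that the same recipe, applied to $G_{K_1}$ and to $G_{K_2}$, yields the maximal prime-to-$l$ extension of the compositum of all degree-$\le d$ extensions. Comparing the two via the common field $K_1L$ is the delicate step, and I expect it to be the core difficulty.
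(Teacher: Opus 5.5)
\textbf{There is a genuine gap: you never prove $\varphi(N)=N$.} You reduce everything to showing $N'=N$ inside $G_{\mathbb{Q}}$ and leave this as ``the core difficulty''. It cannot be settled by choosing $N$ through a purely group-theoretic recipe. Since $\varphi$ is an abstract isomorphism, a characteristic subgroup of $G_{K_1}$ goes to the corresponding characteristic subgroup of $G_{K_2}$, but nothing a priori relates the two as subgroups of $G_{\mathbb{Q}}$. Indeed, the latter need not even be normal in $G_{\mathbb{Q}}$, since $K_2$ is not assumed Galois. Asserting $N'=N$ amounts to asserting that $\varphi$ acts like $G_{\mathbb{Q}}$-conjugation on $N$, which is what this section is working toward.

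The missing ingredient is arithmetic: Lemma \ref{l-fp}, which says that $\varphi$ sends every pro-$l$ pro-cyclic subgroup of $G_{K_1}$ to a $G_{\mathbb{Q}}$-conjugate of itself. Because it controls only pro-$l$ elements, you must work with subgroups topologically generated by their Sylow $l$-subgroups.

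Your specific recipe has a separate problem. Intersecting all open subgroups of index dividing $d$ corresponds to a compositum of infinitely many extensions. For $l\le d$ this typically has degree divisible by $l^\infty$ (e.g.\ all quadratic extensions when $l=d=2$), so the second required condition fails.

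\textbf{The paper's proof.} Let $G_1\subseteq G_{K_1}$ be the closed subgroup generated by all Sylow $l$-subgroups; it is characteristic in $G_{K_1}$, hence normal in $G_{\mathbb{Q}}$. Take $g$ in a Sylow $l$-subgroup of $G_2=\varphi(G_1)$. Lemma \ref{l-fp}, applied to $\varphi^{-1}$, gives $\sigma\in G_{\mathbb{Q}}$ with $\sigma g\sigma^{-1}\in G_1$, so $g\in G_1$ by normality. Hence $\varphi(G_1)\subseteq G_1$.

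Next let $G_3$ be the subgroup generated by the Sylow $l$-subgroups of the $G_{\mathbb{Q}}$-normal core of $G_2$. It has the following properties:
\begin{itemize}
\item it is normal in $G_{\mathbb{Q}}$ and lies in $G_{K_1}\cap G_{K_2}$;
\item its fixed field is an $l$-quasi-number field (cf.\ Proposition \ref{qnfext}, (iii));
\item it satisfies $\varphi^{-1}(G_3)\subseteq G_3$, by the same argument.
\end{itemize}
Equality then follows from a Sylow index count, using that $[S_1:S_1\cap\varphi^{-1}(G_3)]$ is finite.

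\textbf{Repairing your first candidate.} With Lemma \ref{l-fp} available, take $N$ to be the closed subgroup generated by the Sylow $l$-subgroups of $G_{K_1}\cap G_L$. This $N$ is normal in $G_{\mathbb{Q}}$ and lies in $G_{K_1}\cap G_{K_2}$. Its fixed field, the maximal pro-prime-to-$l$ extension of $K_1L$, is an $l$-quasi-number field by your compositum argument together with Proposition \ref{qnfext}, (iii). Applying the argument above to both $\varphi$ and $\varphi^{-1}$ gives $\varphi(N)\subseteq N$ and $\varphi^{-1}(N)\subseteq N$, hence $\varphi(N)=N$.
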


\begin{proof}
Write $G_1\subseteq G_{K_1}$ for the closed subgroup generated by all Sylow $l$-subgroups of $G_{K_1}$ [note that $G_1$ is the kernel of the maximal pro-prime-to-$l$ quotient of $G_{K_1}$, hence characteristic in $G_{K_1}$]; $G_2\overset{\mathrm{def}}{=}\varphi(G_1)$; $G_2'$ for the maximal normal closed subgroup of $G_{\mathbb{Q}}$ contained in $G_2$; $G_3\subseteq G_2'$ for the closed subgroup generated by all Sylow $l$-subgroups of $G_2'$; $G_4\overset{\mathrm{def}}{=}\varphi^{-1}(G_3)$. We verify that $G_3$ satisfies the conditions for ``$N$'' in the statement of Lemma \ref{normalpreserve}. Since $G_2'$ is normal in $G_{\mathbb{Q}}$ and $G_3$ is characteristic in $G_2'$, $G_3$ is normal in $G_{\mathbb{Q}}$. Moreover, let us observe that the separable extensions of $K_1$ (respectively, $K_2$) corresponding to $G_1,G_4\subseteq G_{K_1}$ (respectively, $G_2,G_3\subseteq G_{K_2}$) are $l$-quasi-number fields [cf.\ Proposition \ref{qnfext}, (iii)]. Thus, it suffices to show that $G_3=G_4$.

Now we claim the following:
\begin{quotation}\hypertarget{normalpreserveA}{}
Claim \ref*{normalpreserve}.A: It holds that $G_2\subseteq G_1$.
\end{quotation}

Indeed, let $S\subseteq G_2$ be a Sylow $l$-subgroup of $G_2$; $g\in S$ an element of $S$. Then it follows from Lemma \ref{l-fp} that there exists $\sigma\in G_{\mathbb{Q}}$ such that $\sigma g\sigma^{-1}\in\varphi^{-1}(G_2)=G_1$, which implies that $g\in\sigma^{-1}G_1\sigma=G_1$. Since $G_2=\varphi(G_1)$ is generated by all Sylow $l$-subgroups of $G_2$, it holds that $G_2\subseteq G_1$. This completes the proof of Claim \hyperlink{normalpreserveA}{\ref*{normalpreserve}.A}.

By applying Claim \hyperlink{normalpreserveA}{\ref*{normalpreserve}.A} to $\varphi^{-1}\vert_{G_3}$, it holds that $G_4\subseteq G_3$. Since $G_3$ is generated by all Sylow $l$-subgroups of $G_3$, to verify $G_3=G_4$, it suffices to show that, for any Sylow $l$-subgroup $S_3\subseteq G_3$ of $G_3$, it holds that $S_3\subseteq G_4$.

Since $S_3\subseteq G_3\subseteq G_2\subseteq G_1$, it follows from \cite{RZ}, Corollary 2.3.6, (b), that there exists a Sylow $l$-subgroup $S_2\subseteq G_2$ of $G_2$ containing $S_3$, and there exists a Sylow $l$-subgroup $S_1\subseteq G_1$ of $G_1$ containing $S_2$. Then, for $i=2,3$, it holds that $S_i\subseteq S_1\cap G_i\subseteq G_i$, which implies that $S_i=S_1\cap G_i$ since $S_i$ is a Sylow $l$-subgroup of $G_i$.

On the other hand, $\varphi(S_1)\subseteq G_2$ is a Sylow $l$-subgroup of $G_2$. Thus, it follows from \cite{RZ}, Corollary 2.3.6, (c), that there exists $\tau\in G_2$ such that $\varphi(S_1)=\tau S_2\tau^{-1}$, which implies that
\begin{align*}
[S_1:S_1\cap G_4]&=[\varphi(S_1):\varphi(S_1\cap G_4)]\\
&=[\tau S_2\tau^{-1}:\tau S_2\tau^{-1}\cap G_3]\\
&=[S_2:S_2\cap G_3]\\
&=[S_1\cap G_2:S_1\cap G_3].
\end{align*}
Since $G_4$ corresponds to an $l$-quasi-number field, the index $[S_1:S_1\cap G_4]$ between pro-$l$ groups is finite. Thus, $G_4\subseteq G_3\subseteq G_2\subseteq G_1$ implies that $S_1\cap G_4=S_1\cap G_3=S_3$, which implies that $S_3\subseteq G_4$. This completes the proof of Lemma \ref{normalpreserve}.
\end{proof}

\begin{thm}\label{lqnfinn}
Let $l$ be a prime number; $K_1,K_2$ $l$-quasi-number fields; $\varphi:G_{K_1}\overset{\sim}{\to}G_{K_2}$ a continuous isomorphism. Then $\varphi$ is induced by an inner automorphism of $G_{\mathbb{Q}}$.
\end{thm}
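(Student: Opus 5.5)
We want to show that $\varphi$ is conjugation by some element of $G_{\mathbb{Q}}$. The plan is to realize $\varphi$ as a families preserving isomorphism between closed subgroups of $G_{\mathbb{Q}}$ in a suitable quotient, and then apply Theorem \ref{inngal}, (ii) with $K=\mathbb{Q}$. That theorem requires families preservingness only in the quotient and only for pro-cyclic subgroups; Lemma \ref{l-fp} supplies this for pro-$l$ pro-cyclic subgroups.

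\textbf{Reduction to the Galois case.} First I reduce to the case where $K_1$ is Galois over $\mathbb{Q}$. Let $L_1$ be the Galois closure of $K_1$ over $\mathbb{Q}$ and $L_2$ the field corresponding to $\varphi(G_{L_1})$; by Corollary \ref{qnfextcor}, $L_2$ is an $l$-quasi-number field. If $\varphi|_{G_{L_1}}$ is induced by some $\sigma\in G_{\mathbb{Q}}$, then $\sigma^{-1}\varphi(h)\sigma=h$ for all $h\in G_{L_1}$. Since $G_{L_1}$ is a nontrivial normal closed subgroup of $G_{\mathbb{Q}}$ and $G_{\mathbb{Q}}$ is internally indecomposable (Theorem \ref{sneanab}), Proposition \ref{snisom} (with $n=1$, $S=G_{L_1}$, $H=G_{K_1}$) gives $\sigma^{-1}\varphi(g)\sigma=g$ for all $g\in G_{K_1}$. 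So we may assume $K_1$ is Galois over $\mathbb{Q}$.

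\textbf{Passing to a pro-$l$ quotient.} Lemma \ref{normalpreserve} then gives a closed subgroup $N\subseteq G_{K_1}\cap G_{K_2}$ that is normal in $G_{\mathbb{Q}}$, satisfies $\varphi(N)=N$, and corresponds to an $l$-quasi-number field. Set $R=\ker(N\twoheadrightarrow N^l)$, which is characteristic in $N$, hence normal in $G_{\mathbb{Q}}$ and preserved by $\varphi$. Put $Q=G_{\mathbb{Q}}/R$ and let $\bar\varphi:G_{K_1}/R\overset{\sim}{\to}G_{K_2}/R$ be the induced isomorphism. The index $[G_{\mathbb{Q}}:N]$ is not divisible by $l^\infty$, so Proposition \ref{nffree} applies with $p=l$: $Q$ is strongly sn-internally indecomposable and has a nontrivial free pro-$l$ normal closed subgroup $F$. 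Intersecting with $N/R$ via Proposition \ref{snintersect}, we may take $F\subseteq N/R\subseteq (G_{K_1}\cap G_{K_2})/R$.

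\textbf{Verifying the hypotheses of Theorem \ref{innisom}.} I would apply Theorem \ref{innisom} with $G=\widetilde{F}=Q$ and $n=1$, so I must check its two conditions on $\bar\varphi$ restricted to $F$.
\begin{itemize}
\item \emph{Normal subgroups of $Q$ inside $F$ are preserved.} Let $M\subseteq F$ be closed and normal in $Q$. For any $x\in M$, the group $\overline{\langle x\rangle}$ is pro-$l$, since it lies in $F$. Lift it to a pro-$l$ pro-cyclic subgroup of $G_{K_1}$, which is possible because pro-cyclic pro-$l$ images lift to pro-cyclic pro-$l$ subgroups by Sylow theory. Lemma \ref{l-fp} then gives $\bar\varphi(x)\in$ a $Q$-conjugate of $\overline{\langle x\rangle}$, which lies in $M$ by normality. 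Hence $\bar\varphi(M)\subseteq M$, and applying the same argument to $\bar\varphi^{-1}$ gives equality.
\item \emph{Conjugacy of pro-cyclic subgroups.} The required conjugacy of pro-cyclic subgroups $I\subseteq U\subseteq F$ follows in the same way, by lifting $I$ to a pro-$l$ pro-cyclic subgroup of $G_{K_1}$ and applying Lemma \ref{l-fp}.
\end{itemize}
With both conditions in hand, Theorem \ref{innisom} shows that $\bar\varphi$ is induced by an inner automorphism of $Q$.

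\textbf{Lifting back to $G_{\mathbb{Q}}$.} To pass from $Q$ to $G_{\mathbb{Q}}$, I would run the same argument with $N$ replaced by each open subgroup $N'\subseteq N$ with $N'$ normal in $G_{\mathbb{Q}}$ and $\varphi(N')=N'$. For example, take $N'=N\cap\varphi^{-1}(N\cap U)$-type intersections, using that $N\cap U$ is characteristic-like only up to $\varphi$. This is where I expect the \textbf{main obstacle}: producing a cofinal system of $\varphi$-stable, $G_{\mathbb{Q}}$-normal subgroups. The first thing I would try is to reapply Lemma \ref{normalpreserve} to $\varphi$ restricted to $G_{K_1}\cap U$ for normal open $U\subseteq G_{\mathbb{Q}}$. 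This produces $N_U$ with $\varphi(N_U)=N_U$, and the intersections $\ker(N_U\twoheadrightarrow N_U^l)$ shrink to $\{1\}$. Conclusion (ii) of Theorem \ref{inngal} in each such quotient, combined with Lemma \ref{innlim} (compactness), then yields a single $\sigma\in G_{\mathbb{Q}}$ inducing $\varphi$.
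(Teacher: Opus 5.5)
Your proposal is correct and follows the paper's proof essentially step for step. The paper likewise reduces to $K_1$ Galois via Proposition \ref{snisom} and Corollary \ref{qnfextcor}, and applies Lemma \ref{normalpreserve} to $\varphi|_{G_{K_1}\cap V}$ for each normal open $V\subseteq G_{\mathbb{Q}}$. It then uses Lemma \ref{l-fp} (via pro-$l$ lifts) to make the induced automorphism of $N_V^l$ families preserving, obtains innerness in $G_{\mathbb{Q}}/\ker(N_V\twoheadrightarrow N_V^l)$ (with $N_V^l\neq\{1\}$ by Proposition \ref{qnfext}, (iv)), and passes to the limit via Lemma \ref{innlim}.

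The only slip is your citation of Theorem \ref{inngal}, (ii), at the start and end. It does not literally apply, since that quotient is not an almost pro-$\mathcal{C}$-maximal quotient and $G_{K_1}/R$ has non-pro-$l$ pro-cyclic subgroups. However, the argument you actually run, Theorem \ref{innisom} on $N^l$ (equivalently Proposition \ref{nffree}, (ii), plus Proposition \ref{snisom}, as in the paper), is the right one.
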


\begin{proof}
Since $G_{\mathbb{Q}}$ is internally indecomposable [cf.\ Theorem \ref{sneanab}], in light of Proposition \ref{snisom} and Corollary \ref{qnfextcor}, we may assume that $K_1$ is Galois over $\mathbb{Q}$. For a normal open subgroup $V\subseteq G_{\mathbb{Q}}$ of $G_{\mathbb{Q}}$, since $G_{K_1}\cap V$ is normal in $G_{\mathbb{Q}}$, if we write $K_V$ for the separable extension of $K_1$ corresponding to $G_{K_1}\cap V\subseteq G_{K_1}$, then $K_V$ is Galois over $\mathbb{Q}$. Write $N_V\subseteq\varphi(G_{K_1}\cap V)$ for ``$N$'' in Lemma \ref{normalpreserve}, where we take ``$K_1$'' to be $K_V$. Then it holds that $N_V\subseteq G_{K_1}$ and $\varphi(N_V)=N_V$.

Let $S\subseteq N_V$ be a Sylow $l$-subgroup of $N_V$. Then it follows from Lemma \ref{l-fp} that for any [pro-$l$] pro-cyclic closed subgroup $I\subseteq S(\subseteq G_{K_1})$, there exists $\sigma\in G_{\mathbb{Q}}$ such that $\varphi(I)=\sigma I\sigma^{-1}$. On the other hand, the image of $S\subseteq N_V$ via the natural surjective homomorphism $N_V\twoheadrightarrow N_V^l$ is a Sylow $l$-subgroup of $N_V^l$, that is, the image coincides $N_V^l$. Thus, the automorphism of $N_V^l$ determined by $\varphi$ is families preserving in $G_\mathbb{Q}/\ker(N_V\twoheadrightarrow N_V^l)$.

Now it follows from Proposition \ref{qnfext}, (iv), that the normal closed subgroup $N_V^l\subseteq G_\mathbb{Q}/\ker(N_V\twoheadrightarrow N_V^l)$ is nontrivial. Thus, it follows from Proposition \ref{snisom}; Proposition \ref{nffree}, (ii), that the automorphism $N_V^l\overset{\sim}{\to}N_V^l$, hence also the isomorphism $G_{K_1}/\ker(N_V\twoheadrightarrow N_V^l)\overset{\sim}{\to}G_{K_2}/\ker(N_V\twoheadrightarrow N_V^l)$, is induced by an inner automorphism of $G_\mathbb{Q}/\ker(N_V\twoheadrightarrow N_V^l)$. Finally, since $\bigcap_V\ker(N_V\twoheadrightarrow N_V^l)\subseteq\bigcap_V N_V\subseteq\bigcap_V(G_{K_1}\cap V)=\{1\}$, it follows from Lemma \ref{innlim} that $\varphi$ is induced by an inner automorphism of $G_{\mathbb{Q}}$. This completes the proof of Theorem \ref{lqnfinn}.
\end{proof}

\begin{rem}\label{lqnfinnrem}
\cite{O1} states that the following holds.

\begin{quotation}
Write $\mathcal{F}_1$ for the set of algebraic extensions of $\mathbb{Q}$ such that $K\in\mathcal{F}_1$ if and only if $K$ is a finite extension of a [not necessarily finite] Galois extension of $\mathbb{Q}$ that satisfies the following: for every positive integer $N$, there exists a prime number $l$ such that $N$ divides $l-1$ and $l$ does not divide $[K:\mathbb{Q}]$.

Let $K_1,K_2\in\mathcal{F}_1$; $\varphi:G_{K_1}\overset{\sim}{\to}G_{K_2}$ a continuous isomorphism. Then $\varphi$ is induced by an inner automorphism of $G_{\mathbb{Q}}$.
\end{quotation}

If $K\in\mathcal{F}_1$, then there exists an arbitrarily large prime number that does not divide $[K:\mathbb{Q}]$, thus $K$ is a $\Sigma_K$-quasi-number field for an infinite set of prime numbers $\Sigma_K$. However, for $K_1,K_2\in\mathcal{F}_1$, it is not always possible to take a common prime number from $\Sigma_{K_1}$ and $\Sigma_{K_2}$. Thus, Theorem \ref{lqnfinn} is a partial generalization of the above result in \cite{O1}. It is natural to pose the following question:
\begin{quotation}
\noindent Question: If we replace the assumption that $K_2$ is an $l$-quasi-number field with the assumption that $K_2$ is a $p$-quasi-number field for some prime number $p$, is Theorem \ref{lqnfinn} still valid?
\end{quotation}
However, at the time of writing of the present paper, the authors do not know whether this question is affirmative or not.
\end{rem}

\begin{cor}\label{lqnfbij}
Let $l$ be a prime number; $K_1,K_2$ $l$-quasi-number fields. Then the natural map
\begin{equation*}
\Isom(K_2,K_1)\to\OutIsom(G_{K_1},G_{K_2})
\end{equation*}
is bijective. In particular, $K_1$ and $K_2$ are isomorphic if and only if $G_{K_1}$ and $G_{K_2}$ are isomorphic [as profinite groups].
\end{cor}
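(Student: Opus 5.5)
The plan is to show injectivity and surjectivity of the natural map $\Isom(K_2,K_1)\to\OutIsom(G_{K_1},G_{K_2})$ separately, with surjectivity coming from Theorem \ref{lqnfinn}. We regard $K_1,K_2$ as subfields of a fixed $\overline{\mathbb{Q}}$, so that $G_{K_1},G_{K_2}\subseteq G_{\mathbb{Q}}$. An isomorphism $\tau:K_2\overset{\sim}{\to}K_1$ extends to some $\tilde{\tau}\in\Aut(\overline{\mathbb{Q}})=G_{\mathbb{Q}}$, and it is sent to the class of $g\mapsto\tilde{\tau}^{-1}g\tilde{\tau}$. The ambiguity in the choice of $\tilde{\tau}$ is composition with elements of $G_{K_2}$, so the outer class is well defined.

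\textbf{Surjectivity.} Let $\varphi:G_{K_1}\overset{\sim}{\to}G_{K_2}$ be an isomorphism.
1. By Theorem \ref{lqnfinn}, there is $\sigma\in G_{\mathbb{Q}}$ with $\varphi(g)=\sigma g\sigma^{-1}$ for all $g\in G_{K_1}$.
2. Hence $\sigma G_{K_1}\sigma^{-1}=G_{K_2}$.
3. By Galois correspondence, $\sigma^{-1}$ restricts to a field isomorphism $K_2=\overline{\mathbb{Q}}^{G_{K_2}}\overset{\sim}{\to}\overline{\mathbb{Q}}^{G_{K_1}}=K_1$.
4. With the sign conventions fixed above, this isomorphism maps to the class of $\varphi$.

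\textbf{Injectivity.} Suppose $\tau,\tau'$ have the same image. Choose lifts $\tilde{\tau},\tilde{\tau}'\in G_{\mathbb{Q}}$.
1. The two conjugations differ by an inner automorphism of $G_{K_2}$. So, after modifying $\tilde{\tau}'$ by an element of $G_{K_2}$, which does not change $\tau'$, the element $\rho=\tilde{\tau}'\tilde{\tau}^{-1}$ centralizes $G_{K_1}$, up to the appropriate conjugate.
2. The target is therefore the claim $Z_{G_{\mathbb{Q}}}(G_{K})=\{1\}$ for every $l$-quasi-number field $K$.
3. Once this holds, $\tilde{\tau}=\tilde{\tau}'$, so $\tau=\tau'$.

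\textbf{Main obstacle: the centralizer statement.} The first approach is to apply Proposition \ref{snisom} with $G=G_{\mathbb{Q}}$, which is sn-internally indecomposable by Theorem \ref{sneanab}. For this we need a nontrivial subgroup of $G_K$ that is normal (or subnormal) in $G_{\mathbb{Q}}$. The kernel of $G_{\mathbb{Q}}\to\Gal(L/\mathbb{Q})$ works, where $L$ is the Galois closure of $K$: it lies in $G_K$. It is nontrivial because $[L:\mathbb{Q}]$ is not divisible by $l^\infty$, while $G_{\mathbb{Q}}$ has infinite $l$-part, e.g.\ via the cyclotomic $\mathbb{Z}_l$-extension. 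Conjugation by $\rho$ then fixes a nontrivial normal closed subgroup pointwise. So $\rho$ centralizes it, and internal indecomposability forces $\rho=1$.

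The only remaining care is bookkeeping: matching the direction $\Isom(K_2,K_1)$ with conjugation $G_{K_1}\to G_{K_2}$, and checking that the $G_{K_2}$-ambiguity in lifts is exactly absorbed by passing to outer isomorphisms. The final "in particular" statement is immediate from bijectivity.
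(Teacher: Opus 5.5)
Your proposal is correct and follows the paper's proof. Surjectivity is read off from Theorem \ref{lqnfinn}. Injectivity is reduced to $Z_{G_{\mathbb{Q}}}(G_K)=\{1\}$, which you get from the nontrivial $G_{\mathbb{Q}}$-normal subgroup $G_L\subseteq G_K$ ($L$ the Galois closure) together with the internal indecomposability of $G_{\mathbb{Q}}$ from Theorem \ref{sneanab}. The only cosmetic difference is that you kill the centralizer of $G_{K_1}$ rather than of $G_{K_2}$, which is symmetric.
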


\begin{proof}
The surjectivity immediately follows from Theorem \ref{lqnfinn}. We verify the injectivity. Write $L$ for the Galois closure of $K_2$ over $\mathbb{Q}$. Since $K_2$ is an $l$-quasi-number field, $G_L$ is a nontrivial closed subgroup of $G_{K_2}$ that is normal in $G_\mathbb{Q}$. In particular, it follows from Theorem \ref{sneanab} that $Z_{G_{\mathbb{Q}}}(G_{K_2})\subseteq Z_{G_{\mathbb{Q}}}(G_L)=\{1\}$.

Now let $f,g\in\Isom(K_2,K_1)$ be elements that map to the same element under the natural map under consideration. Let us fix an element $\sigma\in G_{\mathbb{Q}}$ such that $\sigma\vert_{K_2}=g^{-1}\circ f$. Then, by our choice of $f,g$, the conjugation by $\sigma$ determines an automorphism of $G_{K_2}$, and, moreover, this automorphism is an inner automorphism of $G_{K_2}$. In particular, there exists an element $\tau\in G_{K_2}$ such that $\tau^{-1}\sigma\in Z_{G_{\mathbb{Q}}}(G_{K_2})=\{1\}$, which implies that $g^{-1}\circ f=\sigma\vert_{K_2}=\tau\vert_{K_2}=\id_{K_2}$. This completes the proof of injectivity, hence also of Corollary \ref{lqnfbij}.
\end{proof}

\begin{rem}\label{lqnfbijrem}
In \cite{H2}, an explicit functorial ``group-theoretic algorithm'' for reconstructing a number field from its absolute Galois group is given. It is natural to pose the following question:
\begin{quotation}
\noindent Question: Is there a ``group-theoretic algorithm'' for reconstructing an $l$-quasi-number field from its absolute Galois group?
\end{quotation}
However, at the time of writing of the present paper, the authors do not know whether this question is affirmative or not.
\end{rem}

\begin{cor}\label{cyclsubext}
Write $\mathbb{Q}^{\mathrm{cyc}}$ for the cyclotomic $\widehat{\mathbb{Z}}$-extension of $\mathbb{Q}$; $\mathcal{F}$ for the set of finite extension fields of proper subfields of $\mathbb{Q}^{\mathrm{cyc}}$. Let $K_1,K_2\in\mathcal{F}$; $\varphi:G_{K_1}\overset{\sim}{\to}G_{K_2}$ a continuous isomorphism. Then $\varphi$ is induced by an inner automorphism of $G_{\mathbb{Q}}$. In particular, for $K_1,K_2\in\mathcal{F}$, the natural map
\begin{equation*}
\Isom(K_2,K_1)\to\OutIsom(G_{K_1},G_{K_2})
\end{equation*}
is bijective.
\end{cor}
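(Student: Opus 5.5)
Since every $K\in\mathcal{F}$ is an $l$-quasi-number field for some prime $l$, the plan is to reduce Corollary \ref{cyclsubext} to Theorem \ref{lqnfinn} by showing that $K_1$ and $K_2$ are $l$-quasi-number fields for one and the same prime $l$. Once that is done, Theorem \ref{lqnfinn} shows that $\varphi$ is induced by an inner automorphism of $G_{\mathbb{Q}}$. The bijectivity statement then follows exactly as in the proof of Corollary \ref{lqnfbij}: surjectivity comes from the innerness of $\varphi$, and injectivity from the center-freeness of $Z_{G_{\mathbb{Q}}}(G_L)$ given by Theorem \ref{sneanab}.

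First I describe the primes available for each field. Let $K\in\mathcal{F}$ be a finite extension of a proper subfield $F\subsetneq\mathbb{Q}^{\mathrm{cyc}}$. Since $\Gal(\mathbb{Q}^{\mathrm{cyc}}/\mathbb{Q})\cong\widehat{\mathbb{Z}}=\prod_q\mathbb{Z}_q$, and closed subgroups of $\widehat{\mathbb{Z}}$ are of the form $\prod_q q^{n_q}\mathbb{Z}_q$ with $n_q\in\{0,1,\ldots,\infty\}$, the field $F$ being proper means that $[F:\mathbb{Q}]$ is not divisible by $q^\infty$ for at least one prime $q$. Set
\[
\Sigma(K)\overset{\mathrm{def}}{=}\{q \,\vert\, q^\infty\nmid [F:\mathbb{Q}]\}\neq\emptyset .
\]
Because $F$ is Galois over $\mathbb{Q}$, Proposition \ref{qnfext}, (iii), shows that $K$ is a $q$-quasi-number field exactly for $q\in\Sigma(K)$. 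Moreover $K$ is not a $q$-quasi-number field for $q\notin\Sigma(K)$, since then $F\supseteq\mathbb{Q}$ already contains the $\mathbb{Z}_q$-extension of $\mathbb{Q}$.

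Next, $\Sigma(K)$ should be read off from $G_K$ by cohomological dimension. For a prime $q$, I claim $\cdim_q G_K=2$ if $q\in\Sigma(K)$ and $\cdim_q G_K=1$ if $q\notin\Sigma(K)$ (when $q=2$, after first passing to a totally imaginary open subgroup). If $q\notin\Sigma(K)$, then $K$ contains the cyclotomic $\mathbb{Z}_q$-extension of some number field, so \cite{NSW}, Corollary 8.1.18, gives $\cdim_q G_K\le1$. Equality holds because $G_K$ has nontrivial $q$-quotients: $K$ has nontrivial $q$-extensions, for instance a Kummer extension, so $\cdim_q\neq0$.

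If $q\in\Sigma(K)$, then $\cdim_q G_K\le 2$ holds for any algebraic extension of $\mathbb{Q}$ (totally imaginary case). To get equality, I would choose a place $\overline{v}$ with $q$ dividing $p_{\overline{v}}-1$ or similar, and use Proposition \ref{decomp}, (ii). That result says $H^2(U,\mathbb{F}_q)\neq0$ for $U$ open in $D_{K,\overline{v}}$; this requires $[D_{\mathbb{Q},\overline{v}}:D_{K,\overline{v}}]$ to be not divisible by $q^\infty$, which holds since $[K:\mathbb{Q}]$ is not. Hence $\cdim_q D_{K,\overline{v}}=2$ and so $\cdim_q G_K\ge2$.

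These two computations give $\Sigma(K_1)=\Sigma(K_2)$, because $\varphi$ preserves $\cdim_q$ of open subgroups. The case $q=2$ needs the totally imaginary reduction compatibly via $\varphi$; alternatively one can simply choose an odd $l$ when possible. Now pick any $l\in\Sigma(K_1)=\Sigma(K_2)$. Both $K_i$ are then $l$-quasi-number fields, and Theorem \ref{lqnfinn} applies.

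I expect the main obstacle to be the clean verification of the dimension dichotomy at $q=2$, and of the lower bound $\cdim_q=2$ for an infinite field $K$. If $q=2$ is troublesome, I would avoid it. If $\Sigma(K_1)$ contains an odd prime, use that prime. If $\Sigma(K_1)=\{2\}$, the odd-prime computations, which are clean, already force $\Sigma(K_2)\cap\{\text{odd}\}=\emptyset$. Since $\Sigma(K_2)$ is nonempty, this gives $\Sigma(K_2)=\{2\}$ as well.
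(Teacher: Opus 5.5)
Your proposal is correct and takes essentially the same approach as the paper: you read off from $\cdim_q G_{K_i}$ the set of primes $q$ for which $K_i$ is a $q$-quasi-number field, using \cite{NSW}, Corollary 8.1.18 for the bound $\cdim_q\le 1$ and the local computation at a decomposition group for $\cdim_q\ge 2$; this yields a common $l$, and Theorem \ref{lqnfinn} and Corollary \ref{lqnfbij} then finish the proof. Your trick of testing only odd $q$ and then using $\Sigma(K_i)\neq\emptyset$ is a useful extra bit of care. The paper's one-line criterion should read ``$K$ is \emph{not} a $p$-quasi-number field iff $\cdim_p G_K=1$'', and it does not address real places when $p=2$.
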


\begin{proof}
If $K\in\mathcal{F}$, then there exists a proper subfield $L$ of $\mathbb{Q}^{\mathrm{cyc}}$ such that $K$ is a finite extension field of $L$. Since $\Gal(L/\mathbb{Q})$ is a quotient of $\Gal(\mathbb{Q}^{\mathrm{cyc}}/\mathbb{Q})\cong\widehat{\mathbb{Z}}$ by a nontrivial [normal] closed subgroup, there exists a prime number $l$ such that $[L:\mathbb{Q}]$, hence also $[K:\mathbb{Q}]$, is not divided by $l^\infty$, i.e., $K$ is an $l$-quasi-number field. Moreover, it follows from \cite{NSW}, Theorem 7.1.8, (i); \cite{NSW}, Corollary 8.1.18, that, for each prime number $p$, $K$ is a $p$-quasi-number field if and only if $\cdim_p G_K=1$. Thus, Corollary \ref{cyclsubext} follows from Theorem \ref{lqnfinn}; Corollary \ref{lqnfbij}.
\end{proof}

\begin{rem}\label{cyclsubextrem}
Since $\cdim G_{\mathbb{Q}^{\mathrm{cyc}}}=1$, it cannot be expected that any finite extension field of $\mathbb{Q}^{\mathrm{cyc}}$ will satisfy anabelian properties like as Corollary \ref{cyclsubext}.
\end{rem}

\vskip.5\baselineskip
\begin{center}
\textbf{Acknowledgements}
\end{center}

The authors would like to thank Professor Akio Tamagawa for some comments. The authors would like to express their sincere gratitude to the organizers of the various workshops and conferences held during the period 2024--2026 for giving the second author numerous opportunities to present the results contained in this paper. The authors regret the considerable delay between the first public announcement of these results and the public release of the present paper, during which time the results were strengthened and the exposition was substantially revised.

The first author was supported by JSPS KAKENHI Grant Number JP24K16898. The second author was supported by JSPS KAKENHI Grant Number JP20J00323. This research was supported by the Research Institute for Mathematical Sciences, an International Joint Usage/Research Center located in Kyoto University, as well as the Center for Research in Next Generation Geometry. This work is part of the ``Arithmetic and Homotopic Galois Theory'' project, supported by the CNRS France-Japan AHGT International Research Network between the RIMS Kyoto University, the LPP of Lille University, and the DMA of ENS PSL.

\vskip.5\baselineskip

\end{document}